\newcounter{theoremCounter}
\numberwithin{theoremCounter}{section}
\theoremstyle{plain}
\newtheorem{lem}[theoremCounter]{Lemma}
\newtheorem{prop}[theoremCounter]{Proposition}
\newtheorem{thm}[theoremCounter]{Theorem}
\newtheorem{cor}[theoremCounter]{Corollary}
\newtheorem{conjecture}[theoremCounter]{Conjecture}
\theoremstyle{definition}
\newtheorem{definition}[theoremCounter]{Definition}
\newtheorem{rem}[theoremCounter]{Remark}
\newtheorem{example}[theoremCounter]{Example}
\newcommand{\red}[1]{{\color{red}#1}}
\newcommand{\id}{{\boldsymbol{\mathbbm{1}}}}
\newcommand{\R}{\mathbb{R}}
\newcommand{\norm}[1]{\lVert #1 \rVert}
\newcommand{\matr}[1]{\begin{pmatrix}#1\end{pmatrix}}
\newcommand{\SO}{{\rm SO}}
\newcommand{\OO}{{\rm O}}
\newcommand{\DD}{\mathrm{D}}
\newcommand{\WW}{\mathrm{W}}
\newcommand{\innerproduct}[1]{\langle #1 \rangle}
\newcommand{\iprod}{\innerproduct}
\newcommand*\dif{\mathop{}\!\mathrm{d}}
\newcommand{\notiff}{%
  \mathrel{{\ooalign{\hidewidth$\not\phantom{"}$\hidewidth\cr$\iff$}}}}
\newlength{\dhatheight}
\DeclareMathOperator{\lin}{lin}
\DeclareMathOperator{\diag}{diag}
\DeclareMathOperator{\Sym}{Sym}
\DeclareMathOperator{\Cof}{Cof}
\DeclareMathOperator{\dev}{dev}
\DeclareMathOperator{\sym}{sym}
\DeclareMathOperator{\bfsym}{\textbf{sym}}
\DeclareMathOperator{\tr}{tr}
\DeclareMathOperator{\iso}{iso}
\DeclareMathOperator{\Old}{Old}
\DeclareMathOperator{\ZJ}{ZJ}
\DeclareMathOperator{\GN}{GN}
\DeclareMathOperator{\TR}{TR}
\DeclareMathOperator{\NH}{NH}
\DeclareMathOperator{\vol}{vol}
\def\barr{\begin{array}}
\def\tr{\textnormal{tr}}
\def\sk{\textnormal{skew}}
\def\dd{\displaystyle}
\def\barr{\begin{array}}
\def\earr{\end{array}}
\def\becn{\begin{equation*}}
\def\endec{\end{equation}}
\def\endecn{\end{equation*}}
\def\C{\mathbb{C}}
\def\H{\mathbb{H}}
\let\@fnsymbol\@arabic
\numberwithin{equation}{section}
\newcommand{\scal}[2]{\left\langle#1,#2\right\rangle}
\global\long\def\bR{\mathbb{R}}
\title{Hypo-elasticity and logarithmic strain}
\renewcommand*{\@fnsymbol}[1]{\ifcase#1\or1\or2\or3\or4\or5\or6\or7\or$\ast$\else\fi}
\newcommand{\todo}[1][]{\textcolor{red}{TODO\ifx&#1&\else: #1\fi}}
\begin{document}
\title{Hypo-elasticity, Cauchy-elasticity, corotational stability and monotonicity in the logarithmic strain}
\author{
Patrizio Neff\thanks{
Patrizio Neff, University of Duisburg-Essen, Head of Chair for Nonlinear Analysis and Modelling, Faculty of Mathematics, Thea-Leymann-Stra{\ss}e 9,
D-45127 Essen, Germany, email: patrizio.neff@uni-due.de}
, \qquad
Sebastian Holthausen\thanks{
Sebastian Holthausen, University of Duisburg-Essen, Chair for Nonlinear Analysis and Modelling,  Faculty of Mathematics, Thea-Leymann-Stra{\ss}e 9,
D-45127 Essen, Germany, email: sebastian.holthausen@uni-due.de
}, \qquad
Marco Valerio d'Agostino\thanks{
Marco Valerio d'Agostino, GEOMAS, INSA-Lyon, Université de Lyon, 20 avenue Albert Einstein, 69621, Villeurbanne cedex, France, email: marco-valerio.dagostino@insa-lyon.fr
}, \\[0.8em]
Davide Bernardini\thanks{
Davide Bernardini, Department of Structural and Geotechnical Engineering, Sapienza University of Rome, Rome, Italy, e-mail: davide.bernardini@uniroma1.it
}, \quad
Adam Sky\thanks{
Adam Sky, Institute of Computational Engineering and Sciences, Department of Engineering, Faculty of Science, Technology and Medicine, University of Luxembourg, 6 Avenue de la Fonte, L-4362 Esch-sur-Alzette, Luxembourg, e-mail: adam.sky@uni.lu
}
, \quad
Ionel-Dumitrel Ghiba\thanks{
Ionel-Dumitrel Ghiba, Alexandru Ioan Cuza University of Ia\c si, Department of Mathematics,  Blvd. Carol I, no. 11, 700506 Ia\c si, Romania;  Octav Mayer Institute of Mathematics
of the Romanian Academy, Ia\c si Branch, 700505 Ia\c si, email: dumitrel.ghiba@uaic.ro
}, \quad and \quad
Robert J. Martin\thanks{
Robert J. Martin,  Lehrstuhl für Nichtlineare Analysis und Modellierung, Fakultät für Mathematik, Universität Duisburg-Essen, Thea-Leymann Str. 9, 45127 Essen, Germany, email: robert.martin@uni-due.de
}
}
\maketitle
\vspace{-0,6cm}
\begin{abstract}
\noindent We combine the rate-formulation for the objective, corotational Zaremba-Jaumann rate
	\begin{align*}
	\frac{\DD^{\ZJ}}{\DD t} [\sigma] = \H^{\ZJ}(\sigma).D, \qquad D = \sym \DD v\,,
	\end{align*}
operating on the Cauchy stress $\sigma$, the Eulerian strain rate $D$ and the spatial velocity $v$ with the novel \enquote{corotational stability postulate} 
\begin{equation}\label{eq:abstract_CSP}
	\Bigl\langle \frac{\DD^{\ZJ}}{\DD t}[\sigma], D \Bigr\rangle > 0 \qquad \forall \, D\in\Sym(3)\setminus\{0\} \tag{CSP}
\end{equation}
to show that for a given isotropic Cauchy-elastic constitutive law $B \mapsto \sigma(B)$ in terms of the left Cauchy-Green tensor $B = F F^T$, the induced fourth-order tangent stiffness tensor $\H^{\ZJ}(\sigma)$ is positive definite if and only if for $\widehat{\sigma}(\log B)\colonequals \sigma(B)$, the strong monotonicity condition (\eqref{eq:abstract_tstsmpp}) in the logarithmic strain is satisfied:
	\begin{align}\label{eq:abstract_tstsmpp}
	&\sym \DD_{\log B} \widehat \sigma(\log B) \in \Sym^{++}_4(6) \tag{TSTS-M$^{++}$}\\
	\implies \qquad &\langle \widehat\sigma(\log B_1) - \widehat\sigma(\log B_2), \log B_1 - \log B_2 \rangle > 0 \qquad \forall \, B_1, B_2 \in \Sym^{++}(3)\,,\; B_1 \neq B_2\,.\nonumber
	\end{align}
Thus \eqref{eq:abstract_CSP} implies \eqref{eq:abstract_tstsmpp} and vice-versa, and both imply the invertibility of the hypo-elastic material law between the stress and strain rates given by the tensor $\H^{\ZJ}(\sigma)$, since $\H^{\ZJ}(\sigma)$ is accordingly positive definite. Notably, \eqref{eq:abstract_tstsmpp} is one way to characterize the fundamental notion of \enquote{stress increases with strain}. The same characterization remains true for the corotational Green-Naghdi rate as well as the corotational logarithmic rate, conferring the corotational stability postulate \eqref{eq:abstract_CSP} together with the monotonicity in the logarithmic strain tensor \eqref{eq:abstract_tstsmpp} a far reaching generality. It is conjectured that this characterization of \eqref{eq:abstract_CSP} holds for a large class of reasonable corotational rates. The result for the logarithmic rate is based on a novel chain rule for corotational derivatives of isotropic tensor functions.
\end{abstract}
\bigskip
\textbf{Keywords:} nonlinear elasticity, hyperelasticity, rate-formulation,
Eulerian setting, hypo-elasticity, Cauchy-elasticity, material stability, corotational derivatives, objective derivatives, chain rule, constitutive inequalities, logarithmic strain, stress increases with strain \\
\\
\textbf{Mathscinet} classification:
15A24, 73G05, 73G99, 74B20
%
\clearpage
\tableofcontents
\section{Introduction}
In the theory of finite elasticity, numerous mathematical models can be used to describe the relation between the stress acting on an elastic body and its deformation. To this day, however, it remains uncertain how to identify models which predict a \emph{physically reasonable} material behaviour.
Although a vast number of constitutive requirements have been suggested in order to ensure mechanically plausible material behaviour of Cauchy-elastic or hyperelastic laws \cite{hill1968constitutivea,hill1968constitutiveb,hill1970constitutive,nefftranslationbecker} -- many of which can be described as a condition of \emph{stress increasing with strain} -- there is no consensus on which requirements should be considered necessary or even desirable. Truesdell considered this to be the \textbf{\enquote{Hauptproblem} of nonlinear elasticity} \cite{truesdell1956ungeloste}.

Recently, constitutive questions in solid mechanics have increasingly been approached from a data-driven perspective \cite{jin2023recent}. In particular, machine learning techniques are being employed not only for finding numerical approximations to given problems \cite{lagaris1998artificial,raissi2019physics,agn_voss2021morrey2,jin2023recent}, but also for developing constitutive laws in nonlinear elasticity and related fields \cite{flaschel2021unsupervised,thakolkaran2022nn,agn_klein2021neural,flaschel2023automated,tacc2024benchmarking,linka2023new,klein2024}. While this approach can indeed provide new material models which satisfy known constitutive requirements \cite{agn_klein2021neural,klein2024}, it cannot give a meaningful answer to the main question posed by Truesdell's Hauptproblem: which requirements \emph{should} be imposed on the relation between stress and strain?

In this contribution, we therefore continue the classical investigation of nonlinear elasticity theory from an analytical point of view. In particular, we present
a variation on a theme initially introduced by Hill \cite{hill1968constitutivea,hill1968constitutiveb, hill1978} following first steps by Truesdell \cite{truesdell55hypo} and Noll \cite{Noll55}.

Truesdell and Noll rediscovered and formalized, in the mid fifties of the last century, the work of Hencky \cite{biezeno1928, Hencky1928, Hencky1929} and Oldroyd \cite{oldroyd1950} on the formulation of rate-type equations encoding the constitutive law in isotropic nonlinear elasticity involving necessarily {\bf objective time derivatives} \cite{Marsden83} of the Cauchy stress tensor $\frac{\DD^{\sharp}}{\DD t}[\sigma]$ such that the constitutive law is expressed as
	\begin{align}\label{eq1}
	\frac{\DD^{\sharp}}{\DD t}[\sigma] =\H^*(\sigma).\, D, \qquad \textnormal{with} \quad D = \sym \DD v \quad \textnormal{the Eulerian stretching}.
	\end{align}
Here, $\H^*(\sigma)$ is a fourth-order tangent stiffness tensor mapping symmetric tensors to symmetric tensors (minor symmetry) \cite{ericksen1958hypo, xiao97_2}, which does not necessarily have to be self-adjoint (major symmetric), and \eqref{eq1} needs to be integrated along the loading path. Such models are known as hypo-elastic\footnote
{
Today, hypo-elastic models appear prominently in rate-formulations of finite strain elasto-plasticity, splitting the stretch rate $D = D_e + D_p$ additively into elastic and plastic parts and setting $\frac{\DD^{\sharp}}{\DD t}[\sigma] = \H^*(\sigma).(D - D_p)$, see e.g.~\cite{baghani2014, bruhns1999, eshraghi2013, peshkov2019, volokh2013} and references therein. Otherwise, current FEM-software such as Abaqus\texttrademark\ and LS-DyNA require the input of the constitutive law for nonlinear elasticity in the rate format with $\H^*(\sigma)$ for some specified objective rate and some spatial stress tensor.
}.

In contrast, in nonlinear \emph{Cauchy-elasticity}, the stress-strain relation is defined in absolute terms via a stress response function
\[
	\sigma\colon\Sym^{++}(3)\to\Sym(3)\,,\quad B\mapsto\sigma(B)
	\,,
\]
mapping the left Cauchy-Green $B=FF^T$ corresponding to a deformation gradient $F$ to the Cauchy stress tensor $\sigma(B)$. While hypo-elasticity can be applied to more general problems in continuum mechanics, it was first shown by Noll\footnote%
{%
Noll \cite{Noll55} considered only the Zaremba-Jaumann rate $\frac{\DD^{\ZJ}}{\DD t}$.%
} \cite{Noll55}, to the surprise of Truesdell \cite{truesdellremarks}, that every Cauchy-elastic material is hypo-elastic if the mapping $B\mapsto\sigma(B)$ is isotropic and invertible\footnote%
{%
It is quite remarkable that Truesdell and Noll \cite{Truesdell65} largely ignored this invertibility assumption for the Cauchy stress $\sigma$ in their subsequent development for nonlinear elasticity.%
}. Under these conditions, hypo-elasticity can be considered an equivalent formulation of nonlinear elasticity, more specifically a \emph{rate-formulation} in which stress increments are functionally related to strain increments in order to describe the constitutive law \cite{ericksen1958hypo}. In that case, for any given objective time derivative $\frac{\DD^{\sharp}}{\DD t}$, there exists a naturally \textbf{induced tangent stiffness tensor} $\H^{\sharp}(\sigma)$ with
\begin{align}
	\H^{\sharp}(\sigma).D = \frac{\DD^{\sharp}}{\DD t}[\sigma]
	\,.
\end{align}
However, even for perfect nonlinear elasticity, there is a range of limitations and problems for such a rate-formulation:
\begin{itemize}
	\item[1)] There are infinitely many different objective time derivatives which can be seen as covariant derivatives \cite{kolev2024objective} (not all of them are Lie-derivatives \cite{federico2022, kolev2024objective, Marsden83}).
	\item[2)] One can choose different spatial stress tensors to encode the same constitutive law.
	\item[3)] The choice of $\H^*(\sigma)$ is open but a priori restricted to isotropy \cite{Noll55}.
	\item[4)] The rate \eqref{eq1} might not be integrable towards a Cauchy-elastic or hyperelastic model \cite{bernstein1958}.
\end{itemize}
We will address these issues here as they present themselves.

At the same time, in nonlinear Cauchy-elasticity, it was and is still unclear\footnote
{%
{\v{S}}ilhav{\'y} \cite{Silhavy2019}
argues that \enquote{the final solution of the Hauptproblem came in 1977 with the work of J. M. Ball [who] showed that the triplet of weakened convexity conditions, viz., polyconvexity, Morrey’s quasiconvexity, and the rank 1 convexity [...] is exactly what is missing}.
However, while these generalized convexity properties indeed provide a satisfying answer to central existence problems in nonlinear elasticity from the point of view of the direct methods of the calculus of variations, they do not fully adress the mechanical questions raised by Truesdell.
}
as to what constitutive assumption should reasonably be placed on the nonlinear elasticity law $B \mapsto \sigma(B)$ in general to ensure \emph{physically reasonable response}, loosely connected to the idea that \emph{stress increases with strain},\footnote
{
But which stress? Which strain? What does \enquote{increase} mean? We will arrive at a tentative answer at the end of this paper.
}
which Truesdell called the \enquote{Hauptproblem} of nonlinear elasticity \cite{truesdell1956ungeloste}.

In an attempt to answer this question, Hill modified \eqref{eq1} into a formulation for the spatial Kirchhoff stress tensor $\tau = J \, \sigma$ and required
	\begin{align}\label{eq2}
	\langle \frac{\DD^{\sharp}}{\DD t}[\tau] ,D\rangle>0 \qquad \textnormal{for all}\quad D \in \Sym(3) \! \setminus \! \{0\}
	\end{align}
for a specific subclass of objective derivatives, among them the Zaremba-Jaumann rate (cf.~\cite{jaumann1905, jaumann1911geschlossenes, zaremba1903forme}). He continued to examine the consequences of imposing \eqref{eq2} for the constitutive law $B\mapsto \tau(B)$ and obtained with his complicated method of Lagrangian axes for the Zaremba-Jaumann rate the result that (here stated for hyperelasticity) $\tau(B)=\widehat{\tau}(\log B)$ satisfies the condition $\DD_{\log B} \widehat \tau(\log B) \in \Sym^{++}_4(6)$ if and only if \eqref{eq2} is satisfied. Leblond \cite{leblond1992constitutive} took up Hill's development and proved a similar result for the Cauchy stress $\sigma$ in the hyperelastic setting.

In \cite{tobedone} the authors generalize Leblond's results to the Cauchy-elastic case and show that for isotropic nonlinear elasticity, the relation involving the logarithmic strain tensor \cite{Neff_Osterbrink_Martin_Hencky13, NeffGhibaLankeit, NeffGhibaPoly}
	\begin{equation}
	\label{eqCPSdef}
	\begin{alignedat}{2}
	   \forall \,D\in\Sym(3) \! \setminus \! \{0\}:
	   ~
	   \scal{\frac{\DD^{\ZJ}}{\DD t}[\sigma]}{D} > 0
	   \quad
	   &\iff
	   \quad
	   \log B
	   \mapsto
	   \widehat\sigma(\log B)
	   \;\textrm{is strongly Hilbert-monotone\footnotemark} \\
		&\iff \quad \sym \DD_{\log B} \widehat \sigma(\log B) \in \Sym^{++}_4(6)
	\end{alignedat}
	\end{equation}
\footnotetext
{
See Appendix \ref{appendixnotation}
}
holds for the corotational Zaremba-Jaumann objective derivative of the Cauchy stress $\sigma$, which is given by
	\begin{align}
	\frac{\DD^{\ZJ}}{\DD t}[\sigma] = \frac{\DD}{\DD t}[\sigma] - W \, \sigma + \sigma \, W, \qquad W = \sk(\dot F \, F^{-1})\,.
	\end{align}
Here, $B = F \, F^T$ is the left Cauchy-Green tensor and $F = \DD \varphi$ is the Fréchet derivative of the deformation $\varphi$. Relation \eqref{eqCPSdef} implies the \textbf{T}rue-\textbf{S}tress \textbf{T}rue-\textbf{S}train strict Hilbert-\textbf{M}onotonicity (TSTS-M$^+$)
\begin{equation}
	\scal{\widehat\sigma(\log B_1)-\widehat\sigma(\log B_2)}{\log B_1-\log B_2} > 0
	\qquad
	\forall \, B_1\neq B_2\in\Sym^{++}(3),
\end{equation}
setting throughout
\begin{equation}
	\widehat\sigma(\log V)
	=
	\sigma(V)
	=
	\sigma(B)
	= \widehat \sigma(\log B) \qquad 
	\textrm{for $V = \sqrt{B}$ \quad by abuse of notation}.
\end{equation}
One goal of this paper is to show that the same characterization can be obtained when using the corotational Green-Naghdi rate as well as the corotational logarithmic rate. Hence, TSTS-M$^+$ is one way to constitutively characterize the notion that \enquote{stress increases with strain}. \\
\\
Because of its key role, we will refer to the requirement
	\begin{align}
	\langle \frac{\DD^{\circ}}{\DD t}[\sigma],D\rangle>0 \quad \forall \, D \in \Sym(3) \! \setminus \! \{0\} \qquad \textnormal{as the {\bf corotational stability postulate (CSP)},}
	\end{align}
where $\frac{\DD^{\circ}}{\DD t}$ denotes an arbitrary corotational derivative of the Cauchy stress $\sigma$, e.g.~the Zaremba-Jaumann derivative $\frac{\DD^{\ZJ}}{\DD t}$. \\
\\
Furthermore, it is important to note a complete \textbf{paradigm shift}: instead of concentrating on the hypo-elastic formulation \eqref{eq1} per se, we presently only consider the hypo-elastic rate-formulation to elucidate constitutive issues for a given \emph{Cauchy-elastic} law. Indeed, \\
	\begin{center}
	\fbox{
	\begin{minipage}[h!]{0.85\linewidth}
		\centering
		\textbf{our aim is to understand what kind of given isotropic constitutive Cauchy-elastic laws $B \mapsto \sigma(B)$ satisfy $\langle \frac{\DD^{\ZJ}}{\DD t}[\sigma],D \rangle = \langle \H^{\ZJ}(\sigma).D,D \rangle > 0 \quad \forall \, D \in \Sym(3) \! \setminus \! \{0\}$.}
	\end{minipage}}
	\end{center}
\bigskip
In order to properly answer our guiding question, also in the context of more general corotational rates, we, nevertheless, need to dive deep into the hypo-elasticity framework \eqref{eq1}. In this way we shift attention from $\frac{\DD^{\ZJ}}{\DD t}[\sigma]$ towards $\H^{\ZJ}(\sigma)$. Therein, following the paradigm shift, we are only considering those fourth-order stiffness tensors $\H^*(\sigma)$ that are {\bf induced} by a given invertible Cauchy-elastic law, thus circumventing any integrability issue \cite{ericksen1958hypo} and already answering one of the problems alluded to above. Moreover, we formally motivate \textbf{CSP} for the Cauchy stress $\sigma$ from stability requirements in isotropic linear elasticity. For the Zaremba-Jaumann and Green-Naghdi rate, we provide a semi-explicit representation for the induced tangent stiffness tensors $\H^{\rm ZJ}(\sigma)$ and $\H^{\rm GN}(\sigma)$, respectively. More precisely, we obtain in Section \ref{sechypoelastic} the formulas (for an invertible law $B \mapsto \sigma(B) = \widehat \sigma(\log B)$)
	\begin{align}\label{eq6}
	\H^{\rm ZJ}(\sigma).D = \DD_B\sigma(B).[B \, D + D \, B] \qquad \textnormal{and} \qquad \H^{\rm GN}(\sigma).D =\DD_B\sigma(B).[2 \, V D \, V]
	\end{align}
together with (see Proposition \ref{propinvert1})
	\begin{align}
	\label{eq1.14a}
	\left.\begin{array}{r}
	\det\H^{\rm ZJ}(\sigma)\neq 0\\
	\det \H^{\rm GN}(\sigma)\neq 0
	\end{array}\right\} \quad \iff \quad \det \DD _B\sigma(B) \neq 0 \quad \iff \quad \det \DD_{\log B} \widehat \sigma(\log B) \neq 0,
	\end{align}
where the determinant of a symmetric tensor $\C \colon \Sym(3) \to \Sym(3)$ is defined in the Appendix \ref{appendixnotation}. With the representation \eqref{eq6}, which is true for every invertible law $B\mapsto \sigma(B)$ (this invertibility\footnote
{
Invertibility of the Cauchy stress $\sigma$ is independent of the used strain measure. In \cite{richter1948isotrope} Richter writes: \enquote{\textsl{In Verallgemeinerung des Hookeschen Gesetzes nennt man ein Material rein elastisch, wenn die [Cauchy] Spannungen  in umkehrbar eindeutiger Weise von den Dehnungen abhängen},} which translates to \enquote{in generalization of Hooke’s law, a material is called purely elastic, if the [Cauchy] stresses depend in a uniquely reversible way on the stretches.}
}
was already required by Richter \cite{richtertranslation, richter1948isotrope, richter1949hauptaufsatze, Richter50, Richter52} for idealized perfect isotropic nonlinear elasticity), we can rewrite the characterization of \textbf{CSP} for the Zaremba-Jaumann rate as
	\begin{align}
	\langle \frac{\DD^{\rm ZJ}}{\DD t}[\sigma],D\rangle&=\langle \H^{\ZJ}(\sigma). D,D \rangle = \langle \DD_B\sigma(B).\, [BD+DB],D\rangle=\langle \DD_{\log B}\widehat{\sigma}(\log B).\DD_B \log B. [D \, B + B \, D], D \rangle >0 \notag \\
	\label{eq8}
	&\iff \qquad \langle \DD_{\log B} \widehat{\sigma}(\log B).D,D\rangle=\langle \sym \DD_{\log B}\widehat{\sigma}(\log B).D,D\rangle>0.
	\end{align}
The latter equivalence can be viewed as a pure statement of matrix analysis for isotropic tensor functions and we provide a suitable context for this observation in Sections \ref{sec5}--\ref{sec6}. From the matrix analysis viewpoint it furthermore transpires that the equivalence \eqref{eq8} also holds true for the Green-Naghdi and the logarithmic rate and thus possibly for a large class of reasonable objective corotational rates. For the Green-Naghdi rate and an arbitrary corotational rate we formulate corresponding conjectures for any isotropic tensor function $\sigma$. If the latter conjecture is true, this would confer to the equivalence CSP $\iff$ TSTS-M$^{++}$ a far reaching universality. In fact, the result for the Green-Naghdi rate will be shown in an upcoming contribution, using entirely different methods than those described here. The result for the logarithmic rate will be given in Section \ref{corotationalstablog} following still other lines of thought owing to the exceptional structure of the logarithmic rate \cite{xiao98_1}. Thus, the equivalence of CSP with TSTS-M$^{++}$ is already not confined to the Zaremba-Jaumann rate. The problem with more general corotational rates resides in not immediately having a useful representation like the ones presented in \eqref{eq6}.

In Section \ref{appendchainrule} we recall some properties of corotational rates in general and prove a new universal chain rule for corotational rates, which allows to extend our findings to the logarithmic rate. 

With this chain rule at hand, we are able to find a general representation of $\H^{\circ}(\sigma)$ in the spirit of \eqref{eq6} for a large class of reasonable corotational rates defined by their spins and with the help of \cite{neffkoro2024} we ascertain \eqref{eq1.14a} for this class of spins. We propose a further subclass of these spins defining the \enquote{positive} corotational rates which merits future investigation based on geometrical arguments in the hope of extending our characterization of CSP.

In the Appendix, we introduce the notation and gather necessary requisites from linear algebra and matrix analysis, among them the Daleckii-Krein formula for derivatives of primary matrix functions. Moreover, we give examples that monotonicity of $\widehat{\sigma}$ in $\log B$ is different from monotonicity of $\sigma$ in $B$. We also provide an example of a hyperelastic formulation that globally satisfies CSP (and therefore TSTS-M$^{++}$). On the other hand, we show that a polyconvex slightly compressible Neo-Hooke model gives an invertible Cauchy stress relation for which $\H^{\ZJ}(\sigma)$ is not positive definite, such that CSP is not satisfied. In addition, we recall the implications of CSP for purely volumetric energy functions and the known problem of the Zaremba-Jaumann rate in zero grade hypo-elasticity. 

It is our belief and motivation that the corotational stability postulate CSP is equivalent to TSTS-M$^{++}$ for a large class of corotational rates and that it can be used as a cornerstone for a novel local existence result in finite strain Cauchy-elasticity with pre-stress, cf.~Blesgen et al.~\cite{blesgen2024}. As a byproduct of our development we appreciate again the pivotal role that the logarithmic strain tensor $\log V$ should play in isotropic nonlinear elasticity \cite{Becker1893,  Hencky1928, martin2018history, NeffGhibaLankeit, NeffGhibaPoly, nefftranslationbecker}.

\section{Introduction to three-dimensional hypo-elasticity}
A hypo-elastic material, in the sense of Truesdell \cite{truesdellremarks} and Noll \cite{Noll55}, obeys a constitutive law of the form\footnote
{
Rate-formulations in terms of the Kirchhoff stress $\tau = J \, \sigma$ will not be considered here. See e.g.~Korobeynikov \cite{korobeynikov2020}, Bellini and Federico \cite{bellini2015}, and Federico et al.~\cite{federico2024} for results in this direction. 
}
	\begin{align}
	\label{eqthedoublestr}
	\frac{\DD^{\sharp}}{\DD t}[\sigma] = \H^{*}(\sigma) . D \quad \iff \quad \underbrace{D}_{\textnormal{the stretching}} = [\H^{*}(\sigma)]^{-1} . \underbrace{\frac{\DD^{\sharp}}{\DD t}[\sigma]}_{\textnormal{the stressing}} = \mathbb{S}^{*}(\sigma) . \frac{\DD^{\sharp}}{\DD t}[\sigma],
	\end{align}
where
	\begin{itemize}
	\item $\frac{\DD^{\sharp}}{\DD t}[\sigma]$ is an appropriate \textbf{objective} rate of the Cauchy stress tensor $\sigma$,
	\item $\H^{*}(\sigma)$ is a constitutive \textbf{fourth-order} tangent stiffness \textbf{tensor},
	\item $\mathbb{S}^{*}(\sigma) = [\H^{*}(\sigma)]^{-1}$ is a constitutive \textbf{fourth-order} tangent compliance \textbf{tensor} and
	\item $D = \sym \, \DD v$ is the \textbf{Eulerian strain rate tensor}, measuring the spatial rate of deformation \newline or \enquote{stretching}, where $v$ describes the spatial velocity at a point $\xi$ in the current configuration.
	\end{itemize}
In this formulation it can be neatly seen that the elastic stretching $D$ depends solely on the current stress level $\sigma$ together with the rate of stress $\frac{\DD^{\sharp}}{\DD t}[\sigma]$ (\enquote{the stressing} in the parlance of Romano\footnote
{
The work of Romano et al.~\cite{romano2011, romano2017, romano2014} is particularly intriguing. They use a novel differential-geometric framework for setting up rate-formulations of nonlinear elasticity using suitable Lie-derivatives $\mathcal{L}_{v_\varphi}$. Their final result is, however, substantially departing from classical concepts like hyperelasticity or Cauchy-elasticity, which both need a reference configuration. In our rate-formulation, to the contrary, we remain in the classical context. While Romano et al.~use a different rate formulation, they arrive conceptually at constitutive equations of the type
	\begin{align*}
	\mathcal{L}_{v_\varphi}(e) = \mathbb{S}(\sigma). \mathcal{L}_{v_\varphi}(\sigma), \qquad \textnormal{where $e$ is some spatial strain measure,}
	\end{align*}
and assume that the tangent compliance tensor $\mathbb{S}$ is invertible. In \cite[eq.~8]{romano2017}, it is assumed further that $\mathbb{S}$ is positive definite as well as major symmetric, and even that $\mathbb{S}(\sigma) = \DD_{\sigma}^2 \Xi(\sigma)$ for a convex complementary energy potential $\Xi\colon \Sym(3) \to \R$.
}
et al.~\cite{romano2011}) as seen by the objective derivative. \\
\\
Throughout this work, without loss of generality, we will start directly with a Cauchy-elastic stress response instead of a hyperelastic energy potential\footnote
{
Truesdell \cite[p.~88]{truesdell1955} writes: \enquote{[...]~if the strain energy in the elastic case has an especially simple form, no particularly simple form of the resulting hypo-elastic coefficients [of $\H^{\sharp}(\sigma)$] is to be expected.}
}
(cf.~Ogden \cite[sec.~4.2]{Ogden83}), since this gives us a better take on how to arrive at the corresponding hypo-elastic formulation and we can a priori determine and evaluate constitutive requirements on the Cauchy stress without being concerned by explicit calculations about integrability of the Cauchy stress-strain law towards hyperelasticity. \\
\\
As for objective time derivatives $\frac{\DD^{\sharp}}{\DD t}[\sigma]$ of the Cauchy stress tensor $\sigma$, there are infinitely many different possible choices, see e.g.~\cite{kolev2024objective, neffkoro2024}. However, some are more reasonable than others. Paraphrasing\footnote
{
Statements are taken from a discussion on imechanica.org (\url{https://imechanica.org/node/1646}), regarding the \enquote{correct} choice of a time derivative.
}
Andrew Norris (cf. \cite{Norris2008} for an example of his work), the most reasonable time derivatives are those that are \textbf{corotational} and \textbf{objective}, since \textsl{corotational} \enquote{just means that the rate is taken with respect to a frame that is rotating relative to where you are sitting}, while \textsl{objective} \enquote{means that this extra rate of rotation, or spin, should be defined only by the underlying spins in the problem and the velocity gradient $L$} (see e.g.~also the related works by Xiao et al. \cite{xiao97, xiao98_2}, Aubram \cite{Aubram2017}, Federico \cite{bellini2015, palizi2020consistent}, Fiala \cite{fiala2009, fiala2016, fiala2020objective}, Govindjee \cite{govindjee1997}, Korobeynikov et al.~\cite{korobeynikov2018, korobeynikov2023, korobeynikov2023book, korobeynikov2024}, Pinsky et al.~\cite{pinsky1983} and Zohdi \cite{zohdi2006}. From now on we denote by $\frac{\DD^{\circ}}{\DD t}$ a general corotational derivative. \\
\\
For this reason, here we restrict our attention primarily to two specific objective and corotational derivatives, namely the \textbf{Zaremba-Jaumann} derivative, using the spin given by the vorticity\footnote
{
It can be shown that $W = \dot{R}R^T + R \, \sk(\dot{U}U^{-1}) \, R^T$ cf. Gurtin et al. \cite{Gurtin2010}, Nasser et al. \cite{mehrabadi1987} and the books by Ogden \cite[p.126]{Ogden83} and Truesdell \cite[p.21]{truesdell1966}.
}
tensor $W = \sk \, L, \; L = \dot F \, F^{-1}$,
	\begin{equation}
	\label{ZJrate01}
	\boxed{
		\begin{aligned}
		\frac{\DD^{\ZJ}}{\DD t}[\sigma] \colonequals \frac{\DD}{\DD t}[\sigma] + \sigma \, W - W \, \sigma &= Q^W \, \frac{\DD}{\DD t}[(Q^W)^T \, \sigma \, Q^W] \, (Q^W)^T  \\
		&\textnormal{for} \ Q^W(t) \in \OO(3)\quad \textnormal{such that} \;  W =\dot{Q}^W \, (Q^W)^T\in \mathfrak{so}(3)\,,
		\end{aligned}
	}
	\end{equation}
and the \textbf{Green-Naghdi derivative} or \textbf{polar rate} (cf. \cite{bellini2015, naghdi1961}), using the polar spin $\Omega = \dot R \, R^T$ related to the polar decomposition $F = R \, U$:
	\begin{align}
	\label{GNrate01}
	\boxed{\frac{\DD^{\GN}}{\DD t}[\sigma] \colonequals \frac{\DD}{\DD t}[\sigma] + \sigma \, \Omega - \Omega \, \sigma = R \, \frac{\DD}{\DD t}[R^T \, \sigma \, R] \, R^T \qquad \textnormal{for}\  R(t) \in \OO(3)\quad\qquad \textnormal{with} \ \Omega\colonequals \dot{R} \, R^T \in \mathfrak{so}(3).}
	\end{align}
Here, $\frac{\DD}{\DD t}[\sigma] = \dot \sigma$ denotes the material or substantial time-derivative. For more information on the notation, we refer to the Appendix \ref{appendixnotation}. \\
\\
The crucial property that characterizes \textbf{any objective derivative}\cite{kolev2024objective} is the so called \textbf{frame-indifference}, which means that the time derivatives do not depend on a Euclidean transformation of the observer, in the sense that (cf.~Lemma \ref{frameindZJ})
	\begin{align}
	\label{eq0013.122}
	\frac{\DD^{\sharp}}{\DD t}[Q(t) \, \sigma(t) \, Q^T(t)] = Q(t) \, \frac{\DD^{\sharp}}{\DD t} [\sigma] \, Q^T(t), \qquad \forall \, Q(t) \in \OO(3).
	\end{align}
Note that the material derivative $\frac{\DD}{\DD t}[\sigma]$ does not satisfy this transformation law. \\
\\
An important consequence of considering corotational derivatives is that any isotropic scalar function $I(\sigma(t))$ of the stress $\sigma(t)$ is stationary if the corotational rate $\frac{\DD^{\circ}}{\DD t}[\sigma]$ is zero, where we call $I(\sigma(t))$ an isotropic scalar function if
	\begin{align}
	I(\sigma(t)) = I(Q^T(t) \, \sigma(t) \, Q(t)) \qquad \forall \, Q(t) \in \OO(3).
	\end{align}
This property is crucial when developing plasticity theories (cf. Prager \cite{prager1961}). In this sense, only corotational derivatives leave physical properties of the Cauchy stress tensor invariant and are therefore the only ones considered here, answering partly the first issue raised in the introduction.
\begin{lem}
Let $\frac{\DD^{\circ}}{\DD t}$ be any corotational derivative of $\sigma$, i.e.\ of the form
	\begin{align}
	\label{eqlemma3.1a}
	\hspace*{-8pt} \frac{\DD^{\circ}}{\DD t}[\sigma] = Q^{\circ} \, \frac{\DD}{\DD t}[(Q^{\circ})^T \, \sigma \, Q^{\circ}] \, (Q^{\circ})^T \quad \iff \quad (Q^{\circ})^T \, \frac{\DD^{\circ}}{\DD t}[\sigma] \, Q^{\circ} = \frac{\DD}{\DD t}[(Q^{\circ})^T \, \sigma \, Q^{\circ}] \quad \textnormal{with} \quad \dot{Q}^{\circ} \, (Q^{\circ})^T \equalscolon \Omega^{\circ}  ,
	\end{align}
with a corresponding orthogonal frame $t\mapsto Q^{\circ}(t) \in \OO(3)$. If $I(\sigma(t))$ is any isotropic scalar invariant of $\sigma$, then
	\begin{align}
	\label{eqlemma3.1}
	\frac{\DD^{\circ}}{\DD t}[\sigma] = 0 \qquad \implies \qquad \frac{\DD}{\DD t}I(\sigma(t)) = 0.
	\end{align}
\end{lem}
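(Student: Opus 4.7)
The plan is to exploit the isotropy of $I$ to transfer the differentiation from the original stress $\sigma(t)$ to its representation in the rotating frame $t\mapsto Q^{\circ}(t)$, where the corotational rate reduces to the ordinary material time derivative. Concretely, I would introduce the pulled-back stress
\begin{align*}
\widetilde{\sigma}(t) \colonequals (Q^{\circ}(t))^T \, \sigma(t) \, Q^{\circ}(t)\,,
\end{align*}
so that the second form of \eqref{eqlemma3.1a} reads $(Q^{\circ})^T \, \frac{\DD^{\circ}}{\DD t}[\sigma] \, Q^{\circ} = \frac{\DD}{\DD t}[\widetilde{\sigma}]$.

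Under the hypothesis $\frac{\DD^{\circ}}{\DD t}[\sigma] = 0$, conjugating by $Q^{\circ}$ on both sides yields $\frac{\DD}{\DD t}[\widetilde{\sigma}(t)] = 0$. Since $\widetilde{\sigma}(\cdot)$ is a tensor-valued function of $t$ alone (no spatial dependence enters here), this means $\widetilde{\sigma}(t) \equiv \widetilde{\sigma}(t_0)$ is constant in time.

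Now I would invoke the isotropy of $I$. By definition, $I(A) = I(Q^T A \, Q)$ for every $Q \in \OO(3)$ and every symmetric $A$. Specializing to $Q = Q^{\circ}(t)$ and $A = \sigma(t)$ gives
\begin{align*}
I(\sigma(t)) = I\!\left((Q^{\circ}(t))^T \, \sigma(t) \, Q^{\circ}(t)\right) = I(\widetilde{\sigma}(t)) = I(\widetilde{\sigma}(t_0))\,,
\end{align*}
which is independent of $t$. Differentiating in $t$ thus yields $\frac{\DD}{\DD t} I(\sigma(t)) = 0$, establishing \eqref{eqlemma3.1}.

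There is essentially no obstacle here beyond correctly identifying the pullback $\widetilde{\sigma}$ and noting that the defining identity \eqref{eqlemma3.1a} of a corotational derivative is exactly the statement that, in the rotating frame $Q^{\circ}$, the objective rate becomes the ordinary derivative; isotropy of $I$ then removes the frame dependence of $I(\sigma(t))$ altogether. The argument does not use any specific structure of the spin $\Omega^{\circ} = \dot{Q}^{\circ}(Q^{\circ})^T$, which is why the conclusion holds for every corotational derivative — including the Zaremba–Jaumann \eqref{ZJrate01} and Green–Naghdi \eqref{GNrate01} rates — as needed for the later sections.
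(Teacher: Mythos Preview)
Your proof is correct and follows essentially the same route as the paper: both use isotropy to rewrite $I(\sigma(t)) = I((Q^{\circ})^T \sigma \, Q^{\circ})$ and then exploit that the corotational rate vanishing forces $\frac{\DD}{\DD t}[(Q^{\circ})^T \sigma \, Q^{\circ}] = 0$. The only cosmetic difference is that the paper applies the chain rule directly to $\frac{\DD}{\DD t}I(\widetilde{\sigma})$ rather than first integrating to conclude $\widetilde{\sigma}$ is constant, but the argument is the same.
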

\begin{proof}
In order to see \eqref{eqlemma3.1}, from the representation of $\frac{\DD^{\circ}}{\DD t}[\sigma]$ in \eqref{eqlemma3.1a} we have
	\begin{align}
	\frac{\DD^{\circ}}{\DD t}[\sigma] = 0 \qquad \iff \qquad \frac{\DD}{\DD t}[(Q^{\circ})^T \, \sigma \, Q^{\circ}] = 0.
	\end{align}
Using the standard chain rule and the isotropy of $I(\sigma(t))$ we conclude
	\begin{align}
	\frac{\DD}{\DD t}I(\sigma(t)) &= \frac{\DD}{\DD t}I((Q^{\circ})^T \, \sigma \, Q^{\circ}) = \langle \DD I((Q^{\circ})^T \, \sigma \, Q^{\circ}) ,\left[\frac{\DD}{\DD t}[(Q^{\circ})^T \, \sigma \, Q^{\circ}]\right] \rangle = 0. \qedhere
	\end{align}
\end{proof}
\begin{rem}[Corotational derivatives as special Lie-derivatives]
The corotational rate may be interpreted as the Lie derivative with respect to spatial rotation defined by $Q^{\circ}(t)$. Thus, let $\phi$ define the (rotation) mapping $\phi(x) = Q^{\circ}x$, then the corotational rate is the \enquote{Lie-type} derivative $\frac{\DD^{\circ}}{\DD t}[\sigma] = \phi_*[\frac{\DD}{\DD t} \phi^*(\sigma)] = Q^{\circ} \, \frac{\DD}{\DD t}[(Q^{\circ})^T \, \sigma \, Q^{\circ}] \, (Q^{\circ})^T$ (cf.~\cite[p.8]{Norris2008}), where $\phi_*$ is the push-forward and $\phi^*$ the pull-back (see also \cite{holzapfel2000}).
\end{rem}
Having discussed which objective derivatives appear to be a naturally sound choice\footnote
{
In Section \ref{appendchainrule} we will show a universal chain rule for corotational derivatives. This possibility is absent for general objective derivatives. The additional chain rule structure gives further support for using only corotational derivatives.
}
-- namely the corotational derivatives $\frac{\DD^{\circ}}{\DD t}$ -- the next important question regards a sound choice for the tangent stiffness tensor $\H^*(\sigma)$ appearing in \eqref{eqthedoublestr}. Considering a rigid rotation $F(t) \to Q(t) \, F(t)$, we have the transformations
	\begin{align}
	D(t) = \sym L(t) \to Q(t) \, D \, Q^T(t), \qquad \sigma \to Q(t) \, \sigma \, Q^T(t), \qquad \frac{\DD^{\circ}}{\DD t}[Q(t) \, \sigma \, Q^T(t)] = Q(t) \, \frac{\DD^{\circ}}{\DD t}[\sigma] \, Q^T(t),
	\end{align}
such that the hypo-elastic model \eqref{eqthedoublestr} is frame-indifferent if and only if
	\begin{align}
	\H^{*}(Q \, \sigma \, Q^T) . (Q \, D \, Q^T) = Q \, (\H^{*}(\sigma) . D) \, Q^T
	\end{align}
is satisfied for any orthogonal tensor $Q \in \OO(3)$. Thus $\H^{*}(\sigma) . D$ must be an isotropic tensor function of $\sigma$ and $D$. It is a well-known fact that any hyperelastic or Cauchy-elastic model in which $\sigma\colon \Sym^{++}(3) \to \Sym(3), \break B \mapsto \sigma(B)$ is bijective can be written in the format\footnote
{
The constitutive law of every isotropic hyperelastic or isotropic Cauchy-elastic model can always be written in rate-format as
	\begin{align}
	\frac{\DD^{\ZJ}}{\DD t}[\sigma(B)] = \H^{\ZJ}(B).D
	\end{align}
with an isotropic fourth order tensor $\H^{\ZJ}(B)$. In order to arrive rewrite this expression as $\H^{\ZJ}(\sigma)$, one needs the invertibility of $B \mapsto \sigma(B)$. The generality claimed in \eqref{eq2.12} will be shown in Section \ref{app13}.
}
	\begin{align}
	\label{eq2.12}
	\frac{\DD^{\circ}}{\DD t}[\sigma] = \H^*(\sigma).D
	\end{align}
(cf. Truesdell \cite{truesdellremarks} and Noll \cite{Noll55}), albeit with an expression for $\H^{*}(\sigma)$ that is not easily manageable. The general format of such a mapping is given as (see already Noll \cite{Noll55})
	\begin{equation}
	\label{eqrepresent001}
	\begin{alignedat}{2}
	\H^{*}(\sigma) . D = \; &b_1 \, D + b_2(\sigma \, D + D \, \sigma) + b_3(\sigma^2 \, D + D \, \sigma^2) + [b_4 \, \tr D + b_5 \, \tr(D \, \sigma) + b_6 \, \tr(D \, \sigma^2)] \, \id \\
	&+[b_7 \, \tr D + b_8 \, \tr(D \, \sigma) + b_9 \, \tr(D \, \sigma^2)] \, \sigma + [b_{10} \, \tr D + b_{11} \, \tr(D \, \sigma) + b_{12} \, \tr (D \, \sigma^2)] \, \sigma^2,
	\end{alignedat}
	\end{equation}
where the coefficients $b_i$ are scalar valued functions of the principal invariants of $\sigma$ (in \cite{bernstein2007} only a subclass is considered). Note, however, that in this generality, the latter formulation is rather useless. Therefore, in the absence of other evidence zero-grade hypo-elasticity\footnote
{
Simo and Pister \cite{simopister} write concerning the hypo-elastic formulation: \enquote{[L]ack of experimental evidence supporting a particular form for [the stiffness tensor $\H^{*}(\sigma)$] often leads to the choice of the constant isotropic elasticity tensor $[\H^{*}(\sigma) = \C^{\iso}]$ of the linearized theory.}
} \cite{kim2016}
	\begin{align}
	\H^{*}(\sigma).D = b_1 D + b_4(\tr D) \, \id = 2\mu \, D + \lambda \, \tr(D) \, \id = \C^{\iso} . D, \qquad \mu > 0, \quad 2 \, \mu + 3 \, \lambda > 0,
	\end{align}
has often been considered and is still sometimes considered as the definition of hypo-elasticity \cite{LIN2003443}. For more information about the motivation behind considering zero-grade hypo-elasticity we refer to the Appendix \ref{appendixzerograde}. \\
\\
Even though zero-grade hypo-elasticity might appear to be an easy and sound choice\footnote
{
In fact, discarded by Truesdell \cite[p.405]{Truesdell65}.
}
-- as it provides a major and minor symmetric and positive definite constant tangent stiffness tensor $\H^* = \C^{\iso}$ and is motivated by linear elasticity -- it is easy to demonstrate by simple shear considerations, that zero-grade hypo-elasticity together with the Zaremba-Jaumann rate is a poor choice from a physical point of view (see the Appendix \ref{appendixsimpleshear}). \\
\\
Hence the question remains open which choice for the tangent stiffness tensor $\H^*$ is a good choice. The next more or less \enquote{obvious} attempt is to use the tangent stiffness tensor $\H^{\circ}$ that is \textbf{induced} by the chosen objective, corotational time derivative $\frac{\DD^{\circ}}{\DD t}$ for a \textbf{given invertible} and \textbf{isotropic} Cauchy-elastic stress-stretch law $B \mapsto \sigma(B)$, i.e.~to set
	\begin{align}
	\label{eqdefZJ}
	\boxed{\H^{\ZJ}(\sigma).D \colonequals \frac{\DD^{\ZJ}}{\DD t}[\sigma],\qquad \H^{\GN}(\sigma).D \colonequals \frac{\DD^{\GN}}{\DD t}[\sigma] \qquad \text{and} \qquad \H^{\circ}(\sigma).D \colonequals \frac{\DD^{\circ}}{\DD t}[\sigma].}
	\end{align}
Note that any definition of a tangent stiffness tensor $\H^* = \H^{\circ}$ is usually expressed in $B = F \, F^T$, i.e.\ $\H^{\circ} = \H^{\circ}(B)$, hence we generally require the invertibility of the stress-stretch law $B \mapsto \sigma(B)$ with inverse mapping $\sigma \mapsto \mathcal{F}^{-1}(\sigma)$ so that we can write $\H^{\circ}(\sigma)\colonequals \H^{\circ}(\mathcal{F}^{-1}(\sigma))$. Furthermore, $\H^{\circ}(\sigma)$ is now dependent only on stresses but not on strains or their rates and for each different objective rate it will be a different stiffness tensor. In general, this induced tangent stiffness tensor has minor symmetry $\H^{\circ}\colon \Sym(3) \to \Sym(3)$ but \textbf{no} major symmetry: $\H^{\circ} \notin \Sym_4(6)$, i.e.~$\langle \H^\circ.D_1, D_2 \rangle \neq \langle \H^{\circ}.D_2, D_1 \rangle$ and the corresponding induced tangent compliance tensor $\mathbb{S}^{\circ}(\sigma) = [\H^{\circ}(\sigma)]^{-1}$ shares the same properties as $\H^{\circ}$. \\
\\
Let us consider two examples of simple shear for this new choice of $\H^*(\sigma) = \H^{\circ}(\sigma)$.
\begin{example}
The isotropic, physically linear Cauchy-elastic law $\sigma\colon \Sym^{++}(3) \to \Sym(3)$
	\begin{align}
	\label{eqlinearlaw1}
	\sigma(B) = \mu \, (B - \id), \qquad \nu = 0
	\end{align}
 is bijective viewed as a mapping $\sigma\colon \Sym^{++}(3) \to \textnormal{range}(\sigma)$ and strictly monotone in $B$. Here, $\nu = \frac{\lambda}{2 \, (\lambda + \mu)}$ is the Poisson's ratio. The Cauchy stress $\sigma(B)$ has the Zaremba-Jaumann derivative (see Section \ref{sechypoelastic} for a general formula)
	\begin{align}
	\frac{\DD^{\ZJ}}{\DD t}[\sigma] = \mu \, (D \, B + B \, D) = 2 \, \mu \, D + D \, \sigma + \sigma \, D \equalscolon \H^{\ZJ}(\sigma).D \, ,
	\end{align}
which, for this \enquote{simple} law, can be calculated directly from the definition of the Zaremba-Jaumann rate, i.e.
	\begin{align}
	\frac{\DD}{\DD t}[\sigma] &= \mu \, \frac{\DD}{\DD t}[B- \id] = \mu \, (L \, B + B \, L^T) = \mu \, (L \, (B-\id) + (B-\id) \, L^T + L + L^T) \notag \\
	&= (L \, \sigma + \sigma \, L^T) + 2\mu \, D = (D+W) \, \sigma + \sigma \, (D+W)^T + 2 \, \mu \, D = D \, \sigma + \sigma \, D + 2 \, \mu \, D + W\, \sigma - \sigma \, W \notag \\
	\label{eqsigma1}
	\iff \frac{\DD^{\ZJ}}{\DD t}[\sigma] &= \frac{\DD}{\DD t}[\sigma] + \sigma \, W - W \, \sigma = 2 \, \mu \, D + D \, \sigma + \sigma \, D = \H^{\ZJ}(\sigma) . D \, .
	\end{align}
Regarding the representation formula \eqref{eqrepresent001}, here we have $b_1 = 2 \, \mu, \; b_2 = 1$ and for the remaining $b_i, i=3,...,12$ it is $b_i = 0$. Thus, we may speak of a hypo-elastic formulation of grade 1. By chance, the induced tangent stiffness tensor has minor \textbf{and} major symmetry but the constitutive law \eqref{eqlinearlaw1} is not hyperelastic. For the planar case with
	\begin{align}
	F(t) =
	\begin{pmatrix}
	1 & \gamma \, t \\
	0 & 1
	\end{pmatrix},
	\qquad
	B = F \, F^T =
	\begin{pmatrix}
	1 & \gamma \, t \\
	0 & 1
	\end{pmatrix}
	\begin{pmatrix}
	1 & 0 \\
	\gamma \, t & 1
	\end{pmatrix}
	=
	\begin{pmatrix}
	1 + \gamma^2 \, t^2 & \gamma \, t \\
	\gamma \, t & 1
	\end{pmatrix}
	\end{align}
as well as
\begin{align}
	L(t) = \dot{F} \, F^{-1} =
	\begin{pmatrix}
	0 & \gamma \\
	0 & 0
	\end{pmatrix},
	\qquad
	D(t) = \sym L(t) = \frac12
	\begin{pmatrix}
	0 & \gamma \\
	\gamma & 0
	\end{pmatrix}
	\end{align}
we see that the rate-formulation
	\begin{align}
	\frac{\DD^{\circ}}{\DD t}[\sigma] = \frac{\DD^{\ZJ}}{\DD t}[\sigma] = \H^{\ZJ}(\sigma).D = \H^*(\sigma).D
	\end{align}
is trivially fulfilled under stress-free initial conditions, and we conclude from
	\begin{align}
	\sigma = \mu \, (B - \id) = \mu \, 
	\begin{pmatrix}
	\gamma^2 \, t^2 & \gamma \, t \\
	\gamma \, t & 0
	\end{pmatrix},
	\end{align}
that the shear stress $\sigma_{12}(B(t)) = \mu \, \gamma \, t$ is linear increasing in the amount of shear and therefore physically reasonable when using the Zaremba-Jaumann rate $\H^{\ZJ}(\sigma)$. Of course, $\H^{\ZJ}(\sigma)$ is now not constant in the Cauchy stress $\sigma$.
\end{example}
\begin{example}
Consider the physically non-linear Cauchy-elastic law $\sigma\colon \Sym^{++}(3) \to \Sym(3)$,
	\begin{align}
	\label{eqintro003}
	\sigma(B)= \frac{\mu}{2}(B-B^{-1})+\frac{\lambda}{2} (\log \det B) \, \id,
	\end{align}
with the Zaremba-Jaumann derivative \cite{zaremba1903forme,jaumann1911geschlossenes} (see Section \ref{sechypoelastic} for a general formula)
	\begin{align}
	\label{eqintro004}
	\frac{\DD^{\ZJ}}{\DD t}[\sigma] = \frac{\mu}{2} \, \{B \, D + D \, B + B^{-1} \, D + D \, B^{-1}\} + \lambda \, \tr(D) \, \id \equalscolon\H^{\ZJ}(B).D \, ,
	\end{align}
which again happens to be minor and major symmetric but is also not hyperelastic. Since the constitutive law \eqref{eqintro003} is now invertible \cite{ghiba2024}, we can write $\H^{\ZJ}(\sigma) \colonequals \H^{\ZJ}(\mathcal{F}^{-1}(\sigma))$. Similarly to the previous example we can determine the shear stress $\sigma_{12}(B(t))$. With $F(t), L(t)$ and $D(t)$ as in the previous example,
	\begin{align}
	B = F \, F^T =
	\begin{pmatrix}
	1 & \gamma \, t \\
	0 & 1
	\end{pmatrix}
	\begin{pmatrix}
	1 & 0 \\
	\gamma \, t & 1
	\end{pmatrix}
	=
	\begin{pmatrix}
	1 + \gamma^2 \, t^2 & \gamma \, t \\
	\gamma \, t & 1
	\end{pmatrix},
	\qquad
	B^{-1} =
	\begin{pmatrix}
	1 & -\gamma \, t \\
	-\gamma \, t & 1 + \gamma^2 \, t^2
	\end{pmatrix}
	\end{align}
such that the rate equation \eqref{eqintro004} is satisfied. Since $\tr \log B = \log \det B = 0$, we can calculate the shear stress $\sigma_{12}$ directly from \eqref{eqintro003}:
	\begin{align}
	\sigma_{12}(B(t)) = \frac{\mu}{2} \, (\gamma \, t + \gamma \, t) = \mu \, \gamma \, t \, ,
	\end{align}
which is, once again, linear and increasing in the amount of shear $\gamma \, t$ and thus physically reasonable. $\hfill{\square}$
\end{example}
Considering the previous two examples, the tangent stiffness tensor induced by a suitable Cauchy-elastic constitutive law appears to be a physically reasonable choice. Since its derivation can be traced back to the derivation of the corresponding corotational, objective time derivative $\frac{\DD^{\circ}}{\DD t}[\sigma]$, we will prove next a concise formula for the calculation of this induced tangent stiffness tensor. Proceeding in this fashion we obviate any integrability issue \cite{ericksen1958hypo, bernstein1958} that arises if $\H^*(\sigma)$ is taken in arbitrary form according to \eqref{eqthedoublestr}.
%
%
\subsection{Formulas for the corotational Zaremba-Jaumann and Green-Naghdi rates} \label{sechypoelastic}
The following calculations are motivated by the elegant early exposition in \cite{fei1994}. Recall that the constitutive equation of nonlinear isotropic Cauchy-elasticity can be expressed as
	\begin{align}
	\label{eqratetype1}
	\sigma\colon \Sym^{++}(3) \subset \Sym(3) \to \Sym(3), \quad B \mapsto \sigma(B), \quad (B = \mathcal{F}^{-1}(\sigma) \in \Sym^{++}(3) \; \textnormal{if $\sigma(B)$ is invertible})
	\end{align}
with $B = F\, F^T$ and an isotropic tensor function $\sigma(B)$, i.e.~$\sigma(B)$ satisfies\footnote
{
As shown already by Richter in 1948 \cite{richter1948isotrope, richter1949hauptaufsatze, Richter50, Richter52} and later by Rivlin and Ericksen \cite{rivlin1955} every such isotropic tensor function $B \mapsto \sigma(B)$ can e.g.~be expressed as
	\begin{align}
	\sigma(B) = \beta_0 \, \id + \beta_1 \, B + \beta_{-1} \, B^{-1}\,,
	\end{align}
where the $\beta_i$ are scalar functions of the principal invariants of $B$ \cite{mihai2017}. However, this representation of the elastic law will not be useful for our purposes.
}
	\begin{align}
	\label{eqratetype02}
	\sigma\colon \Sym^{++}(3) \subset \Sym(3) \to \Sym(3), \qquad Q \, \sigma(B) \, Q^T = \sigma(Q \, B \, Q^T) \qquad \forall \, Q \in \OO(3).
	\end{align}
Recalling the identities $L = \dot{F} \, F^{-1}$ and $L = D + W$ with $D = \sym L, \; W = \sk \, L$, we calculate the material time derivative (cf.~\cite[p.37]{Noll55}) 
	\begin{equation}
	\label{eqratetype2}
	\begin{alignedat}{2}
	\frac{\DD}{\DD t}[\sigma] &= \DD_B\sigma(B).\dot{B} = \DD_B\sigma(B).[\dot{F} \, F^T + F \, \dot{F}^T] = \DD_B\sigma(B).[L \, F \, F^T + F \, (L \, F)^T] \\
	&= \DD_B\sigma(B).[L \, B + B \, L^T] =\DD_B\sigma(B).[D \, B + B \, D] + \DD_B\sigma(B).[W \, B - B \, W] \, ,
	\end{alignedat}
	\end{equation}
where $B \mapsto \DD_B\sigma(B)$ is an isotropic fourth order tensor function satisfying
	\begin{align}
	\label{eqratetype01}
	Q \, [\DD_B\sigma(B).H] \, Q^T = \DD_B\sigma(Q \, B \, Q^T).[Q \, H \, Q^T].
	\end{align}
Taking time derivatives on both sides of \eqref{eqratetype02} leads to
	\begin{align}
	\frac{\DD}{\DD t}[Q \, \sigma(B) \, Q^T] = \dot{Q} \, \sigma(B) \, Q^T + Q \, (\DD_B\sigma(B).\dot{B}) \, Q^T + Q \, \sigma(B) \, \dot{Q}^T
	\end{align}
for the left hand side, as well as
	\begin{equation}
	\begin{alignedat}{2}
	\frac{\DD}{\DD t}[\sigma(Q \, B \, Q^T)] &= \DD_B\sigma(Q \, B \, Q^T).\frac{\DD}{\DD t}[Q \, B \, Q^T] = \DD_B\sigma(Q \, B \, Q^T).[\dot{Q} \, B \, Q^T + Q \, \dot{B} \, Q^T + Q \, B \, \dot{Q}^T] \\
	&= \DD_B\sigma(Q \, B \, Q^T).[\dot{Q} \, B \, Q^T + Q \, B \, \dot{Q}^T] + \DD_B\sigma(Q \, B \, Q^T).[Q \, \dot{B} \, Q^T].
	\end{alignedat}
	\end{equation}
Using \eqref{eqratetype01} with $H = \dot{B}$ shows that 
	\begin{align}
	Q \, [\DD_B\sigma(B).\dot{B}] \, Q^T = \DD_B\sigma(Q \, B \, Q^T).[Q \, \dot{B} \, Q^T],
	\end{align}
leading to the identity
	\begin{align}
	\label{eqratetype3}
	\dot{Q} \, \sigma(B) \, Q^T + Q \, \sigma(B) \, \dot{Q}^T = \DD_B\sigma(Q \, B \, Q^T).[\dot{Q} \, B \, Q^T + Q \, B \, \dot{Q}^T].
	\end{align}
A suitable choice of the arbitrary matrix $Q(t) \in \OO(3)$ in combination with \eqref{eqratetype2} now lead to a formula for the Zaremba-Jaumann and the Green-Naghdi time derivative, respectively.
\subsubsection{Formula for the corotational, objective Zaremba-Jaumann time derivative}
For the Zaremba-Jaumann derivative we choose $Q(0) = \id, \; \dot{Q}(0) = W$ in \eqref{eqratetype3} to obtain
	\begin{equation}
	\label{eqratetype4}
	\boxed{W \, \sigma(B) - \sigma(B) \, W = \DD_B\sigma(B).[W \, B - B \, W].}
	\end{equation}
Note that this formula was discovered by Noll in the year 1955 \cite[p. 37]{Noll55} in the same fashion. Inserting \eqref{eqratetype4} in \eqref{eqratetype2} yields
	\begin{align}
	\label{eqratetype5}
	\boxed{\frac{\DD^{\ZJ}}{\DD t}[\sigma] = \frac{\DD}{\DD t}[\sigma] - W \, \sigma + \sigma \, W = \DD_B\sigma(B).[D \, B + B \, D] = \H^{\ZJ}(B).D}
	\end{align}
for the Zaremba-Jaumann derivative. If $B \mapsto \sigma(B)$ is invertible, this can be rewritten as \cite[p.~37, eq.~(15.12)]{Noll55}
	\begin{align}
	\frac{\DD^{\ZJ}}{\DD t}[\sigma] = \DD_B\sigma(\mathcal{F}^{-1}(\sigma)).[D \, \mathcal{F}^{-1}(\sigma) + \mathcal{F}^{-1}(\sigma) \, D] \equalscolon \H^{\ZJ}(\sigma).D
	\end{align}
and determines the induced tangent stiffness tensor $\H^{\ZJ}(\sigma)$ since $B \mapsto \sigma(B)$, and thus its inverse $\sigma\mapsto\mathcal{F}^{-1}(\sigma)$, is assumed given.
\subsubsection{Formula for the corotational, objective Green-Naghdi time derivative}
For the derivation of the corresponding formula for the Green-Naghdi derivative\footnote
{
Related to this general rule, in Dienes \cite[p.~13]{dienes1987discussion} it is proven that $\frac{\DD^{\GN}}{\DD t}[B] = 2 \, V \, D \, V$.
}
(the polar rate) $\frac{\DD^{\GN}}{\DD t}$, we first recall the following identity (cf. \cite{dienes1979})
	\begin{lem}
	If $F = V \, R$ is the left polar decomposition of $F$ and $\Omega = \dot{R} \, R^T$, then
	\begin{align}
	\label{eqdienes}
	(D+W-\Omega)\, B + B \, (D-W+\Omega) = L \, B + B \, L^T - \Omega \, B + B \, \Omega = 2 \, V \, D \, V .
	\end{align}
	\end{lem}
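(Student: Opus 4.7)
The plan is to verify the two displayed equalities separately. The first equality
\[
(D+W-\Omega)\, B + B\,(D-W+\Omega) = L\,B + B\,L^T - \Omega\,B + B\,\Omega
\]
is immediate from the additive decomposition $L = D+W$ together with $L^T = D-W$, so the substantive content is the second equality $L\,B + B\,L^T - \Omega\,B + B\,\Omega = 2\,V D V$.

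The starting point will be to exploit the left polar decomposition $F = V R$ in order to express $L$ cleanly. Differentiating gives $\dot F = \dot V\,R + V\,\dot R$, and post-multiplying by $F^{-1} = R^T V^{-1}$ yields
\[
L \;=\; \dot V\,V^{-1} \;+\; V\,(\dot R\, R^T)\,V^{-1} \;=\; \dot V\,V^{-1} + V\,\Omega\,V^{-1}.
\]
Taking the transpose and using $V^T = V$ together with $\Omega^T = -\Omega$ gives $L^T = V^{-1}\dot V - V^{-1}\Omega\,V$. Finally, since $B = F F^T = V R R^T V = V^2$, every quantity appearing in the identity can be rewritten in terms of $V$, $\dot V$, and $\Omega$ alone.

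With these building blocks, I would compute $L\,B + B\,L^T$ directly. Using $B=V^2$ one finds
\[
L\,B \;=\; \dot V\,V + V\,\Omega\,V, \qquad B\,L^T \;=\; V\,\dot V - V\,\Omega\,V,
\]
so the off-diagonal contributions $\pm V\,\Omega\,V$ cancel and leave $L\,B + B\,L^T = \dot V\,V + V\,\dot V$. Hence
\[
L\,B + B\,L^T - \Omega\,B + B\,\Omega \;=\; \dot V\,V + V\,\dot V + V^2\,\Omega - \Omega\,V^2.
\]
To close, one computes $2\,V D V = V\,(L+L^T)\,V$ from the formulas for $L$ and $L^T$: the $\dot V$-contributions assemble to $V\,\dot V + \dot V\,V$, whereas the $\Omega$-contributions collapse (via $V\cdot V\Omega V^{-1}\cdot V$ and its transpose counterpart) to exactly $V^2\,\Omega - \Omega\,V^2$, matching the previous display term-by-term.

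The argument is essentially algebraic, the only minor technicality being the bookkeeping of the non-commuting factors $V$ and $\Omega$; the decisive observation is the cancellation of the $V\,\Omega\,V$ terms in $L\,B + B\,L^T$, which is what allows the remaining $\Omega$-contributions to be absorbed into $B\,\Omega - \Omega\,B$ and, on the other side, to reproduce $2\,V D V$. No deeper obstacle is anticipated.
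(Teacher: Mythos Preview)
Your proposal is correct and follows essentially the same approach as the paper: both use the left polar decomposition to express $L = \dot V V^{-1} + V\Omega V^{-1}$, then compute $LB + BL^T = \dot V V + V\dot V$ via the cancellation of $\pm V\Omega V$, and separately compute $2VDV$ from $V(L+L^T)V$ to obtain $\dot V V + V\dot V + B\Omega - \Omega B$. The only cosmetic difference is the order in which the two sides are assembled.
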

	\begin{proof}
	We begin by pointing out the additional identities
		\begin{align}
		V = V^T \quad \implies \quad \dot{V} = \dot{V}^T \qquad \textnormal{and} \qquad R \, R^T = \id \quad \implies \quad \Omega^T = - \Omega,
		\end{align}
	where the last implication holds since $\Omega + \Omega^T = \dot{R} \, R^T + R \, \dot{R}^T = \frac{\DD}{\DD t}[R \, R^T] = \frac{\DD}{\DD t}[\id] = 0$.
	Now we have
		\begin{align}
		\label{eqdienes00}
		L = \dot{F} \, F^{-1} = \frac{\DD}{\DD t}[V \, R] \, (V \, R)^{-1} = (\dot{V} \, R + V \, \dot{R})\, R^{-1} \, V^{-1} = \dot{V} \, V^{-1} + V \, \dot{R} \, R^{-1} \, V^{-1}
		\end{align}
	which we premultiply and postmultiply by $V$ to obtain (with $R^T = R^{-1}$ and $B = V^2$)
		\begin{align}
		\label{eqdienes01}
		V \, L \, V = V \, \dot{V} + V^2 \, \dot{R} \, R^{-1} = V \, \dot{V} + B \, \Omega.
		\end{align}
	In the next step we transpose \eqref{eqdienes01} to obtain (with $B = B^T$)
		\begin{align}
		\label{eqdienes02}
		V \, L^T \, V = \dot{V}^T \, V^T + \Omega^T \, B^T = \dot{V} \, V - \Omega \, B.
		\end{align}
	Adding \eqref{eqdienes01} and \eqref{eqdienes02} then yields (with $L + L^T = 2 \, D$)
		\begin{align}
		2 \, V \, D \, V = V \, \dot{V} + \dot{V} \, V + B \, \Omega - \Omega \, B.
		\end{align}
	Hence, it remains to show that
		$
		L \, B + B \, L^T = (L\, B) + (L \, B)^T = V \, \dot{V} + \dot{V} \, V.
		$
	But with \eqref{eqdienes00} and $B = V^2$ it follows that
		$
		L \, B = (\dot{V} \, V^{-1} + V \, \Omega \, V^{-1}) \, V^2 = \dot{V} \, V + V \, \Omega \, V,
		$
	which implies
		\begin{equation}
		\begin{alignedat}{2}
		\hspace*{-10pt} (L \, B) + (L \, B)^T &= (\dot{V} \, V + V \, \Omega \, V) + (\dot{V} \, V)^T + (V \, \Omega \, V)^T = \dot{V} \, V + V^T \, \dot{V}^T + V \, \Omega \, V - V \, \Omega \, V = \dot{V} \, V + V \, \dot{V},
		\end{alignedat}
		\end{equation}
	and the claim follows. 
	\end{proof}
\noindent Knowing that \eqref{eqdienes} holds, we can substitute
	\begin{align}
	L \, B + B \, L^T = 2 \, V \, D \, V + \Omega \, B - B \, \Omega
	\end{align}
into \eqref{eqratetype2} to obtain
	\begin{equation}
	\label{eqratetype6}
	\begin{alignedat}{2}
	\frac{\DD}{\DD t}[\sigma] &= \DD_B\sigma(B).[\dot B] = \DD_B\sigma(B).[L \, B + B \, L^T] = \DD_B\sigma(B).[2 \, V \, D \, V - B \, \Omega + \Omega \, B] \\
	&= 2 \, \DD_B\sigma(B).[V \, D \, V] + \DD_B\sigma(B).[\Omega \, B - B \, \Omega].
	\end{alignedat}
	\end{equation}
Next we choose $Q(0) = \id$ and $\dot{Q}(0) = \Omega$ in \eqref{eqratetype3} and use $R \, R^T = \id$, which implies $\Omega^T = - \Omega$. This leads to
	\begin{equation}
	\boxed{\Omega \, \sigma - \sigma \, \Omega = \DD_B\sigma(B).[\Omega \, B - B \, \Omega],}
	\end{equation}
which can be inserted in \eqref{eqratetype6} to obtain the formula
	\begin{align}
	\label{eqrateGN01}
	\boxed{\frac{\DD^{\GN}}{\DD t}[\sigma] = \frac{\DD}{\DD t}[\sigma] + \sigma \, \Omega - \Omega \, \sigma = 2 \, \DD_B\sigma(B).[V \, D \, V] = \H^{\GN}(V).D \, .}
	\end{align}
Since $V^2 = B = \mathcal{F}^{-1}(\sigma)$ in the case of invertible $B \mapsto \sigma(B)$, the latter can be rewritten as
	\begin{align}
	\frac{\DD^{\GN}}{\DD t}[\sigma] = 2 \, \DD_B\sigma(\mathcal{F}^{-1}(\sigma)).[\sqrt{\mathcal{F}^{-1}(\sigma)} \, D \sqrt{\mathcal{F}^{-1}(\sigma)}] \equalscolon \H^{\textnormal{GN}}(\sigma).D \, ,
	\end{align}
determining the induced tangent stiffness tensor $\H^{\GN}(\sigma)$ for the Green-Naghdi rate.
%
%
%
%
%
%
%
%
%
%
\begin{rem}
Note again that whenever we speak about the induced fourth order tangent stiffness tensor $\H^{\circ}(\sigma)$, we need to require invertibility.\footnote
{
Saccomandi and Rajagopal put forward \cite{saccomandi2016novel}: ``There are several shortcomings with respect to the manner in which constitutive relations are usually specified currently, both from a philosophical standpoint and more pragmatic considerations. From the philosophical standpoint, expressing the stress in terms of kinematical variables turns causality on its head, as forces and stresses are the causes, and the kinematics is the effect. It makes much more sense to describe kinematics in terms of the stresses and/or their derivatives.'' Not much experience is available in directly prescribing the inverted law $B=\mathcal{F}^{-1}(\sigma)$ apart from those restrictions that isotropy dictates, see \cite{rajagopal2003implicit, rajagopal2007elasticity, sfyris2015treatment}.
 }
of $B \mapsto \sigma(B)$ (since otherwise we would only have a tensor $\H^{\circ}(B)$). Thus, whenever the tensor $\H^{\circ}(\sigma)$ appears, we silently assume that the underlying constitutive law $B \mapsto \sigma(B)$ is invertible\footnote
{
Otherwise, the stiffness tensor $\H^{\circ}(B)$ does depend explicitly on $B = F F^T$ instead of only the Cauchy stress $\sigma$. The Cauchy stress tensor $\sigma$ itself is independent of the adopted reference configuration, while $B$ involves the computation of the deformation gradient $F = \DD \varphi$, defined with respect to the adopted reference configuration which is arbitrary. The independence of the reference configuration gives additional motivation for insisting on the invertibility of $B \mapsto \sigma(B)$, at least as regards the induced tangent stiffness tensor $\H^{\circ}(\sigma)$. Of course, assuming $\sigma = \sigma(B)$ will always involve the reference configuration.
}
\end{rem}
\begin{rem}[\textbf{One formula to rule them all...}] \label{theoneremark}
Consider an \textbf{arbitrary corotational derivative} (not necessarily objective) with spin tensor $\Omega^{\circ} \in \mathfrak{so}(3)$ for an isotropic function $\sigma = \sigma(B)$, i.e.
	\begin{align}
	\label{eq:general_corotational_rate}
	\frac{\DD^{\circ}}{\DD t}[\sigma] = \frac{\DD}{\DD t}[\sigma] - \Omega^{\circ} \, \sigma + \sigma \, \Omega^{\circ}.
	\end{align}
Then the equation \eqref{eqratetype3} can be applied to any choice of $Q(t)$ with $Q(0) = \id, \; \dot Q(0) = \Omega^{\circ}$ to obtain the general relation (remaining true for isotropic tensor functions $\overline \sigma \colon \Sym^{++}(3) \to \R^{3 \times 3}$)
	\begin{equation}
	\label{eqgeneralomsig}
	\boxed{\Omega^{\circ} \, \sigma(B) - \sigma(B) \, \Omega^{\circ} = \DD_B\sigma(B).[\Omega^{\circ} \, B - B \, \Omega^{\circ}],}
	\end{equation}
alternatively expressed via the Lie-bracket $[A,B] = A \, B - B \, A$ as
	\begin{equation}
	\boxed{[\Omega^{\circ}, \sigma(B)] = \DD_B\sigma(B).[\Omega^{\circ}, B].}
	\end{equation}
This formula represents the crucial identity used in Section \ref{chainruleallg} to prove a chain rule for arbitrary corotational derivatives.
\end{rem}
\begin{rem}
Due to its importance for our exposition, a slightly different approach to also obtain \eqref{eqgeneralomsig} is given by the following idea thanks to a discussion with Sergey N.~Korobeynikov: \\
\\
We begin again with the relations \eqref{eqratetype01} and \eqref{eqratetype3}, i.e.
	\begin{align}
	\label{eqkoro1}
	Q \, [\DD_B\sigma(B).H] \, Q^T = \DD_B\sigma(Q \, B \, Q^T).[Q \, H \, Q^T]
	\end{align}
and
	\begin{align}
	\label{eqkoro2}
	\dot{Q} \, \sigma(B) \, Q^T + Q \, \sigma(B) \, \dot{Q}^T = \DD_B\sigma(Q \, B \, Q^T).[\dot{Q} \, B \, Q^T + Q \, B \, \dot{Q}^T].
	\end{align}
Next, we define $H$ as
	\begin{align}
	Q \, H \, Q^T = \dot Q \, B \, Q^T + Q \, B \, \dot Q^T \qquad \implies \qquad H = Q^T \, \dot Q \, B + B \, \dot Q^T \, Q
	\end{align}
and insert this in \eqref{eqkoro1} which afterwards can be inserted into \eqref{eqkoro2} to obtain
	\begin{align}
	\dot{Q} \, \sigma(B) \, Q^T + Q \, \sigma(B) \, \dot{Q}^T = Q \, \DD_B \sigma(B).[Q^T \, \dot Q \, B + B \, \dot Q^T \, Q] \, Q^T \, .
	\end{align}
Multiplying this equation by $Q^T$ from the left and $Q$ from the right results in
	\begin{align}
	Q^T \, \dot{Q} \, \sigma(B) + \sigma(B) \, \dot Q^T \, Q = \DD_B \sigma(B).[Q^T \, \dot Q \, B + B \, \dot Q^T \, Q]
	\end{align}
so that, by defining the skew-symmetric spin tensor $\Omega^{\circ}\colonequals Q^T \, \dot Q$, we finally obtain \eqref{eqgeneralomsig}.
\end{rem}
\pagebreak
\subsection{Positive definiteness of the induced tangent stiffness tensor $\H^\circ(\sigma)$: the corotational stability postulate}
	\begin{wrapfigure}[29]{r}{0.55\textwidth}
		\begin{center}
		\begin{minipage}[h!]{\linewidth}
			\arraycolsep1pt
			\centering
			\includegraphics[scale=0.33]{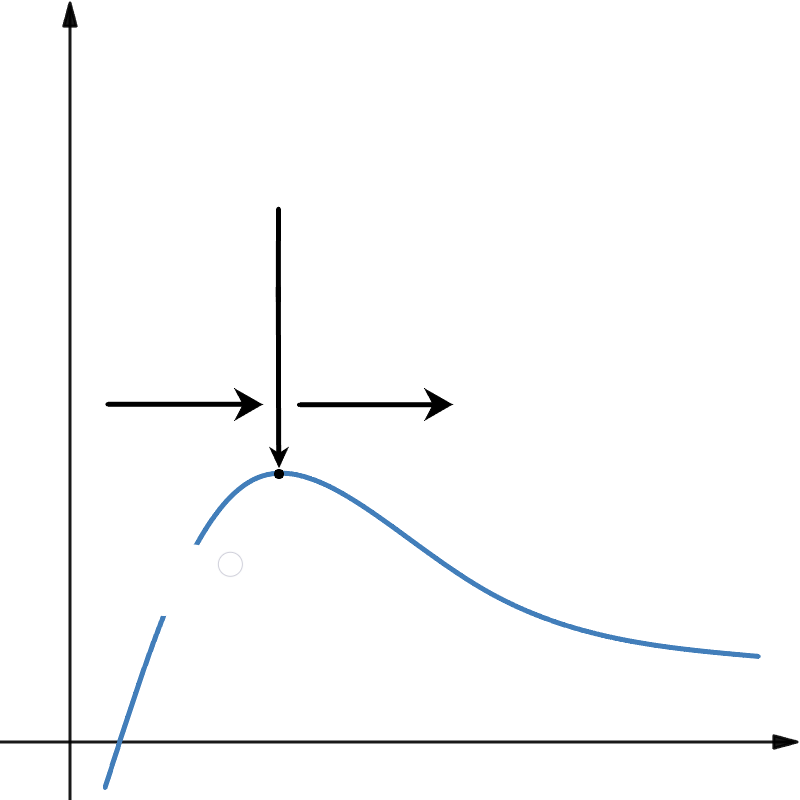}
			\put(-10,0){\footnotesize{$\lambda$}}
			\put(-165,2){\footnotesize $1$}
			\put(-175,102){\footnotesize{stiffening}}
			\put(-127,102){\footnotesize{softening}}
			\put(-170,57){\footnotesize{corotational}}
			\put(-170,47){\footnotesize{stability}}
			\put(-160,32){\footnotesize{$\langle \frac{\DD^{\ZJ}}{\DD t} [\sigma] , D \rangle > 0$}}
			\put(-60,75){\footnotesize{corotational}}
			\put(-60,65){\footnotesize{instability}}
			\put(-60,50){\footnotesize{$\langle \frac{\DD^{\ZJ}}{\DD t} [\sigma] , D \rangle < 0$}}
			\put(-175,165){\footnotesize{``two intrinsic}}
			\put(-175,155){\footnotesize{failure conditions'' :}}
			\put(-125,125){\footnotesize{$\frac{\DD^{\ZJ}}{\DD t} [\sigma] = 0, \; \textnormal{but} \; D \neq 0$}}
			\put(-100,155){\footnotesize{$\left\{ \begin{array}{ll} \det \H^{\ZJ}(\sigma) = 0 \; \iff \; \det \DD_B\sigma(B) = 0 \\ \textnormal{or} \; \sym \H^{\ZJ}(\sigma) \notin \Sym^{++}_4(6) \end{array} \right.$}}
			\put(-175,190){\footnotesize{$\sigma(\lambda)$}}
		\end{minipage}
		\end{center}
		\caption{Illustration of our interpretation of Drucker's stability inequality for isotropic nonlinear elasticity as \break \textbf{``corotational stability postulate''}. In the $\sigma$-softening regime we expect convergence problems and non-well-posedness.}
		\label{figDrucker}
	\end{wrapfigure}

To motivate the meaningfulness of the analysis of the induced tangent stiffness tensor $\H^{\circ}(\sigma)$, we take a look at isotropic linear elasticity, i.e.~we consider the linear elastic law
	\begin{align}
	\varepsilon = \sym \, \DD u \mapsto \sigma_{\lin}(\varepsilon) = \C^{\iso}.\varepsilon.
	\end{align}
Here, stability is expressed as the \textbf{positive definiteness} of $\C^{\iso} \in \Sym^{++}_4(6)$:
	\begin{equation}
	\label{eq2.60}
	\begin{alignedat}{2}
	&\langle \C^{\iso}.\varepsilon, \varepsilon \rangle \ge c^+ \, \norm{\varepsilon}^2 \\
	\iff \qquad &\mu>0, \quad 2 \, \mu + 3 \, \lambda > 0.
	\end{alignedat}
	\end{equation}
This is almost equivalent to the \textbf{invertibility} of \break $\varepsilon \mapsto \sigma_{\lin}(\varepsilon)$, for which we note that
	\begin{align}
	\det \C^{\rm iso}\neq 0\iff \mu\neq 0, \ \ 2\mu+3\lambda\neq 0.
	\end{align}
The linear constitutive law can also be written in rate-format as
	\begin{align}
	\dot \sigma = \C^{\iso}.\dot \varepsilon\,.
	\end{align}
Then the positive definiteness of $\C^{\iso}$ is equivalently described through 
	\begin{align}
	\label{eqlinear01}
	\langle \dot \sigma, \dot \varepsilon \rangle > 0 \qquad \forall \, \dot \varepsilon \in \Sym(3), \; \dot \varepsilon \neq 0.
	\end{align}
We refer to \eqref{eq2.60} and \eqref{eqlinear01} as {\bf linear stability postulate}. In isotropic linear elasticity, the linear stability postulate is equivalent to the convexity of the elastic energy $\frac12 \, \langle \C^{\iso}.\varepsilon, \varepsilon \rangle$ in the displacement gradient $\DD u$ which is the same as positivity of the second order work (see also the Appendix \ref{appsecondorderwork}). \\
\\
Next, consider an isotropic tensor function $\varepsilon \mapsto \sigma(\varepsilon)$, not necessarily linear in $\varepsilon$, and impose
	\begin{equation}
	\label{eqdrucker}
	\begin{alignedat}{2}
	\langle \dot \sigma, \dot \varepsilon \rangle > 0
	\quad\iff&\quad
	\langle \frac{\dif}{\dif t}[\sigma(\varepsilon(t))], \frac{\dif}{\dif t}[\varepsilon(t)] \rangle > 0 \qquad \forall \, \frac{\dif}{\dif t}[\varepsilon(t)] \neq 0
	 \\
	\iff& \quad \langle \DD \sigma(\varepsilon(t)). \frac{\dif}{\dif t}[\varepsilon(t)], \frac{\dif}{\dif t}[\varepsilon(t)] \rangle > 0 
	\iff \quad \sym \DD \sigma(\varepsilon(t)) \in \Sym^{++}_4(6) \\
	\iff& \quad \langle \dif \sigma(\varepsilon), \dif \varepsilon \rangle > 0 \qquad \textnormal{classical ``\textbf{Drucker-stability}'' for linearized kinematics} \\
	\implies& \quad \langle \sigma(\varepsilon_1) - \sigma(\varepsilon_2), \varepsilon_1 - \varepsilon_2 \rangle > 0 \qquad \implies \qquad  \varepsilon \mapsto \sigma(\varepsilon) \quad \textnormal{is invertible.}
	\end{alignedat}
	\end{equation}
Now, the rate-form stability requirement \eqref{eqlinear01} can naturally be generalized to the hypo-elastic setting by replacing the material time derivative of $\sigma$ by the corotational Zaremba-Jaumann rate and the time derivative of $\varepsilon$ by the Eulerian strain rate, i.e.,
	\begin{align}
	\frac{\dif}{\dif t}[\sigma] \quad \textnormal{by} \quad \frac{\DD^{\ZJ}}{\DD t}[\sigma] \qquad \textnormal{and} \qquad \frac{\dif}{\dif t}[\varepsilon] \quad \textnormal{by} \quad D = \sym \, \DD v,
	\end{align}
so that \eqref{eqlinear01} turns into
	\begin{align}
	\label{eqlinear02}
	\langle \frac{\DD^{\ZJ}}{\DD t}[\sigma], D \rangle > 0 \qquad \forall \, D \in \Sym(3) \! \setminus \! \{0\}.
	\end{align}
In analogy to \eqref{eqlinear01} we call \eqref{eqlinear02} the \textbf{``corotational stability postulate'' CSP}. It is evident that \eqref{eqlinear02} is frame-indifferent, while the ``second order work'' $\langle \frac{\dif}{\dif t}[\sigma],D \rangle$ is not, cf.~Appendix \ref{appsecondorderwork}. Also note that CSP is in general not related to a convexity type condition (e.g.~rank-one convexity) in $F = \DD \varphi$. \\
\\
Since by \eqref{eqdefZJ} we have the relation $\frac{\DD^{\ZJ}}{\DD t}[\sigma] = \H^{\ZJ}(\sigma).D$, the requirement \eqref{eqlinear02} turns further into the positive definiteness condition for the induced tangent stiffness tensor $\H^{\ZJ}(\sigma)$
	\begin{equation}
	\label{eqlinstability}
	\begin{alignedat}{2}
	\langle \H^{\ZJ}(\sigma).D,D \rangle > 0 \quad \forall \, D \in \Sym(3) \! \setminus \! \{0\} \qquad &\iff \qquad \langle \sym \, \H^{\ZJ}(\sigma).D, D \rangle > 0 \quad \forall \, D \in \Sym(3) \! \setminus \! \{0\} \\
	&\iff \qquad \sym \H^{\ZJ}(\sigma) \in \Sym^{++}_4(6).
	\end{alignedat}
	\end{equation}
Note that since $\H^{\ZJ}(\sigma)$ is not necessarily major symmetric (i.e.\ self adjoint), we obtain only the positive definiteness of the symmetric part of $\H^{\ZJ}(\sigma)$, i.e.~$\sym \, \H^{\ZJ}(\sigma) \in \Sym^{++}_4(6)$. 
\subsection{Invertibility of the induced tangent stiffness tensor $\H^{\circ}(\sigma)$}
In Section \ref{sechypoelastic} we have derived the formulas for the induced tangent stiffness tensors $\H^{\ZJ}(\sigma)$ and $\H^{\GN}(\sigma)$ under the assumption that $B \mapsto \sigma(B)$ is invertible, i.e.~there is a function $\mathcal{F}^{-1} = \mathcal{F}^{-1}(\sigma)$ with $B = \mathcal{F}^{-1}(\sigma)$ for every $B \in \Sym^{++}(3)$. Under this assumption it holds
	\begin{align}
	\label{eqraterepresnt1}
	\H^{\rm ZJ}(\sigma).D = \DD_B\sigma(B).[B \, D + D \, B] \qquad \textnormal{and} \qquad \H^{\rm GN}(\sigma).D =\DD_B\sigma(B).[2 \, V \, D \, V].
	\end{align}
Let us first check, as a minimal requirement usually made (cf.~e.g.~Ericksen \cite{ericksen1958hypo} or Romano \cite{romano2011}), under which condition on the given constitutive law the induced tangent stiffness tensors $\H^{\ZJ}(\sigma)$ and $\H^{\GN}(\sigma)$ will be invertible, considered as $6 \times 6$ matrices so that one could also speak about the induced compliance tensor $\mathbb{S} = \H^{-1}$. In this respect we can formulate the
\begin{prop}[Invertibility of $\H^{\ZJ}(\sigma)$ and $\H^{\GN}(\sigma)$] \label{propinvert1}
The induced tangent stiffness tensors $\H^{\ZJ}(\sigma)$ and $\H^{\GN}(\sigma)$ are invertible if and only if $\DD_B \sigma(B)$ is invertible, i.e.
	\begin{align}
	\left.\begin{array}{r}
	\det \H^{\rm ZJ}(\sigma) \neq 0\\
	\det \H^{\rm GN}(\sigma) \neq 0
	\end{array}\right\} \quad \iff \quad \det \DD _B\sigma(B) \neq 0 \qquad \iff \qquad \det \DD_{\log B} \widehat \sigma(\log B) \neq 0.
	\end{align}
\end{prop}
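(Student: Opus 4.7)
The plan is to factor each induced tangent stiffness tensor into a composition of two linear maps on $\Sym(3)$ and analyze the invertibility of each factor separately, then use the chain rule to pass between $\DD_B\sigma(B)$ and $\DD_{\log B}\widehat\sigma(\log B)$.

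First I would define on $\Sym(3)$ the auxiliary operators
\begin{align*}
\mathcal{L}_B\colon \Sym(3) \to \Sym(3)\,,\quad D\mapsto B\,D + D\,B\,,\qquad
\mathcal{M}_V\colon \Sym(3) \to \Sym(3)\,,\quad D \mapsto 2\,V\,D\,V\,,
\end{align*}
so that the representation formulas \eqref{eqraterepresnt1} read $\H^{\rm ZJ}(\sigma) = \DD_B\sigma(B)\circ \mathcal{L}_B$ and $\H^{\rm GN}(\sigma) = \DD_B\sigma(B)\circ\mathcal{M}_V$. Both $\mathcal{L}_B$ and $\mathcal{M}_V$ are endomorphisms of $\Sym(3)$ since $B,V\in\Sym^{++}(3)$, and I would show each is an isomorphism. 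For $\mathcal{M}_V$ the explicit inverse is $D\mapsto \tfrac12\,V^{-1} D\,V^{-1}$, which exists because $V\in\Sym^{++}(3)$. For the Lyapunov-type operator $\mathcal{L}_B$, I would diagonalize $B = Q\,\diag(\lambda_1,\lambda_2,\lambda_3)\,Q^T$ with $\lambda_i>0$ and observe that in the basis of $\Sym(3)$ induced by $Q$, $\mathcal{L}_B$ is diagonal with eigenvalues $\lambda_i+\lambda_j$, each strictly positive. Hence both auxiliary operators have nonzero determinant, and by multiplicativity
\begin{align*}
\det \H^{\rm ZJ}(\sigma) \;=\; \det \DD_B\sigma(B)\cdot \det\mathcal{L}_B\,,\qquad
\det \H^{\rm GN}(\sigma) \;=\; \det \DD_B\sigma(B)\cdot\det \mathcal{M}_V\,,
\end{align*}
so each of the left-hand determinants is nonzero if and only if $\det \DD_B \sigma(B)\neq 0$. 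This establishes the first equivalence.

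For the second equivalence I would apply the chain rule to the identity $\sigma(B)=\widehat\sigma(\log B)$, which gives
\begin{align*}
\DD_B\sigma(B) \;=\; \DD_{\log B}\widehat\sigma(\log B)\circ \DD_B\log(B)
\end{align*}
as a composition of linear maps $\Sym(3)\to\Sym(3)$. It then suffices to show that $\DD_B\log(B)$ is an isomorphism of $\Sym(3)$. This follows because $\log\colon\Sym^{++}(3)\to\Sym(3)$ is a smooth bijection with smooth inverse $\exp$, so that at every $B\in\Sym^{++}(3)$ the Fréchet derivative $\DD_B\log(B)$ is an isomorphism with inverse $\DD_{\log B}\exp(\log B)$. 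One can also read this off the Daleckii--Krein formula recalled in the appendix: in a spectral basis of $B=Q\,\diag(\lambda_1,\lambda_2,\lambda_3)\,Q^T$, the operator $\DD_B\log(B)$ is diagonal with entries that are either $1/\lambda_i$ (on diagonal components) or the divided difference $(\log\lambda_i-\log\lambda_j)/(\lambda_i-\lambda_j)$ (on off-diagonal components), each of which is strictly positive since $\lambda_i,\lambda_j>0$. Thus $\det \DD_B\log(B)\neq 0$, and again by multiplicativity of the determinant $\det \DD_B\sigma(B)\neq 0 \iff \det \DD_{\log B}\widehat\sigma(\log B)\neq 0$.

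Neither step poses a real analytical obstacle; the argument is essentially the invertibility of two standard tensorial operators built from $B\in\Sym^{++}(3)$ and an application of the inverse function theorem to $\log$. The only care needed is to track that $\mathcal{L}_B$, $\mathcal{M}_V$ and $\DD_B\log(B)$ all map the six-dimensional space $\Sym(3)$ into itself, so that the factorization-of-determinants argument is legitimate; this is precisely guaranteed by $B,V\in\Sym^{++}(3)$.
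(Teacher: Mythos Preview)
Your proposal is correct and follows essentially the same approach as the paper: factor $\H^{\ZJ}$ and $\H^{\GN}$ as $\DD_B\sigma(B)$ composed with the Lyapunov-type and congruence operators, note that the latter are invertible for $B,V\in\Sym^{++}(3)$, and then use the chain rule together with the invertibility of $\DD_B\log B$. The only difference is that the paper dispatches the invertibility of $D\mapsto BD+DB$ and $D\mapsto VDV$ by citation (Sidoroff, Scheidler) and that of $\DD_B\log B$ by pointing to the appendix, whereas you supply the spectral and Daleckii--Krein details explicitly.
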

\begin{proof}
It is well-known (cf.~Sidoroff \cite{sidoroff1974restrictions} or Scheidler \cite{scheidler1994}) that the linear mappings
	\begin{align}
	D \mapsto B \, D + D \, B \qquad \textnormal{and} \qquad D \mapsto V \, D \, V
	\end{align}
are invertible for $V^2 = B \in \Sym^{++}(3)$. Thus, from the representation of $\H^{\ZJ}(\sigma)$ and $\H^{\GN}(\sigma)$ in \eqref{eqraterepresnt1} it is clear that $\H^{\ZJ}(\sigma)$ and $\H^{\GN}(\sigma)$ are invertible if and only if $\DD_B \sigma(B)$ is invertible. Finally, since $B \mapsto \log B$ is invertible and $\det \DD_B \log B > 0$ (cf.~Appendix \ref{lemhilbertmon1}), the same holds true if and only if $\DD_{\log B} \widehat \sigma(\log B)$ is invertible, by the standard chain rule.
\end{proof}
The different conditions $\det \H^{\ZJ}(\sigma)>0$ and $\sym \H^{\ZJ}(\sigma) \in \Sym^{++}_4(6)$ can be used as novel stability criteria, see Figure \ref{figDrucker} and compare with \cite{jog2013conditions}. In general, we know that
	\begin{align}
	\sym \H^{\ZJ}(\sigma) \in \Sym^{++}_4(6) \quad &\implies \quad \langle (\sym \H^{\ZJ}(\sigma)). D, D \rangle = \langle \H^{\ZJ}(\sigma).D, D \rangle > 0 \quad \forall D \in \Sym(3) \! \setminus \! \{0\} \\
	&\implies \quad \H^{\ZJ}(\sigma).D \in \Sym(3) \! \setminus \! \{0\} \quad\forall D \in \Sym(3) \! \setminus \! \{0\}\quad \implies \quad \det \H^{\ZJ}(\sigma) \neq 0. \notag
	\end{align}
In this respect it is useful to recall the Bendixson-inequality \cite{bendixson1902} $\lambda_{\min}(\sym \H^{\ZJ}) \le \textnormal{Re}(\lambda_i(\H^{\ZJ}))$ showing that the positive definiteness of $\sym \H^{\ZJ}$ implies $ \det \H^{\ZJ}(\sigma) > 0$ in general. A detailed exposition of these stability requirements for nonlinear elasticity will be pursued in a future contribution.

\begin{rem}
It is useful to remember that the global invertibility of $B \mapsto \sigma(B), \; \Sym^{++}(3) \to \Sym(3)$ does not already imply that $\DD_B \sigma(B)$ is invertible everywhere, as the simple one-dimensional example
$f\colon \R \to \R, \; f(t) = t^3$ demonstrates:
$f$ is strictly monotone throughout and globally invertible, but $\DD f(0) = 0$.
\end{rem}
\section{Corotational stability results and conjectures} \label{sec5}
We recall the main result from \cite{tobedone} as it is the starting point for the upcoming considerations.
	\begin{thm} \label{thm4.3}
	Let $\sigma\colon \Sym^{++}(3) \to \Sym(3), \; B \mapsto \sigma(B)$ be the constitutive expression of the Cauchy stress tensor for isotropic nonlinear elasticity. Assume that $\sigma \in C^1(\Sym^{++}(3), \Sym(3))$. Then the following equivalence holds:
	\begin{equation}
		\begin{alignedat}{2}
		&0 \;<\; \langle \frac{\DD^{\ZJ}}{\DD t}[\sigma], D \rangle &&= \langle \DD_B \sigma(B) . [B \, D + D \, B] , D \rangle \\&&&= \langle \DD_{\log B} \widehat \sigma(\log B). \DD_B \log B. [B \, D + D \, B] \\
		&&&= \mathrlap{\langle \H^{\ZJ}(\sigma).D, D \rangle} \hphantom{\langle \DD_{\log B} \widehat \sigma(\log B). \DD_B \log B. [B \, D + D \, B]} \qquad \forall \, D \in \Sym(3) \setminus \{0\} \\
		\iff \qquad &\mathrlap{B \mapsto \sigma(B) = \widehat{\sigma}(\log B) \quad \textnormal{is strongly monotone in $\log B$ (TSTS-M$^{++}$)}} \\
		\iff \qquad &\mathrlap{\sym \DD_{\log B} \widehat \sigma(\log B) \in \Sym^{++}_4(6)\,,}
		\end{alignedat}
	\end{equation}
	which implies
		\begin{equation}
		\label{eqappendix11}
		\langle \widehat{\sigma}(\log B_1) - \widehat{\sigma}(\log B_2), \log B_1 - \log B_2 \rangle > 0 \qquad \quad \forall \, B_1 \neq B_2, \quad B_1, B_2 \in \Sym^{++}(3)\,.
		\end{equation}
	\end{thm}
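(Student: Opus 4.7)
My overall plan is to exploit the chain-rule factorization already written in the theorem statement,
$$
\H^{\ZJ}(\sigma).D \;=\; A\circ M.D,\qquad A:=\DD_{\log B}\widehat\sigma(\log B),\quad M.D:=\DD_B\log B.[BD+DB]\,,
$$
and to reduce the claimed equivalence \textbf{CSP}$\iff$\textbf{TSTS-M}$^{++}$ to a purely linear-algebraic statement about $A$ and $M$ expressed in the Daleckii-Krein eigenbasis of $\Sym(3)$ attached to the spectral decomposition of $B$. The implication to the global Hilbert-monotonicity \eqref{eqappendix11} is then a one-line integration along the straight segment in the convex set $\log \Sym^{++}(3)=\Sym(3)$.

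Concretely, I would fix $B=\sum_{i=1}^{3} e^{\mu_i}\,e_i\otimes e_i$ with $\mu_i=\log \lambda_i^2$ and adopt the orthonormal basis $E_{ii}:=e_i\otimes e_i$ and $E_{ij}:=\tfrac{1}{\sqrt{2}}(e_i\otimes e_j+e_j\otimes e_i)$ ($i<j$) of $\Sym(3)$. A direct Daleckii-Krein computation (Appendix \ref{appendixnotation}) shows that $\DD_B\log B$ multiplies $E_{ii}$ by $e^{-\mu_i}$ and $E_{ij}$ by $(\mu_i-\mu_j)/(e^{\mu_i}-e^{\mu_j})$, while $D\mapsto BD+DB$ multiplies $E_{ii}$ by $2e^{\mu_i}$ and $E_{ij}$ by $e^{\mu_i}+e^{\mu_j}$. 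Composing yields
$$
M.E_{ii}=2\,E_{ii},\qquad M.E_{ij}=m_{ij}\,E_{ij},\qquad m_{ij}:=\frac{(\mu_i-\mu_j)(e^{\mu_i}+e^{\mu_j})}{e^{\mu_i}-e^{\mu_j}}>0\,.
$$
Isotropy of $\widehat\sigma$ ensures that $A$ is block-diagonal in the \emph{same} basis: on the three-dimensional diagonal subspace $\mathrm{span}\{E_{ii}\}$ it acts as the Jacobian $J=[\partial g_i/\partial \mu_j]$, and on each one-dimensional off-diagonal subspace $\mathrm{span}\{E_{ij}\}$ as the scalar $a_{ij}=(g_i-g_j)/(\mu_i-\mu_j)$, where $g_1,g_2,g_3$ denote the eigenvalues of $\widehat\sigma(\log B)$.

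Because $M$ is a \emph{positive scalar} on every irreducible block of $A$, the composition $A\circ M$ is block-diagonal with the blocks $2J$ and $a_{ij}m_{ij}$, and hence
$$
\sym(A\circ M)\in\Sym^{++}_4(6)\;\iff\;\sym J\in\Sym^{++}(3)\;\text{and}\;a_{ij}\,m_{ij}>0\;\forall i\neq j\;\iff\;\sym A\in\Sym^{++}_4(6)\,,
$$
the last step using $m_{ij}>0$ crucially; this yields the main equivalence. For the consequence \eqref{eqappendix11}, convexity of $\log\Sym^{++}(3)=\Sym(3)$ lets me parametrize $X(t)=(1-t)\log B_2+t\log B_1$, with $\Delta:=\log B_1-\log B_2\neq 0$, and conclude by the fundamental theorem of calculus
$$
\langle\widehat\sigma(\log B_1)-\widehat\sigma(\log B_2),\Delta\rangle=\int_0^{1}\langle\sym \DD_{\log B}\widehat\sigma(X(t)).\Delta,\Delta\rangle\,dt>0\,.
$$
The main technical obstacle is the behaviour at coincident eigenvalues $\mu_i=\mu_j$, where the ratios $a_{ij}$ and $m_{ij}$ are only defined as L'Hôpital limits; here $C^1$ regularity of $\widehat\sigma$ provides the limit $a_{ij}\to \partial_{\mu_i}g_i-\partial_{\mu_i}g_j$, while $m_{ij}\to 2$ (consistently with its value on the diagonal block), so the block-diagonal analysis extends by continuity.
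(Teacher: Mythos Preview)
Your argument is correct. Note, however, that the present paper does not itself prove Theorem~\ref{thm4.3}: it is recalled from the companion work \cite{tobedone}, whose proof the authors describe as proceeding via ``lengthy'' and ``cumbersome'' calculations in principal Lagrangean axes (see the remarks opening Section~\ref{sec6} and closing Section~\ref{app13}).

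Your route is precisely the matrix-analysis perspective the paper advocates (Section~4) but does not carry out for the Zaremba--Jaumann rate. The two structural facts you isolate --- (i) the Daleckii--Krein representation makes $M=\DD_B\log B.[B(\cdot)+(\cdot)B]$ diagonal in the $\{E_{ii},E_{ij}\}$ basis with all eigenvalues positive, and crucially equal to the scalar $2$ on the \emph{entire} three-dimensional diagonal block (this is the content of Corollary~\ref{dopeasscor001} and equation~\eqref{eqappendixtrace1}); and (ii) the derivative $A=\DD_{\log B}\widehat\sigma$ of an isotropic tensor function is block-diagonal in that same decomposition --- together reduce the equivalence to the triviality that multiplying each irreducible block of $A$ by a positive scalar preserves positive definiteness of the symmetric part. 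This is exactly the mechanism the paper exploits for the logarithmic rate in Proposition~\ref{proplograte}, where $M=2\cdot\mathrm{id}$ globally; your argument shows the same mechanism already settles the Zaremba--Jaumann case once one observes that $M$ restricts to a scalar on each block of $A$. The principal-axis proof in \cite{tobedone} presumably reaches the same conclusion through explicit scalar computations with $\widehat\sigma_i(\log\lambda_1,\log\lambda_2,\log\lambda_3)$; what your formulation buys is a transparent structural reason for the equivalence and an immediate template for the Green--Naghdi and general positive corotational rates of Conjecture~\ref{conjCSPgeneral}, since all that is needed is that the analogue of $M$ be a positive scalar on each isotypic block. Your treatment of coincident eigenvalues via the limits $m_{ij}\to 2$ is adequate; the cleaner way to phrase it is that at a repeated eigenvalue the enlarged isotropy group forces $M$ to remain a scalar on each (now larger) irreducible block of $A$, so the block-by-block argument goes through unchanged.
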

\begin{rem}
The novel point of departure is to view the CSP characterization now from the angle of properties of the induced tangent stiffness tensor $\H^{\ZJ}(\sigma)\colonequals \DD_B \sigma(B). [B \, D + D \, B] = \DD_{\log B} \widehat \sigma(\log B). \DD_B \log B.[B \, D + D \, B]$, giving directly algebraic conditions on the Cauchy stress $\sigma$.
\end{rem}
	\begin{rem}
	\label{rem4.4}
	The strong Hilbert-monotonicity reads
		\begin{align}
		\label{eqappendix22}
		\sym \DD_{\log B} \widehat{\sigma}(\log B) \in \Sym^{++}_4(6) \qquad \iff \qquad \sym \DD_{\log V} \widehat{\sigma}(\log V) \in \Sym^{++}_4(6) \, ,
		\end{align}
	which can be equivalently expressed in principal Cauchy stresses (see the Appendix, Remark \ref{remA5})
		\begin{align}
		\forall \, (\lambda_1, \lambda_2, \lambda_3) \in \R^3_+: \qquad \Lambda_{ij} = \sym \, \frac{\partial \widehat{\sigma}_i (\log \lambda_1, \log \lambda_2, \log \lambda_3)}{\partial \log \lambda_j} \, \in \Sym^{++}(3).
		\end{align}
		For the proof we refer to \cite{tobedone}.
	\end{rem}
It is important to point out that strict Hilbert monotonicity of $\sigma(V) = \widehat \sigma(\log V)$ in $V$ and in $\log V$ (in $B$ and in $\log B$), respectively, are two completely unrelated characteristics of a constitutive Cauchy law $V \mapsto \sigma(V)$. In fact, neither implies the other, as shown explicitly in Appendix~\ref{appmono001}.
\subsection{Some instructive examples}
Recall from formula \eqref{eqratetype5} that
	\begin{align}
	\frac{\DD^{\ZJ}}{\DD t}[\sigma] = \frac{\DD}{\DD t}[\sigma] - W \, \sigma + \sigma \, W = \DD_B\sigma(B).[D \, B + B \, D] = \H^{\ZJ}(\sigma).D
	\end{align}
and from \eqref{eqrateGN01} that
	\begin{align}
	\frac{\DD^{\GN}}{\DD t}[\sigma] = \frac{\DD}{\DD t}[\sigma] + \sigma \, \Omega - \Omega \, \sigma = 2 \, \DD_B\sigma(B).[V \, D \, V] = \H^{\GN}(\sigma).D
	\end{align}
for the Zaremba-Jaumann and the Green-Naghdi derivative, respectively.
\begin{example} \label{example5.1}
We reconsider the stress response $\sigma(B) = \mu \, (B - \id)$, which is a primary matrix function of $B$, for $\mu > 0$, so that $\DD_B\sigma(B) . H = \mu \, H$. Then, denoting by $\lambda_{\min}$ the smallest eigenvalue of its argument\footnote
{
Note that the relation $\lambda_{\min}(B) = \lambda_{\min}^2(V)$ holds.
},
we obtain\footnote{Note carefully (cf.~Remark~\ref{remark:one_dimensional_examples}) that the mapping $B\mapsto\sigma(B)=\mu\,(B-\id)$ is not invertible in this case and that $\frac{\DD^{\circ}}{\DD t}[\sigma]$ according to \eqref{eq:non_invertible_ZJ} and \eqref{eq:non_invertible_ZJ} only corresponds to the Cauchy-elastic constitutive law if $\sigma$ is in the range of the stress response mapping.}
	\begin{align}
	\label{eq:non_invertible_ZJ}
	\langle \H^{\ZJ}(\sigma).D, D \rangle = \mu \, \langle B \, D + D \, B , D \rangle = 2 \, \mu \, \langle D \, B , D \rangle \geq 2\,\mu\, \lambda_{\rm min}(B)\, \|D\|^2> 0
	\end{align}
as well as
	\begin{align}
	\label{eq:non_invertible_GN}
	\langle \H^{\GN}(\sigma).D,D \rangle = 2 \, \mu \, \langle [V \, D \, V], D \rangle = 2 \, \mu \, \langle V \, D \, V , D \rangle \ge 2 \, \mu \, \lambda_{\min}^2(V) \, \norm{D}^2 > 0\,,
	\end{align}
showing (directly) that for both rates the corotational stability postulate (CSP) is satisfied, which is equivalent to strong monotonicity of $\widehat \sigma(\log B)$ as shown in Theorem \ref{thm4.3}. Independently, we see that $\sigma(B) = \widehat \sigma(\log B)$ is also strictly Hilbert-monotone in $\log B$, by observing that
	\begin{equation}
	\label{eqmonotonesig}
	\begin{alignedat}{2}
	\langle \widehat{\sigma}(\log B_1) - \widehat{\sigma}(\log B_2)&, \log B_1 - \log B_2 \rangle \\
	&= \langle \sigma(B_1) - \sigma(B_2) , \log B_1 - \log B_2 \rangle = \mu \, \langle B_1 - \id - (B_2 - \id), \log B_1 - \log B_2 \rangle \\
	&= \mu \, \langle B_1 - B_2, \log B_1 - \log B_2 \rangle = \mu \, \langle \log B_1 - \log B_2, B_1 - B_2 \rangle > 0,
	\end{alignedat}
	\end{equation}
since $B \mapsto \log B$ is a strongly monotone primary matrix function, as shown in Lemma \ref{lemhilbertmon1} in the Appendix.
\end{example}
\begin{example}
We next show positive definiteness of the induced tangent stiffness tensors $\H^{\ZJ}(\sigma)$ and $\H^{\GN}(\sigma)$ for the constitutive Cauchy-elastic law (a non-primary isotropic matrix function)
	\begin{align}
	\label{eqthesecondlaw1}
	\sigma(B) = \frac{\mu}{2} (B - B^{-1}) + \frac{\lambda}{2} \, \tr(\log B) \, \id.
	\end{align}
Therefore, we first calculate $\DD_B \sigma(B).H$, by making use of the equality $\DD_B [B^{-1}] = - B^{-1} \, H \, B$, which leads to
	\begin{equation}
	\DD_B\sigma(B).H = \frac{\mu}{2} (H + B^{-1} \, H \, B^{-1}) + \frac{\lambda}{2} \langle B^{-1} , H \rangle \, \id.
	\end{equation}
Inserting the increment $H = [B \, D + D \, B]$ (not to be confounded with $\H^{\ZJ}(\sigma)$), we obtain
	\begin{align}
	\H^{\ZJ}(\sigma).D = \DD_B\sigma(B).[B \, D + D \, B] = \frac{\mu}{2} \, (B \, D + D \, B + B^{-1} \, D + D \, B^{-1})
+ \lambda \, \tr(D) \, \id
	\end{align}
and similarly, for $H = [V \, D \, V]$ with $\langle B^{-1},VDV\rangle=\langle VB^{-1}V,D\rangle=\tr(D)$,
	\begin{align}
	\H^{\GN}(\sigma).D = 2 \, \DD_B\sigma(B).[V \, D \, V] = \mu \, (V \, D \, V + V^{-1} \, D \, V^{-1}) + \lambda \, \tr(D) \, \id.
	\end{align}
From this we find
	\begin{equation}
	\label{eqthelatter01}
	\begin{alignedat}{2}
	\langle \H^{\ZJ}(\sigma) . D , D \rangle &= \frac{\mu}{2} \, (2 \, \langle B \, D , D \rangle + 2 \, \langle B^{-1} \, D , D \rangle) + \lambda \, \langle \tr(D) \, \id , D \rangle \\
	&= \mu \, (\langle B \, D , D \rangle + \langle B^{-1} \, D , D \rangle) + \lambda \, \tr^2(D) \\
	&\ge \mu \, \left(\lambda_{\min}(B) + \frac{1}{\lambda_{\max}(B)}\right) \, \norm{D}^2 + \lambda \, \tr^2(D) \ge c^+(\mu, \lambda) \, \norm{D}^2
	\end{alignedat}
	\end{equation}
as well as
	\begin{equation}
	\begin{alignedat}{2}
	\langle \H^{\GN}(\sigma).D,D \rangle &= \mu \, \langle V \, D \, V , D \rangle + \mu \, \langle V^{-1} \, D \, V^{-1} , D \rangle + \lambda \, \langle \id , D \rangle \, \tr(D) \\
	&\ge \mu \, (\lambda_{\min}^2(V) \, \norm{D}^2 + \lambda_{\min}^2(V^{-1}) \, \norm{D}^2) + \lambda \, \tr^2(D)\,,
	\end{alignedat}
	\end{equation}
which are both non-negative if we assume $\mu, \lambda \ge 0$, thus showing that CSP is fulfilled and $\widehat \sigma(\log B)$ is strongly monotone in $\log B$ according to Theorem \ref{thm4.3}. \\
\\
Dividing \eqref{eqthesecondlaw1} into three parts
	\begin{align}
	\label{eqsigma2}
	\sigma = \frac{\mu}{2} (B - B^{-1}) + \frac{\lambda}{2} \, \tr(\log B) \, \id = \frac12 \left\{\, \smash{\underbrace{\mu \, (B-\id)}_{\equalscolon \sigma_1} + \underbrace{\mu \, (\id-B^{-1})}_{\equalscolon \sigma_2} + \underbrace{\lambda \, \tr(\log B) \, \id}_{\equalscolon\sigma_3}} \,\right\}\,,
	\vphantom{\underbrace{\mu \, (\id-B^{-1})}_{\equalscolon \sigma_2} + \underbrace{\lambda \, \tr(\log B) \, \id}_{\equalscolon\sigma_3}}
	\end{align}
we can easily check (directly) that $\sigma$ in \eqref{eqsigma2} is monotone as a function of $\log B$, where we suppose that $\mu, \, \lambda > 0$. The term $B - \id$ is monotone in $\log B$ as shown in Example \ref{example5.1}. Furthermore, it is easy to see that $\tr(\log B) \, \id$ is monotone in $\log B$. It remains to check the term $\sigma = \frac{1}{\mu} \sigma_2 = \id - B^{-1}$. We find
	\begin{equation}
	\begin{alignedat}{2}
	\langle \sigma(B_1) - \sigma(B_2)&, \log B_1 - \log B_2 \rangle \\
	&= \langle \id - B_1^{-1} - (\id - B_2^{-1}), \log B_1 - \log B_2 \rangle = \langle -B_1^{-1} - (-B_2^{-1}), \log B_1 - \log B_2 \rangle \\
	&= \langle -B_1^{-1} + B_2^{-1} , - \log B_1^{-1} - (-\log B_2^{-1}) \rangle = \langle B_2^{-1} - B_1^{-1} , \log B_2^{-1} - \log B_1^{-1} \rangle \\
	&= \langle \log X - \log Y, X - Y \rangle \,>\, 0\,, \quad X=B_2^{-1}, \quad Y = B_1^{-1},
	\end{alignedat}
	\end{equation}
since $\log$ is strongly monotone in its argument, showing independently that $\sigma$ given by \eqref{eqsigma2} is strictly Hilbert-monotone in $\log B$.
\end{example}

\begin{rem}
\label{remark:one_dimensional_examples}
Let us consider, for the previous two examples, the corresponding one-dimensional Cauchy stresses. On the one hand we have $\sigma_a$, shown in Figure~\ref{xfig90}, corresponding to the three-dimensional Cauchy-elastic (but not hyperelastic) constitutive law $\sigma_{\alpha}$, with
	\begin{align}
	\sigma_a(\lambda) \colonequals \frac{1}{2} (\lambda^2-1), \quad \qquad \sigma_{\alpha}(B) = \mu \, (B-\id) = 2\mu \, \sym \DD u + \textnormal{h.o.t.} = 2\mu \, \varepsilon + \textnormal{h.o.t.},
	\end{align}
and on the other hand the one-dimensional Cauchy stress $\sigma_b$ (cf.~Figure \ref{xfig91}) with the corresponding three-dimensional counterpart $\sigma_{\beta}$,
	\begin{align}
	\sigma_b(\lambda) \colonequals \frac{1}{5} (\lambda^2 - \lambda^{-2} + \log \lambda)\,, \quad \qquad \sigma_{\beta}(B) = \frac{\mu}{2} (B - B^{-1}) + \frac{\lambda}{2} \, \tr(\log B) \, \id\,,
	\end{align}
respectively. In the one-dimensional setting, both give rise to monotone functions $\lambda \mapsto \sigma(\lambda)$.\footnote
{
Both constitutive laws are isotropic and objective, therefore they can be used, in principle, for small strain and large rotations. However, only $\sigma_b$ is suitable for large strains, since $B \mapsto \sigma_b(B)$ is invertible while $\sigma_a$ does not respond properly for extreme stretches $B \to 0$.
}
	\begin{figure}[h!]
		\begin{center}
		\begin{minipage}[h!]{0.4\linewidth}
		\includegraphics[scale=0.25]{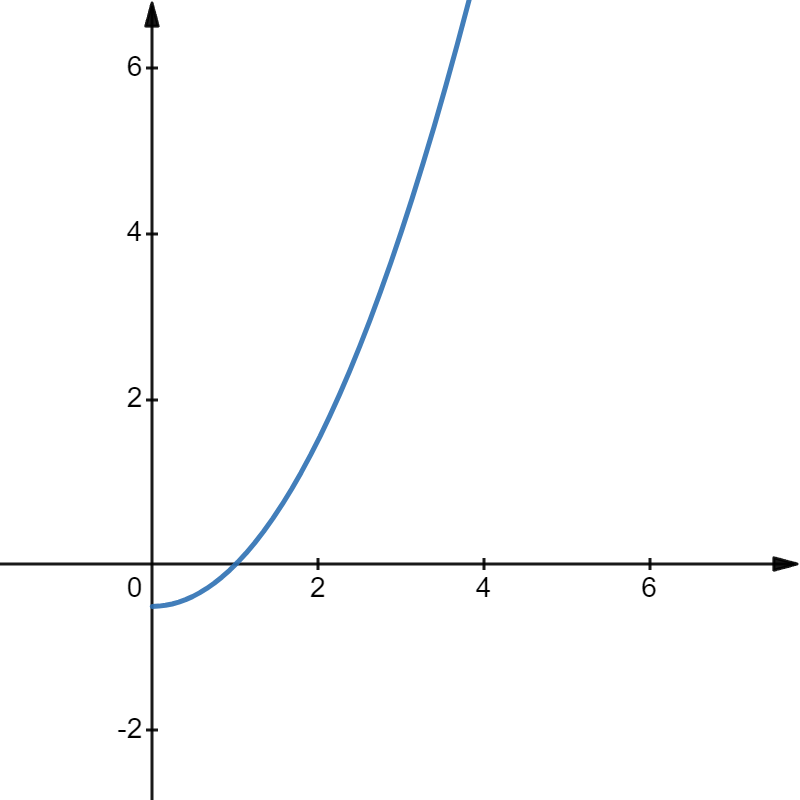}
		\put(-5,45){\footnotesize{$\lambda$}}
		\put(-155,190){\footnotesize{$\sigma(\lambda)$}}
		\caption{Picture of the monotone Cauchy stress $\sigma_a(\lambda) = \frac12(\lambda^2-1)$.}
		\label{xfig90}
		\end{minipage}
		\qquad \qquad
		\begin{minipage}[h!]{0.4\linewidth}
		\includegraphics[scale=0.25]{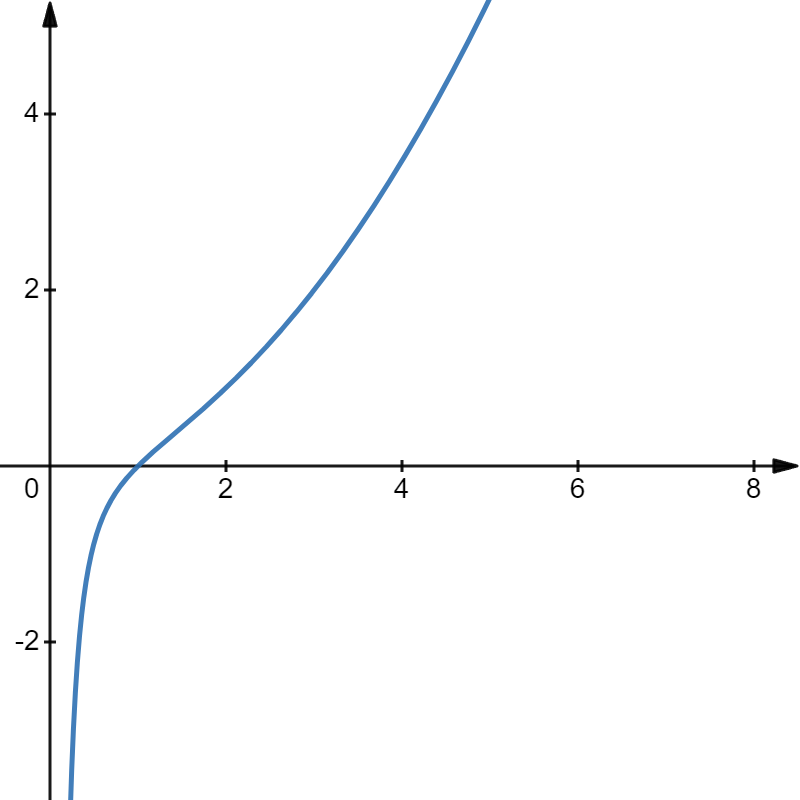}
		\put(-5,70){\footnotesize{$\lambda$}}
		\put(-180,190){\footnotesize{$\sigma(\lambda)$}}
		\caption{Monotone and bijective Cauchy stress $\sigma_b(\lambda) = \frac15(\lambda^2 - \lambda^{-2} + \log \lambda)$. Idealized nonlinear elastic response: ``\textbf{stress increases with strain}''.}
		\label{xfig91}
		\end{minipage}
		\end{center}
	\end{figure}
Figure~\ref{xfig90} demonstrates that for the choice $\sigma = \mu \, (B-\id)$, the stress response is not surjective; in particular, there does not exist any $B\in\Sym(3)^{++}$ such that $\mu\,(B-\id)=\sigma$ if $\sigma_i\leq-\mu$ for any eigenvalue $\sigma_i$ of $\sigma$. Therefore, the relation
	\begin{align}
	\label{eq:rate_formulation_non_invertible_incomplete}
	\H^{\ZJ}(\sigma).D = \mu \, (D \, B + B \, D)
	\end{align}
is only well-defined for $\sigma$ in the range of the stress response mapping, since otherwise, the left Cauchy-Green tensor $B$ on the right-hand side is not well defined.
However, we may still talk about \textbf{conditional positive definiteness} of $\H^{\ZJ}(\sigma)$, i.e.\ $\H^{\ZJ}(\sigma)$ is positive definite whenever there is a function $\mathcal{F}^{-1}\colon \Sym(3) \to \Sym^{++}(3)$ such that
	\begin{align}
	\langle \H^{\ZJ}(\sigma).D,D \rangle = \langle \H^{\ZJ}(\mathcal{F}^{-1}(\sigma)).D,D \rangle = \langle \H^{\ZJ}(B).D,D \rangle \qquad \textnormal{for} \qquad B = \mathcal{F}^{-1}(\sigma) \in \Sym^{++}(3)\,.
	\end{align}
Note also that the relation
	\begin{align}
	\H^{\ZJ}(\sigma).D = 2 \, \mu \, D + D \, \sigma + \sigma \, D\,,
	\end{align}
which is equivalent to \eqref{eq:rate_formulation_non_invertible_incomplete} for $\sigma = \mu \, (B-\id)$ if such a $B\in\Sym^{++}(3)$ exists, properly defines a stiffness tensor for any $\sigma\in\Sym(3)$ and thus (globally) constitutes a hypoelastic law.

An example for an \emph{invertible} Cauchy stress response $B \mapsto \sigma(B)$ that generates an induced fourth order tangent stiffness tensor $\H^{\ZJ}(\sigma)$ that is \emph{not positive definite} throughout is given by
	\begin{equation}
	\begin{alignedat}{2}
	\WW_{\NH}(F) &= \frac{\mu}{2} \, \left(\frac{\norm{F}^2}{(\det F)^{\frac23}} - 3\right) + \kappa \, \mathrm{e}^{(\log \det F)^2}, \\
	\sigma_{\NH}(B) &= \mu \, (\det B)^{-\frac56} \, \dev_3 B + \kappa \, (\det B)^{-\frac12} \, (\log \det B) \, \mathrm{e}^{\frac14 \, (\log \det B)^2} \, \id\,,
	\end{alignedat}
	\end{equation}
as discussed in Appendix~\ref{appendixneohooke}.
\end{rem}
\subsection{Conjectures - a far reaching generality for the CSP}
In the previous examples, we observe that whenever $\log B \mapsto \widehat \sigma(\log B)$ is monotone, the equivalence \break CSP $\iff$ TSTS-M$^{++}$ holds not only for the corotational Zaremba-Jaumann rate, but also for the corotational Green-Naghdi rate. The authors have not yet found any isotropic Cauchy-elastic law for which this observation is not true, giving reason to propose the following conjecture (compare Theorem \ref{thm4.3}).
	\begin{conjecture}[Corotational stability for the Green-Naghdi rate $\frac{\DD^{\GN}}{\DD t}$]~\\
	Let $\sigma\colon \Sym^{++}(3) \to \Sym(3), \; B \mapsto \sigma(B)$ be the constitutive expression of the Cauchy stress tensor for isotropic nonlinear elasticity. Assume that $\sigma \in C^1(\Sym^{++}(3), \Sym(3))$. Then the following equivalence holds:
	\begin{equation}
		\begin{alignedat}{2}
		&0 \;<\; \langle \frac{\DD^{\GN}}{\DD t}[\sigma], D \rangle &&= \langle \DD_B \sigma(B) . [2 \, V D \, V] , D \rangle \\&&&= \langle \DD_{\log B} \widehat \sigma(\log B). \DD_B \log B. [2 \, V \, D \, V] \\
		&&&= \mathrlap{\langle \H^{\GN}(\sigma).D, D \rangle} \hphantom{\langle \DD_{\log B} \widehat \sigma(\log B). \DD_B \log B. [2 \, V \, D \, V]} \qquad \forall \, D \in \Sym(3) \setminus \{0\} \\
		\iff \qquad &\mathrlap{B \mapsto \sigma(B) = \widehat{\sigma}(\log B) \quad \textnormal{is strongly monotone in $\log B$ (TSTS-M$^{++}$)}} \\
		\iff \qquad &\mathrlap{\sym \DD_{\log B} \widehat \sigma(\log B) \in \Sym^{++}_4(6)\,,}
		\end{alignedat}
	\end{equation}
	implying
		\begin{equation}
		\langle \widehat{\sigma}(\log B_1) - \widehat{\sigma}(\log B_2), \log B_1 - \log B_2 \rangle > 0 \qquad \quad \forall \, B_1 \neq B_2, \quad B_1, B_2 \in \Sym^{++}(3).
		\end{equation}
	\end{conjecture}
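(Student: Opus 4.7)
The approach mirrors the matrix-analytic template of Theorem~\ref{thm4.3}, but with the inner operator $\DD_B\log B.[BD+DB]$ of the Zaremba-Jaumann rate replaced by $\DD_B\log B.[2\,VDV]$ of the Green-Naghdi rate; the essential task is to verify that this substitution preserves the positivity structure on $\Sym(3)$. Applying the standard chain rule to \eqref{eqrateGN01} yields
\[
\H^{\GN}(\sigma).D \;=\; \bigl(\DD_{\log B}\widehat{\sigma}(\log B)\bigr).\bigl(\DD_B\log B.[2\,V D V]\bigr) \;\equalscolon\; \mathcal{A}\circ K_{\GN}(D)\,,
\]
where $\mathcal{A}:=\DD_{\log B}\widehat{\sigma}(\log B)$ and $K_{\GN}(D):=\DD_B\log B.[2\,V D V]$ are linear endomorphisms of $\Sym(3)$. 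Since CSP is a pointwise condition in $B$, one fixes an arbitrary $B\in\Sym^{++}(3)$ and works in the common orthonormal eigenbasis of $B$, $V$, and $\log B$, writing $\beta_i=\lambda_i^2$ and $u_i=\log\lambda_i$.

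Next, the Daleckii-Krein formula for the primary matrix function $\log$, combined with the elementary identity $\beta_i-\beta_j=2\sqrt{\beta_i\beta_j}\sinh(u_i-u_j)$, gives in this basis
\[
[K_{\GN}(D)]_{ij} \;=\; \begin{cases} 2\,D_{ii} & \text{if } i=j\,,\\[2pt] \dfrac{2(u_i-u_j)}{\sinh(u_i-u_j)}\,D_{ij} & \text{if } i\neq j\,.\end{cases}
\]
The scalar map $x\mapsto 2x/\sinh x$ is strictly positive on $\R$ and extends continuously to $2$ at $x=0$. Hence $K_{\GN}$ leaves invariant the orthogonal decomposition $\Sym(3)=\mathrm{Diag}\oplus\bigoplus_{i<j}\mathrm{span}(e_i\otimes e_j+e_j\otimes e_i)$, acting as multiplication by $2$ on $\mathrm{Diag}$ and as multiplication by a strictly positive scalar $k_{ij}>0$ on each of the three off-diagonal one-dimensional subspaces. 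By the isotropy of $\widehat{\sigma}$, the operator $\mathcal{A}$ preserves the very same decomposition, acting as a linear self-map of $\mathrm{Diag}$ and as multiplication by a scalar $c_{ij}^{\mathcal{A}}\in\R$ on the $(i,j)$-th off-diagonal subspace.

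Consequently $\H^{\GN}(\sigma)=\mathcal{A}\circ K_{\GN}$ also respects the decomposition, and writing $D=D_d+D_o$ the inner product $\langle \H^{\GN}(\sigma).D,D\rangle$ splits without cross-terms as a sum of a diagonal contribution proportional to $\langle \sym\mathcal{A}|_{\mathrm{Diag}}\,D_d,D_d\rangle$ and a positive-weighted sum $\sum_{i<j}k_{ij}\,c_{ij}^{\mathcal{A}}\,D_{ij}^{2}$. Strict positivity for every nonzero $D$ is therefore equivalent to $\sym\mathcal{A}|_{\mathrm{Diag}}\in\Sym^{++}(3)$ together with $c_{ij}^{\mathcal{A}}>0$ for all $i\neq j$; applying the same decomposition analysis to $\mathcal{A}$ alone shows that $\sym\mathcal{A}=\sym\DD_{\log B}\widehat{\sigma}(\log B)\in\Sym^{++}_4(6)$ is characterized by exactly the same pair of conditions. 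This establishes the claimed equivalence between CSP for $\frac{\DD^{\GN}}{\DD t}$ and TSTS-M$^{++}$; the implication to global Hilbert-monotonicity is then inherited from Theorem~\ref{thm4.3}.

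The main obstacle in executing this plan rigorously is the stratum of $B$ with coincident eigenvalues, where the chosen eigenbasis is not uniquely defined and both $K_{\GN}$ and $\mathcal{A}$ must be understood through continuous Daleckii-Krein extensions of the off-diagonal scalings. A clean route is to establish the equivalence first on the open dense subset of simple-spectrum $B$ and then extend it by continuity of both sides. A second delicate point is that for a general (non-primary) isotropic tensor function $\widehat{\sigma}$, the off-diagonal scalars $c_{ij}^{\mathcal{A}}$ are not Daleckii-Krein quotients of a single scalar but the analogous symmetric difference quotients $(\widehat{\sigma}_i-\widehat{\sigma}_j)/(\log\lambda_i-\log\lambda_j)$ in the principal representation; their correct handling ultimately rests on the permutation symmetry of the principal Cauchy stresses $\widehat{\sigma}_i$ in $(\log\lambda_1,\log\lambda_2,\log\lambda_3)$ as employed in Remark~\ref{rem4.4}.
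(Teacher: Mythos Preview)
The paper does not prove this statement: it is explicitly labelled a \emph{Conjecture}, and the authors remark that ``the result for the Green-Naghdi rate will be shown in an upcoming contribution, using entirely different methods than those described here.'' So there is no proof in the paper to compare against; your proposal is an attempt to settle what the authors leave open. Your approach is, moreover, precisely the matrix-analysis route the paper advocates (cf.~Section~\ref{appendixAA} and the discussion surrounding Conjecture~\ref{conjCSPgeneral}) as a replacement for the principal-axis calculus of \cite{tobedone}. The Daleckii--Krein evaluation of $K_{\GN}(D)=\DD_B\log B.[2\,V D V]$ is correct, the block structure you invoke for $\mathcal{A}=\DD_{\log B}\widehat{\sigma}(\log B)$ in the eigenbasis of $B$ is the standard derivative formula for isotropic tensor functions, and on the simple-spectrum stratum the equivalence follows exactly as you write.

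The one genuine soft spot is your extension to repeated eigenvalues ``by continuity of both sides'': strict positive definiteness does not survive limits, so from positivity of $\sym\H^{\GN}(\sigma)$ (respectively of $\sym\mathcal{A}$) at nearby simple-spectrum $B$ you obtain only positive \emph{semi}-definiteness at a repeated-eigenvalue $B_0$, which does not deliver the pointwise equivalence there. A cleaner closure avoids stratification altogether. First, $K_{\GN}$ is self-adjoint and positive definite on $\Sym(3)$ for every $B$, since the Schur multipliers $2x/\sinh x$ extend continuously and positively to $x=0$. Second, $K_{\GN}$ \emph{commutes} with $\mathcal{A}$: on the simple-spectrum stratum both are block-diagonal for the same orthogonal decomposition with $K_{\GN}$ a scalar on each block, and commutation---being an equality---passes to all $B$ by continuity of $\mathcal{A}$ and $K_{\GN}$ in $B$. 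Writing $K_{\GN}=L^2$ with $L=K_{\GN}^{1/2}$ (a polynomial in $K_{\GN}$, hence also commuting with $\mathcal{A}$) and using self-adjointness of $L$ gives
\[
\langle \mathcal{A}\,K_{\GN}\,D,\,D\rangle \;=\; \langle L\,\mathcal{A}\,L\,D,\,D\rangle \;=\; \langle \mathcal{A}\,(LD),\,LD\rangle\,,
\]
so that $\langle \H^{\GN}(\sigma).D,D\rangle>0$ for all $D\neq 0$ is equivalent to $\langle \mathcal{A}\,E,E\rangle>0$ for all $E\neq 0$, i.e., to $\sym\DD_{\log B}\widehat{\sigma}(\log B)\in\Sym^{++}_4(6)$, at every $B$ without exception.
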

%
In fact, we suspect that the equivalence between CSP and TSTS-M$^{++}$ may even hold for a larger class of objective corotational rates $\frac{\DD^{\circ}}{\DD t}$, the \textbf{positive corotational rates} (cf.~Section \ref{app13}, Definitions \ref{appmaterialspins} and \ref{appendixpcd}). For an in-depth analysis of positive corotational rates we refer to the development in \cite{neffkoro2024}.
	\begin{conjecture}[Corotational stability for positive corotational rates $\frac{\DD^{\circ}}{\DD t}$] \label{conjCSPgeneral}
	~\\Let $\sigma\colon \Sym^{++}(3) \to \Sym(3), \; B \mapsto \sigma(B)$ be the constitutive expression of the Cauchy stress tensor for isotropic nonlinear elasticity. Assume that $\sigma \in C^1(\Sym^{++}(3), \Sym(3))$. Let $\frac{\DD^{\circ}}{\DD t}$ belong to the subclass of positive, objective and corotational derivatives, i.e.~for the fourth order stiffness tensor $\mathbb{A}^{\circ}(B)$ in 
	\begin{equation}
	\frac{\DD^{\circ}}{\DD t}[B] = \mathbb{A}^{\circ}(B).D
	\end{equation}
we require $\mathbb{A}^{\circ}(B) \in \Sym^{++}_4(6)$. Then the following equivalence holds:
	\begin{equation}
		\begin{alignedat}{2}
		&0 \;<\; \langle \frac{\DD^{\circ}}{\DD t}[\sigma], D \rangle &&= \langle \DD_B \sigma(B) . \left[\frac{\DD^{\circ}}{\DD t}[B]\right] , D \rangle \\&&&= \langle \DD_{\log B} \widehat \sigma(\log B). \DD_B \log B. \left[\frac{\DD^{\circ}}{\DD t}[B]\right] \\
		&&&= \mathrlap{\langle \H^{\circ}(\sigma).D, D \rangle} \hphantom{\langle \DD_{\log B} \widehat \sigma(\log B). \DD_B \log B. \left[\frac{\DD^{\circ}}{\DD t}[B]\right]} \qquad \forall \, D \in \Sym(3) \setminus \{0\} \\[.7em]
		\iff \qquad &\mathrlap{B \mapsto \sigma(B) = \widehat{\sigma}(\log B) \quad \textnormal{is strongly monotone in $\log B$ (TSTS-M$^{++}$)}} \\
		\iff \qquad &\mathrlap{\sym \DD_{\log B} \widehat \sigma(\log B) \in \Sym^{++}_4(6)\,,}
		\end{alignedat}
	\end{equation}
	implying
		\begin{equation}
		\langle \widehat{\sigma}(\log B_1) - \widehat{\sigma}(\log B_2), \log B_1 - \log B_2 \rangle > 0 \qquad \quad \forall \, B_1 \neq B_2, \quad B_1, B_2 \in \Sym^{++}(3).
		\end{equation}
	\end{conjecture}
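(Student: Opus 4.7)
The plan is to mirror the proof of Theorem~\ref{thm4.3}, routing the corotational rate through the logarithmic strain via the universal chain rule for corotational derivatives of isotropic tensor functions from Section~\ref{appendchainrule}. Combining that chain rule with $\DD_B\sigma(B)=\DD_{\log B}\widehat\sigma(\log B)\circ\DD_B\log B$ and the hypothesis $\frac{\DD^{\circ}}{\DD t}[B]=\mathbb{A}^{\circ}(B).D$ yields
\begin{align*}
\H^{\circ}(\sigma).D \,=\, \DD_{\log B}\widehat\sigma(\log B)\circ \mathbb{M}^{\circ}(B).D\,, \qquad \mathbb{M}^{\circ}(B)\colonequals\DD_B\log B\circ\mathbb{A}^{\circ}(B)\,.
\end{align*}
CSP becomes the positivity of the quadratic form $D\mapsto\langle\DD_{\log B}\widehat\sigma(\log B).\mathbb{M}^{\circ}(B).D,D\rangle$, to be compared with TSTS-M$^{++}$, i.e.\ the positivity of $\sym\DD_{\log B}\widehat\sigma(\log B)$.

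Next I would exploit isotropy and the eigenbasis of $B=\diag(b_1,b_2,b_3)$ to block-diagonalize. Every isotropic linear operator on $\Sym(3)$ preserves the splitting $\Sym(3)=\Sym^{\mathrm{diag}}(3)\oplus\bigoplus_{i<j}\mathrm{span}(e_ie_j^T+e_je_i^T)$, acting by a $3\times 3$ block on $\Sym^{\mathrm{diag}}(3)$ and by a single scalar on each off-diagonal plane. By the Daleckii-Krein formula, $\DD_B\log B$ acts as $\diag(1/b_i)$ on $\Sym^{\mathrm{diag}}(3)$ and by the strictly positive weights $(\log b_i-\log b_j)/(b_i-b_j)>0$ on the off-diagonals; the positivity hypothesis $\mathbb{A}^{\circ}(B)\in\Sym^{++}_4(6)$ delivers a symmetric positive-definite $3\times 3$ block $A^{\mathrm{diag}}$ together with three strictly positive off-diagonal scalars. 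Consequently, on each off-diagonal plane $\mathbb{M}^{\circ}(B)$ acts by a strictly positive scalar, and combined with the strict positivity of the off-diagonal Daleckii-Krein weights of $\widehat\sigma(\log B)$ this renders the off-diagonal contributions of CSP and of $\sym\DD_{\log B}\widehat\sigma\in\Sym^{++}_4(6)$ equivalent scalar-by-scalar. The strict Hilbert-monotonicity~\eqref{eqappendix11} then follows by integrating $t\mapsto\langle\DD_{\log B}\widehat\sigma(tX_1+(1-t)X_2).(X_1-X_2),X_1-X_2\rangle$ over $[0,1]$ with $X_i=\log B_i$, exactly as in Theorem~\ref{thm4.3}.

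The $3\times 3$ diagonal block is where the conjecture really rests. Writing $\Lambda=(\partial\widehat\sigma_i/\partial\log\lambda_j)_{i,j}$ and $M^{\mathrm{diag}}=\diag(1/b_i)\,A^{\mathrm{diag}}$, CSP reads $\sym(\Lambda\,M^{\mathrm{diag}})\in\Sym^{++}(3)$ whereas TSTS-M$^{++}$ reads $\sym\Lambda\in\Sym^{++}(3)$. The \emph{principal obstacle} is that the product of two $3\times 3$ symmetric positive-definite matrices is generically not symmetric, so $M^{\mathrm{diag}}$ is only guaranteed to have positive eigenvalues; moreover, even granting $M^{\mathrm{diag}}\in\Sym^{++}(3)$, the equivalence $\sym(\Lambda\,M^{\mathrm{diag}})\in\Sym^{++}(3)\iff\sym\Lambda\in\Sym^{++}(3)$ generally fails — explicit counterexamples come from the anticommutator of two SPD matrices being indefinite. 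The equivalence does go through cleanly in the privileged case where $M^{\mathrm{diag}}$ is a positive scalar multiple of the identity, which is exactly what happens for the Zaremba-Jaumann and Green-Naghdi rates: both yield $A^{\mathrm{diag}}=\diag(2b_i)$, hence $M^{\mathrm{diag}}=2\,\id$. The conjecture therefore reduces to establishing that the restricted subclass of positive corotational rates (Definitions~\ref{appmaterialspins}--\ref{appendixpcd}, cf.~\cite{neffkoro2024}) forces either the diagonal-block degeneration $M^{\mathrm{diag}}=c(B)\,\id$ with $c(B)>0$, or at least the uniform commutation $[\Lambda,M^{\mathrm{diag}}]=0$; pinpointing this structural restriction on the underlying spin is the heart of the open problem.
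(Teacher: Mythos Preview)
The statement you are addressing is explicitly labelled a \emph{conjecture} in the paper; the authors do not prove it and defer the argument to \cite{neffkoro2024} and future work. Your write-up recognises this and is really an outline of the mechanism together with an identification of what you regard as the residual obstacle. The structural reduction you give --- chain rule $\Rightarrow$ $\H^{\circ}(\sigma)=\DD_{\log B}\widehat\sigma\circ\mathbb{M}^{\circ}(B)$ with $\mathbb{M}^{\circ}=\DD_B\log B\circ\mathbb{A}^{\circ}$, followed by block-diagonalisation in the eigenframe of $B$ --- is correct and is the route the paper is gesturing at.

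However, your ``principal obstacle'' on the diagonal $3\times3$ block is not actually present for the class the conjecture targets. By Definition~\ref{appendixpcd}, positive corotational rates are in particular \emph{material spins}, and for those the representation \eqref{eqAAb} shows that the additional spin contributions are built from $\sk(BD)$, $\sk(B^2D)$, $\sk(B^2DB)$. With $B$ diagonal and $D$ diagonal these products are diagonal, hence their skew parts vanish, so on $\Sym^{\mathrm{diag}}(3)$ one always has $\mathbb{A}^{\circ}(B).D=BD+DB$, i.e.\ $A^{\mathrm{diag}}=\diag(2b_i)$ and therefore $M^{\mathrm{diag}}=\diag(1/b_i)\cdot\diag(2b_i)=2\,\id$ \emph{universally} for this class --- exactly the degeneration you said would suffice. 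Your reduction thus already closes the diagonal block; the genuine content (and what the paper leaves to \cite{neffkoro2024}) lies rather in making the off-diagonal bookkeeping and the distinct/coalescing-eigenvalue cases rigorous for general isotropic (not primary) $\widehat\sigma$.

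A minor point of phrasing: you speak of ``strict positivity of the off-diagonal Daleckii--Krein weights of $\widehat\sigma(\log B)$''. Those weights $(\widehat\sigma_i-\widehat\sigma_j)/(\log\lambda_i-\log\lambda_j)$ are not assumed positive --- their positivity is precisely the off-diagonal part of TSTS-M$^{++}$ --- and Daleckii--Krein as stated applies to primary matrix functions, whereas $\widehat\sigma$ is merely isotropic. What you need (and what is true by isotropy) is that both $\DD_{\log B}\widehat\sigma$ and $\mathbb{M}^{\circ}$ act by scalars on each off-diagonal line, and since the $\mathbb{M}^{\circ}$-scalar is positive the two positivity conditions coincide.
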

\section{The CSP from a matrix-analysis viewpoint}
It is clear that showing the far reaching generalizations in Conjecture \ref{conjCSPgeneral} cannot be undertaken by mimicking the technical calculations in principal Lagrangean axis given in \cite{tobedone}. To this aim, we need a new perspective, as introduced in the following section.
\subsection{Chain rule formula for arbitrary corotational rates} \label{appendchainrule}
In order to further investigate the relation between CSP and TSTS-M$^{++}$ for the logarithmic rate, we derive here an important and useful chain rule formula for an arbitrary corotational rate $\frac{\DD^{\circ}}{\DD t}$.
\subsubsection{Chain rule formula for primary matrix functions}
In \cite[p.~19, Theorem 2]{xiao98_1}, \cite[p.~7]{fiala2020objective}, \cite[p.~1066, Theorem 2.3]{korobeynikov2018} and \cite[p.~10, Lemma 1]{Norris2008}, a chain rule like formula for corotational rates acting on \emph{primary matrix functions} (cf.~Appendix~\ref{appendixnotation})
is supplied. More precisely, they prove the following.
\begin{prop} \label{propallgchain01}
Let $\sigma = \sigma(B) \in C^1(\Sym^{++}(3), \Sym(3))$ be a primary matrix function (e.g.~$B \mapsto \log B$) and consider an arbitrary corotational rate
	\begin{align}
	\frac{\DD^{\circ}}{\DD t}[\sigma(B)] = \frac{\DD}{\DD t}[\sigma(B)] + \sigma(B) \, \Omega^{\circ} - \Omega^{\circ} \, \sigma(B)
	\end{align}
defined by any spin tensor $\Omega^{\circ} \in \mathfrak{so}(3)$. Then $\sigma(B)$ obeys the chain rule
	\begin{align}
	\frac{\DD^{\circ}}{\DD t}[\sigma(B)] = \DD_B \sigma(B).\!\left[\frac{\DD^{\circ}}{\DD t}[B]\right].
	\end{align}
\end{prop}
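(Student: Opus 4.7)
The strategy is to show that Proposition \ref{propallgchain01} follows essentially immediately from the identity \eqref{eqgeneralomsig} derived in Remark \ref{theoneremark}, combined with the standard material chain rule and the definition of the corotational rate. The crucial observation is that a primary matrix function $B\mapsto\sigma(B)$ is automatically an isotropic tensor function, since $\sigma(QBQ^T)=Q\sigma(B)Q^T$ follows from the spectral characterization $\sigma(B)=\sum_i f(\lambda_i)\,P_i$ with the spectral projections $P_i$ transforming equivariantly under the rotation $Q$. Consequently the universal commutator identity
\begin{equation*}
\Omega^{\circ}\,\sigma(B)-\sigma(B)\,\Omega^{\circ} \;=\; \DD_B\sigma(B).[\,\Omega^{\circ}B - B\,\Omega^{\circ}\,]
\end{equation*}
from Remark \ref{theoneremark} applies to any spin $\Omega^{\circ}\in\mathfrak{so}(3)$.

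The first step is to write out the definition $\frac{\DD^{\circ}}{\DD t}[B]=\dot B+B\,\Omega^{\circ}-\Omega^{\circ}B$ and solve it for the material derivative: $\dot B=\frac{\DD^{\circ}}{\DD t}[B]+\Omega^{\circ}B-B\,\Omega^{\circ}$. Next, I apply the standard (Fréchet) chain rule to $\sigma$, namely $\frac{\DD}{\DD t}[\sigma(B)]=\DD_B\sigma(B).\dot B$, and substitute the above expression for $\dot B$ to obtain
\begin{equation*}
\frac{\DD}{\DD t}[\sigma(B)] \;=\; \DD_B\sigma(B).\!\left[\frac{\DD^{\circ}}{\DD t}[B]\right] \;+\; \DD_B\sigma(B).[\,\Omega^{\circ}B-B\,\Omega^{\circ}\,].
\end{equation*}
By the commutator identity above, the last term equals $\Omega^{\circ}\sigma(B)-\sigma(B)\,\Omega^{\circ}$. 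Rearranging yields
\begin{equation*}
\frac{\DD}{\DD t}[\sigma(B)] + \sigma(B)\,\Omega^{\circ} - \Omega^{\circ}\,\sigma(B) \;=\; \DD_B\sigma(B).\!\left[\frac{\DD^{\circ}}{\DD t}[B]\right],
\end{equation*}
and recognizing that the left-hand side is precisely the defining expression of $\frac{\DD^{\circ}}{\DD t}[\sigma(B)]$ closes the argument.

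There is no serious obstacle here: the content of the proposition is essentially bookkeeping once \eqref{eqgeneralomsig} is available. The only point requiring care is justifying why \eqref{eqgeneralomsig} applies to $\sigma(B)$, and this is guaranteed because primary matrix functions are isotropic tensor functions in the sense of \eqref{eqratetype02}, which in turn is exactly the hypothesis under which the commutator identity was established in Remark \ref{theoneremark}. It is worth remarking in passing that the proof does not actually require $\sigma$ to be a primary matrix function in the narrow sense; any $C^1$ isotropic tensor function $\sigma\colon\Sym^{++}(3)\to\Sym(3)$ would suffice, a fact that will be useful when extending the chain rule to more general isotropic constitutive laws later in Section \ref{chainruleallg}.
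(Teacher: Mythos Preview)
Your proof is correct. Note, however, that the paper does not actually prove Proposition~\ref{propallgchain01} itself: it merely cites the result from the literature (Xiao et al., Fiala, Korobeynikov, Norris) and then moves on. Your argument, based on the commutator identity \eqref{eqgeneralomsig} from Remark~\ref{theoneremark} together with the material chain rule, is precisely the mechanism the paper deploys a few paragraphs later to establish the \emph{more general} Proposition~\ref{apppropa20} for arbitrary isotropic tensor functions (see the computation \eqref{eqallglong1}). You have, in effect, recognised that the ``primary matrix function'' hypothesis is superfluous once \eqref{eqgeneralomsig} is in hand, and your closing remark anticipates exactly the extension carried out in Section~\ref{chainruleallg}. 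In the paper's logical architecture, Proposition~\ref{propallgchain01} (for the special case $\sigma=\log$) is invoked only at the very last step of \eqref{eqallglong1} to pass from $\DD_B\log B.\bigl[\frac{\DD^{\circ}}{\DD t}[B]\bigr]$ to $\frac{\DD^{\circ}}{\DD t}[\log B]$; your direct argument shows that even this appeal to the cited literature is unnecessary.
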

\noindent Proposition \ref{propallgchain01} constitutes a partial chain rule result. Indeed, although being true for \textbf{any corotational rate}, it is \textbf{restricted to isotropic matrix functions that are primary matrix functions}. Nevertheless, we can e.g.~already calculate
	\begin{align}
	\frac{\DD^{\circ}}{\DD t}[B] = \frac{\DD^{\circ}}{\DD t}[V^2] = \DD_V[V^2].\frac{\DD^{\circ}}{\DD t}[V] = V \, \frac{\DD^{\circ}}{\DD t}[V] + \frac{\DD^{\circ}}{\DD t}[V] \, V
	\end{align}
so that $\frac{\DD^{\circ}}{\DD t}[V]$ is uniquely determined by $\frac{\DD^{\circ}}{\DD t}[B]$ since $V \in \Sym^{++}(3)$ (cf.~\cite[eq.~3.5b]{Norris2008}) and $X \mapsto X \, V + V \, X$ is invertible for $\frac{\DD^{\circ}}{\DD t}[V] = X \in \Sym(3)$. \\
\\
Next, we generalize Proposition \ref{propallgchain01} by first \textbf{directly} proving a chain rule formula for the corotational Zaremba-Jaumann rate and the corotational Green-Naghdi rate for \textbf{all isotropic tensor functions} $\sigma(B)$, which will then be generalized to a \textbf{chain rule} for an \textbf{arbitrary corotational rate} $\frac{\DD^{\circ}}{\DD t}$ and all \textbf{isotropic tensor functions} $\sigma(B)$. 
%
\subsubsection{Chain rule formulas for the corotational Zaremba-Jaumann and Green-Naghdi rates}
We have seen in Section \ref{sechypoelastic} that the relations
	\begin{align}
	\label{eqchainrule01}
	\frac{\DD^{\ZJ}}{\DD t}[\sigma] = \DD_B \sigma(B).[D \, B + B \, D] \qquad \textnormal{and} \qquad \frac{\DD^{\GN}}{\DD t}[\sigma] = \DD_B \sigma(B).[2 \, V \, D \, V]
	\end{align}
hold. For the choice $\sigma(B) = B$ this yields\footnote
{
Recall that \eqref{eqZJB} can also directly be calculated. Surely, we have
	\begin{align*}
	\frac{\DD^{\ZJ}}{\DD t}[B] = \frac{\DD}{\DD t} B + B \, W - W \, B = L \, B + B \, L^T + B \, W - W \, B = (D + W) \, B + B \, (D - W) + B \, W - W \, B = D \, B + B \, D.
	\end{align*}
}
	\begin{align}
	\label{eqZJB}
	\frac{\DD^{\ZJ}}{\DD t}[B] = \DD_B B.[D \, B + B \, D] = \textnormal{id}.[D \, B + B \, D] = D \, B + B \, D
	\end{align}
and
	\begin{align}
	\frac{\DD^{\GN}}{\DD t}[B] = \DD_B B.[2 \, V \, D \, V] = \textnormal{id}.[2 \, V \, D \, V] = 2 \, V \, D \, V,
	\end{align}
implying that \eqref{eqchainrule01} can formally be rewritten as
	\begin{align}
	\frac{\DD^{\ZJ}}{\DD t}[\sigma] = \DD_B \sigma(B).\!\left[\frac{\DD^{\ZJ}}{\DD t}[B]\right] \qquad \textnormal{and} \qquad \frac{\DD^{\GN}}{\DD t}[\sigma] = \DD_B \sigma(B).\!\left[\frac{\DD^{\GN}}{\DD t}[B]\right].
	\end{align}
If we assume that $\sigma(B) = \widehat{\sigma}(\log B)$, application of the standard chain rule implies
	\begin{align}
	\label{eqchainZJ01}
	\frac{\DD^{\ZJ}}{\DD t}[\widehat{\sigma}(\log B)] = \DD_B \{\widehat{\sigma}(\log B)\}.\!\left[\frac{\DD^{\ZJ}}{\DD t}[B]\right] = \DD_{\log B} \widehat{\sigma} . \DD_B \log B .\!\left[\frac{\DD^{\ZJ}}{\DD t}[B]\right]
	\end{align}
and
	\begin{align}
	\label{eqchainGN01}
	\frac{\DD^{\GN}}{\DD t}[\widehat{\sigma}(\log B)] = \DD_B \{\widehat{\sigma}(\log B)\}.\!\left[\frac{\DD^{\GN}}{\DD t}[B]\right] = \DD_{\log B} \widehat{\sigma} . \DD_B \log B .\!\left[\frac{\DD^{\GN}}{\DD t}[B]\right].
	\end{align}
Additionally, applying formula \eqref{eqchainrule01} to the (primary matrix) function $B \mapsto \log B$ yields
	\begin{align}
	\frac{\DD^{\ZJ}}{\DD t}[\log B] = \DD_B \log B . [B \, D + D \, B]=\DD_B \log B.\! \left[\frac{\DD^{\ZJ}}{\DD t}[B]\right],
	\end{align}
so that by using the standard chain rule again we can also write
	\begin{equation}
	\begin{alignedat}{2}
	\H^{\ZJ}(\sigma).D &= \frac{\DD^{\ZJ}}{\DD t}[\sigma] = \DD_B \sigma(B) . [B \, D + D \, B] \\
	&= \DD_{\log B} \widehat{\sigma}(\log B) . \DD_B \log B . [B \, D + D \, B] = \DD_{\log B} \widehat{\sigma}(\log B).\!\left[\frac{\DD^{\ZJ}}{\DD t}[\log B]\right],
	\end{alignedat}
	\end{equation}
where obviously the same holds true for the Green-Naghdi rate. \\
\\
The transformation $\sigma(B) = \widehat \sigma(\log B)$ can be replaced by an arbitrary transformation of the type $\sigma(B) = \widetilde \sigma(g(B))$ with a differentiable  mapping $B \mapsto g(B)$ (neither $\widetilde \sigma$ nor $g$ need be monotone), yielding
	\begin{align}
	\frac{\DD^{\ZJ}}{\DD t}[\widetilde{\sigma}(g(B))] = \DD_B \{\widetilde{\sigma}(g(B))\}.\!\left[\frac{\DD^{\ZJ}}{\DD t}[B]\right] = \DD_{g(B)} \widetilde{\sigma} . \DD_B g(B) .\!\left[\frac{\DD^{\ZJ}}{\DD t}[B]\right] = \DD_{g(B)} \widetilde \sigma .\!\left[ \frac{\DD^{\ZJ}}{\DD t}[g(B)] \right]
	\end{align}
and
	\begin{align}
	\frac{\DD^{\GN}}{\DD t}[\widetilde{\sigma}(g(B))] = \DD_B \{\widetilde{\sigma}(g(B))\}.\!\left[\frac{\DD^{\GN}}{\DD t}[B]\right] = \DD_{g(B)} \widetilde{\sigma} . \DD_B g(B) .\!\left[\frac{\DD^{\GN}}{\DD t}[B]\right] = \DD_{g(B)} \widetilde \sigma .\!\left[ \frac{\DD^{\GN}}{\DD t}[g(B)] \right]
	\end{align}
instead. \\
\\
Let us independently check the validity of the equation \eqref{eqchainZJ01} for the Zaremba-Jaumann and \eqref{eqchainGN01} for the Green-Naghdi derivative by a direct calculation. To this end, let $R \in \OO(3)$ be the orthogonal rotation tensor with $\dot{R} \, R^T = \Omega$ and let $Q^W = Q = Q(t) \in \OO(3)$ be a matrix with $\dot{Q} \, Q^T = W$. Then we can express the corotational rates as
	\begin{align}
	\frac{\DD^{\ZJ}}{\DD t}[\sigma] = Q \, \frac{\DD}{\DD t} \, [Q^T \, \sigma \, Q] \, Q^T \qquad \textnormal{and} \qquad \frac{\DD^{\GN}}{\DD t}[\sigma] = R \, \frac{\DD}{\DD t}\, [R^T \, \sigma \, R] \, R^T \, .
	\end{align}
Since the upcoming calculation is the same for either rate (Zaremba-Jaumann with rotation matrix $Q$ and Green-Naghdi with matrix $R$), we will only carry it out for the Zaremba-Jaumann derivative. Writing $\widehat \sigma = \widehat \sigma(\log B)$ and recalling $\frac{\DD}{\DD t} B = L \, B + B \, L^T$ we obtain together with the chain rule for the material derivative $\frac{\DD}{\DD t}[\widehat \sigma(\log B)]$
	\begin{equation}
	\label{thelasteq}
	\begin{alignedat}{2}
	\frac{\DD^{\ZJ}}{\DD t}[\widehat \sigma] &= Q \, \dot{Q}^T \, \widehat \sigma \, Q \, Q^T + Q \, Q^T \, \widehat \sigma \, \dot{Q} \, Q^T + \frac{\DD}{\DD t}[\widehat \sigma(\log B)] \\
	&= Q \, \dot{Q}^T \, \widehat \sigma \, Q \, Q^T + Q \, Q^T \, \widehat \sigma \, \dot{Q} \, Q^T + \DD_{\log B} \widehat \sigma . \DD_B \log B.\!\left[\frac{\DD}{\DD t} B\right] \\
	&= W^T \, \widehat \sigma + \widehat \sigma \, W + \underbrace{\DD_{\log B} \widehat \sigma . \DD_B \log B}_{\DD_B [\widehat \sigma(\log B)]}.[L \, B + B \, L^T] \\
	&= \widehat \sigma \, W - W \, \widehat \sigma + \underbrace{\DD_B \widehat \sigma.[W \, B - B \, W]}_{\overset{\eqref{eqratetype4}}{=} W \, \widehat \sigma - \widehat \sigma W} + \DD_{\log B} \widehat \sigma . \DD_B \log B.[D \, B + B \, D] \\
	&= \DD_{\log B} \widehat \sigma . \DD_B \log B.[D \, B + B \, D] = \DD_{\log B} \widehat \sigma . \DD_B \log B.\!\left[\frac{\DD^{\ZJ}}{\DD t}[B]\right].
	\end{alignedat}
	\end{equation}
\subsubsection{Chain rule like formula for an arbitrary corotational rate} \label{chainruleallg}
We now generalize both Proposition~\ref{propallgchain01} and the above observations on the Zaremba-Jaumann and Green-Naghdi rates to a chain rule formula for arbitrary corotational rates and isotropic (not necessarily primary) matrix functions.
\begin{prop} \label{apppropa20}
Let $\frac{\DD^{\circ}}{\DD t}$ be an arbitrary corotational rate with spin tensor $\Omega^{\circ} \in \mathfrak{so}(3)$ and an isotropic, differentiable function $\sigma = \sigma(B) = \widehat \sigma(\log B)$. Then the chain rule
	\begin{align}
	\frac{\DD^{\circ}}{\DD t}[\widehat \sigma] = \DD_{\log B} \widehat \sigma(\log B) . \frac{\DD^{\circ}}{\DD t}[\log B]
	\end{align}
holds
\end{prop}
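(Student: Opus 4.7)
The plan is to mimic the direct computation carried out for the Zaremba-Jaumann rate in \eqref{thelasteq}, but replace the spin-specific identity \eqref{eqratetype4} with the universal identity \eqref{eqgeneralomsig} from Remark~\ref{theoneremark}, which holds for \emph{any} isotropic tensor function and \emph{any} skew-symmetric spin $\Omega^{\circ}$. The strategy is to unfold the corotational rate of $\widehat\sigma$ into a material derivative plus commutator terms, handle the material derivative by the ordinary chain rule, and then rewrite the commutator terms so they combine into the corotational rate of $\log B$.

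Concretely, I would first expand, using the definition of an arbitrary corotational rate,
\[
\frac{\DD^{\circ}}{\DD t}[\widehat\sigma(\log B)] \;=\; \frac{\DD}{\DD t}[\widehat\sigma(\log B)] \,+\, \widehat\sigma\,\Omega^{\circ} \,-\, \Omega^{\circ}\,\widehat\sigma,
\]
and apply the ordinary chain rule to the material derivative to obtain $\frac{\DD}{\DD t}[\widehat\sigma(\log B)] = \DD_{\log B}\widehat\sigma \,.\, \frac{\DD}{\DD t}[\log B]$. For the commutator terms, the crucial observation is that the composition $B\mapsto\widehat\sigma(\log B)$ is itself an isotropic tensor function of $B$, so the universal identity \eqref{eqgeneralomsig} applies and gives
\[
\Omega^{\circ}\,\widehat\sigma - \widehat\sigma\,\Omega^{\circ} \;=\; \DD_{B}[\widehat\sigma(\log B)] .\, [\Omega^{\circ}B - B\,\Omega^{\circ}] \;=\; \DD_{\log B}\widehat\sigma \,.\, \DD_{B}\log B .\, [\Omega^{\circ}B - B\,\Omega^{\circ}].
\]

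The key trick is now to invoke \eqref{eqgeneralomsig} a second time, this time for the primary matrix function $\sigma(B)=\log B$ itself, which yields
\[
\DD_{B}\log B .\, [\Omega^{\circ}B - B\,\Omega^{\circ}] \;=\; \Omega^{\circ}\log B - \log B\,\Omega^{\circ}.
\]
Substituting back, the commutator contribution takes the form $\DD_{\log B}\widehat\sigma \,.\, [\Omega^{\circ}\log B - \log B\,\Omega^{\circ}]$. Factoring $\DD_{\log B}\widehat\sigma$ out of the sum and re-identifying the bracketed expression as the corotational rate of $\log B$ (again by definition of $\frac{\DD^{\circ}}{\DD t}$), I arrive at
\[
\frac{\DD^{\circ}}{\DD t}[\widehat\sigma] \;=\; \DD_{\log B}\widehat\sigma .\,\Bigl(\frac{\DD}{\DD t}[\log B] + \log B\,\Omega^{\circ} - \Omega^{\circ}\log B\Bigr) \;=\; \DD_{\log B}\widehat\sigma(\log B) .\, \frac{\DD^{\circ}}{\DD t}[\log B].
\]

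I do not expect a genuine obstacle: the entire argument is driven by the universal identity \eqref{eqgeneralomsig}, whose proof (via \eqref{eqratetype3} with $Q(0)=\id$, $\dot Q(0)=\Omega^{\circ}$) is already in place, and the two applications (one to $\widehat\sigma\circ\log$, one to $\log$) do the whole bookkeeping for us. The only mild subtlety is recognizing that the composition $B\mapsto\widehat\sigma(\log B)$ inherits isotropy from the isotropy of $\widehat\sigma$ and of $\log$, so that \eqref{eqgeneralomsig} legitimately applies to it; this is immediate from $Q\,\log B\,Q^{T}=\log(QBQ^{T})$. The same proof template shows, with no extra work, that $\log$ may be replaced by any differentiable isotropic tensor function $g$, giving the more general chain rule $\frac{\DD^{\circ}}{\DD t}[\widetilde\sigma(g(B))] = \DD_{g(B)}\widetilde\sigma .\, \frac{\DD^{\circ}}{\DD t}[g(B)]$.
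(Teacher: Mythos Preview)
Your proof is correct and very close to the paper's, which likewise unfolds the corotational rate into material derivative plus commutator, applies the ordinary chain rule, and invokes the universal identity \eqref{eqgeneralomsig} for the composition $B\mapsto\widehat\sigma(\log B)$. The one organizational difference is in the final step: the paper routes the material-derivative term through $\frac{\DD}{\DD t}[B]$, splits off $\frac{\DD^{\circ}}{\DD t}[B]$, and then appeals to Proposition~\ref{propallgchain01} (the chain rule for \emph{primary} matrix functions) to pass from $\DD_B\log B.\frac{\DD^{\circ}}{\DD t}[B]$ to $\frac{\DD^{\circ}}{\DD t}[\log B]$. You instead work directly at the level of $\log B$ and apply \eqref{eqgeneralomsig} a \emph{second} time, to $\sigma(B)=\log B$, to handle the commutator there. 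This is slightly more self-contained---it uses only \eqref{eqgeneralomsig} and no auxiliary proposition---and, as you note, it makes transparent that $\log$ plays no special role: the argument goes through verbatim with any differentiable isotropic $g$ in place of $\log$, whereas the paper's last step formally relies on $\log$ being a primary matrix function.
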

\begin{proof}
Recall the formula
	\begin{equation}
	\label{eqboxedrateeq}
	\boxed{\Omega^{\circ} \, \sigma(B) - \sigma(B) \, \Omega^{\circ} = \DD_B\sigma(B).[\Omega^{\circ} \, B - B \, \Omega^{\circ}],}
	\end{equation}
derived in Remark \ref{theoneremark}, alternatively expressed as
	\begin{equation}
	\boxed{[\Omega^{\circ}, \sigma(B)] = \DD_B \sigma(B).[\Omega^{\circ},B]}
	\end{equation}
for an arbitrary corotational derivative
	\begin{align}
	\frac{\DD^{\circ}}{\DD t}[\sigma] = \frac{\DD}{\DD t}[\sigma] - \Omega^{\circ} \, \sigma + \sigma \, \Omega^{\circ} = \frac{\DD}{\DD t}[\sigma] + [\sigma, \Omega^{\circ}]
	\end{align}
with spin tensor $\Omega^{\circ} \in \mathfrak{so}(3)$ and an isotropic, differentiable function $\sigma = \sigma(B) = \widehat \sigma(\log B)$. Repeating the calculations of \eqref{thelasteq} shows (with $\dot Q \, Q^T = \Omega^{\circ}$ and $\widehat \sigma = \widehat \sigma(\log B)$) that
	\begin{equation}
	\label{eqallglong1}
	\begin{alignedat}{2}
	\frac{\DD^{\circ}}{\DD t}[\widehat \sigma] &= Q \, \frac{\DD}{\DD t}[Q^T \, \widehat \sigma \, Q] \, Q^T = Q \, \dot{Q}^T \, \widehat \sigma \, Q \, Q^T + Q \, Q^T \, \widehat \sigma \, \dot{Q} \, Q^T + \frac{\DD}{\DD t}[\widehat \sigma(\log B)] \\
	&= Q \, \dot{Q}^T \, \widehat \sigma \, Q \, Q^T + Q \, Q^T \, \widehat \sigma \, \dot{Q} \, Q^T + \DD_{\log B} \widehat \sigma . \DD_B \log B.\!\left[\frac{\DD}{\DD t} B\right] \\
	&= (\Omega^{\circ})^T \, \widehat \sigma + \widehat \sigma \, \Omega^{\circ} + \underbrace{\DD_{\log B} \widehat \sigma . \DD_B \log B}_{\DD_B \sigma(B)}.\!\left[\frac{\DD^{\circ}}{\DD t}[B] + \Omega^{\circ} \, B - B \, \Omega^{\circ} \right] \\
	&= \underbrace{\sigma \, \Omega^{\circ} - \Omega^{\circ} \, \sigma + \underbrace{\DD_B \sigma.[\Omega^{\circ} \, B - B \, \Omega^{\circ}]}_{\overset{\eqref{eqboxedrateeq}}{=} \Omega^{\circ} \, \sigma - \sigma \, \Omega^{\circ}}}_{= \, 0} + \DD_{\log B} \widehat \sigma . \DD_B \log B.\!\left[\frac{\DD^{\circ}}{\DD t} B \right] \\
	&= \DD_{\log B} \widehat \sigma . \DD_B \log B.\!\left[\frac{\DD^{\circ}}{\DD t}[B]\right] = \DD_{\log B} \widehat \sigma(\log B) . \frac{\DD^{\circ}}{\DD t}[\log B],
	\end{alignedat}
	\end{equation}
where in $\eqref{eqallglong1}_1$ we used the chain rule for the material derivative $\frac{\DD}{\DD t}[\widehat \sigma(\log B)]$, in $\eqref{eqallglong1}_2$ that \break $\frac{\DD}{\DD t} B = \frac{\DD^{\circ}}{\DD t}[B] + \Omega^{\circ} \, B - B \, \Omega^{\circ}$ and in $\eqref{eqallglong1}_4$ that $\log B$ is a primary matrix function which admits a chain rule due to Proposition \ref{propallgchain01}. 
\end{proof}
\begin{rem}
The last equation of \eqref{eqallglong1} also shows that
	\begin{align}
	\frac{\DD^{\circ}}{\DD t}[\widehat \sigma] = \DD_B \sigma(B).[\DD_B \log B]^{-1}. \frac{\DD^{\circ}}{\DD t}[\log B],
	\end{align}
which extends results by Norris \cite[Lemma 2]{Norris2008} from primary matrix functions to arbitrary isotropic tensor functions.
\end{rem}
We conjecture that the corotational stress rates are, in fact, the only objective rates that satisfy a classical chain rule.
\begin{conjecture}[Chain rule and corotational rates]
Let $\frac{\DD^{\sharp}}{\DD t}$ be any objective rate. Assume that for all differentiable isotropic tensor functions $\sigma\colon \Sym^{++}(3) \to \Sym(3), \, B \mapsto \sigma(B)$ and all $B \in \Sym^{++}(3)$ we have the chain rule
	\begin{align}
	\frac{\DD^{\sharp}}{\DD t}[\sigma(B)] = \DD_B \sigma(B). \frac{\DD^{\sharp}}{\DD t}[B].
	\end{align}
Then $\frac{\DD^{\sharp}}{\DD t}$ is corotational.
\end{conjecture}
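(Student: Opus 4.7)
The plan is to apply the conjectured chain rule to a small family of carefully chosen isotropic test stress responses $\sigma(B)$ in order to force $\frac{\DD^{\sharp}}{\DD t}$ into the corotational form. The argument will proceed in three steps, followed by a representation-theoretic closure.

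First, I would evaluate the chain rule on the \emph{constant} response $\sigma(B)\equiv\id$, which is trivially isotropic and differentiable. Since $\DD_B[\id]=0$, the hypothesis immediately yields $\frac{\DD^{\sharp}}{\DD t}[\id] = 0$. Already this kills most classical non-corotational rates: for the Oldroyd upper-convected rate $\frac{\DD^{\sharp}}{\DD t}[\id] = -2D$, for Cotter--Rivlin $+2D$, for Truesdell $(\tr L)\,\id - 2D$, all of which are generically non-zero. In contrast, every corotational rate satisfies $\frac{\DD^{\circ}}{\DD t}[\id] = \id\,\Omega^{\circ} - \Omega^{\circ}\,\id = 0$ trivially, so this test is exactly the first cut that separates corotational from non-corotational.

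Second, I would test on $\sigma(B) = B^2$. With $\DD_B[B^2].H = BH + HB$, the chain rule becomes the Leibniz-type identity
\[
	\frac{\DD^{\sharp}}{\DD t}[B^2] = B\,\frac{\DD^{\sharp}}{\DD t}[B] + \frac{\DD^{\sharp}}{\DD t}[B]\,B\,.
\]
Restricting to the Noll-type class of objective rates that are linear in $\sigma$ and $\dot\sigma$ with isotropic $L$-dependent coefficients, frame-indifference forces the ansatz
\[
	\frac{\DD^{\sharp}}{\DD t}[\sigma] = \dot\sigma - \Omega^{\sharp}\,\sigma + \sigma\,\Omega^{\sharp} + \mathcal{N}(\sigma, D),
\]
where $\Omega^{\sharp}\in\mathfrak{so}(3)$ absorbs the frame rotation and $\mathcal{N}(\sigma, D)$ is an objective symmetric correction that depends only on $\sigma$ and the stretching $D$. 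Substituting this ansatz into the Leibniz-type identity and using the invertibility of $B\in\Sym^{++}(3)$, together with the Step~1 condition $\mathcal{N}(\id,D)=0$, yields a functional equation on $\mathcal{N}$ that forces $\mathcal{N}\equiv 0$; as a sanity check, the quadratic corrector $\mathcal{N}(\sigma,D) = (\sigma-\id)\,D\,(\sigma-\id)$ -- which survives Step~1 since it vanishes at $\sigma = \id$ -- can be seen to violate the Leibniz-type identity already for $\sigma(B)=B^2$ at $B=2\,\id$.

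Third, to upgrade from the single power $B^2$ to arbitrary isotropic $\sigma(B)$, I would combine the above with the classical Rivlin--Ericksen representation $\sigma(B) = \alpha_0(B)\,\id + \alpha_1(B)\,B + \alpha_{-1}(B)\,B^{-1}$ and with Proposition~\ref{propallgchain01}: the chain rule for the primary matrix functions $B$ and $B^{-1}$ is then automatic, while the chain rule for the scalar coefficients $\alpha_i$ (tested via $\sigma(B)=\alpha_i(B)\,\id$) reduces, thanks to Step~1, to constraints purely on the spin tensor $\Omega^{\sharp}$. This identifies $\frac{\DD^{\sharp}}{\DD t}$ with its skew-spin part $\dot\sigma - \Omega^{\sharp}\sigma + \sigma\Omega^{\sharp}$, i.e.~with a corotational rate in the sense of Remark~\ref{theoneremark}. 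The principal obstacle is the axiomatic specification of the admissible class: in its broadest reading, the notion of \enquote{objective rate} allows smooth nonlinear functional dependence on $\sigma$ and on the kinematic data, and pathological rates outside the linear class could in principle satisfy the chain rule on all isotropic $\sigma(B)$ without being corotational. A rigorous proof of the conjecture therefore hinges on first pinning down the admissible class -- presumably as Noll-type objective rates linear in $(\sigma,\dot\sigma)$ with isotropic coefficients built from $L$, in the spirit of \eqref{eqrepresent001} -- within which the three-step strategy above should close.
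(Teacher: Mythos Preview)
The paper states this as a \emph{Conjecture} and offers no proof; there is therefore no argument in the paper to compare your attempt against. Your write-up is a strategy sketch rather than a proof, and you correctly flag the central obstruction yourself in the final paragraph: without a precise axiomatic delineation of what counts as an ``objective rate'', the statement is not even well-posed enough to be proved. Your Step~1 test $\sigma\equiv\id\Rightarrow\frac{\DD^{\sharp}}{\DD t}[\id]=0$ is genuinely informative and does rule out the classical non-corotational Lie-type rates, but it does not by itself force corotational structure.

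The more serious gap is in Step~2. You assert that frame-indifference within the ``Noll-type class'' forces the ansatz $\frac{\DD^{\sharp}}{\DD t}[\sigma]=\dot\sigma-\Omega^{\sharp}\sigma+\sigma\,\Omega^{\sharp}+\mathcal{N}(\sigma,D)$ with $\mathcal{N}$ depending only on $(\sigma,D)$, and that the Leibniz identity $\mathcal{N}(B^{2},D)=B\,\mathcal{N}(B,D)+\mathcal{N}(B,D)\,B$ then kills $\mathcal{N}$. Neither claim is established. First, even among standard objective rates the spin can depend on $B$ (e.g.\ the Green--Naghdi spin $\Omega=\dot R R^{T}$, or the logarithmic spin), so restricting the non-corotational correction $\mathcal{N}$ to depend only on $(\sigma,D)$ already excludes large parts of the territory you need to cover. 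Second, the functional equation you derive is a necessary condition, but you give no argument that its only solution (even within your restricted ansatz) is $\mathcal{N}\equiv 0$; the single sanity check at $B=2\,\id$ eliminates one candidate but is far from exhaustive. The honest status of your proposal is therefore: a plausible heuristic programme whose completion would require (i) fixing a precise class of admissible rates, and (ii) a genuine uniqueness argument for the resulting functional equation---both of which remain open, which is presumably why the paper leaves the statement as a conjecture.
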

\begin{rem}[Leibniz-rule for derivations]
Any objective derivative can be seen as a covariant derivative \cite{kolev2024objective} which automatically satisfies the Leibniz rule of differentiation (cf.~\cite[p.~10]{kolev2024objective}). This means for $f \in C^1(\R, \R)$ and differentiable $\sigma\colon \Sym^{++}(3) \to \Sym(3), \, B \mapsto \sigma(B)$ we have
	\begin{align}
	\frac{\DD^{\sharp}}{\DD t}[f(t) \, \sigma(t)] = f(t) \, \frac{\DD^{\sharp}}{\DD t}[\sigma(t)] + f'(t) \, \sigma(t).
	\end{align}
We may check the latter for a general corotational rate:
	\begin{equation}
	\begin{alignedat}{2}
	\frac{\DD^{\circ}}{\DD t}[f \, \sigma] &= \frac{\DD}{\DD t}[f \, \sigma] + (f \, \sigma) \, \Omega^{\circ} - \Omega^{\circ} \, (f \, \sigma) \\
	&= f' \, \sigma + f \, \frac{\DD}{\DD t}[\sigma] + f \, (\sigma \, \Omega^{\circ} - \Omega^{\circ} \, \sigma) = f \, \frac{\DD^{\circ}}{\DD t}[\sigma] + f' \, \sigma.
	\end{alignedat}
	\end{equation}
\end{rem}
\begin{prop}[Product rule for corotational rates]
For two isotropic and differentiable tensor functions $\sigma_1, \sigma_2: \Sym^{++}(3) \to \Sym(3)$ we have the product rule in the form
	\begin{align}
	\frac{\DD^{\circ}}{\DD t}[\sigma_1(B) \, \sigma_2(B)] = \frac{\DD^{\circ}}{\DD t}[\sigma_1(B)] \, \sigma_2(B) + \sigma_1(B) \, \frac{\DD^{\circ}}{\DD t}[\sigma_2(B)] \, .
	\end{align}
\end{prop}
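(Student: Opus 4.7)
The plan is to use the representation of a general corotational rate through an associated orthogonal frame $Q^{\circ}(t)\in\OO(3)$ with $\dot{Q}^{\circ}(Q^{\circ})^T = \Omega^{\circ}$, namely
\begin{align*}
    \frac{\DD^{\circ}}{\DD t}[\sigma] \,=\, Q^{\circ}\,\frac{\DD}{\DD t}\bigl[(Q^{\circ})^T\,\sigma\,Q^{\circ}\bigr]\,(Q^{\circ})^T\,,
\end{align*}
which reduces the corotational rate of any spatial tensor to the ordinary material derivative of its pullback into the rotating frame. Since the material time derivative trivially satisfies the Leibniz rule, the strategy is to push both $\sigma_1(B)$ and $\sigma_2(B)$ into the corotated frame, apply the classical product rule for $\frac{\DD}{\DD t}$ there, and push the result back.

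Concretely, I would first write $A_i(t) \colonequals (Q^{\circ})^T\sigma_i(B)\,Q^{\circ}$ for $i=1,2$ and insert the identity $Q^{\circ}(Q^{\circ})^T=\id$ to get
\begin{align*}
    (Q^{\circ})^T\,\sigma_1(B)\,\sigma_2(B)\,Q^{\circ} \,=\, A_1(t)\,A_2(t)\,.
\end{align*}
Then the corotational rate of the product becomes
\begin{align*}
    \frac{\DD^{\circ}}{\DD t}[\sigma_1\,\sigma_2] \,=\, Q^{\circ}\,\frac{\DD}{\DD t}[A_1\,A_2]\,(Q^{\circ})^T \,=\, Q^{\circ}\bigl(\dot A_1\,A_2 + A_1\,\dot A_2\bigr)(Q^{\circ})^T\,.
\end{align*}
Re-inserting $Q^{\circ}(Q^{\circ})^T = \id$ between the two factors in each summand and identifying $Q^{\circ}\dot A_i\,(Q^{\circ})^T = \frac{\DD^{\circ}}{\DD t}[\sigma_i]$ as well as $Q^{\circ} A_i\,(Q^{\circ})^T = \sigma_i(B)$ yields the claimed product rule.

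As an independent cross-check (which I would include for transparency), one can work directly with the definition $\frac{\DD^{\circ}}{\DD t}[\sigma]=\dot\sigma-\Omega^{\circ}\sigma+\sigma\,\Omega^{\circ}$: expanding $\frac{\DD}{\DD t}[\sigma_1\sigma_2]=\dot\sigma_1\sigma_2+\sigma_1\dot\sigma_2$ and adding $\pm\,\sigma_1\Omega^{\circ}\sigma_2$ regroups the six terms into exactly $(\dot\sigma_1-\Omega^{\circ}\sigma_1+\sigma_1\Omega^{\circ})\sigma_2 + \sigma_1(\dot\sigma_2-\Omega^{\circ}\sigma_2+\sigma_2\Omega^{\circ})$, which is the right-hand side. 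There is no genuine obstacle here: the statement is essentially a tautology once either representation of $\frac{\DD^{\circ}}{\DD t}$ is in hand, and the only care to take is bookkeeping the noncommutative middle term $\sigma_1\Omega^{\circ}\sigma_2$, which is precisely the telescoping term that makes the Leibniz identity work for a derivation of the form $\dot{(\cdot)}+[\,\cdot\,,\Omega^{\circ}]$.
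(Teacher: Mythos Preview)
Your proposal is correct. Your cross-check by direct expansion of $\dot\sigma_1\sigma_2+\sigma_1\dot\sigma_2$ with the telescoping term $\pm\,\sigma_1\Omega^{\circ}\sigma_2$ is exactly the paper's second proof. Your primary argument differs slightly from the paper's first proof: you pull back to the rotating $Q^{\circ}$-frame and use the ordinary Leibniz rule for $\frac{\DD}{\DD t}$, whereas the paper invokes the just-established chain rule $\frac{\DD^{\circ}}{\DD t}[\sigma(B)]=\DD_B\sigma(B).\frac{\DD^{\circ}}{\DD t}[B]$ to reduce to the ordinary product rule for $\DD_B[\sigma_1\sigma_2]$. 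Both routes are equally short; the paper's first version advertises the chain rule as a working tool, while your $Q^{\circ}$-frame argument is self-contained and does not rely on that preceding proposition.
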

\begin{proof}
The proof follows easily by application of the chain rule,
	\begin{equation}
	\begin{alignedat}{2}
	\frac{\DD^{\circ}}{\DD t}[\sigma_1 \, \sigma_2] \overset{\textnormal{chain rule}}&{=} \DD_B[\sigma_1 \, \sigma_2] . \frac{\DD^{\circ}}{\DD t}[B] \overset{\text{product rule}}{=} [\DD_B \sigma_1 \, \sigma_2 + \sigma_1 \, \DD_B \sigma_2] . \frac{\DD^{\circ}}{\DD t}[B] \\
	&=\left(\DD_B \sigma_1. \frac{\DD^{\circ}}{\DD t}[B]\right) \, \sigma_2 + \sigma_1 \, \left(\DD_B \sigma_2.\frac{\DD^{\circ}}{\DD t}[B]\right) \overset{\textnormal{chain rule}}{=} \frac{\DD^{\circ}}{\DD t}[\sigma_1] \, \sigma_2 + \sigma_1 \, \frac{\DD^{\circ}}{\DD t}[\sigma_2]
	\,,
	\end{alignedat}
	\end{equation}
or alternatively by direct computation (independent of the chain rule):
	\begin{align}
	\frac{\DD^{\circ}}{\DD t}[\sigma_1 \, \sigma_2] &= \frac{\DD}{\DD t}[\sigma_1 \, \sigma_2] + \sigma_1 \, \sigma_2 \, \Omega^{\circ} - \Omega^{\circ} \, \sigma_1 \, \sigma_2 \notag \\
	&= \frac{\DD}{\DD t}[\sigma_1] \, \sigma_2 + \sigma_1 \, \frac{\DD}{\DD t}[\sigma_2] + \sigma_1 \, \sigma_2 \, \Omega^{\circ} - \sigma_1 \, \Omega^{\circ} \, \sigma_2 + \sigma_1 \, \Omega^{\circ} \, \sigma_2 - \Omega^{\circ} \, \sigma_1 \, \sigma_2 \\
	&= \left(\frac{\DD}{\DD t}[\sigma_1] + \sigma_1 \, \Omega^{\circ} - \Omega^{\circ} \, \sigma_1\right) \, \sigma_2 + \sigma_1 \, \left(\frac{\DD}{\DD t}[\sigma_2] + \sigma_2 \, \Omega^{\circ} - \Omega^{\circ} \, \sigma_2\right) = \frac{\DD^{\circ}}{\DD t}[\sigma_1] \, \sigma_2 + \sigma_1 \, \frac{\DD^{\circ}}{\DD t}[\sigma_2] \, . \notag \qedhere
	\end{align}
\end{proof}
\begin{rem}[Leibniz and product rules for the Truesdell rate]
Recalling the definition of the Truesdell rate (cf.~\cite[eq.~3]{truesdellremarks})
	\begin{align}
	\frac{\DD^{\TR}}{\DD t}[\sigma] = \frac{\DD}{\DD t}[\sigma] - L \, \sigma - \sigma \, L^T + \sigma \, \tr(D) \, ,
	\end{align}
we observe that the Truesdell rate satisfies the Leibniz rule, i.e.
	\begin{equation}
	\begin{alignedat}{2}
	\frac{\DD^{\TR}}{\DD t}[f(t) \, \sigma] &= f' \, \sigma + f \, \frac{\DD}{\DD t}[\sigma] - L \, (f \, \sigma) - (f \, \sigma) \, L^T + (f \, \sigma) \, \tr(D) \\ 
	&= f' \, \sigma + f \, \left(\frac{\DD}{\DD t}[\sigma] - L \, \sigma - \sigma \, L^T + \sigma \, \tr(D)\right) = f' \, \sigma + f \, \frac{\DD^{\TR}}{\DD t}[\sigma] \, .
	\end{alignedat}
	\end{equation}
However, \textbf{the product rule is not satisfied for the Truesdell rate} since
	\begin{equation}
	\begin{alignedat}{2}
	\frac{\DD^{\TR}}{\DD t}[\sigma_1 \, \sigma_2] &= \frac{\DD}{\DD t}[\sigma_1] \, \sigma_2 + \sigma_1 \, \frac{\DD}{\DD t}[\sigma_2] - L \, (\sigma_1 \, \sigma_2) - (\sigma_1 \, \sigma_2) \, L^T + (\sigma_1 \, \sigma_2) \, \tr(D) \\ 
	&= \sigma_1 \, \frac{\DD^{\TR}}{\DD t}[\sigma_2] + \frac{\DD}{\DD t}[\sigma_1] \, \sigma_2 - L \, \sigma_1 \, \sigma_2 + \sigma_1 \, L \, \sigma_2 \\
	&= \sigma_1 \, \frac{\DD^{\TR}}{\DD t}[\sigma_2] + \frac{\DD^{\TR}}{\DD t}[\sigma_1] \, \sigma_2 + \sigma_1 \, L \, \sigma_2 + \sigma_1 \, L^T \, \sigma_2 - \sigma_1 \, \sigma_2 \, \tr(D) \\
	&= \sigma_1 \, \frac{\DD^{\TR}}{\DD t}[\sigma_2] + \frac{\DD^{\TR}}{\DD t}[\sigma_1] \, \sigma_2 + \sigma_1 \, \left(2 \, D - \id \, \tr(D)\right) \, \sigma_2.
	\end{alignedat}
	\end{equation}
This underlines again our choice for using only corotational rates $\frac{\DD^{\circ}}{\DD t}$ (cf.~\cite{neffkoro2024}).
\end{rem}
\begin{rem}
The previously proven chain- and product rules are correct even if $\sigma$ is not symmetric and $\sigma_1 \, \sigma_2 \neq \sigma_2 \, \sigma_1$, since symmetry is not required in any of the proofs.
\end{rem}
\begin{rem}[Perfect elastic fluid]
For a perfect elastic fluid (cf.~\cite[p.~10]{Marsden83}) and an arbitrary corotational rate described by
	\begin{align}
	\frac{\DD^{\circ}}{\DD t}[\sigma] = \frac{\DD}{\DD t}[\sigma] - \Omega^{\circ} \, \sigma + \sigma \, \Omega^{\circ}
	\end{align}
with the spin tensor $\Omega^{\circ} \in \mathfrak{so}(3)$, we already see that the corotational derivative reduces to the material time derivative\footnote
{
This feature is lost for only objective rates $\frac{\DD^{\sharp}}{\DD t}$, such as the Truesdell rate.
}
of $\sigma$, i.e.
	\begin{align}
	\frac{\DD^{\circ}}{\DD t}[\sigma] = \frac{\DD}{\DD t}[\sigma] \qquad \textnormal{for $\sigma = h'(\sqrt{\det B}) \, \id$} \, ,
	\end{align}
so that for \textbf{any corotational rate}, $\H^{\circ}(\sigma).D$ for the perfectly compressible fluid has the form (for more details about the calculation see the Appendix~\ref{apphypcomp})
	\begin{align}
	\H^{\circ}(\sigma).D = h''(\sqrt{\det B}) \, \sqrt{\det B} \, \tr(D) \, \id \, .
	\end{align}
Checking positive definiteness of $\H^{\circ}(\sigma)$ leads to
	\begin{align}
	\langle \H^{\circ}(\sigma).D , D \rangle = \langle h''(\sqrt{\det B}) \, \sqrt{\det B} \, \tr(D) \, \id, D \rangle = h''(\sqrt{\det B}) \, \underbrace{\sqrt{\det B} \, \tr^2(D)}_{\ge \; 0}
	\end{align}
so that $\H^{\circ}(\sigma) \in \Sym^{++}_4(6)$ if and only if $h(x)$ is convex. This result remains true for the Cauchy-elastic case.
\end{rem}

\subsection{Corotational stability for the logarithmic rate} \label{corotationalstablog}
In Xiao et al.~\cite{xiao97_2, xiao97, xiao98_2, xiao98_1} the authors present the unique corotational rate in the family of corotational rates with spin tensors $\Omega^{\circ}$ of the form
	\begin{align}
	\label{eqmatspin11}
	\Omega^{\circ} &= W + \widetilde \Upsilon(B,D) = W + \nu_1 \, \sk(B \, D) + \nu_2 \, \sk(B^2 \, D) + \nu_3 \, \sk(B \, D \, B^2),
	\end{align}
called the ``logarithmic rate'' (cf.~\cite{Zhilin2013}). Here, each coefficient $\nu_k$ is an isotropic invariant of $B$, i.e. \break $\nu_k = \nu_k(I_1, I_2, I_3)$ with $I_1 = \tr B, \; I_2 = \tr(\Cof B), \; I_3 = \det B$. The logarithmic rate is given by
	\begin{align}
	\frac{\DD^{\log}}{\DD t}[\sigma] = \frac{\DD}{\DD t}[\sigma] + \sigma \, \Omega^{\log} - \Omega^{\log} \, \sigma, \qquad \Omega^{\log} = \Omega^{\log}(W, B,D) \in \mathfrak{so}(3),
	\end{align}
with the ``logarithmic spin'' $\Omega^{\log}$ (discovery roughly at the same time by different groups, cf.~Lehmann, Guo and Liang \cite{lehmann1991}, Reinhardt and Dubey \cite{reinhardt1996}, Xiao, Bruhns and Meyers \cite{xiao98_2} and Zhilin et al.~\cite{Zhilin2013}) that satisfies the defining relation
	\begin{align}
	\label{eqreallograte}
	\frac{\DD^{\log}}{\DD t}[\log V] = D \qquad \Bigl(\textnormal{and thus }\; \frac{\DD^{\log}}{\DD t}[\log B] = 2 \, D \Bigr).
	\end{align}
This shows conclusively that the Eulerian stretching $D = \sym \DD v$ is a true rate of the spatial logarithmic strain tensor $\log V$. While the analytical expression for the spin tensor $\Omega^{\log}$ (cf.~Footnote \ref{footnoteomega} in the Appendix) is involved, we can use the characterization \eqref{eqreallograte} to show now easily that
	\begin{align}
	\langle \frac{\DD^{\log}}{\DD t}[\sigma], D \rangle > 0 \quad \forall \, D \in \Sym(3) \! \setminus \! \{0\} \qquad \iff \qquad \langle \DD_{\log B} \widehat \sigma(\log B). D, D \rangle > 0 \quad \forall \, D \in \Sym(3) \! \setminus \! \{0\},
	\end{align}
i.e.~the equivalence of CSP with TSTS-M$^{++}$ also for the logarithmic rate. 
\begin{prop}(Equivalence of CSP with TSTS-M$^{++}$ for the logarithmic rate $\frac{\DD^{\log}}{\DD t}$) \label{proplograte}\\
Let $\sigma = \sigma(B) \in C^1(\Sym^{++}(3), \Sym(3))$ be an isotropic tensor function. Then for the logarithmic rate
	\begin{align}
	\frac{\DD^{\log}}{\DD t}[\sigma] = \frac{\DD}{\DD t}[\sigma] + \sigma \, \Omega^{\log} - \Omega^{\log} \, \sigma
	\end{align}
with the logarithmic spin $\Omega^{\log} = \dot Q^{\log} \, (Q^{\log})^T, \; Q^{\log} \in \OO(3)$ we have CSP $\iff$ TSTS-M$^{++}$.
\end{prop}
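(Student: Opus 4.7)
The plan is to reduce the statement almost entirely to the universal chain rule in Proposition~\ref{apppropa20} together with the defining property \eqref{eqreallograte} of the logarithmic rate. First, I would apply Proposition~\ref{apppropa20} to the logarithmic rate $\frac{\DD^{\log}}{\DD t}$ (noting that it indeed fits the hypothesis of the chain rule, since it is a corotational rate with spin tensor $\Omega^{\log}\in\mathfrak{so}(3)$ applied to the isotropic tensor function $\sigma(B)=\widehat\sigma(\log B)$). This immediately yields
\begin{align*}
\frac{\DD^{\log}}{\DD t}[\sigma]
\;=\;
\DD_{\log B}\widehat\sigma(\log B).\frac{\DD^{\log}}{\DD t}[\log B]\,.
\end{align*}

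Next, I would substitute the defining identity $\frac{\DD^{\log}}{\DD t}[\log B]=2D$ to obtain the clean representation
\begin{align*}
\frac{\DD^{\log}}{\DD t}[\sigma]
\;=\;
2\,\DD_{\log B}\widehat\sigma(\log B).D
\;\equalscolon\;
\H^{\log}(\sigma).D\,,
\end{align*}
which explicitly identifies the induced tangent stiffness tensor $\H^{\log}(\sigma)=2\,\DD_{\log B}\widehat\sigma(\log B)$. This is the structural analogue of the formulas \eqref{eq6} for the Zaremba-Jaumann and Green-Naghdi rates, but now \emph{with no additional $B$- or $V$-factor in the way}, which is precisely the exceptional feature of the logarithmic rate alluded to in the introduction.

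Pairing with $D$ and using that only the symmetric part of a linear operator on $\Sym(3)$ contributes to its associated quadratic form, I would then write
\begin{align*}
\Bigl\langle \frac{\DD^{\log}}{\DD t}[\sigma],D\Bigr\rangle
\;=\;
2\,\langle \DD_{\log B}\widehat\sigma(\log B).D,D\rangle
\;=\;
2\,\langle \sym\DD_{\log B}\widehat\sigma(\log B).D,D\rangle\,.
\end{align*}
Positivity of this quadratic form for all $D\in\Sym(3)\setminus\{0\}$ is, by definition, exactly
$\sym\DD_{\log B}\widehat\sigma(\log B)\in\Sym^{++}_4(6)$, i.e.\ TSTS-M$^{++}$, which in turn implies the strict Hilbert monotonicity of $\log B\mapsto\widehat\sigma(\log B)$ as already recorded in Theorem~\ref{thm4.3} and Remark~\ref{rem4.4}. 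Conversely, TSTS-M$^{++}$ feeds back into the representation to yield CSP.

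The proof involves no real obstacle: the only thing to be careful about is justifying the use of Proposition~\ref{apppropa20} in the present setting, namely that the logarithmic spin $\Omega^{\log}\in\mathfrak{so}(3)$ defines a bona fide corotational rate of the general form \eqref{eq:general_corotational_rate}, so that the chain rule applies verbatim; this is guaranteed by the Xiao-Bruhns-Meyers construction. The entire non-trivial content of the proof is thus absorbed into the chain rule plus the identity \eqref{eqreallograte}, and the equivalence CSP $\iff$ TSTS-M$^{++}$ follows by inspection, without the detailed Lagrangean-axis calculus needed for the Zaremba-Jaumann case.
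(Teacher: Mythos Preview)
Your proposal is correct and follows essentially the same approach as the paper: apply the chain rule from Proposition~\ref{apppropa20} to get $\frac{\DD^{\log}}{\DD t}[\widehat\sigma(\log B)]=\DD_{\log B}\widehat\sigma(\log B).\frac{\DD^{\log}}{\DD t}[\log B]$, then use the defining identity \eqref{eqreallograte} to replace $\frac{\DD^{\log}}{\DD t}[\log B]$ by $2D$, and read off the equivalence between CSP and $\sym\DD_{\log B}\widehat\sigma(\log B)\in\Sym^{++}_4(6)$.
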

\begin{proof}
We write $\sigma(B) = \widehat \sigma(\log B)$ and calculate with the chain rule for corotational rates (Proposition \ref{apppropa20})
	\begin{align}
	\frac{\DD^{\log}}{\DD t}[\widehat \sigma(\log B)] = \DD_{\log B} \widehat \sigma (\log B). \frac{\DD^{\log}}{\DD t}(\log B) \overset{\eqref{eqreallograte}}{=} 2 \, \DD_{\log B} \widehat \sigma(\log B). D \, .
	\end{align}
In particular,
	\begin{equation}
	\langle \frac{\DD^{\log}}{\DD t}[\widehat \sigma(\log B)].D, D \rangle > 0 \quad\iff\quad \langle \DD_{\log B} \widehat \sigma(\log B). D , D \rangle > 0 \qquad\forall \, D \in \Sym(3) \! \setminus \! \{0\}
	\,,
	\end{equation}
which immediately implies the equivalence between CSP and $\DD_{\log B} \widehat \sigma(\log B) \in \Sym^{++}_4(6)$.
\end{proof}
\begin{rem}
The induced tangent stiffness tensor $\H^{\log}(\sigma)$ for the logarithmic rate takes an exceptionally simple form. Indeed, since
	\begin{align}
	\frac{\DD^{\log}}{\DD t}[\widehat \sigma(\log B)] = \DD_{\log B} \widehat \sigma(\log B).\!\left[ \frac{\DD^{\log}}{\DD t} [\log B] \right] = 2 \, \DD_{\log B} \widehat \sigma(\log B). D
	\end{align}
we obtain
	\begin{align}
	\H^{\log}(\sigma)\colonequals 2 \, \DD_{\log B} \widehat \sigma(\log B) = \DD_{\log V} \widehat \sigma(\log V)
	\end{align}
which can be truly called ``\textbf{logarithmic tangent stiffness}''. Moreover,
	\begin{align}
	\det \H^{\log}(\sigma) \neq 0 \qquad \iff \qquad \det \DD_{\log B} \widehat \sigma(\log B) \neq 0 \qquad \iff \qquad \det \DD_B \sigma(B) \neq 0,
	\end{align}
complementing Proposition \ref{propinvert1}.
\end{rem}
Furthermore, the corotational stability postulate takes a suggestive form for the logarithmic derivative (cf.~\eqref{eqdrucker}). Indeed,
	\begin{align}
	0 < \langle \frac{\DD^{\log}}{\DD t}[\sigma], D \rangle \overset{\textnormal{log-rate}}&{=} \langle \frac{\DD^{\log}}{\DD t}[\sigma], \frac{\DD^{\log}}{\DD t}[\log V] \rangle = \langle Q^{\log} \, \frac{\DD}{\DD t}[(Q^{\log})^T \, \sigma \, Q^{\log}] \, Q^{\log}, Q^{\log} \, \frac{\DD}{\DD t}[(Q^{\log})^T \, (\log V) \, Q^{\log}] \, Q^{\log} \rangle \notag \\
	&= \langle \frac{\DD}{\DD t}[(Q^{\log})^T \, \sigma \, Q^{\log}], \frac{\DD}{\DD t}[(Q^{\log})^T \, (\log V) \, Q^{\log}] \rangle \notag \\
	&= \langle \frac{\DD}{\DD t}[(Q^{\log})^T \, \widehat \sigma(\log V) \, Q^{\log}], \frac{\DD}{\DD t}[(Q^{\log})^T \, (\log V) \, Q^{\log}] \rangle \\
	\overset{\substack{\textnormal{isotropy} \\ \textnormal{of} \; \sigma}}&{=} \langle \frac{\DD}{\DD t}[\widehat \sigma((Q^{\log})^T \, (\log V) \, Q^{\log})], \frac{\DD}{\DD t}[(Q^{\log})^T \, (\log V) \, Q^{\log}] \rangle \notag \\
	\overset{\substack{\textnormal{isotropy} \\ \textnormal{of} \; \log}}&{=} \langle \frac{\DD}{\DD t}[\widehat \sigma(\underbrace{\log((Q^{\log})^T \, V \, Q^{\log})}_{\equalscolon \, \widetilde \varepsilon})], \frac{\DD}{\DD t}[\underbrace{\log((Q^{\log})^T \, V \, Q^{\log})}_{\equalscolon \,\widetilde \varepsilon}] \rangle= \langle \frac{\DD}{\DD t}[\widehat \sigma(\widetilde \varepsilon)], \frac{\DD}{\DD t} \widetilde \varepsilon \rangle = \langle \DD \widehat \sigma(\widetilde \varepsilon). \dot{\widetilde \varepsilon}, \dot{\widetilde \varepsilon} \rangle \, . \notag
	\end{align}
Thus
	\begin{align}
	\hspace*{-10pt} \textnormal{CSP} \;\; \iff \;\; \sym \, \DD \widehat \sigma(\widetilde \varepsilon) \in \Sym^{++}_4(6) \;\; \iff \;\; \sym \DD_{\log V} \widehat \sigma(\log V) \in \Sym^{++}_4(6) \;\; \iff \;\; \textnormal{TSTS-M$^{++}$} ,
	\end{align}
arriving, once again, at the correspondence (now for the \textbf{logarithmic rate}):
	\begin{center}
	\fbox{
	\begin{minipage}[h!]{0.85\linewidth}
		\centering
		\textbf{For an isotropic Cauchy-elastic material, the corotational stability postulate (CSP) $\langle \frac{\DD^{\log}}{\DD t}[\sigma] , D \rangle > 0$ is equivalent to the strong Hilbert-monotonicity of $\widehat \sigma$ in $\log V$, which is TSTS-M$^{++}$.}
	\end{minipage}}
	\end{center}
\subsection{Material spins, invertibility considerations and positive corotational rates} \label{app13} \label{appendixAA}
The material and exposition in this section is needed as a preparatory step for tackling Conjecture \ref{conjCSPgeneral} in terms of matrix analysis instead of the cumbersome method of Lagrangean axes. The bulk of this work is displayed in \cite{neffkoro2024}. Here, we will solely give a short overview of the general concept and its implications on invertibility for a fourth order tangent stiffness tensor $\H^{\circ}(\sigma)$ induced by an arbitrary corotational rate $\frac{\DD^{\circ}}{\DD t}$.

Therefore, we begin by asking the leading question, whether or not it is always possible to write an induced fourth-order tangent stiffness tensor $\H^{\circ}(\sigma)$ in the form
	\begin{align}
	\label{eqfirsthalf}
	\H^{\circ}(\sigma).D = \frac{\DD^{\circ}}{\DD t}[\sigma] = \frac{\DD^{\circ}}{\DD t}[\widehat \sigma(\log B)] \overset{\text{chain rule}}{=} \DD_{\log B} \widehat \sigma(\log B). \DD_B \log B. \mathbb{A}^{\circ}(B).D \, ,
	\end{align}
as this identity would require the corotational derivative $\frac{\DD^{\circ}}{\DD t}$ to have the representation
	\begin{align}
	\label{eqisthatcorrect1}
	\frac{\DD^{\circ}}{\DD t}[B] = \frac{\DD}{\DD t}[B] + B \, \Omega^{\circ} - \Omega^{\circ} \, B = \mathbb{A}^{\circ}(B).D \, .
	\end{align}
In an attempt to satisfyingly answer this question, we resort to the already mentioned (cf.~\eqref{conjCSPgeneral}) class of objective corotational rates with so-called \textbf{material spins}. In this regard, we have the following Theorem \cite[p.22]{xiao98_1} and Definition \cite[p.25]{xiao98_1} from Xiao et al.:
\begin{thm} \label{thm3.19}
Let the spin tensor $\Omega^{\circ}$ of an objective, corotational rate be associated with the deformation and rotation of a deforming material body as indicated by 
	\begin{align}
	\Omega^{\circ} = \Upsilon(B, D, W)\,.
	\end{align}
Moreover, let the tensor function $\Upsilon$ be continuous with respect to the argument $B$. Then the corotational rate of any Eulerian strain measure $e$ defined by $\Omega^{\circ}$ is objective if and only if
	\begin{align}
	\label{eqxiao1}
	\Omega^{\circ} = W + \widetilde \Upsilon(B,D),
	\end{align}
where $\widetilde \Upsilon$ is an isotropic skew symmetric tensor-valued function of $B$ and $D$ that is continuous with respect to the argument $B$.
\end{thm}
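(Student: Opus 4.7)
I would follow the classical spin-transformation argument. The plan is first to translate objectivity of the corotational rate into the required transformation rule for $\Omega^{\circ}$ under a Euclidean change of observer, and then to exploit the freedom in choosing the frame change at a single instant to pin down the functional form of the spin.

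Under a change of observer $x\mapsto Q(t)\,x + c(t)$ with $Q\in\OO(3)$, one has $F^*=QF$, $B^*=QBQ^T$, $D^*=QDQ^T$, $W^*=\dot Q\,Q^T + Q\,W\,Q^T$, and any Eulerian strain measure $e$ transforms as $e^*=QeQ^T$. Expanding the corotational rate $\frac{\DD^{\circ}}{\DD t}[e]=\dot e + e\,\Omega^{\circ} - \Omega^{\circ}\,e$ in both observer frames and imposing objectivity $\frac{\DD^{\circ}}{\DD t}[e^*] = Q\,\frac{\DD^{\circ}}{\DD t}[e]\,Q^T$, a short calculation (using $\dot Q\,Q^T + Q\,\dot Q^T=0$) reduces, after the material-derivative terms cancel, to the commutator identity
\[
	\bigl[e^*,\ \Omega^{\circ*} - Q\,\Omega^{\circ}\,Q^T - \dot Q\,Q^T\bigr]=0.
\]
Since this must hold as $e^*$ runs through a rich enough family of symmetric tensors and the bracketed tensor is skew (being a difference of skew tensors), the only possibility is that it vanishes, giving the equivalent spin-transformation law
\[
	\Omega^{\circ*} = \dot Q\,Q^T + Q\,\Omega^{\circ}\,Q^T. \qquad (\star)
\]

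Now set $\widetilde\Upsilon(B,D,W) \colonequals \Upsilon(B,D,W) - W$. Since $W$ itself satisfies $(\star)$, the identity $(\star)$ is equivalent to $\widetilde\Upsilon(B^*,D^*,W^*) = Q\,\widetilde\Upsilon(B,D,W)\,Q^T$. To pin down the dependence, fix $t_0$ and choose $Q(t)=\exp((t-t_0)M)$ for arbitrary $M\in\mathfrak{so}(3)$, so that $Q(t_0)=\id$ and $\dot Q(t_0)=M$. At $t_0$ one has $B^*=B$, $D^*=D$, but $W^*=W+M$, and the transformation identity collapses to $\widetilde\Upsilon(B,D,W+M) = \widetilde\Upsilon(B,D,W)$ for every skew $M$. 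Choosing $M=-W$ shows that $\widetilde\Upsilon$ is independent of $W$; write $\widetilde\Upsilon=\widetilde\Upsilon(B,D)$. Specializing next to a time-independent rotation $Q\in\OO(3)$ (hence $\dot Q\equiv 0$) yields $\widetilde\Upsilon(QBQ^T,QDQ^T) = Q\,\widetilde\Upsilon(B,D)\,Q^T$ for every $Q\in\OO(3)$, which is exactly the required isotropy. Skew-symmetry is automatic because both $\Omega^{\circ}$ and $W$ lie in $\mathfrak{so}(3)$. The converse direction is a direct verification: if $\Omega^{\circ}=W+\widetilde\Upsilon(B,D)$ with $\widetilde\Upsilon$ isotropic and skew, the transformation rules of $W$, $B$, $D$ together with isotropy of $\widetilde\Upsilon$ immediately yield $(\star)$.

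The main obstacle is the very first reduction to $(\star)$: the kinematic freedom at a single instant does not automatically make $e^*$ range over all of $\Sym(3)$, so one needs the continuity of $\Upsilon$ in its argument $B$ (as assumed in the theorem) to pass to a dense enough family of admissible configurations, ensuring that a skew tensor commuting with every admissible $e^*$ must vanish. Once $(\star)$ is secured, the remainder is purely algebraic, and the genuinely substantive step is the pointwise elimination $W\mapsto W+M$ with $M=-W$ that collapses the three-argument function $\Upsilon(B,D,W)$ onto an isotropic two-argument function $\widetilde\Upsilon(B,D)$.
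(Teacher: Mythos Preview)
Your argument is correct and follows precisely the classical approach of Xiao, Bruhns and Meyers \cite{xiao98_1}, which is exactly what the paper invokes (the paper's proof consists solely of the citation ``See proof of Theorem 4 in \cite[p.~22]{xiao98_1}''). Your identification of the commutator reduction to $(\star)$ as the delicate step, and your use of the continuity hypothesis in $B$ to extend from configurations with three distinct principal stretches to all configurations, is the right way to close that gap.
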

\begin{proof}
See proof of Theorem 4 in \cite[p.~22]{xiao98_1}.
\end{proof}
\begin{definition}[\textbf{Material spins}] \label{appmaterialspins}
An objective corotational rate of the form \eqref{eq:general_corotational_rate} with $\Omega^{\circ}$ given by \eqref{eqxiao1} is called a \emph{material spin} if $\widetilde \Upsilon(B,D)$ is \textbf{linear in $D$}, \textbf{isotropic in $B$ and $D$} and satisfies the \textbf{homogeneity condition $\widetilde \Upsilon(\alpha \, B, D) = \widetilde \Upsilon(B,D)$} for all $\alpha > 0$.
\end{definition}
\noindent This set defines a physically reasonably large subclass of spin tensors for objective corotational rates which includes all known corotational rates. Additionally, these objective corotational rates $\frac{\DD^{\circ}}{\DD t}$ admit the general structure (cf.~\cite{neffkoro2024})
	\begin{equation}
	\label{eqAAb}
	\begin{alignedat}{2}
	\frac{\DD^{\circ}}{\DD t}[B] &= D \, B + B \, D + \nu_1 \, (B \, \sk(B \, D) - \sk(B \, D) \, B) +\nu_2 \, (B \, \sk(B^2 \, D) - \sk(B^2 \, D) \, B) \\
	&\qquad + \nu_3 \, (B \, \sk(B^2 \, D \, B) - \sk(B^2 \, D \, B) \, B) \equalscolon \mathbb{A}^{\circ}(B).D \, ,
	\end{alignedat}
	\end{equation}
so that \eqref{eqisthatcorrect1} is satisfied and $\H^{\circ}(\sigma)$ can indeed be expressed in the form \eqref{eqfirsthalf}, i.e.
	\begin{align}
	\label{eqapp227}
	\H^{\circ}(\sigma).D = \DD_{\log B} \widehat \sigma(\log B). \DD_B \log B. \mathbb{A}^{\circ}(B).D = \DD_B \sigma(B).\mathbb{A}^{\circ}(B).D
	\end{align}
with $\mathbb{A}^{\circ}(B)$ given in \eqref{eqAAb}, extending the result in Norris \cite[Lemma 1]{Norris2008} from primary matrix functions to arbitrary isotropic tensor functions. Additionally, it is shown in \cite{neffkoro2024} that the tensor $\mathbb{A}^{\circ}(B)$ is \textbf{minor and major symmetric}. Evaluation of \eqref{eqapp227} at $B = \id$ with stress free initial state ($\sigma(\id) = 0$) gives the linear elastic-like relation
	\begin{align}
	\H^{\circ}(0).D = 2 \, \DD_{\log B} \widehat \sigma(0).D = \DD_{\log V} \widehat \sigma(0).D = \C^{\iso}.D = 2 \, \mu \, D + \lambda \, \tr(D) \, \id \, .
	\end{align}
We would also like to point out the following result regarding the invertibility of $\mathbb{A}^{\circ}(B)$, further complementing Proposition \ref{propinvert1}:
\begin{prop} \label{prop3.23a}
Let $\sigma\colon \Sym^{++}(3) \subset \Sym(3) \to \Sym(3)$ be an invertible, isotropic and differentiable tensor function. Consider \textbf{any} corotational rate $\frac{\DD^{\circ}}{\DD t}$ with material spin tensor $\Omega^{\circ}$ of the form \eqref{eqxiao1} and let $\H^{\circ}(\sigma)$ be the induced tangent stiffness tensor. If $\mathbb{A}^{\circ}(B)$ is invertible, then we have the equivalence
	\begin{align}
	\det \H^{\circ}(\sigma) \neq 0 \quad \iff \quad \det \DD_{\log B} \widehat \sigma(\log B) \neq 0 \quad \iff \quad \det \DD_B \sigma(B) \neq 0
	\end{align}
where $\mathbb{A}^{\circ}(B)$ is given in \eqref{eqAAb}.
\end{prop}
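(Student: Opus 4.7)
The plan is to exploit the factorization \eqref{eqapp227},
\begin{equation*}
\H^{\circ}(\sigma).D \;=\; \DD_B \sigma(B).\,\mathbb{A}^{\circ}(B).D \;=\; \DD_{\log B} \widehat \sigma(\log B).\,\DD_B \log B.\,\mathbb{A}^{\circ}(B).D\,,
\end{equation*}
which has already been set up via Remark~\ref{theoneremark} (the universal identity $[\Omega^{\circ}, \sigma(B)] = \DD_B \sigma(B).[\Omega^{\circ}, B]$) together with the corotational chain rule of Proposition~\ref{apppropa20} and the material-spin representation \eqref{eqAAb}. The idea is simply to read off invertibility of $\H^{\circ}(\sigma)$ from invertibility of its three factors, each viewed as a linear endomorphism of $\Sym(3) \cong \R^6$.

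First I would note that $\H^{\circ}(\sigma)$, $\DD_B \sigma(B)$, $\DD_{\log B} \widehat \sigma(\log B)$, $\DD_B \log B$ and $\mathbb{A}^{\circ}(B)$ are all linear self-maps of $\Sym(3)$, so in any basis their composition corresponds to an actual product of $6 \times 6$ matrices and determinants multiply:
\begin{equation*}
\det \H^{\circ}(\sigma) \;=\; \det\bigl[\DD_B \sigma(B)\bigr]\cdot\det\bigl[\mathbb{A}^{\circ}(B)\bigr] \;=\; \det\bigl[\DD_{\log B} \widehat\sigma(\log B)\bigr]\cdot\det\bigl[\DD_B \log B\bigr]\cdot\det\bigl[\mathbb{A}^{\circ}(B)\bigr]\,.
\end{equation*}

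Next I would invoke the two structural invertibilities already available: (i) $\mathbb{A}^{\circ}(B)$ is invertible by hypothesis, so $\det \mathbb{A}^{\circ}(B) \neq 0$; (ii) $\DD_B \log B$ is invertible with $\det \DD_B \log B > 0$ by Lemma~\ref{lemhilbertmon1} of the Appendix, as was already exploited in the proof of Proposition~\ref{propinvert1}. Substituting these two facts into the determinant identity above immediately yields the chain of equivalences
\begin{equation*}
\det \H^{\circ}(\sigma) \neq 0 \;\iff\; \det \DD_B \sigma(B) \neq 0 \;\iff\; \det \DD_{\log B} \widehat\sigma(\log B) \neq 0\,,
\end{equation*}
which is precisely the claim.

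There is no substantial obstacle in this proposition itself: the whole point is that all the genuinely nontrivial work has been done upstream, first in establishing \eqref{eqboxedrateeq} and the chain rule in Proposition~\ref{apppropa20}, then in the derivation of the explicit form \eqref{eqAAb} for material-spin rates carried out in \cite{neffkoro2024}, and finally in the appendix lemma that identifies $\DD_B \log B$ as a positive definite fourth-order tensor. Once these are in hand, multiplicativity of the determinant takes over. The role of the present proposition is thus to package these pieces into a clean statement which extends Proposition~\ref{propinvert1} from the Zaremba--Jaumann and Green--Naghdi rates to the entire material-spin class of corotational rates, thereby putting the invertibility of $\H^{\circ}(\sigma)$ on exactly the same footing as that of $\DD_B \sigma$ and of $\DD_{\log B} \widehat\sigma$.
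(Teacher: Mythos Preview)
Your proposal is correct and follows exactly the paper's approach: the paper's proof is the one-line observation that the claim ``follows instantly by the representation formula \eqref{eqapp227} on noting that $\DD_B \log B \in \Sym^{++}_4(6)$,'' which is precisely the factorization-plus-determinant-multiplicativity argument you spell out.
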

\begin{proof}
This follows instantly by the representation formula \eqref{eqapp227} on noting that $\DD_B \log B \in \Sym^{++}_4(6)$.
\end{proof}
\noindent Lastly, in view of the corotational stability postulate, it now appears reasonable to focus the attention on \textbf{positive corotational derivatives}, a subclass of corotational derivatives with material spins given by
\begin{definition}[Positive corotational derivatives] \label{appendixpcd}
Positive corotational derivatives $\frac{\DD^{\circ}}{\DD t}$ are those objective corotational derivatives with material spins given by \eqref{eqxiao1} such that for $\mathbb{A}^{\circ}(B)$ defined by $\frac{\DD^{\circ}}{\DD t}[B] = \mathbb{A}^{\circ}(B).D$ we have $\mathbb{A}^{\circ}(B) \in \Sym^{++}_4(6)$.
\end{definition}
\noindent In the sense of this definition, $\frac{\DD^{\ZJ}}{\DD t}, \frac{\DD^{\GN}}{\DD t}$ and $\frac{\DD^{\log}}{\DD t}$ qualify as positive corotational rates. As mentioned in Conjecture \ref{conjCSPgeneral} we intend to show CSP $\iff$ TSTS-M$^{++}$ for all positive corotational rates in a future contribution without resorting to the burdensome principal axis calculus given in \cite{tobedone}. The result for the logarithmic rate in Proposition \ref{proplograte} shows that this result is plausible.

%
%
%
%
%
%
\clearpage
\section{Synthesis of the results} \label{sec6}
\subsection{Connection to the BCH-formula}
A much needed perspective for the obtained result will be tentatively presented next. We have seen in Theorem \ref{thm4.3} by lengthy calculations for the Zaremba-Jaumann rate (cf.~\cite{tobedone}) that for an isotropic tensor function \break $\widehat \sigma\colon \Sym(3) \to \Sym(3)$ satisfying $\widehat \sigma(Q^T \, S \, Q) = Q^T \, \widehat \sigma(S) \, Q$ for all $Q \in \OO(3)$,
	\begin{align}
	\hspace*{-8pt} \langle \DD_{\log B} \widehat \sigma(\log B) . \DD_B \log B . [B \, D + D \, B], D \rangle > 0 \quad \forall \, D \in \Sym(3) \! \setminus \! \{0\} \quad \iff \quad \langle \DD_{\log B} \widehat \sigma(\log B). D, D \rangle > 0.
	\end{align}
Thus, this equivalence would have been an easy observation if $\DD_B \log B.[B \, D + D \, B] = 2 \, D$, which is, however \textbf{false} if $B \, D \neq D \, B$ (cf.~\eqref{eqboxed2D}). Instead, the commutator $[B, D] \colonequals B \, D - D \, B$ naturally needs to be taken into account. In the following table we gather some pertinent observations in this direction. \\

\begingroup
\renewcommand{\arraystretch}{1.35}
\centering
\begin{tabular}{|c | c | c|}
\hline
On the one hand... & but still... & as seen in... \\
\hline \hline
$\exp(B + D)\neq \exp(B)\, \exp(D)$ & \makecell{if $B \, D = D \, B$ then \\ $\exp(B + D) = \exp(B)\, \exp(D)$} & \eqref{eqappendixcommute} \\
\hline
$\DD_B \log B.[B \, D + B \, D] \neq 2 \, D$ & \makecell{if $B \, D = D \, B$ then \\ $\DD_B \log B.[B \, D + D \, B] = 2 \, D$} & Proposition \ref{propA17} \\ 
\hline
$\DD_B \log B.H\neq B^{-1}H$ & \makecell{if $B \, H = H \, B$ then \\ $\DD_B \log B. H = B^{-1} \, H$} & \eqref{eqproofofa31} \\
\hline \hline
\makecell{$\DD_B \log B.H \neq B^{-1} \, H$ \\in general if $BH\neq HB$}& \makecell{$\DD_B [\widehat \WW(\log B)] = \DD_{\log B} \widehat \WW(\log B) \, B^{-1}$ \\ for an isotropic and scalar valued function $\WW$} & \makecell{Richter-Valleé \\ representation \\ cf.~Appendix~\ref{Appensansour}} \\
\hline 
$\DD_B \log B.[B \, D + B \, D] \neq 2 \, D$ & $\tr(\DD_B \log B . [B \, D + D \, B]) = \tr(2 \, D)$ & \eqref{eqappendixtrace1} \\
\hline
$\DD_B \log B.[B \, D + B \, D] \neq 2 \, D$ & $\langle \DD_B \log B.[B \, D + D \, B], D \rangle \ge c^+ \, \norm{2 \, D}^2$ & Lemma \ref{dopeasslemma001} \\
\hline
\makecell{$\exp(B+D) \neq \exp(B) \, \exp(D)$ \\ in general if $B \, D \neq D \, B$} & $\tr(\exp(B+D)) \le \tr(\exp(B) \, \exp(D))$ & \makecell{Golden-Thompson \\ inequality \\ cf.~Appendix~\ref{appgoldenth}} \\
\hline
\makecell{$\exp(B+D) \neq \exp(B) \, \exp(D)$ \\ in general if $B \, D \neq D \, B$} & $\tr(\log(\exp(B) \, \exp(D))) = \tr(B) + \tr(D)$ & implication of BCH\footnotemark \\
\hline
$\DD_B \log B.[B \, D + D \, B] \neq 2 \, D$ & \makecell{$\widehat \sigma\colon \Sym(3) \to \Sym(3)$ isotropic tensor function \\ $\langle \DD_{\log B} \widehat \sigma(\log B).(\DD_B \log B.[B \, D + D \, B]), D \rangle > 0$ \\ $\iff \quad \langle \DD_{\log B} \widehat \sigma(\log B). 2 \, D, D \rangle > 0, \quad \forall D \in \Sym(3) \! \setminus \! \{0\}$ \\ $\iff \quad \sym \, \DD_{\log B} \widehat \sigma(\log B) \in \Sym^{++}_4(6)$} & \makecell{CSP $\iff$ TSTS-M$^+$ \\ ``corotational stability \\ postulate''} \\
\hline
$\DD_B \log B.[B \, D + D \, B] \neq 2 \, D$ & \makecell{$\widehat \tau\colon\Sym(3) \to \Sym(3)$ isotropic tensor function \\ $\langle \DD_{\log B} \widehat \tau(\log B).(\DD_B \log B.[B \, D + D \, B]), D \rangle > 0$ \\ $\iff \quad \langle \DD_{\log B} \widehat \tau(\log B). 2 \, D, D \rangle > 0, \quad \forall D \in \Sym(3) \! \setminus \! \{0\}$ \\ $\iff \quad \DD_{\log B} \widehat \tau(\log B) \in \Sym^{++}_4(6)$}  & \makecell{hyperelastic \\ Hill's inequality \\ cf.~\cite{tobedone}} \\ 
\hline
\end{tabular}
\footnotetext
{
In general the \emph{Baker-Campbell-Hausdorff} (BCH) formula (cf.~\cite[p.861]{hofmann1998}) determines the expression $Z = Z(B, D)$ so that
	\begin{align*}
	\exp(B) \, \exp(D) = \exp(Z(B,D)) \qquad \iff \qquad \log(\exp(B) \, \exp(D)) = Z(B, D)
	\end{align*}
and $Z(B, D)$ involves iterated commutator-brackets, i.e.
	\begin{align*}
	Z(B,D) = \log(\exp(B) \, \exp(D)) = B + D + \frac12 \, [B, D] + \frac{1}{12} \, [B,[B,D]] - \frac{1}{12} \, [D,[B,D]] + \textnormal{h.o.t.}
	\end{align*}
}
\endgroup \\
\\[2em]
Note that this set of properties does have a common theme. Indeed, isotropic functions or operators can make up for lack of commutation (i.e.\ $[B , D] \neq 0$) in the Baker-Campbell-Hausdorff (BCH) \cite[p.861]{hofmann1998} formula if inequalities are concerned and our statement in Theorem \ref{thm4.3} is just another instance of this general observation.
\subsection{Continuum mechanics perspective - constitutive requirements}
From a continuum mechanics perspective with a view on constitutive requirements we can summarize our results in the following diagram (the result holds as well with $\frac{\DD^{\ZJ}}{\DD t}$ replaced by $\frac{\DD^{\log}}{\DD t}$).
\begin{center}
	\small
	\hspace*{-3.43em}
	\begin{tikzpicture}
	\node (A) at (0,0) {%
		$\displaystyle
		\left.
			\begin{array}{llc}
			&\Bigg\{&
				\begin{array}{c}
				\textbf{corotational stability postulate (CSP)} \\ 
				\textbf{for the Cauchy stress $\sigma$} \\
				\langle \frac{\DD^{\ZJ}}{\DD t}[\sigma], D \rangle = \langle \H^{\ZJ}(\sigma).D,D \rangle > 0 \\
				\end{array}
			\\
			\iff \quad &\Bigg\{&
				\begin{array}{c}
				\textbf{TSTS-M$^{++}$}: \\
				\sym \DD_{\log B} \widehat \sigma(\log B) \in \Sym^{++}_4(6) \\
				\implies \quad \langle \widehat \sigma(\log V_1) - \widehat \sigma(\log V_2), \log V_1 - \log V_2 \rangle > 0
				\end{array}
			\end{array}
		\right\}
		$
	};
	\node at (5.88,0) {%
	$\displaystyle
		\notiff
		$
	};
	\node (B) at (10.29,0) {%
	$\displaystyle
		\left\{
			\begin{array}{l}
			\textbf{Legendre-Hadamard ellipticity}: \\
			\DD_F^2 \WW(F). (\xi \otimes \eta, \xi \otimes \eta) \ge c^+ \, |\xi|^2 \cdot |\eta|^2 \\
			\langle S_1(F + \xi \otimes \eta) - S_1(F), \xi \otimes \eta \rangle \ge c^+ \, |\xi|^2 \, |\eta|^2
			\end{array} \right.
			\vphantom{
				\begin{array}{llc}
				&\Bigg\{&
					\begin{array}{c}
					\textbf{corotational stability postulate (CSP)} \\ 
					\textbf{for the Cauchy stress $\sigma$} \\
					\langle \frac{\DD^{\ZJ}}{\DD t}[\sigma], D \rangle = \langle \H^{\ZJ}(\sigma).D,D \rangle > 0 \\
					\end{array}
				\\
				\iff \quad &\Bigg\{&
					\begin{array}{c}
					\textbf{TSTS-M$^{++}$}: \\
					\sym \DD_{\log B} \widehat \sigma(\log B) \in \Sym^{++}_4(6) \\
					\implies \quad \langle \widehat \sigma(\log V_1) - \widehat \sigma(\log V_2), \log V_1 - \log V_2 \rangle > 0
					\end{array}
				\end{array}
			}
		$
	};
	\node (C) at (-.98,-3.43) {%
		$\displaystyle
		\textnormal{BE}: \quad (\sigma_i - \sigma_j) \, (\lambda_i - \lambda_j) \ge 0
		\vphantom{
			\textnormal{TE\footnotemark}: \quad \frac{\partial \sigma_i(\lambda_1, \lambda_2, \lambda_3)}{\partial \lambda_i} \ge 0 \quad \iff \quad \frac{\partial \widehat \sigma_i(\log \lambda_1, \log \lambda_2, \log \lambda_3)}{\partial \log \lambda_i} \ge 0
		}
		$
	};
	\node (D) at (8.33,-3.43) {%
		$\displaystyle
		\textnormal{TE\footnotemark}: \quad \frac{\partial \sigma_i(\lambda_1, \lambda_2, \lambda_3)}{\partial \lambda_i} \ge 0 \quad \iff \quad \frac{\partial \widehat \sigma_i(\log \lambda_1, \log \lambda_2, \log \lambda_3)}{\partial \log \lambda_i} \ge 0
		$
	};
	
	\coordinate (X) at ($(B.south west)!.5!(B.south)$);
	\draw[-implies,double equal sign distance] (A) -- (C.north-|A);
	\draw[-implies,double equal sign distance] (A.south) -- ($(D.north west)!.5!(D.north)$);
	\draw[-implies,double equal sign distance] (X) -- (C.north east);
	\draw[-implies,double equal sign distance] (X) -- (D.north-|X);
	\end{tikzpicture}
\end{center}
\noindent Note that TSTS-M$^{++}$ implies the tension-extension (TE) inequality since the positive definiteness of (the symmetric part of) $\frac{\partial \widehat \sigma_i(\log \lambda_1, \log \lambda_2, \log \lambda_3)}{\partial \log \lambda_j}$ implies that the diagonal entries $\frac{\partial \widehat \sigma_i}{\partial \log \lambda_i}$ are positive.

\smallskip\noindent For purely volumetric energy functions (elastic fluids, cf.~\cite[p.~10]{Marsden83} and the Appendix~\ref{appvolufunc}), the equivalences
	\begin{center}
	\fbox{
	\begin{minipage}[b][1.05cm]{0.8\linewidth}
	\begin{align*}
	&\WW(F) = h(\det F) \\[.343em]
	\textnormal{CSP} \quad \iff \quad \textnormal{TSTS-M$^{++}$} \quad \iff \quad &\textnormal{LH-ellipticity} \iff \quad \WW \; \textnormal{polyconvex} \quad \iff \quad h \; \textnormal{convex}
	\end{align*}
	\end{minipage}}
	\end{center}
%
\noindent hold, which gives support to using exclusively the Cauchy stress $\sigma$ in setting up the CSP-condition. Indeed, if we would use the spatial Kirchhoff stress $\tau = J \, \sigma$ instead, a non-convex $h$ would be permitted, e.g.~$h(\det F) = (\log \det F)^2 = \tr^2(\log V)$, which is unphysical. 

In this way, we answer the issues concerning the use of hypo-elasticity to discern constitutive properties for nonlinear elasticity as raised in the introduction as follows:
	\begin{enumerate}
	\item[1a)] which rate $\frac{\DD^{\sharp}}{\DD t}$? $\rightarrow$ objective, corotational rates $\frac{\DD^{\circ}}{\DD t}$ only
	\item[1b)] which corotational rate $\frac{\DD^{\circ}}{\DD t}$? $\rightarrow$ presumably largely irrelevant for the equivalence CSP $\iff$ TSTS-M$^{++}$ \break \hspace*{133pt} (any positive corotational rate might do, cf.~Section \ref{appendixAA} and \cite{neffkoro2024})
	\item[2)] which stress? $\rightarrow$ only the true/Cauchy stress $\sigma$
	\item[3)] which tangent stiffness tensor $\H$? $\rightarrow$ only the induced tangent stiffness tensor $\H^{\circ}(\sigma)$ \newline \hspace*{5.6cm} for the corotational rate $\frac{\DD^{\circ}}{\DD t}$.
	\end{enumerate}
Expressed by the logarithmic strain tensor $\log V$, we obtain for the idealized isotropic nonlinear elastic constitutive law given as a mapping $\log V \to \widehat \sigma(\log V) = \sigma(V)$ in terms of a rate-formulation in the spatial setting the following concordance:
	\begin{alignat}{3}
	\log V &\mapsto \widehat \sigma(\log V) \qquad && \longhookrightarrow \quad \H^{\ZJ}(B)\,, \nonumber\\
	\log V &\mapsto \widehat \sigma(\log V) \; \textnormal{invertible} \qquad && \longhookrightarrow \quad \H^{\ZJ}(\sigma)\,, \nonumber\\
	\log V &\mapsto \widehat \sigma(\log V) \; \textnormal{invertible and} \det \DD_{\log V} \widehat \sigma(\log V) \neq 0 \qquad &&\longhookrightarrow \quad \H^{\ZJ}(\sigma) \; \textnormal{invertible}\,, \nonumber\\[.7em]
	\log V &\mapsto \widehat \sigma(\log V) \; \textnormal{strongly monotone} \qquad &&\!\! \iff \quad \sym \H^{\ZJ}(\sigma) \in \Sym^{++}_4(6)\,. \label{eq:ut_tensio}
	\end{alignat}
The last requirement \eqref{eq:ut_tensio} represents our proposal for the notion that \underline{\textbf{{stress increases with strain}}} in idealized isotropic nonlinear elasticity, generalizing Hooke's linear elasticity law \underline{$\boldsymbol{\mathfrak{ut \; tensio\smash{,}\,\, sic \; vis}}$}.

\footnotesize
\clearpage
\section{Acknowledgement}
The first author is especially grateful to \textbf{Sergey N. Korobeynikov} (Lavrentyev Institute of Hydrodynamics of Russian Academy of Science, Novosibirsk). He acknowledges critical discussions and helpful remarks by \textbf{Zden\v{e}k Fiala} (Institute of Theoretical and Applied Mechanics of the Academy of Sciences of the Czech Republic, Prague, Czech Republic), \textbf{Daniel Aubram} (Technische Universität Berlin), \textbf{Salvatore Federico} (University of Calgary, Canada), \textbf{Ray Ogden} (University of Glasgow), 
\textbf{Reza Naghdabadi} (Sharif University of Technology, Tehran), \textbf{Soumya Mukherjee} (Indian Institute of Technology, Madras), 
\textbf{Sanjay Govindjee} (University of California, Berkeley), \textbf{David Steigmann} (University of California, Berkeley), \textbf{Otto Timme Bruhns} (Ruhr-Universität Bochum), \textbf{Frank Ihlenburg} (Hochschule für Angewandte Wissenschaften, Hamburg), \textbf{Jean-Baptiste Leblond} (member of l'Académie des Sciences and professor at the Pierre-et-Marie-Curie University, Paris) and \textbf{Rebecca Brannon} (University of Utah) at various times in setting up this manuscript. \\
\\
\printbibliography
%
\normalsize
\begin{appendix}
\section{Appendix}
\subsection{Notation} \label{appendixnotation}
\textbf{The deformation $\varphi(x,t)$, the material time derivative $\frac{\DD}{\DD t}$ and the partial time derivative $\partial_t$} \\
\\
In accordance with \cite{Marsden83} we agree on the following convention regarding an elastic deformation $\varphi$ and time derivatives of material quantities:

Given two sets $\Omega, \Omega_{\xi} \subset \R^3$ we denote by $\varphi: \Omega \to \Omega_{\xi}, x \mapsto \varphi(x) = \xi$ the deformation from the \emph{reference configuration} $\Omega$ to the \emph{current configuration} $\Omega_{\xi}$. A \emph{motion} of $\Omega$ is a time-dependent family of deformations, written $\xi = \varphi(x,t)$. The \emph{velocity} of the point $x \in \Omega$ is defined by $\overline{V}(x,t) = \partial_t \varphi(x,t)$ and describes a vector emanating from the point $\xi = \varphi(x,t)$ (see also Figure \ref{yfig1}). Similarly, the velocity viewed as a function of $\xi \in \Omega_{\xi}$ is denoted by $v(\xi,t)$. 

	\begin{figure}[h!]
		\begin{center}		
		\begin{minipage}[h!]{0.8\linewidth}
			\centering
			\hspace*{-40pt}
			\includegraphics[scale=0.4]{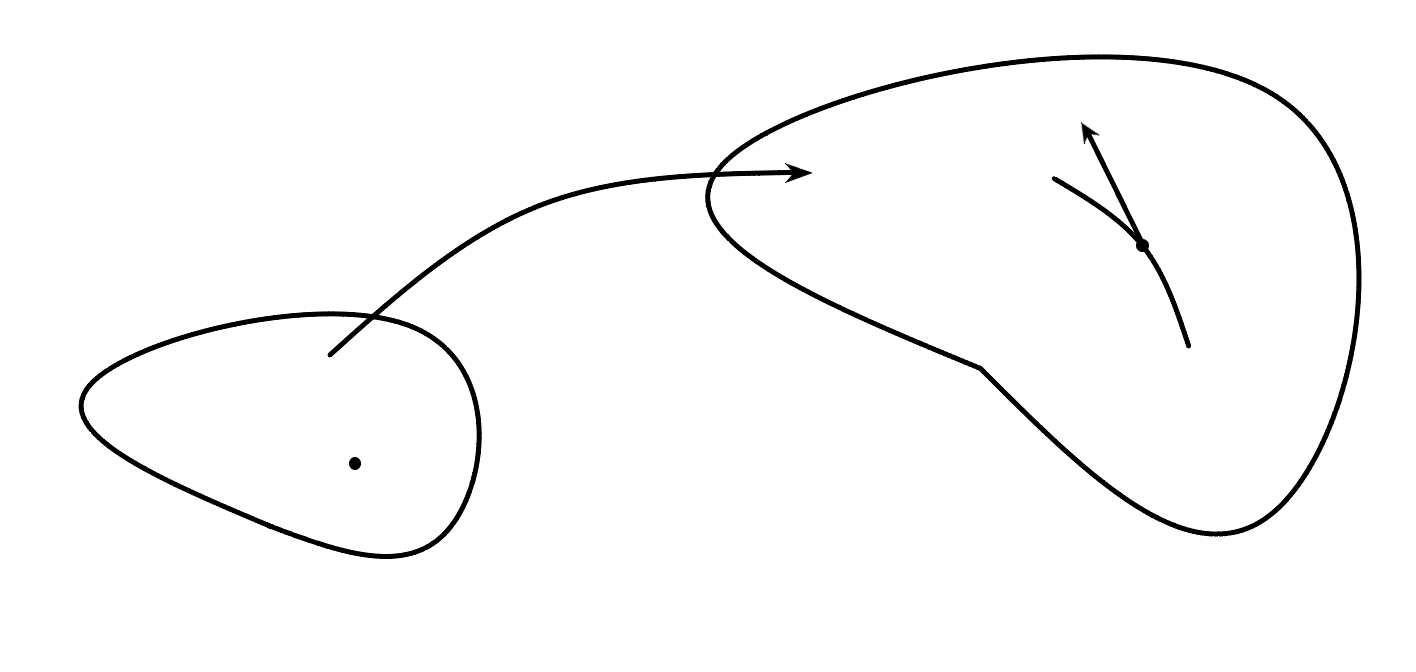}
			\put(-40,30){\footnotesize $\Omega_\xi$}
			\put(-340,25){\footnotesize $\Omega_x$}
			\put(-316,64){\footnotesize $x$}
			\put(-280,148){\footnotesize $\varphi(x,t)$}
			\put(-104,168){\footnotesize $\overline V(x,t) \!=\! v(\xi,t)$}
			\put(-88,119){\footnotesize $\xi$}
			\put(-105,90){\footnotesize curve $t \mapsto \varphi(x,t)$}
			\put(-85,80){\footnotesize  for $x$ fixed}
		\end{minipage} 
		\caption{Illustration of the deformation $\varphi(x,t): \Omega_x \to \Omega_{\xi}$ and the velocity $\overline V(x,t) = v(\xi,t)$.}
		\label{yfig1}
		\end{center}
	\end{figure}

Considering an arbitrary material quantity $Q(x,t)$ on $\Omega$, equivalently represented by $q(\xi,t)$ on $\Omega_\xi$, we obtain by the chain rule for the time derivative of $Q(x,t)$
	\begin{align}
	\frac{\DD}{\DD t}q(\xi,t) \colonequals \frac{\dif}{\dif t}[Q(x,t)] = \DD_\xi q(\xi,t).v + \partial_t q(\xi,t) \, .
	\end{align}
Since it is always possible to view any material quantity $Q(x,t) = q(\xi,t)$ from two different angles, namely by holding $x$ or $\xi$ fixed, we agree to write
	\begin{itemize}
	\item $\dot q \colonequals \dd \frac{\DD}{\DD t}[q]$ for the material (substantial) derivative of $q$ with respect to $t$ holding $x$ fixed and
	\item $\partial_t q$ for the derivative of $q$ with respect to $t$ holding $\xi$ fixed.
	\end{itemize}
For example, we obtain the velocity gradient $L := \DD_\xi v(\xi,t)$ by
	\begin{align}
	L = \DD_\xi v(\xi,t) = \DD_\xi \overline V(x,t) \overset{\text{def}}&{=} \DD_\xi \frac{\dif}{\dif t} \varphi(x,t) = \DD_{\xi} \partial_t \varphi(\varphi^{-1}(\xi,t),t) = \partial_t \DD_x \varphi(\varphi^{-1}(\xi,t),t) \, \DD_\xi \big(\varphi^{-1}(\xi,t)\big) \notag \\
&=  \partial_t \DD_x \varphi(\varphi^{-1}(\xi,t),t) \, (\DD_x \varphi)^{-1}(\varphi^{-1}(\xi,t),t) = \dot F(x,t) \, F^{-1}(x,t) = L \, ,
	\end{align}
where we used that $\partial_t = \frac{\dif}{\dif t} = \frac{\DD}{\DD t}$ are all the same, if $x$ is fixed. \\
\\
As another example, when determining a corotational rate $\frac{\DD^{\circ}}{\DD t}$ we write
	\begin{align}
	\frac{\DD^{\circ}}{\DD t}[\sigma] = \frac{\DD}{\DD t}[\sigma] + \sigma \, \Omega^{\circ} - \Omega^{\circ} \, \sigma = \dot \sigma + \sigma \, \Omega^{\circ} - \Omega^{\circ} \, \sigma \, .
	\end{align}
However, if we solely work on the current configuration, i.e.~holding $\xi$ fixed, we write $\partial_t v$ for the time-derivative of the velocity (or any quantity in general). \\
\\
\noindent \textbf{Inner product} \\
\\
For $a,b\in\R^n$ we let $\langle {a},{b}\rangle_{\R^n}$  denote the scalar product on $\R^n$ with associated vector norm $\norm{a}_{\R^n}^2=\langle {a},{a}\rangle_{\R^n}$. We denote by $\R^{n\times n}$ the set of real $n\times n$ second order tensors, written with capital letters. The standard Euclidean scalar product on $\R^{n\times n}$ is given by
$\langle {X},{Y}\rangle_{\R^{n\times n}}=\tr{(X Y^T)}$, where the superscript $^T$ is used to denote transposition. Thus the Frobenius tensor norm is $\norm{X}^2=\langle {X},{X}\rangle_{\R^{n\times n}}$, where we usually omit the subscript $\R^{n\times n}$ in writing the Frobenius tensor norm. The identity tensor on $\R^{n\times n}$ will be denoted by $\id$, so that $\tr{(X)}=\langle {X},{\id}\rangle$. \\
\\
\noindent \textbf{Frequently used spaces} 
	\begin{itemize}
	\item $\Sym(n), \rm \Sym^+(n)$ and $\Sym^{++}(n)$ denote the symmetric, positive semi-definite symmetric and positive definite symmetric second order tensors respectively. Note that $\Sym^{++}(n)$ is considered herein only as an algebraic subset of $\Sym(n)$, not endowed with a Riemannian geometry \cite{fiala2009, fiala2016, fiala2020objective, kolev2024objective}.
	\item ${\rm GL}(n)\colonequals\{X\in\R^{n\times n}\;|\det{X}\neq 0\}$ denotes the general linear group.
	\item ${\rm GL}^+(n)\colonequals\{X\in\R^{n\times n}\;|\det{X}>0\}$ is the group of invertible matrices with positive determinant.
	\item ${\rm SL}(n)\colonequals\{X\in {\rm GL}(n)\;|\det{X}=1\}$.
	\item $\mathrm{O}(n)\colonequals\{X\in {\rm GL}(n)\;|\;X^TX=\id\}$.
	\item ${\rm SO}(n)\colonequals\{X\in {\rm GL}(n,\R)\;|\; X^T X=\id,\;\det{X}=1\}$.
	\item $\mathfrak{so}(3)\colonequals\{X\in\mathbb{R}^{3\times3}\;|\;X^T=-X\}$ is the Lie-algebra of skew symmetric tensors.
	\item $\mathfrak{sl}(3)\colonequals\{X\in\mathbb{R}^{3\times3}\;|\; \tr({X})=0\}$ is the Lie-algebra of traceless tensors.
	\item The set of positive real numbers is denoted by $\R_+\colonequals(0,\infty)$, while $\overline{\R}_+=\R_+\cup \{\infty\}$.
	\end{itemize}
\textbf{Frequently used tensors}
	\begin{itemize}
	\item $F = \DD \varphi(x,t)$ is the Fréchet derivative (Jacobean) of the deformation $\varphi(\,,t)\colon\Omega_x \to \Omega_{\xi} \subset \R^3$. $\varphi(x,t)$ is usually assumed to be a diffeomorphism at every time $t \ge 0$ so that the inverse mapping $\varphi^{-1}(\,,t)\colon\Omega_{\xi} \to \Omega_x$ exists.
	\item $C=F^T \, F$ is the right Cauchy-Green strain tensor.
	\item $B=F\, F^T$ is the left Cauchy-Green (or Finger) strain tensor.
	\item $U = \sqrt{F^T \, F} \in \Sym^{++}(3)$ is the right stretch tensor, i.e.~the unique element of ${\rm Sym}^{++}(3)$ with $U^2=C$.
	\item $V = \sqrt{F \, F^T} \in \Sym^{++}(3)$ is the left stretch tensor, i.e.~the unique element of ${\rm Sym}^{++}(3)$ with $V^2=B$.
	\item $\log V = \frac12 \, \log B$ is the spatial logarithmic strain tensor or Hencky strain.
	\item $L = \dot F \, F^{-1} = \DD_\xi v(\xi)$ is the spatial velocity gradient.
	\item $v = \frac{\DD}{\DD t} \varphi(x, t)$ denotes the Eulerian velocity.
	\item $D = \sym \, L$ is the spatial rate of deformation, the Eulerian strain rate tensor.
	\item $W = \sk \, L$ is the vorticity tensor.
	\item We also have the polar decomposition $F = R \, U = V R \in {\rm GL}^+(3)$ with an orthogonal matrix $R \in \OO(3)$ (cf. Neff et al.~\cite{Neffpolardecomp}), see also \cite{LankeitNeffNakatsukasa,Neff_Nagatsukasa_logpolar13}.
	\end{itemize}
\noindent \textbf{Frequently used rates}
	\begin{multicols}{2}
	\begin{itemize}
	\item $\dd \frac{\DD^{\sharp}}{\DD t}$ denotes an arbitrary objective derivative,
	\item $\dd \frac{\DD^{\circ}}{\DD t} \begin{array}{l} \text{denotes an arbitrary corotational} \\ \text{derivative,} \end{array}$
	\item $\dd \frac{\DD^{\ZJ}}{\DD t} \begin{array}{l} \text{denotes the Zaremba-Jaumann} \\ \text{derivative,} \end{array}$
	\item $\dd \frac{\DD^{\GN}}{\DD t}$ denotes the Green-Naghdi derivative.
	\item $\dd \frac{\DD^{\log}}{\DD t}$ denotes the logarithmic derivative.
	\item $\dd \frac{\DD}{\DD t}$ denotes the material derivative.
	\end{itemize}
	\end{multicols}
\noindent \textbf{Calculus with the material derivative -- some examples} \\
\\
Consider the spatial Cauchy stress
	\begin{align}
	\label{eqmat01}
	\sigma(\xi,t) \colonequals \Sigma(B) = \Sigma(F(x,t) \, F^T(x,t)) = \Sigma(F(\varphi^{-1}(\xi,t),t) \, F^T(\varphi^{-1}(\xi,t),t)) \, .
	\end{align}
Then, on the one hand we have for the material derivative
	\begin{align}
	\label{eqmat02}
	\frac{\DD}{\DD t}[\sigma] = \DD_\xi \sigma(\xi,t).v(\xi,t) + \partial_t \sigma(\xi,t) \cdot 1
	\end{align}
and on the other hand equivalently
	\begin{equation}
	\label{eqmat03}
	\begin{alignedat}{2}
	\frac{\DD}{\DD t}[\sigma] &= \frac{\DD}{\DD t}[\Sigma(F(x,t) \, F(x,t)^T)] \overset{(1)}{=} \frac{\dif}{\dif t}[\Sigma(F(x,t) \, F^T(x,t))] \\
	\overset{\substack{\text{standard} \\ \text{chain rule}}}&{=} \DD_B \Sigma(F(x,t) \, F^T(x,t)). \frac{\dif}{\dif t}[(F(x,t) \, F^T(x,t))] = \DD_B \Sigma(F(x,t) \, F^T(x,t)).(\dot F \, F^T + F \, \dot{F}^T) \\
	&= \DD_B \Sigma(F(x,t) \, F^T(x,t)).[\dot F \, F^{-1} \, F \, F^T + F \, F^T \, F^{-T} \, \dot{F}^T] =\DD_B \Sigma(F(x,t) \, F^T(x,t)).[L \, B + B \, L^T] \, .
	\end{alignedat}
	\end{equation}
In $\eqref{eqmat03}_{1}$ we have used the fact that there is already a material representation which allows to set $\frac{\DD}{\DD t} = \frac{\dif}{\dif t}$. Of course, \eqref{eqmat02} is equivalent to \eqref{eqmat03}. From the context it should be clear which representation of $\sigma$ (referential or spatial) we are working with and by abuse of notation we do not distinguish between $\sigma$ and $\Sigma$. \\
\\
The same must be observed when calculating with corotational derivatives
	\begin{align}
	\label{eqmat04}
	\frac{\DD^{\circ}}{\DD t}[\sigma] = \frac{\DD}{\DD t}[\sigma] + \sigma \, \Omega^{\circ} - \Omega^{\circ} \, \sigma, \qquad \Omega^{\circ} = \frac{\DD}{\DD t}Q^{\circ}(x,t) \, (Q^{\circ})^T(x,t) = \frac{\dif}{\dif t} Q^{\circ}(x,t) \, (Q^{\circ})^T(x,t) \, .
	\end{align}
Here, we have
	\begin{align}
	\label{eqmat05}
	\frac{\DD^{\circ}}{\DD t}[\sigma] \overset{(\ast \ast)}&{=} Q^{\circ}(x,t) \, \frac{\DD}{\DD t}[(Q^{\circ})^T(x,t) \, \sigma \, Q^{\circ}(x,t)] \, (Q^{\circ})^T(x,t) \\
	&=Q^{\circ}(x,t) \, \left\{\frac{\DD}{\DD t}(Q^{\circ})^T(x,t) \, \sigma \, Q^{\circ}(x,t) + (Q^{\circ})^T(x,t) \, \frac{\DD}{\DD t}[\sigma] \, Q^{\circ}(x,t) + (Q^{\circ})^T(x,t) \, \sigma \, \frac{\DD}{\DD t}Q^{\circ}(x,t) \right\} \, (Q^{\circ})^T(x,t) \notag \\
	&=Q^{\circ}(x,t) \, \bigg\{\frac{\dif}{\dif t}(Q^{\circ})^T(x,t) \, \sigma \, Q^{\circ}(x,t) + (Q^{\circ})^T(x,t) \, \underbrace{\frac{\DD}{\DD t}[\sigma]}_{(\ast \ast \ast)} \, Q^{\circ}(x,t) + (Q^{\circ})^T(x,t) \, \sigma \, \frac{\dif}{\dif t}Q^{\circ}(x,t) \bigg\} \, (Q^{\circ})^T(x,t) \notag 
	\end{align}
and we can decide for $(\ast \ast \ast)$ to continue the calculus with \eqref{eqmat02} or \eqref{eqmat03}. In either case one has to decide viewing the functions as defined on the reference configuration $\Omega$ or in the spatial configuration $\Omega_\xi$.

In \eqref{eqmat05} we used $Q = Q(x,t)$ and $\Omega = \Omega(x,t)$. This means that the ``Lie-type'' representation $(\ast \, \ast)$ necessitates the definition of a reference configuration, so that we can switch between $\xi = \varphi(x,t)$ and $x$.

The interpretation $(\ast \, \ast)$ is most clearly represented for the Green-Naghdi rate, in which the spin \break $\Omega^{\GN} \colonequals \frac{\dif}{\dif t}R(x,t) \, R^T(x,t) = \dot R(x,t) \, R^T(x,t)$ is defined according to the polar decomposition $F = R \, U$ and in
	\begin{align}
	\frac{\DD^{\GN}}{\DD t}[\sigma] = \frac{\DD}{\DD t}[\sigma] + \sigma \, \Omega^{\GN} - \Omega^{\GN} \, \sigma = R \, \frac{\DD}{\DD t}[R^T \, \sigma \, R] \, R^T
	\end{align}
the term $[R^T \, \sigma \, R]$ is called \emph{corotational stress tensor} (cf.~\cite[p.~142]{Marsden83}). \\
\\
\noindent \textbf{Tensor domains} \\
\\
Denoting the reference configuration by $\Omega_x$ with tangential space $T_x \Omega_x$ and the current/spatial configuration by $\Omega_\xi$ with tangential space $T_\xi \Omega_\xi$ as well as $\varphi(x) = \xi$, we have the following relations (see also Figure \ref{yfig2}):

	\begin{figure}[h!]
		\begin{center}		
		\begin{minipage}[h!]{0.8\linewidth}
			\centering
			\hspace*{-80pt}
			\includegraphics[scale=0.5]{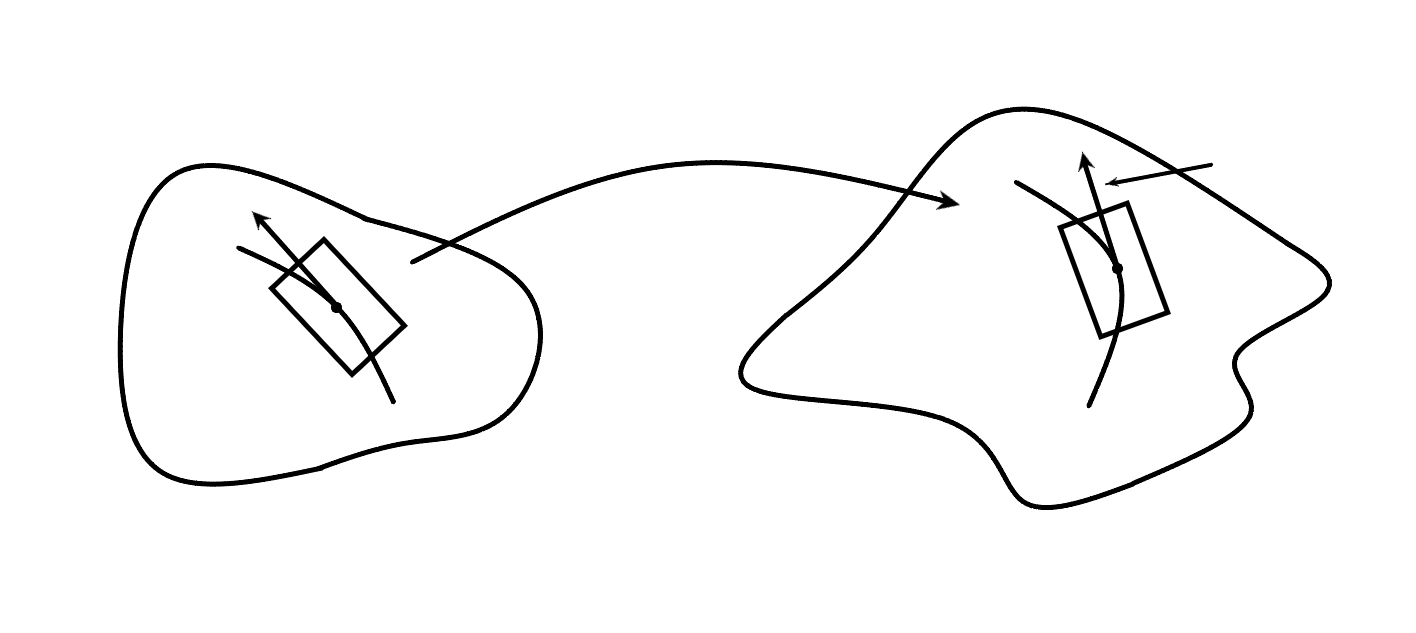}
			\put(-100,45){\footnotesize $\Omega_\xi$}
			\put(-390,55){\footnotesize $\Omega_x$}
			\put(-412,115){\footnotesize $x$}
			\put(-430,155){\footnotesize $\dot \gamma(0)$}
			\put(-377,105){\footnotesize $T_x \Omega_x$}
			\put(-383,85){\footnotesize $\gamma(s)$}
			\put(-280,183){\footnotesize $\varphi(x,t_0)$}
			\put(-120,131){\footnotesize $\xi$}
			\put(-78,181){\footnotesize $\frac{\dif}{\dif s}\varphi(\gamma(s),t_0)\bigg\vert_{s=0}$}
			\put(-88,119){\footnotesize $T_\xi \Omega_\xi$}
			\put(-115,88){\footnotesize $\varphi(\gamma(s),t_0)$}
		\end{minipage} 
		\caption{Illustration of the curve $s \mapsto \varphi(\gamma(s),t_0), \; \gamma(0) = x$ for a fixed time $t = t_0$ with vector field \break $s \mapsto \frac{\dif}{\dif s} \varphi(\gamma(s),t) \in T_\xi \Omega_\xi$.}
		\label{yfig2}
		\end{center}
	\end{figure}
	\begin{multicols}{2}
	\begin{itemize}
	\item $F\colon T_x \Omega_x \to T_\xi \Omega_\xi$
	\item $Q\colon T_x \Omega_x \to T_\xi \Omega \xi$
	\item $F^T\colon T_\xi \Omega_\xi \to T_x \Omega_x$
	\item $Q^T\colon T_\xi \Omega_\xi \to T_x \Omega_x$
	\item $C = F^T \, F\colon T_x \Omega_x \to T_x \Omega_x$
	\item $B = F \, F^T\colon T_\xi \Omega_\xi \to T_\xi \Omega_\xi$
	\item $\sigma\colon T_\xi \Omega_\xi \to T_\xi \Omega_\xi$
	\item $\tau\colon T_\xi \Omega_\xi \to T_\xi \Omega_\xi$
	\item $S_2 \colon T_x \Omega_x \to T_x \Omega_x$
	\item $S_1 \colon T_x \Omega_x \to T_\xi \Omega_\xi$
	\item $Q^T \, \sigma \, Q\colon T_x \Omega_x \to T_x \Omega_x$
	\end{itemize}
	\end{multicols}
\noindent \textbf{The strain energy function $\WW(F)$} \\
\\
We are only concerned with rotationally symmetric functions $\WW(F)$ (objective and isotropic), i.e.
	\begin{equation*}
	\WW(F)={\WW}(Q_1^T\, F\, Q_2), \qquad \forall \, F \in {\rm GL}^+(3), \qquad  Q_1 , Q_2 \in {\rm SO}(3).
	\end{equation*}
\textbf{Primary matrix functions} \\
\\
We define primary matrix functions as those functions $\Sigma \colon \Sym^{++}(3) \to \Sym(3)$, such that
	\begin{align}
	\Sigma(V) = \Sigma(Q^T \, \textnormal{diag}_V(\lambda_1, \lambda_2, \lambda_3) \, Q) = Q^T \Sigma(\textnormal{diag}_V(\lambda_1, \lambda_2, \lambda_3)) \, Q = Q^T \,
		\begin{pmatrix}
		f(\lambda_1) & 0 & 0 \\
		0 & f(\lambda_2) & 0 \\
		0 & 0 & f(\lambda_3)
		\end{pmatrix} \, Q
	\end{align}
with one given real-valued scale-function $f \colon \mathbb{R}_+ \to \mathbb{R}$. Any primary matrix function is an isotropic matrix function but not vice-versa as shows e.g.~$\Sigma(V) = \det V \, \id$. \\
\\
\textbf{List of additional definitions and useful identities}
	\begin{itemize}
	\item For two metric spaces $X, Y$ and a linear map $L: X \to Y$ with argument $v \in X$ we write $L.v\colonequals L(v)$. This applies to a second order tensor $A$ and a vector $v$ as $A.v$ as well as a fourth order tensor $\C$ and a second order tensor $H$ as $\C.H$. Sometimes we may emphasize the usual matrix product of two second order tensors $A, B$ as $A \cdot B$.
	\item We define $J = \det{F}$ and denote by $\Cof(X) = (\det X)X^{-T}$ the cofactor of a matrix in ${\rm GL}^{+}(3)$.
	\item We define $\sym X = \frac12 \, (X + X^T)$ and $\sk X = \frac12 \, (X - X^T)$ as well as $\dev X = X - \frac13 \, \tr(X) \, \id$.
	\item For all vectors $\xi,\eta\in\R^3$ we have the tensor product $(\xi\otimes\eta)_{ij}=\xi_i\,\eta_j$.
	\item $S_1=\DD_F \WW(F) = \sigma \, \Cof F$ is the non-symmetric first Piola-Kirchhoff stress tensor.
	\item $S_2=F^{-1}S_1=2\,\DD_C \widetilde{\WW}(C)$ is the symmetric second  Piola-Kirchhoff stress tensor.
	\item $\sigma=\frac{1}{J}\,  S_1\, F^T=\frac{1}{J}\,  F\,S_2\, F^T=\frac{2}{J}\DD_B \widetilde{\WW}(B)\, B=\frac{1}{J}\DD_V \widetilde{\WW}(V)\, V = \frac{1}{J} \, \DD_{\log V} \widehat \WW(\log V)$ is the symmetric Cauchy stress tensor.
	\item $\sigma = \frac{1}{J} \, F\, S_2 \, F^T = \frac{2}{J} \, F \, \DD_C \widetilde{\WW}(C) \, F^T$ is the ``\emph{Doyle-Ericksen formula}'' \cite{doyle1956}.
	\item For $\sigma\colon \Sym(3) \to \Sym(3)$ we denote by $\DD_B \sigma(B)$ with $\sigma(B+H) = \sigma(B) + \DD_B \sigma(B).H + o(H)$ the Fréchet-derivative. For $\sigma\colon \Sym^+(3) \subset \Sym(3) \to \Sym(3)$ the same applies. Similarly, for $\WW\colon\R^{3 \times 3} \to \R$ we have $\WW(X + H) = \WW(X) + \langle \DD_X \WW(X), H \rangle + o(H)$.
	\item $\tau = J \, \sigma = 2\, \DD_B \widetilde{\WW}(B)\, B $ is the symmetric Kirchhoff stress tensor.
	\item $\tau = \DD_{\log V} \widehat{\WW}(\log V)$ is the ``\emph{Richter-formula}'' \cite{richter1948isotrope, richter1949hauptaufsatze}.
	\item $\sigma_i =\dd\frac{1}{\lambda_1\lambda_2\lambda_3}\dd\lambda_i\frac{\partial g(\lambda_1,\lambda_2,\lambda_3)}{\partial \lambda_i}=\dd\frac{1}{\lambda_j\lambda_k}\dd\frac{\partial g(\lambda_1,\lambda_2,\lambda_3)}{\partial \lambda_i}, \ \ i\neq j\neq k \neq i$ are the principal Cauchy stresses (the eigenvalues of the Cauchy stress tensor $\sigma$), where $g:\mathbb{R}_+^3\to \mathbb{R}$ is the unique function  of the singular values of $U$ (the principal stretches) such that $\WW(F)=\widetilde{\WW}(U)=g(\lambda_1,\lambda_2,\lambda_3)$.
	\item $\sigma_i =\dd\frac{1}{\lambda_1\lambda_2\lambda_3}\frac{\partial \widehat{g}(\log \lambda_1,\log \lambda_2,\log \lambda_3)}{\partial \log \lambda_i}$, where $\widehat{g}:\mathbb{R}^3\to \mathbb{R}$ is the unique function such that \\ \hspace*{0.3cm} $\widehat{g}(\log \lambda_1,\log \lambda_2,\log \lambda_3)\colonequals g(\lambda_1,\lambda_2,\lambda_3)$.
	\item $\tau_i =J\, \sigma_i=\dd\lambda_i\frac{\partial g(\lambda_1,\lambda_2,\lambda_3)}{\partial \lambda_i}=\frac{\partial \widehat{g}(\log \lambda_1,\log \lambda_2,\log \lambda_3)}{\partial \log \lambda_i}$ \, . 
	\end{itemize}

\vspace*{2em}
\noindent \textbf{Conventions for fourth order symmetric operators, minor and major symmetry} \\
\\
Fourth order tensors are written as $\H$ or $\C$. For a fourth order linear mapping $\C\colon\Sym(3) \to \Sym(3)$ we agree on the following convention. \\
\\
We say that $\C$ has \emph{minor symmetry} if
	\begin{align}
	\C.S \in \Sym(3) \qquad \forall \, S \in \Sym(3).
	\end{align}
This can also be written in index notation as $C_{ijkm} = C_{jikm} = C_{ijmk}$. If we consider a more general fourth order tensor $\C\colon\R^{3 \times 3} \to \R^{3 \times 3}$ then $\C$ can be transformed having minor symmetry by considering the mapping $X \mapsto \sym(\C. \sym X)$ such that $\C: \R^{3 \times 3} \to \R^{3 \times 3}$ is minor symmetric, if and only if $\C.X = \sym(\C.\sym X)$. \\
\\
We say that $\C$ has \emph{major symmetry} (or is \emph{self-adjoint}, respectively) if
	\begin{align}
	\langle \C. S_1, S_2 \rangle = \langle \C. S_2, S_1 \rangle \qquad \forall \, S_1, S_2 \in \Sym(3).
	\end{align}
Major symmetry in index notation is understood as $C_{ijkm} = C_{kmij}$. \\
\\
The set of positive definite, major symmetric fourth order tensors mapping $\R^{3 \times 3} \to \R^{3 \times 3}$ is denoted as $\Sym^{++}_4(9)$, in case of additional minor symmetry, i.e.~mapping $\Sym(3) \to \Sym(3)$ as $\Sym^{++}_4(6)$. By identifying $\Sym(3) \cong \R^6$, we can view $\C$ as a linear mapping in matrix form $\widetilde \C: \R^6 \to \R^6$. \newline If $H \in \Sym(3) \cong \R^6$ has the entries $H_{ij}$, we can write
	\begin{align}
	\label{eqvec1}
	h = \textnormal{vec}(H) = (H_{11}, H_{22}, H_{33}, H_{12}, H_{23}, H_{31}) \in \R^6 \qquad \textnormal{so that} \qquad \langle \C.H, H \rangle_{\Sym(3)} = \langle \widetilde \C.h, h \rangle_{\R^6}.
	\end{align}
If $\C: \Sym(3) \to \Sym(3)$, we can define $\bfsym \C$ by
	\begin{align}
	\langle \C.H, H \rangle_{\Sym(3)} = \langle \widetilde \C.h, h \rangle_{\R^6} = \langle \sym \widetilde \C. h, h \rangle_{\R^6} \equalscolon \langle \bfsym \C.H, H \rangle_{\Sym(3)}, \qquad \forall \, H \in \Sym(3).
	\end{align}
Major symmetry in these terms can be expressed as $\widetilde \C \in \Sym(6)$. \emph{In this text, however, we omit the tilde-operation and ${\bf sym}$ and write in short $\sym\C\in {\rm Sym}_4(6)$ if no confusion can arise.} In the same manner we speak about $\det \C$ meaning $\det \widetilde \C$. \\
\\
A linear mapping $\C\colon\R^{3 \times 3} \to \R^{3 \times 3}$ is positive definite if and only if
	\begin{align}
	\label{eqposdef1}
	\langle \C.H, H \rangle > 0 \qquad \forall \, H \in \R^{3 \times 3} \qquad \iff \qquad \C \in \Sym^{++}_4(9)
	\end{align}
and analogously it is positive semi-definite if and only if
	\begin{align}
	\label{eqpossemidef1}
	\langle \C.H, H \rangle \ge 0 \qquad \forall \, H \in \R^{3 \times 3} \qquad \iff \qquad \C \in \Sym^+_4(9).
	\end{align}
For $\C: \Sym(3) \to \Sym(3)$, after identifying $\Sym(3) \cong \R^6$, we can reformulate \eqref{eqposdef1} as $\widetilde \C \in \Sym^{++}(6)$ and \eqref{eqpossemidef1} as $\widetilde \C \in \Sym^+(6)$. 
\subsection{Hilbert-monotonicity }\label{ips2}
Regarding Hilbert-monotonicity, we recall the following properties from Ghiba et al.~\cite{ghiba2024}.
\begin{definition} \cite{NeffMartin14}
A tensor function $\Sigma_f:{\rm Sym}^{++}(3)\to\Sym^{++}\,$ is called  \emph{strictly Hilbert-monotone} if
	\begin{align}
	\label{eq:introductionMatrixMonotonicity}
	\iprod{\Sigma_f(U)-\Sigma_f(\overline U),\,U-\overline U}_{\R^{3\times 3}}>0\qquad\forall\,U\neq\overline U\in{\rm Sym}^{++}(3)\,.
	\end{align}
We refer to this inequality as strict \emph{Hilbert-space matrix-monotonicity} of the tensor function $\Sigma_f$. \\
Similarly, $\Sigma_f(U)$ is \emph{strongly Hilbert-monotone} if for all $H \in \Sym(3) \! \setminus \! \{0\}$ we have $\langle \DD_U \Sigma_f(U).H, H \rangle > 0$ or equivalently $\sym \DD_U \Sigma_f(U) \in \Sym^{++}_4(6)$.
\end{definition}
\begin{definition}\cite{NeffMartin14}
A vector function  $f:\R^3_+ \colonequals \R_+ \times \R_+ \times \R_+ \to\R^3$ is \emph{strictly vector monotone} if 
	\begin{align}
	\iprod{f(\lambda)-f(\overline\lambda),\,\lambda-\overline\lambda}_{\R^3}>0\qquad\forall\lambda\neq\overline\lambda\in\R^3_+.
	\end{align}
\end{definition}
\begin{definition}
A differentiable function $f$ between finite-dimensional Hilbert spaces is called \emph{strongly monotone} if $\DD f$ is positive definite everywhere. Thus ``stronlgy'' implies ``strictly'', see e.g.~Remark \ref{remarkmon}.
\end{definition}
\noindent Note that for an arbitrary vector function $f: \R^3_+ \to \R^3$, ${\rm D}f\,(\lambda_1,\lambda_2,\lambda_3)$ in itself might not be symmetric. One main goal of a forthcoming paper \cite{MartinVossGhibaNeff}  is to proof the following result, thereby elucidating on Ogden's work \cite[last page in Appendix]{Ogden83}, based on the seminal contributions of Hill \cite{hill1968constitutivea,hill1968constitutiveb,hill1970constitutive}: 
\begin{thm}
	\label{theorem:mainResult}
	A symmetric function $f:\R^3_+\to\R^3$ is strictly (strongly) vector-monotone if and only if $\Sigma_f$ is strictly (strongly) matrix-monotone.
\end{thm}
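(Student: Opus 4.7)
The plan is to establish the theorem via a spectral--decomposition identity that reduces matrix--monotonicity of $\Sigma_f$ to a doubly--stochastic convex combination of vector--monotonicity increments of $f$, plus a Daleckii--Krein type diagonalisation of $\DD_U\Sigma_f$ for the strong version. One direction is essentially trivial. If $\Sigma_f$ is strictly (resp.\ strongly) matrix monotone, one restricts to $U=\diag(\lambda_1,\lambda_2,\lambda_3)$, $\overline U=\diag(\overline\lambda_1,\overline\lambda_2,\overline\lambda_3)$ (resp.\ to diagonal increments $H=\diag(h_1,h_2,h_3)$); the inner product $\iprod{\Sigma_f(U)-\Sigma_f(\overline U),\,U-\overline U}$ collapses to $\iprod{f(\lambda)-f(\overline\lambda),\,\lambda-\overline\lambda}_{\R^3}$, and $\DD_U\Sigma_f(U).H$ restricted to diagonal directions coincides with $\DD f(\lambda).h$.

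For the converse direction (strict case), I would diagonalise $U=Q_1^T D_1 Q_1$ and $\overline U=Q_2^T D_2 Q_2$, set $R\colonequals Q_1 Q_2^T\in\OO(3)$ and $r_{ij}\colonequals R_{ij}^2$. Using the isotropy of $\Sigma_f$ and cyclicity of the trace, a direct computation gives the key identity
\begin{equation*}
\iprod{\Sigma_f(U)-\Sigma_f(\overline U),\,U-\overline U}
\;=\;\sum_{i,j=1}^{3} r_{ij}\bigl(f_i(\lambda)-f_j(\overline\lambda)\bigr)\bigl(\lambda_i-\overline\lambda_j\bigr),
\end{equation*}
where $r=(r_{ij})$ is doubly stochastic. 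By Birkhoff's theorem $r=\sum_{\pi\in S_3} c_\pi P_\pi$ with $c_\pi\ge 0$, $\sum c_\pi=1$, and by the equivariance $f_{\pi(i)}(\overline\lambda)=f_i(\mu^\pi)$ forced by the isotropy of $\Sigma_f$, where $\mu^\pi_k\colonequals\overline\lambda_{\pi(k)}$, one obtains
\begin{equation*}
\iprod{\Sigma_f(U)-\Sigma_f(\overline U),\,U-\overline U}
\;=\;\sum_{\pi\in S_3}c_\pi\,\iprod{f(\lambda)-f(\mu^\pi),\,\lambda-\mu^\pi}_{\R^3}.
\end{equation*}
Strict vector monotonicity then makes every term non--negative, and strictly positive whenever $\lambda\neq\mu^\pi$.

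For the strong case, I would use a Daleckii--Krein style computation. In the eigenbasis of $U=\diag(\lambda_1,\lambda_2,\lambda_3)$ a straightforward perturbation expansion yields
\begin{equation*}
\iprod{\sym\DD_U\Sigma_f(U).H,H}
\;=\;\iprod{\sym\DD f(\lambda).h^{\mathrm{diag}},h^{\mathrm{diag}}}_{\R^3}
\;+\;2\sum_{i<j}\frac{f_i(\lambda)-f_j(\lambda)}{\lambda_i-\lambda_j}\,H_{ij}^2,
\end{equation*}
with $h^{\mathrm{diag}}=(H_{11},H_{22},H_{33})$. The first summand is positive by strong vector monotonicity. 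For the divided differences, applying strict vector monotonicity to the pair $\{\lambda,\sigma_{ij}\lambda\}$ and using $f(\sigma_{ij}\lambda)=\sigma_{ij} f(\lambda)$ yields $2(f_i-f_j)(\lambda_i-\lambda_j)>0$ whenever $\lambda_i\neq\lambda_j$; in the degenerate limit $\lambda_i=\lambda_j$ the identity $\frac{f_i-f_j}{\lambda_i-\lambda_j}=\int_0^1(\partial_i f_i-\partial_j f_i)(\lambda(s))\,\dif s$ combined with the pointwise symmetry relations forced on $\DD f$ at fixed points of $\sigma_{ij}$ reduces positivity of the divided difference to positivity of $\iprod{\sym\DD f(\lambda)(e_i-e_j),e_i-e_j}$, which is strong vector monotonicity tested on the direction $e_i-e_j$.

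The main obstacle is the strict case with degenerate spectra: one has to rule out that $\iprod{\Sigma_f(U)-\Sigma_f(\overline U),\,U-\overline U}=0$ could occur for $U\neq\overline U$ when $c_\pi$ is supported entirely on the stabiliser of $\lambda$, so that every $\mu^\pi=\lambda$. This requires a careful structural analysis of the orthogonal matrix $R=Q_1 Q_2^T$: if $\lambda_1=\lambda_2\neq\lambda_3$ and $r$ is a convex combination of $\mathrm{id}$ and the transposition $(12)$, then $R_{33}=\pm 1$ and the upper--left $2\times 2$ block of $R$ acts as a rotation in the two--dimensional eigenspace of $U$ corresponding to the repeated eigenvalue; this rotation is an automorphism of the spectral projector and thus forces $U=\overline U$, contradicting the assumption. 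A parallel argument handles the triple--degenerate case. This degeneracy bookkeeping, together with a careful justification of the Daleckii--Krein expansion at points where eigenvalues of $U$ collide (where the formula must be interpreted via the equivariance--induced smoothness of $\Sigma_f$), constitutes the technical heart of the proof.
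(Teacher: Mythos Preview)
The paper does not actually prove this theorem. It is announced as ``one main goal of a forthcoming paper \cite{MartinVossGhibaNeff}'' and merely stated here without proof, with a reference to earlier work of Hill and Ogden for context. So there is no proof in the present paper against which to compare your proposal.

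On its own merits your sketch is sound and follows the classical route that the paper alludes to. The doubly--stochastic identity
\[
\iprod{\Sigma_f(U)-\Sigma_f(\overline U),\,U-\overline U}
=\sum_{i,j} r_{ij}\bigl(f_i(\lambda)-f_j(\overline\lambda)\bigr)\bigl(\lambda_i-\overline\lambda_j\bigr)
\]
is correct, and the Birkhoff decomposition together with the permutation--equivariance of $f$ indeed reduces the strict case to vector monotonicity along permutations of $\overline\lambda$. Your diagonal--plus--off--diagonal splitting of $\iprod{\DD_U\Sigma_f(U).H,H}$ in the eigenbasis is likewise the standard mechanism for the strong case, and the argument that $(f_i-f_j)(\lambda_i-\lambda_j)>0$ follows from strict monotonicity tested on transpositions is clean.

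You have correctly located the genuine technical work: the strict case with repeated eigenvalues, where the Birkhoff weights may be supported entirely on the stabiliser of $\lambda$ and every summand vanishes individually. Your treatment of this is only a sketch; in particular the step ``this rotation is an automorphism of the spectral projector and thus forces $U=\overline U$'' needs to be made precise, since the eigenspaces of $U$ and $\overline U$ are a priori different and one must argue via $R=Q_1Q_2^T$ that $\overline U$ shares the same spectral projectors as $U$. Similarly, the limiting interpretation of the divided differences at colliding eigenvalues requires care about the differentiability of $\Sigma_f$ there, which does not follow automatically from differentiability of $f$. These are exactly the points the forthcoming paper would have to address.
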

\begin{rem} \label{remA5}
Theorem \ref{theorem:mainResult} is decisive for the equivalence
	\begin{align}
	\sym \DD_{\log V} \widehat \sigma(\log V) \in \Sym^{++}_4(6) \qquad \iff \qquad \sym \frac{\partial \widehat \sigma_i}{\partial \log \lambda_j} \in \Sym^{++}(3),
	\end{align}
where the $\widehat \sigma_i$ are the principal Cauchy stresses expressed as function of the principle logarithmic strains $\log \lambda_i$.
\end{rem}
\subsection{True-Stress-True-Strain monotonicity (TSTS-M)} \label{appendixtstsm}
\begin{definition}
We define three notions of \emph{True-Stress-True-Strain monotonicity} as follows
	\begin{equation}
	\begin{alignedat}{2}
	&\textnormal{TSTS-M:} \qquad &\langle \widehat \sigma(\log V_1)- \widehat \sigma(\log V_2),\log V_1-\log V_2\rangle&\ge 0, \qquad \forall\, V_1, V_2\in {\rm Sym}^{++}(3), \ V_1\neq V_2, \\
	&\textnormal{TSTS-M$^+$:} \qquad &\langle \widehat \sigma(\log V_1)- \widehat \sigma(\log V_2),\log V_1-\log V_2\rangle&> 0, \qquad \forall\, V_1, V_2\in {\rm Sym}^{++}(3), \ V_1\neq V_2, \\
	&\textnormal{TSTS-M$^{++}$:} \qquad & \sym \, \DD_{\log V} \widehat \sigma(\log V) \in \Sym^{++}_4(6). &
	\end{alignedat}
	\end{equation}
Note that this is equivalent to monotonicity of $\widehat \sigma$ in $\log B$ since $\log B=2\, \log V$.
\end{definition}
\noindent Regarding TSTS-M$^+$, we have the following properties from \cite{NeffGhibaLankeit}.
\begin{rem}\label{remarkmon}
Sufficient for TSTS-M$^+$ is Jog and Patil's \cite{jog2013conditions} constitutive requirement that
	\begin{align}
	\textnormal{TSTS-M$^{++}$:} \qquad \Lambda \colonequals \sym \, \DD_{\log V}\,\widehat \sigma(\log V) \in \Sym^{++}_4(6),
	\end{align}
i.e.~in their notation (see also \cite[Remark 4.1]{NeffGhibaLankeit})
	\begin{align}
	\mathbb{Z} \colonequals \DD_{\log B} \widehat \sigma(\log B) \qquad \textnormal{with} \qquad \langle \mathbb{Z}. H, H \rangle > 0 \qquad \forall \, H \in \Sym(3) \! \setminus \! \{0\}.
	\end{align}
\end{rem}
\begin{proof}
Let us remark that for all $B_1, B_2\in {\rm Sym}^{++}(3)$ and $0\leq t\leq 1$, we have $2\,\log V_1=\log B_1, \, 2\,\log V_2=\log B_2$ and $t\, (\log
V_1-\log V_2)+\log V_2\in{\rm Sym}(3)$, where $V_1^2=B_1,\, V_2^2=B_2$ . Moreover, we have
	 \begin{align}\label{Joginespr}
	  \langle \widehat \sigma(\log B_1)&- \widehat \sigma(\log B_2),\log B_1-\log B_2\rangle=2\,\langle \widehat \sigma(2\,\log V_1)- \widehat \sigma(2\,\log V_2),\log V_1-\log
	  V_2\rangle\notag\\&=2\,\left\langle\left[\int_0^1 \frac{\rm d}{\rm dt}\, \widehat \sigma \bigg(2\,t\, (\log V_1-\log V_2)+2\,\log V_2\bigg) \dif t\right],\log V_1-\log
	  V_2\right\rangle\\&=4\,\int_0^1 \left\langle\left[\DD_{\log V}\, \widehat \sigma \bigg(2\,t\, (\log V_1-\log V_2)+2\,\log V_2\bigg).\,(\log V_1-\log V_2)\right],\log
	  V_1-\log V_2\right\rangle \dif t\,.\notag \\
	  &= 4\,\int_0^1 \left\langle\left[\sym \left( \DD_{\log V}\, \widehat \sigma \bigg(2\,t\, (\log V_1-\log V_2)+2\,\log V_2\bigg) \right) .\,(\log V_1-\log V_2)\right],\log
	  V_1-\log V_2\right\rangle \dif t\,.\notag
	 \end{align}
Where the last equation of \eqref{Joginespr} is due to the fact that 
for any skew symmetric matrix $A\in\mathfrak{so}(3)$, it always holds
$
\scal{A.v}{v}_{\bR^{3}}
=0
$
for every $v\in\bR^3$. Using that the integrand is non-negative, due to the assumption that $\Lambda = \sym \, \DD_{\log V} \widehat \sigma(\log V)$ is positive definite, the TSTS-M$^+$ condition
 follows.
\end{proof}
\begin{rem}
As an easy consequence of the previous remark we obtain the implications
	\begin{align}
	\textnormal{TSTS-M$^{++}$} \qquad \implies \qquad \textnormal{TSTS-M$^{+}$} \qquad \implies \qquad \textnormal{TSTS-M}
	\end{align}
as well as the equivalence \quad 
	\fbox{
	\begin{minipage}[h!]{0.65\linewidth}
		\centering
		TSTS-M$^{++}$ \qquad $\iff$ \qquad corotational stability postulate (CSP).
	\end{minipage}}
\end{rem}
\subsubsection{TSTS-M$^{++}$ for the exponentiated Hencky energy $V\mapsto \frac{\mu}{k}\,e^{k\,\|\log\,V\|^2}+\frac{\lambda}{2\widehat{k}}\,e^{\widehat{k}\,[{\rm tr}
(\log\,V)]^2}$}
Since our examples in the main part of this work regarding monotonicity of the Cauchy stress $\sigma$ as a function of the logarithmic strain $\log V$ are all \emph{not hyperelastic}, we show in this Appendix that this monotonicity requirement is nevertheless in principle consistent with hyperelasticity. Note, however, that the classical compressible Neo-Hooke or compressible Mooney-Rivlin models do not satisfy TSTS-M (example given in Appendix~\ref{appendixneohooke}). For incompressible response, the situation is different. As further references for the exponentiated Hencky energy see also \cite{montella2016, nedjar2018, xiao2002}.
\begin{prop}
 The Cauchy stress tensor $\sigma$ corresponding to the energy $V\mapsto \frac{\mu}{k}\,e^{k\,\|\log\,V\|^2}$ satisfies TSTS-M  for $k\geq \frac{3}{8}$ and
 TSTS-M$^{++}$  for $k> \frac{3}{8}$.
 \end{prop}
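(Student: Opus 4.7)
The plan is to reduce the claim to a pointwise positivity statement for a quadratic form in a symmetric increment $H\in\Sym(3)$, and then to exploit a single Young-type inequality calibrated by the constant $k$.

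First, I would write the Cauchy stress directly as a function of $\varepsilon\colonequals\log V$. Using the Richter formula $\tau=\DD_{\log V}\widehat\WW(\log V)$ together with $\sigma=\frac{1}{J}\tau$ and $J=e^{\tr\varepsilon}$, a direct differentiation of $\widehat\WW(\varepsilon)=\frac{\mu}{k}\,e^{k\,\|\varepsilon\|^2}$ yields
\begin{equation*}
\widehat\sigma(\varepsilon)\;=\;2\mu\,\exp\bigl(-\tr\varepsilon+k\,\|\varepsilon\|^2\bigr)\,\varepsilon.
\end{equation*}
Setting $\psi(\varepsilon)\colonequals-\tr\varepsilon+k\,\|\varepsilon\|^2$, the Fréchet derivative in $\varepsilon$ along $H\in\Sym(3)$ is
\begin{equation*}
\DD_{\varepsilon}\widehat\sigma(\varepsilon).H\;=\;2\mu\,e^{\psi(\varepsilon)}\Bigl[H+\bigl(-\tr H+2k\,\langle\varepsilon,H\rangle\bigr)\varepsilon\Bigr],
\end{equation*}
so that
\begin{equation*}
\tfrac{1}{2\mu\,e^{\psi(\varepsilon)}}\,\langle\DD_{\varepsilon}\widehat\sigma(\varepsilon).H,H\rangle
\;=\;\|H\|^2+2k\,\langle\varepsilon,H\rangle^{2}-\tr(H)\,\langle\varepsilon,H\rangle.
\end{equation*}
Since the prefactor is strictly positive, TSTS-M$^{++}$ reduces to showing that the right-hand side is positive (respectively non-negative) for every $H\in\Sym(3)\setminus\{0\}$ and every $\varepsilon\in\Sym(3)$.

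The core algebraic step is then to control the cross term $\tr(H)\,\langle\varepsilon,H\rangle$. Writing $a=\tr H=\langle\id,H\rangle$ and $b=\langle\varepsilon,H\rangle$, Cauchy--Schwarz gives $a^2\le 3\,\|H\|^2$, and Young's inequality yields, for any $\alpha>0$,
\begin{equation*}
|ab|\;\le\;\tfrac{a^{2}}{4\alpha}+\alpha\,b^{2}\;\le\;\tfrac{3}{4\alpha}\,\|H\|^{2}+\alpha\,b^{2}.
\end{equation*}
Substituting into the quadratic form shows
\begin{equation*}
\|H\|^2+2k\,b^{2}-ab\;\ge\;\Bigl(1-\tfrac{3}{4\alpha}\Bigr)\|H\|^{2}+(2k-\alpha)\,b^{2}.
\end{equation*}
Choosing $\alpha=\tfrac{3}{4}$ (the sharp choice from the Cauchy--Schwarz bound) we obtain $(2k-\tfrac{3}{4})\,b^{2}$, which is $\ge 0$ for $k\ge\tfrac{3}{8}$ and $>0$ for $k>\tfrac{3}{8}$ whenever $b\neq 0$. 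The remaining coefficient $1-\tfrac{3}{4\alpha}=0$ is exactly zero, so positivity has to be argued jointly: either $b\neq 0$, in which case the $b^{2}$ term is strictly positive for $k>\tfrac{3}{8}$; or $b=0$, in which case the quadratic form reduces to $\|H\|^{2}>0$ directly. Equality $a^{2}=3\|H\|^{2}$ moreover forces $H=\lambda\,\id$, and then $b=\lambda\,\tr\varepsilon$, which is again handled by a case distinction. This delivers $\sym\DD_{\log V}\widehat\sigma(\log V)\in\Sym^{++}_{4}(6)$ for $k>\tfrac{3}{8}$, i.e.\ TSTS-M$^{++}$, and the non-strict chain of inequalities above yields $\sym\DD_{\log V}\widehat\sigma\in\Sym^{+}_{4}(6)$ for $k=\tfrac{3}{8}$.

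Finally, TSTS-M for $k\ge\tfrac{3}{8}$ is obtained by the integral representation already used for Remark~\ref{remarkmon}: integrating $\frac{\dif}{\dif t}\widehat\sigma\bigl(t\,\log V_{1}+(1-t)\log V_{2}\bigr)$ from $0$ to $1$ and pairing with $\log V_{1}-\log V_{2}$ turns the pointwise semi-definiteness into the TSTS-M inequality, while the strict version $k>\tfrac{3}{8}$ analogously upgrades to TSTS-M$^{+}$. The main delicate point is the sharp matching of constants in the Young inequality: the threshold $k=\tfrac{3}{8}$ arises precisely because the bound $a^{2}\le 3\|H\|^{2}$ saturates on $H=\lambda\,\id$, so any looser handling of the cross term would give a worse constant.
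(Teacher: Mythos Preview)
Your proof is correct and follows essentially the same approach as the paper: both reduce to the identical quadratic form $\|H\|^{2}+2k\,\langle\varepsilon,H\rangle^{2}-\tr(H)\,\langle\varepsilon,H\rangle$ and establish its (semi-)definiteness via the elementary bound $(\tr H)^{2}\le 3\,\|H\|^{2}$. The paper's version differs only cosmetically, handling the cross term by a sign case distinction and then completing the square to $\bigl\|H-\sqrt{2k/3}\,\langle\varepsilon,H\rangle\,\id\bigr\|^{2}$, whereas you use Young's inequality with $\alpha=\tfrac{3}{4}$ and a $b=0$ versus $b\neq 0$ split; both routes pinpoint the threshold $k=\tfrac{3}{8}$ in the same way.
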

 \begin{proof}
 In order to show this, let us remark that  for the energy $V\mapsto \frac{\mu}{k}\,e^{k\,\|\log\,V\|^2}$ we have
	 \begin{align}\label{eqsigmataut}
	 \widehat{\tau}(\log\,V)&=\,2\,{\mu}\,e^{k\,\|\log\,V\|^2}\, \log\,V,\qquad
	  \widehat{\sigma}(\log\,V)=\,2\,{\mu}\,e^{k\,\|\log\,V\|^2-\tr(\log  V)}\, \log\,V.
	 \end{align}
  We compute
	 \begin{align}
	 \langle \DD_{X}\widehat{\sigma}(X).\,H,H\rangle=&\,2\,{\mu}\,e^{k\,\| X\|^2-\tr(X)}[2k\langle  X,H\rangle-\tr(H)]\langle
	 X,H\rangle\notag+2\,{\mu}\,e^{k\,\|X\|^2-\tr(X)}\| H\|^2\notag\\
	 =&\,2\,{\mu}\,e^{k\,\| X\|^2-\tr(X)}\{2\,k\,\langle  X,H\rangle^2-\tr(H)\langle  X,H\rangle+\| H\|^2\}.
	\end{align}
If $\tr(H)\langle  X,H\rangle<0$, then obviously $\langle \DD_{X}\widehat{\sigma}(X).\,H,H\rangle>0$. Otherwise, for $k\geq \frac{3}{8}$ it follows
	\begin{align}
	 \langle \DD_{X}\widehat{\sigma}(X).\,H,H\rangle\geq &\,2\,{\mu}\,e^{k\,\| X\|^2-\tr(X)}\{2k\langle  X,H\rangle^2-2\,\sqrt{\frac{2k}{3}}\tr(H)\langle
	 X,H\rangle+\| H\|^2\}\\
	 =&\,2\,{\mu}\,e^{k\,\| X\|^2-\tr(X)}\langle H-\sqrt{\frac{2\,k}{3}}\langle  X,H\rangle\, \id,H-\sqrt{\frac{2k}{3}}\langle  X,H\rangle\,
	 \id\rangle\notag\\
	 =&\,2\,{\mu}\,e^{k\,\| X\|^2-\tr(X)}\left\| H-\sqrt{\frac{2\,k}{3}}\langle  X,H\rangle\, \id\right\|^2\geq 0\notag.
	 \end{align}
Moreover, for $k> \frac{3}{8}$ we have $\langle \DD_{X}\widehat{\sigma}(X).\,H,H\rangle>0$ and the proof is complete.
 \end{proof}
 \begin{cor}
 The Cauchy stress tensor corresponding to the energy $V\mapsto \frac{\mu}{k}\,e^{k\,\|\log\,V\|^2}+\frac{\lambda}{2\widehat{k}}\,e^{\widehat{k}\,[{\rm tr}
 (\log\,V)]^2}$ satisfies TSTS-M  for $k\geq \frac{3}{8}$, $\widehat{k}\geq \frac{1}{8}$ and $\mu,\lambda>0$ and TSTS-M$^{++}$ for $k> \frac{3}{8}$,
 $\widehat{k}\geq \frac{1}{8}$ (or $k\geq \frac{3}{8}$, $\widehat{k}> \frac{1}{8}$) and $\mu,\lambda>0$.
 \end{cor}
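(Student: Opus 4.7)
The plan is to reduce the corollary to the preceding proposition by splitting the energy additively and checking each piece separately. Because TSTS-M/TSTS-M$^{++}$ are convex properties on the stress map $\widehat{\sigma}$, and because $\widehat{\sigma}$ is linear in the energy by the Richter formula $\widehat{\tau} = \DD_{\log V}\widehat{\WW}$, writing $\WW = \WW_1 + \WW_2$ with $\WW_1(V) = \frac{\mu}{k}\,e^{k\|\log V\|^2}$ and $\WW_2(V) = \frac{\lambda}{2\widehat{k}}\,e^{\widehat{k}[\tr(\log V)]^2}$ yields $\widehat{\sigma} = \widehat{\sigma}_1 + \widehat{\sigma}_2$. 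The preceding proposition already handles $\widehat{\sigma}_1(X) = 2\mu\,e^{k\|X\|^2-\tr X}\,X$ (with $X = \log V$), giving $\sym\,\DD_X\widehat{\sigma}_1 \in \Sym^+_4(6)$ for $k\geq 3/8$ and $\in\Sym^{++}_4(6)$ for $k > 3/8$.

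Next I would compute $\widehat{\sigma}_2$. A direct calculation gives $\widehat{\tau}_2 = \lambda\,\tr(X)\,e^{\widehat{k}(\tr X)^2}\,\id$, hence, absorbing the factor $J^{-1} = e^{-\tr X}$,
\begin{equation*}
\widehat{\sigma}_2(X) = \lambda\,\tr(X)\,e^{\widehat{k}(\tr X)^2 - \tr X}\,\id.
\end{equation*}
Differentiating yields
\begin{equation*}
\langle \DD_X\widehat{\sigma}_2(X).H,H\rangle = \lambda\,e^{\widehat{k}(\tr X)^2 - \tr X}\,\tr^2(H)\,\bigl[2\widehat{k}(\tr X)^2 - \tr(X) + 1\bigr].
\end{equation*}
The scalar bracket, read as a quadratic in $t = \tr(X)$, has discriminant $1 - 8\widehat{k}$, so it is nonnegative for all $t$ precisely when $\widehat{k}\geq 1/8$ and strictly positive for all $t$ when $\widehat{k} > 1/8$. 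Hence $\sym\,\DD_X\widehat{\sigma}_2$ is positive semidefinite for $\widehat{k}\geq 1/8$ and positive definite on $\{H : \tr(H)\neq 0\}$ for $\widehat{k} > 1/8$ (it always vanishes on traceless $H$).

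Finally I would add the two contributions. If $k\geq 3/8$ and $\widehat{k}\geq 1/8$, the sum is positive semidefinite, giving TSTS-M. To upgrade to TSTS-M$^{++}$ one must exhibit, for every $H\neq 0$, strict positivity. For traceless $H\neq 0$ the $\widehat{\sigma}_2$-contribution vanishes, but the $\widehat{\sigma}_1$-contribution from the preceding proposition reduces to $2\mu\,e^{k\|X\|^2-\tr X}\{2k\langle X,H\rangle^2 + \|H\|^2\} > 0$ for any $k\geq 3/8$; so both alternative hypotheses already yield strict positivity on the deviatoric subspace. For $H$ with $\tr(H)\neq 0$ the $\widehat{\sigma}_2$-contribution is strictly positive whenever $\widehat{k} > 1/8$, covering the case $k\geq 3/8,\,\widehat{k}>1/8$; in the alternative case $k>3/8,\,\widehat{k}\geq 1/8$ the first part is strictly positive by the preceding proposition, and the second part is $\geq 0$, so the sum is strict. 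The main bookkeeping point—and the only genuine subtlety—is the boundary configuration $H = c\,\id$ at the critical value $\tr(X) = 2$, where the volumetric bracket $2\widehat{k}t^2 - t + 1$ with $\widehat{k}=1/8$ vanishes and the first-part quadratic for $H=c\id$ reads $c^2(8k-3)$; strict positivity of at least one of these two scalars is exactly what the alternative hypothesis $k > 3/8$ or $\widehat{k} > 1/8$ guarantees, completing the argument.
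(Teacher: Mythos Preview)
Your proof is correct and follows essentially the same approach as the paper: split the energy additively, invoke the preceding proposition for the deviatoric part, compute $\langle \DD_X\widehat\sigma_2(X).H,H\rangle = \lambda\,e^{\widehat k(\tr X)^2 - \tr X}\,[2\widehat k(\tr X)^2 - \tr X + 1]\,\tr^2(H)$ for the volumetric part, and observe that the scalar bracket is nonnegative iff its discriminant $1-8\widehat k\le 0$. Your case analysis for TSTS-M$^{++}$ (traceless $H$ handled by the first part, $\tr H\neq 0$ handled by whichever constant is strict) is more explicit than the paper's terse ``the rest follows from the previous theorem'', and your boundary check at $\tr X = 2$, $H = c\,\id$ nicely pinpoints why neither $k=3/8$ nor $\widehat k = 1/8$ alone can be relaxed.
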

 \begin{proof}
 From direct calculations we have
	 \begin{align}
	 \langle \DD_{X}e^{\widehat{k}\,(\tr (X))^2-\tr(X)}\, \tr(X)\, \id.\,H,H\rangle=e^{\widehat{k}\,(\tr (X))^2-\tr(X)} \{2\,\widehat{k}\,
	 [\tr(X)]^2-\tr(X)+1\}\,[\tr(H)]^2.
	 \end{align}
 Thus, if $\widehat{k}\geq \frac{1}{8}$, then
	  \begin{align}
	 \langle \DD_{X}e^{\widehat{k}\,(\tr (X))^2-\tr(X)}\, \tr(X)\, \id.\,H,H\rangle\geq e^{\widehat{k}\,(\tr (X))^2-\tr(X)}
	 \left(\frac{1}{2}\,\tr(X)-1\right)^2[\tr(H)]^2\geq 0.
	 \end{align}
The above inequality is strict for $\widehat{k}> \frac{1}{8}$.  The rest of the  proof follows from the previous theorem.
 \end{proof}
\noindent \textbf{Conclusion.} For the exponentiated Hencky energy $V\mapsto \frac{\mu}{k}\,e^{k\,\|\log\,V\|^2}+\frac{\lambda}{2\widehat{k}}\,e^{\widehat{k}\,[{\rm tr}
(\log\,V)]^2}$ our results are applicable, i.e.~we have corotational stability
	\begin{align}
	\sym \, \H^{\ZJ}_{\exp-\textnormal{Hencky}}(\sigma) \in \Sym^{++}_4(6) \qquad \iff \qquad \langle \H^{\ZJ}_{\exp-\textnormal{Hencky}}(\sigma). D , D \rangle > 0 \, .
	\end{align}
However, the exponentiated Hencky energy is in general \emph{not polyconvex} \cite{Ball77} and \emph{not LH-elliptic}. Hence it remains a \emph{major open question} (cf.~\cite{martin2017}) to 
\begin{center}
	\fbox{
	\begin{minipage}[h!]{1\linewidth}
		\centering
		\textbf{find a polyconvex (or LH-elliptic), isotropic and objective elastic energy such that the corotational stability postulate CSP is satisfied everywhere for the induced tangent stiffness tensor $\H^{\ZJ}(\sigma)$.}
	\end{minipage}}
	\end{center}

\subsection{Hypoelasticity for a slightly compressible Neo-Hooke type model} \label{appendixneohooke}
Consider a \textbf{polyconvex} slightly compressible Neo-Hooke type solid with elastic energy in volumetric-isochoric decoupled form (the vol-iso split also goes back to Richter \cite{richter1948isotrope, richter1949hauptaufsatze, Richter50, Richter52} and not to Flory \cite{flory1961}), i.e.
	\begin{equation}
	\label{eqappendixneohooke01}
	\begin{alignedat}{2}
	\WW_{\NH}(F) &= \frac{\mu}{2} \, \left(\frac{\norm{F}^2}{(\det F)^{\frac23}} - 3\right) + \kappa \, \mathrm{e}^{(\log \det F)^2}, \\
	\sigma_{\NH}(B) &= \mu \, (\det B)^{-\frac56} \, \dev_3 B + \kappa \, (\det B)^{-\frac12} \, (\log \det B) \, \mathrm{e}^{\frac14 \, (\log \det B)^2} \, \id,
	\end{alignedat}
	\end{equation}
with the shear modulus $\mu > 0$, the bulk modulus $\kappa$ and $\dev_3 X = X - \frac13 \tr(X) \, \id$. Then the constitutive law for $B \mapsto \sigma_{\NH}(B)$ is invertible, i.e.~there is a function $\mathcal{F}^{-1}: \Sym(3) \to \Sym^{++}(3), \; \mathcal{F}^{-1}(\sigma_{\NH}) = B$. For the proof of this statement we use the following
	\begin{lem}
	\label{psystin3}
	Let $S$ be a symmetric and trace free matrix, and $a>0$, then the system
		\begin{align}
		\label{systin2}
		\dev_n B=S,\qquad \det B=a,
		\end{align}
	has a unique solution $B\in {\rm Sym}^{++}(n)$ for $n=2,3$.
	\end{lem}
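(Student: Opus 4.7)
The plan is to reduce the system to a one-parameter scalar equation. Since the deviatoric part determines $B$ up to a multiple of the identity, any solution of $\dev_n B = S$ must have the form $B = S + t\,\id$ with $t = \tfrac{1}{n}\tr B \in \R$ to be determined. Conversely, any such $B$ automatically satisfies $\dev_n B = S$ because $\tr S = 0$. The system therefore collapses to: find $t \in \R$ with $S + t\,\id \in \Sym^{++}(n)$ and $\det(S + t\,\id) = a$.

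Since $S \in \Sym(n)$, I would diagonalize $S = Q^T \diag(s_1,\dots,s_n) Q$ with $Q \in \OO(n)$ and $s_1 + \cdots + s_n = 0$. Then $B = S + t\,\id$ is similar to $\diag(s_1 + t,\dots,s_n + t)$, so $B \in \Sym^{++}(n)$ if and only if $t > -s_{\min}$ where $s_{\min} \colonequals \min_i s_i$, and
\begin{equation*}
f(t) \colonequals \det(S + t\,\id) = \prod_{i=1}^n (s_i + t).
\end{equation*}
The existence and uniqueness of $t$ in the admissible range $(-s_{\min},\infty)$ will follow from monotonicity: on this interval, all factors $s_i + t$ are strictly positive, hence
\begin{equation*}
f'(t) = \sum_{i=1}^n \prod_{j\neq i}(s_j + t) > 0,
\end{equation*}
so $f$ is strictly increasing. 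Combined with $f(-s_{\min}) = 0$ (one factor vanishes) and $f(t)\to\infty$ as $t\to\infty$, the intermediate value theorem yields exactly one $t_* \in (-s_{\min},\infty)$ with $f(t_*) = a$ for any prescribed $a > 0$. Setting $B \colonequals S + t_*\,\id$ then gives the unique solution in $\Sym^{++}(n)$.

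There is no real obstacle here: the argument is elementary and in fact works for every $n \ge 1$, not just $n = 2,3$. The only subtlety worth flagging is that although the restriction to $n\in\{2,3\}$ is not needed for this lemma per se, one does need $s_{\min}$ to exist (i.e.\ finite $n$) and the convention $\dev_n B = B - \tfrac{1}{n}\tr(B)\,\id$ to be consistent with the ansatz $B = S + t\,\id$, which is why the trace-free hypothesis on $S$ is essential.
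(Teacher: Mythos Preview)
Your argument is correct. The paper itself does not prove this lemma in-line; it simply states that ``the proof will be given in \cite{MartinVossGhibaNeff}'', so there is no proof in the paper to compare against. Your reduction to the one-variable polynomial equation $f(t)=\prod_i(s_i+t)=a$ on $(-s_{\min},\infty)$, together with the monotonicity and boundary behaviour of $f$ there, is a clean and complete proof --- and as you observe, it works for every finite $n$, not only $n\in\{2,3\}$.
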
 
	\begin{proof}
	The proof will be given in \cite{MartinVossGhibaNeff}.
	\end{proof}
\begin{figure}[h!]
\begin{center}
\begin{minipage}[h!]{0.95\linewidth}
\centering
\includegraphics[scale=0.25]{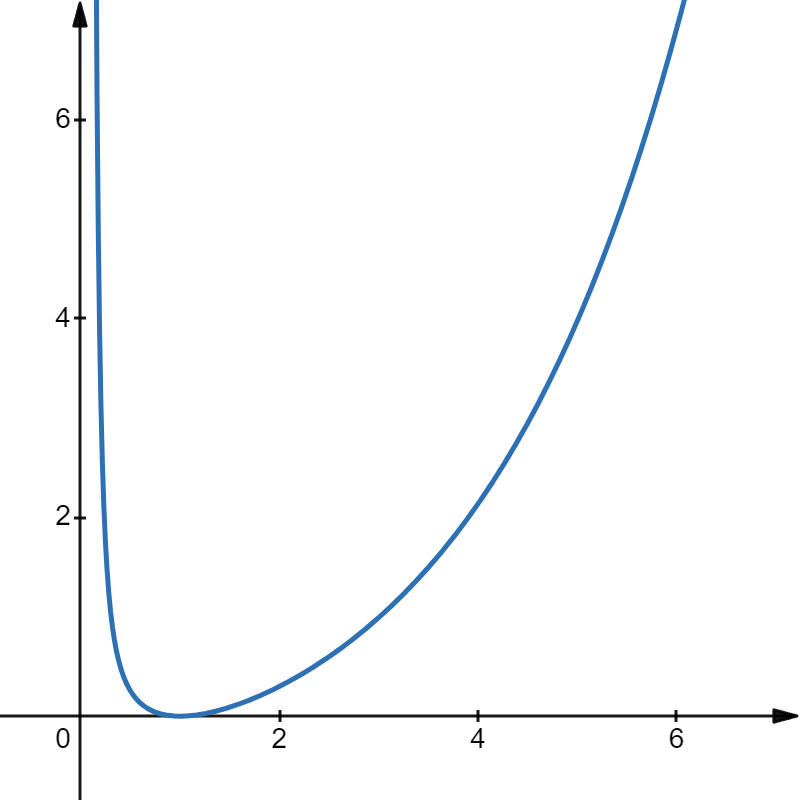}
\put(-5,10){\footnotesize{$\lambda$}}
\put(-220,187){\footnotesize{$W_{\NH}(\lambda)$}}
\qquad
\includegraphics[scale=0.25]{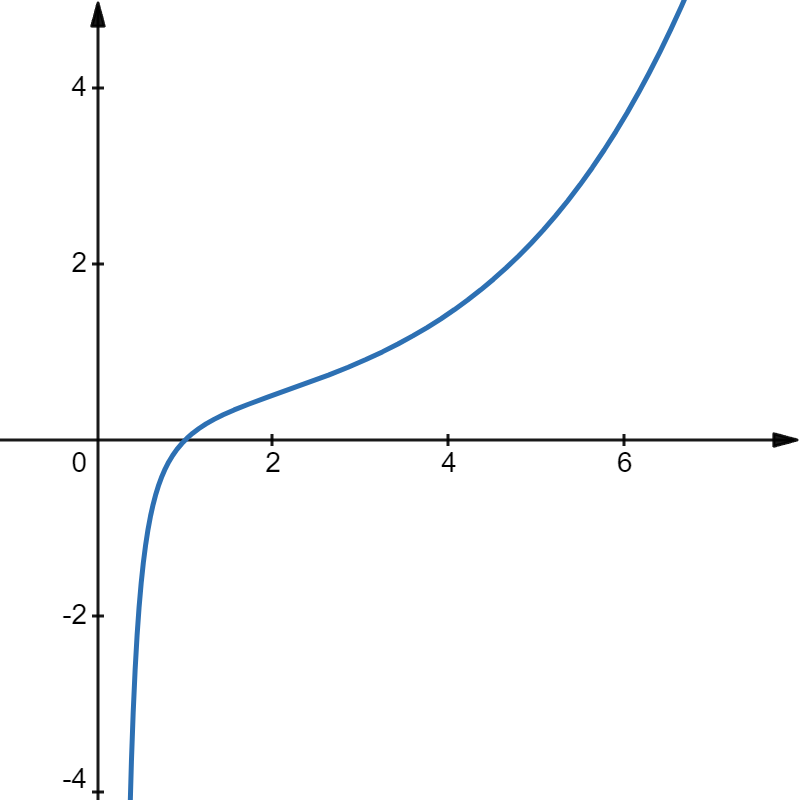}
\put(-7,80){\footnotesize{$\lambda$}}
\put(-200,187){\footnotesize{$\sigma(\lambda)$}}
\caption{Picture of the one-dimensional slightly compressible Neo-Hooke type energy \break $W_{\NH}(\lambda) = \frac{9}{42} (\lambda^{\frac43} + 2 \, \lambda^{-\frac23} + \mathrm{e}^{(\log \lambda)^2}-4)$ and the one-dimensional invertible Neo-Hooke Cauchy stress $\sigma_{\NH}(\lambda) = \frac{\dif}{\dif \lambda} W_{\NH}(\lambda)$, obtained from \eqref{eqappendixneohooke01}, by setting $F = \diag(\lambda, 1, 1) \implies B = \diag(\lambda^2, 1, 1)$.}
\label{fig10}
\end{minipage}
\end{center}
\end{figure}
\noindent Next, we write
	\begin{equation}
	\begin{alignedat}{2}
	\sigma_{\NH}(B) &= \underbrace{\mu \, (\det B)^{-\frac56} \, \dev_3 B}_{I_B} + \underbrace{\kappa \, (\det B)^{-\frac12} \, (\log \det B) \, \mathrm{e}^{\frac14 \, (\log \det B)^2}}_{II_B}  \, \id ,\\
	\sigma_{\NH}(B) &= \underbrace{\dev_3 X}_{I_X} + \underbrace{\frac13 \, \tr(X)}_{II_X} \, \id
	\end{alignedat}
	\end{equation}
and compare the components
	\begin{equation}
	\label{eqcomponents01}
	\begin{alignedat}{3}
	I: &&\qquad \mu \, (\det B)^{-\frac56} \, \dev_3 B &= \dev_3 X \, ,\\
	II: &&\qquad \kappa \, (\det B)^{-\frac12} \, (\log \det B) \, \mathrm{e}^{\frac14 \, (\log \det B)^2} &= \frac13 \, \tr(X) \, .
	\end{alignedat}
	\end{equation}
Defining the function
	\begin{align}
	f: \R^+ \to \R, \quad f(t) = t^{-\frac12} \, (\log t) \, \mathrm{e}^{\frac14 \, (\log t)^2} \quad \textnormal{with} \quad f'(t) = \frac12 \, t^{-\frac32} \, \mathrm{e}^{\frac14 \, (\log t)^2} \, \left(\left(\log t - \frac12 \right) + \frac74 \right) > 0
	\end{align}
we see (cf. Figure \ref{fig16}), that $f$ is bijective as function from $\R^+ \to \R$. Thus, we obtain a unique positive value $\det B = g_1(X)$ for any choice $X \in \R^{3 \times 3}$ in $\eqref{eqcomponents01}_2$. Inserting this ``solution'' for $\det B$ into $\eqref{eqcomponents01}_1$ determines $\dev_3 B = g_2(X)$ as function of $X$. Thus we search for solutions to the system
	\begin{align}
	\dev_3 B = g_2(X), \qquad \det B = g_1(X) > 0, \qquad \textnormal{with} \qquad \tr(g_2(X)) = 0,
	\end{align}
which by Lemma \ref{psystin3} has a unique solution, proving the invertibility of $\sigma_{\NH}(B)$. Note that we have not yet proven that $\det \H^{\ZJ}(\sigma) \neq 0$. This is true nevertheless but will be skipped here. \\
	\begin{figure}[h!]
		\begin{center}
		\begin{minipage}[h!]{0.9\linewidth}
			\centering
			\includegraphics[scale=0.25]{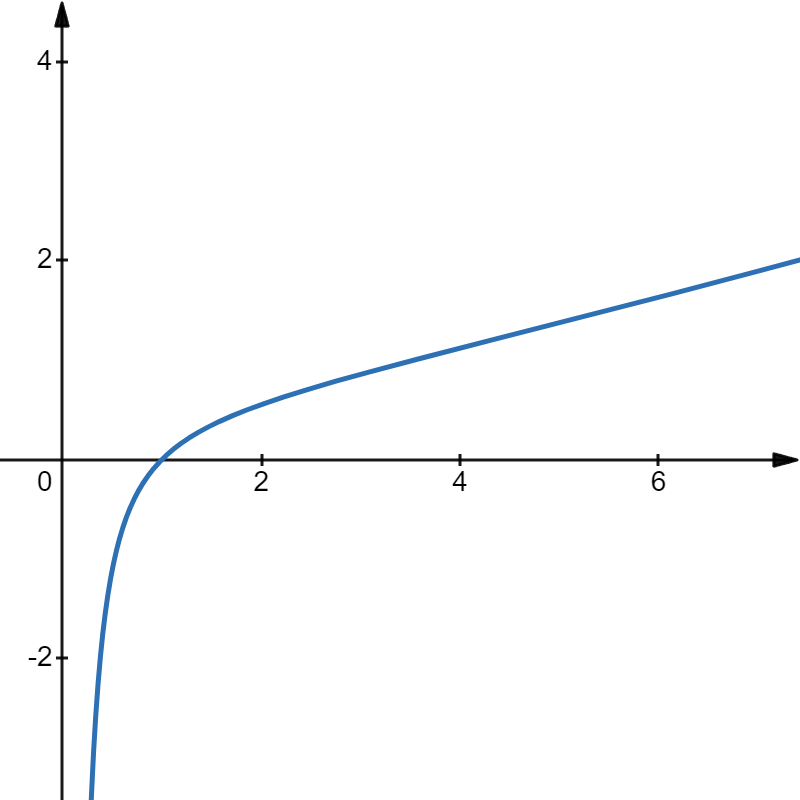}
			\put(-10,75){\footnotesize $t$}
			\put(-180,190){\footnotesize $f(t)$}
			\centering
			\qquad
			\includegraphics[scale=0.25]{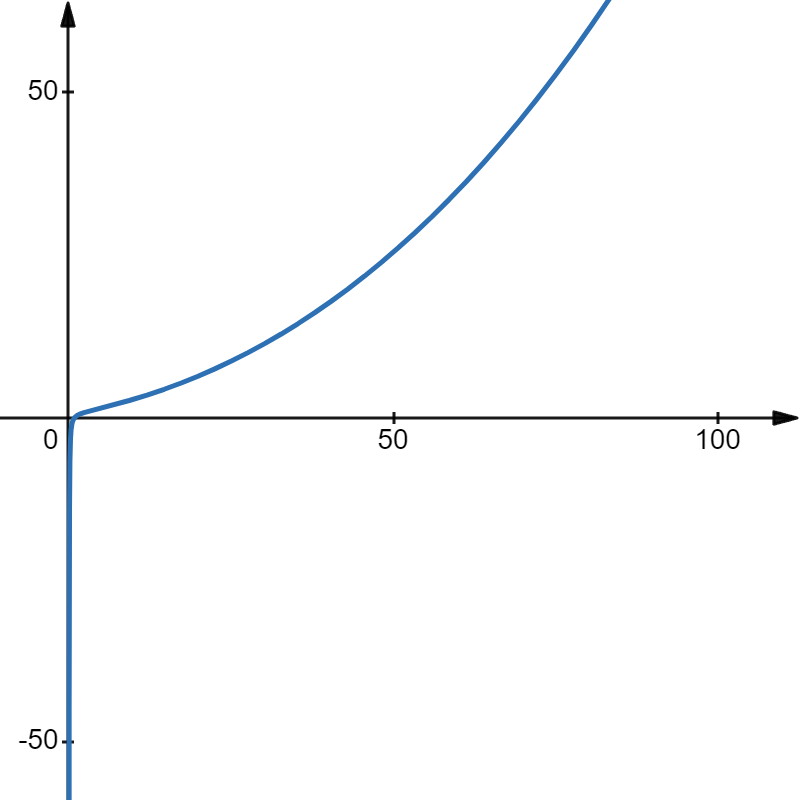}
			\put(-10,85){\footnotesize $t$}
			\put(-180,190){\footnotesize $f(t)$}
			\centering
			\caption{Illustration of the bijectivity of $f: \R^+ \to \R, \; f(t) = t^{-\frac12} \, (\log t) \, \mathrm{e}^{\frac14 \, (\log t)^2}$.}
			\label{fig16}
		\end{minipage}
	\end{center}
	\end{figure}

\noindent Let us also check positive definiteness for $\H^{\ZJ}(\sigma_{\NH})$. The Zaremba-Jaumann derivative of \eqref{eqappendixneohooke01} is given by
	\begin{equation}
	\label{eqappendixmajorsymmetry01}
	\begin{alignedat}{2}
	\frac{\DD^{\ZJ}}{\DD t}[\sigma_{\NH}] = \mu \, (\det B)^{-\frac56} &\, \left\{-\frac53 \, \tr(D) \, \dev_3 B + \dev_3(B \, D + D \, B)\right\} \\
	&\hspace*{-40pt}+ \kappa \, (\det B)^{-\frac12} \, \mathrm{e}^{\frac14 \, (\log \det B)^2} \, \tr(D) \, \left(\left(\log \det B - \frac12\right)^2 + \frac74\right) \, \id \equalscolon \H^{\ZJ}(\sigma_{\NH}).D \, ,
	\end{alignedat}
	\end{equation}
where we used the formula (cf.~\eqref{eqratetype5}) $\frac{\DD^{\ZJ}}{\DD t}[\sigma] = \DD_B\sigma(B).[D \, B + B \, D]$ for its calculation. \\
\\
In the upcoming calculation we prove that the induced tangent stiffness tensor $\H^{\ZJ}(\sigma_{\NH})$ is not positive definite. It suffices to consider
	\begin{equation}
	\begin{alignedat}{2}
	\langle -\frac53 \, \tr(D) \, &\dev_3 B + \dev_3(B \, D + D \, B) , D \rangle \\
	&= -\frac53 \, \tr(D) \left(B - \frac13 \, \tr(B)\right) \, \id + B \, D + D \, B - \frac13 \, \tr(B \, D + D \, B) \, \id , D \rangle \\
	&= -\frac53 \, \tr(D) \, \langle B,D \rangle + \frac59 \, \tr(B) \, (\tr(D))^2 + 2 \, \langle B \, D , D \rangle - \frac23 \, \tr(B \, D) \, \tr(D) \\
	&= \frac59 \, \tr(B) \, (\tr(D))^2 + 2 \, \langle B \, D , D \rangle - \frac73 \, \tr(B \, D) \, \tr(D),
	\end{alignedat}
	\end{equation}
since we will show next, that this expression is not bounded from below. For simplicity let us assume that $B$ and $D$ are only two-dimensional, as the three-dimensional case would follow similarly. Let $B = \diag(\alpha, \beta)$ and $D = \diag(a,b)$. Then we have
	\begin{align}
	\frac59 \, \tr(B) \, (\tr(D))^2 + 2 \, \langle B \, D , D \rangle - \frac73 \, \tr(B \, D) \, \tr(D) = \frac59 \, (\alpha + \beta) \, (a+b)^2 + 2 \, (\alpha \, a^2 + \beta \, b^2) - \frac73 \, (a+b)(a \, \alpha + b \, \beta)
	\end{align}
Next, we choose $a = 10^{-10}, \, b = 9 \, 10^{-10}$ and obtain
	\begin{equation}
	\begin{alignedat}{2}
	\frac59 \, (\alpha + \beta) &\, (a+b)^2 + 2 \, (\alpha \, a^2 + \beta \, b^2) - \frac73 \, (a+b)(a \, \alpha + b \, \beta) \\
	&= \frac59 \, (\alpha + \beta) \, 10^{-18} + 2 \, (\alpha \, 10^{-100} + \beta \, 81 \, 10^{-100}) - \frac73 \, 10^{-9} \, (10^{-10} \, \alpha + 9 \, 10^{-10} \, \beta).
	\end{alignedat}
	\end{equation}
Assuming further, that $\alpha>0$ is chosen small enough to be neglected, we are left with an expression in $\beta > 0$, given by
	\begin{align}
	\frac59 \, 10^{-18} \, \beta + \underbrace{162 \, 10^{-100}}_{<(4/9) \, 10^{-18}} \, \beta - 21 \, 10^{-19} \, \beta < 10^{-18} \, \beta - 2.1 \, 10^{-18} \, \beta \xrightarrow{\beta \to \infty} -\infty,
	\end{align}
so that by taking $\beta>0$ sufficiently large, we can guarantee, that $\langle \H^{\ZJ}(\sigma_{\NH}).D,D \rangle < 0$ for any choice of $\mu, \kappa > 0$.

This shows that invertibility of $B \mapsto \sigma(B)$ and positive definiteness of the corresponding induced stiffness tensor $\H^{\ZJ}(\sigma)$ are not related and positivity of $\H^{\ZJ}(\sigma)$ is even violated for the slightly compressible Neo-Hooke model \eqref{eqappendixneohooke01} which happened to be polyconvex and LH-elliptic.\footnote
{
Note that LH-ellipticity alone is only sufficient for injectivity of the Cauchy stress tensor along rank-one connected lines \cite{mihaineff1, mihaineff2, mihaineff3}.
}
\begin{rem}
Altmeyer et al.~\cite[eq.~(11)]{altmeyer2016} use their consistent hypo-elastic framework to provide a rate-formulation of a slightly compressible Mooney-Rivlin material with Lie-derivative $\mathcal{L}_{v_\varphi}$ (the Truesdell rate), i.e.~their constitutive law reads \cite[eq.~42]{altmeyer2016}
	\begin{align}
	\mathcal{L}_{v_\varphi}(\sigma) = \H^{\mathcal{L}_{v_{\varphi}}}(B).D \qquad \iff \qquad \frac{\DD^{\TR}}{\DD t}[\sigma] = \H^{\TR}(B).D,
	\end{align}
where $\H^{\mathcal{L}_{v_\varphi}}$ is derived from hyperelasticity by hand. No investigation of invertibility or positive definiteness of $\H^{\mathcal{L}_{v_\varphi}} = \H^{\TR}$ is undertaken. In \cite{fei1994}, starting from hyperelasticity [cf.~\cite{Panicaud2016, romenskii1974}], the rate-formulation for the Truesdell-rate in explicit form, also for the Mooney-Rivlin model, is presented, yielding
	\begin{align}
	\hspace*{-12pt} \frac{\DD^{\TR}}{\DD t}[\sigma] \colonequals \frac{\DD}{\DD t}[\sigma] - L \, \sigma - \sigma \, L^T + \sigma \, \tr (D) = \DD_B\sigma(B).[D \, B + B \, D] - (D \, \sigma + \sigma \, D) + \sigma \, \tr (D) \equalscolon \H^{\TR}(B).D \, .
	\end{align}
Here, we can observe that the Truesdell rate, which is \emph{non-corotational}, will not allow for a chain rule like formula (cf.~Section \ref{appendchainrule}). Indeed, with $\frac{\DD^{\TR}}{\DD t}[B] = B \, \tr(D)$ the chain rule would imply
	\begin{equation}
	\begin{alignedat}{2}
	\frac{\DD^{\TR}}{\DD t}[\sigma] = \DD_B\sigma(B).[D \, B + B \, D] - (D \, \sigma + \sigma \, D) + \sigma \, \tr(D) \overset{!}&{=} \DD_B \sigma(B).\!\left[\frac{\DD^{\TR}}{\DD t}[B]\right] = \DD_B \sigma(B).[B \, \tr(D)] \\
	\iff \qquad \DD_B\sigma(B).[D \, B + B \, D - B \, \tr(D)] &= D \, \sigma + \sigma \, D - \sigma \, \tr(D)
	\end{alignedat}
	\end{equation}
Inserting e.g.~$\sigma(B) = B^2$ and $D$ with $\tr(D) = 0$ would then yield with $\DD_B(B^2).H = B \, H + H \, B$
	\begin{equation}
	\begin{alignedat}{2}
	\DD_B \sigma(B).[D \, B + B \, D - B \, \tr(D)] &= B \, [D \, B + B \, D] + [D \, B + B \, D] \, B \\
	&\neq D \, B^2 + B^2 \ D = D \, \sigma + \sigma \, D - \sigma \, \tr(D) \, .
	\end{alignedat}
	\end{equation}
Thus a similar chain rule as in Section \ref{appendchainrule} does not apply to the Truesdell derivative. This implies that the possible positive definiteness of $\H^{\TR}(\sigma)$ is, in general, unrelated to the positive definiteness of $\DD_{\log B} \widehat \sigma(\log B)$, since
	\begin{equation}
	\begin{alignedat}{2}
	\frac{\DD^{\TR}}{\DD t}[\widehat \sigma(\log B)] = \DD_{\log B} \widehat \sigma(\log B). &\DD_B \log B. [B \, D + D \, B] \\
	& - [D \, \widehat \sigma(\log B) + \widehat \sigma(\log B) \, D] - \tr(D) \, \widehat \sigma(\log B) \equalscolon \H^{\TR}(B).D \, ,
	\end{alignedat}
	\end{equation}
which supports, again, our choice of only considering corotational rates $\dd \frac{\DD^{\circ}}{\DD t}$.
\end{rem}

\subsection{Monotonicity in $V$ versus monotonicity in $\log V$} \label{appmono001}
	 In a uniaxial situation, we can always consider 
	 \begin{align}
	 \widehat{\sigma}(\log \lambda)\colonequals\sigma(\lambda), \qquad \sigma\colon\mathbb{R}^+\to \mathbb{R}, \quad \widehat{\sigma}:\mathbb{R}\to \mathbb{R}, \quad \textnormal{differentiable}.
	 \end{align}
	 Then it is clear that $\lambda \mapsto \sigma(\lambda)$ is invertible if and only if $\log \lambda \mapsto \widehat \sigma(\log \lambda)$ is invertible and
	 \begin{align}
	 \lambda\to \sigma(\lambda) 
	 \ \ \textnormal{monotone}\ \ \iff \ \  \log\lambda\to \widehat{\sigma}(\log\lambda) \ \ \textnormal{monotone}, \qquad \textnormal{since} \quad \sigma'(\lambda) = \widehat \sigma'(\log \lambda) \, \frac{1}{\lambda}, \quad \lambda > 0.
	 \end{align}
	 However, in the matrix setting the latter simple correspondence is lost as is shown next. This shows that the statement in \cite[eq.~50]{jog2013conditions} is incorrect, cf.~the discussion in \cite{NeffMartin14}.
\subsubsection{Example: $\sigma$ monotone in $V$ does not imply $\widehat \sigma$ monotone in $\log V$} \label{appendixex1}
Consider the elastic energy (see also \cite{richter1948isotrope}) $\WW(F) = 2\,\mu\,\det V\,\bigl\{\tr(V)-4\bigr\}$, leading to the hyperelastic Cauchy stress
	\begin{equation}\label{sVp}
	\sigma(V)\ =\ 2\,\mu\, \{(V-\id) + \tr(V-\id)\,\id \}.
	\end{equation}
Obviously, this law is monotone in $V-\id$ since $\tr(\,)$ is a linear function and thus it is also monotone in $V$. \\
\\
Rewriting $\sigma(V)$ in terms of $\log V$ we obtain
	\begin{equation}
	\widehat\sigma(\log V) = \sigma(V)\ = V - \id + \tr(V-\id)\,\id\ =\ \exp(\log V) - \id + \tr\bigl(\exp(\log V)-\id\bigr)\,\id\,.
	\end{equation}
Hence, for $S\in\Sym(3)$ we define $ \widetilde\sigma\colon\Sym(3)\to\Sym(3)$ by
	\begin{equation}
	\widehat\sigma(S) = \exp(S)-\id + \tr\bigl(\exp(S)-\id\bigr)\,\id\,,
	\end{equation}
so that monotonicity of $\widehat \sigma$ in $\log V$ is now equivalent to monotonicity of $\widehat \sigma$ in $S$, i.e.
	\begin{equation}\label{eq_one}
	\iprod{ \widehat\sigma(S_1)- \widehat\sigma(S_2),S_1-S_2}_{\R^{3\times 3}} > 0\,.
	\end{equation}
Condition \eqref{eq_one} implies monotonicity in principal Cauchy stresses versus principal ($\log$)-strains, so that we have
	\begin{equation}\label{eq_two}{\scriptsize
	\iprod{\matr{  \widehat\sigma_1(s_1,s_2,s_3) & 0 & 0 \\ 0 & \!\!\!\!\! \widehat\sigma_2(s_1,s_2,s_3)\!\!\!\!\! & 0 \\ 0 & 0 &  \widehat\sigma_3(s_1,s_2,s_3) }-\matr{  \widehat\sigma_1(\overline s_1,\overline s_2,\overline s_3) & 0 & 0 \\ 0 & \!\!\!\!\! \widehat\sigma_2(\overline s_1,\overline s_2,\overline s_3)\!\!\!\!\! & 0 \\ 0 & 0 &  \widehat\sigma_3(\overline s_1,\overline s_2,\overline s_3) } , \matr{ s_1-\overline s_1 & 0 & 0 \\ 0 & \!\!\!\!\!s_2-\overline s_2\!\!\!\!\! & 0 \\ 0 & 0 & s_3-\overline s_3} }_{\R^{3\times 3}}\ >\ 0}\,.
	 \end{equation}
We may rewrite \eqref{eq_two} in the form
	\begin{equation}\label{eq_two_b}
	\iprod{\matr{ \widehat\sigma_1(s_1,s_2,s_3)\\ \widehat\sigma_2(s_1,s_2,s_3)\\ \widehat\sigma_3(s_1,s_2,s_3)} - \matr{ \widehat\sigma_1(\overline s_1,\overline s_2,\overline s_3)\\ \widehat\sigma_2(\overline s_1,\overline s_2,\overline s_3)\\ \widehat\sigma_3(\overline s_1,\overline s_2,\overline s_3)} , \matr{s_1-\overline s_1 \\ s_2-\overline s_2 \\ s_3-\overline s_3} }_{\R^3}\ >\ 0\,.
	\end{equation}
Here, in terms of principal Cauchy stresses we have by setting $x\colonequals \log \lambda_1, \; y\colonequals \log \lambda_2, \; z\colonequals \log \lambda_3$
	\begin{equation}
	\begin{pmatrix} \widehat \sigma_1(x,y,z) \\ \widehat \sigma_2(x,y,z) \\ \widehat \sigma_3(x,y,z) \end{pmatrix} = \matr{e^x-1\\e^y-1\\e^z-1} + \matr{e^x + e^y + e^z - 3 \\ e^x + e^y + e^z - 3 \\ e^x + e^y + e^z - 3} = \matr{2\,e^x+e^y+e^z -4 \\ 2\,e^y + e^x+e^z - 4 \\ 2\,e^z + e^x+e^y - 4}.
	\end{equation}
The Jacobian of $ \widehat\sigma$ is
	\begin{equation}
	\DD \widehat\sigma(x,y,z)\ =\ \matr{ 2\,e^x & e^y & e^z \\ e^x & 2\,e^y & e^z \\ e^x & e^y & 2\,e^z}\ \not\in \Sym(3)\,.
	\end{equation}
From \eqref{eq_two_b} we imply by application of the mean value theorem the inequality (with $\delta_i \colonequals s_i - \overline s_i, \, i=1,2,3$)
	\begin{equation}\label{eq_three}
	\iprod{\DD \widehat\sigma(x,y,z) . \!\! \matr{\delta_1\\\delta_2\\\delta_3} \!,\! \matr{\delta_1\\\delta_2\\\delta_3}}_{\R^3}\ >\ 0
	\qquad \iff \qquad
	\iprod{\bigl[ \sym \DD \widehat\sigma(x,y,z)\bigr] . \!\! \matr{\delta_1\\\delta_2\\\delta_3} \!,\! \matr{\delta_1\\\delta_2\\\delta_3}}_{\R^3}\ >\ 0\,,
	\end{equation}
since the skew-symmetric part of a matrix $X$ always fulfills $\langle \sk(X) . v, v \rangle = 0$ for all $v \in \R^3$. Hence, if $\widehat \sigma$ was monotone in $S$, it should hold that $\sym \DD \widehat\sigma(x,y,z)$ is positive definite, where
	\begin{equation}\label{eq_four}
	\sym \DD \widehat\sigma(x,y,z)\ =\ \matr{ 2\,e^x & \frac 12(e^y+e^x) & \frac 12(e^z+e^x) \\ \frac 12(e^y+e^x) & 2\,e^y & \frac 12(e^z+e^y) \\ \frac 12(e^x+e^z) & \frac 12(e^y+e^z) & 2\,e^z}\,.
	\end{equation}
However, according to the Sylvester criterion for positive definiteness of \eqref{eq_four} is
	\begin{align}
	2\,e^x &> 0,\notag \\
	\det\matr{2\,e^x & \frac 12(e^y+e^x) \\ \frac 12(e^y+e^x) & 2\,e^y }\ = 4\,e^x\,e^y - \frac 14[e^y\,e^y + 2\,e^y\,e^x + e^x\,e^x] &> 0,\label{eq_five}
	\\
	\det \sym\DD \widehat\sigma\ &> 0,\notag
	\end{align}
so that, by choosing $y=0$ and $x=\log 20$, we have that
\begin{align}
4\,e^x\,e^y - \frac 14[e^y\,e^y + 2\,e^y\,e^x + e^x\,e^x]=80-\frac{1}{4}[1+40+400]<80-\frac{1}{4}\,400<0
\end{align}
and therefore the positive definiteness of $\sym \DD \widetilde\sigma(x,y,z)$ is violated, which is equivalent to the statement that $\widehat \sigma(\log V)$ is not monotone in $\log V$.
\subsubsection{Example: $\widehat \sigma$ monotone in $\log V$ does not imply $\sigma$ monotone in $V$} \label{appendixex2}
Consider the constitutive Cauchy stress law \cite{Hencky1928}
	\begin{equation}
	\label{eqhenckynonmonotone1}
	\widehat \sigma(\log V)\ =\ 2\,\mu\,\log V + \lambda\,\tr(\log V)\, \id \, .
	\end{equation}
For $\mu>0$, $\lambda>0$, this  expression is clearly strictly monotone in $\log V$, i.e.
	\begin{align}
	\iprod{\widehat \sigma(\log V_1)-\widehat \sigma(\log V_2),\log V_1-\log V_2)}_{\mathbb{R}^{3\times 3}}\ >\ 0 \qquad \forall \, V_1, V_2 \in \Sym^{++}(3), \qquad V_1 \neq V_2 \,.
	\end{align}
However, $\sigma(V) = \widehat \sigma(\log V)$ is not monotone in $V$, i.e.
	\begin{equation}\label{eq_monotonicity_in_B}
	\iprod{\widehat \sigma(\log V_1)-\widehat \sigma(\log V_2),V_1-V_2)}_{\mathbb{R}^{3\times 3}}\ = \ \iprod{\sigma(V_1)-\sigma(V_2),V_1-V_2)}_{\mathbb{R}^{3\times 3}} \ >\ 0
	\end{equation}
\textbf{does not hold} for every $V_1, V_2 \in \Sym^{++}(3), \; V_1 \neq V_2$, if $\mu, \lambda > 0$. \\
\\
To see this, assume that $\mu$ is small $0<\mu\ll1$ and let $\lambda>0$ be arbitrary. Then we can, for now, neglect the first term in \eqref{eqhenckynonmonotone1} by smallness of $\mu$, so that \eqref{eq_monotonicity_in_B} corresponds approximately to 
	\begin{equation}\label{to_show}
	\iprod{\tr(\log V_1)\,\id - \tr(\log V_2)\,\id, V_1-V_2}_{\mathbb{R}^{3\times 3}}\ \geq\ 0\,.
	\end{equation}
By choosing $V_1 = \diag(\beta, \frac{1}{\beta}, 1), \; V_2 = \diag(\overline{\lambda}_1, \overline{\lambda}_2, 1)$ we get
	\begin{equation}
	\label{eq_monotonicity_concr}
	\begin{alignedat}{2}
	\langle\tr(\log V_1)\,\id &- \tr(\log V_2)\,\id, V_1-V_2\rangle_{\mathbb{R}^{3\times 3}}\ =\ \iprod{0 - \log(\overline\lambda_1\,\overline\lambda_2\,1)\,\id, V_1-V_2}_{\mathbb{R}^{3\times 3}} \\
	 \quad =\ &-\log(\overline\lambda_1\,\overline\lambda_2)\,\bigl[\tr(V_1)-\tr(V_2)\bigr]\ =\ -\log(\overline\lambda_1\,\overline\lambda_2)\,\bigl[\beta+\frac 1\beta + 1 - (\overline\lambda_1+\overline\lambda_2+1)\bigr] \\
	\quad =\ &-\log(\overline\lambda_1\,\overline\lambda_2)\,\bigl[\underbrace{\beta+\frac 1\beta}_{\geq 2}\ -\ (\overline\lambda_1+\overline\lambda_2)\bigr]\,.
	\end{alignedat}
	\end{equation}
Now we choose $\beta=3$, $\overline\lambda_1=1$, $\overline\lambda_2=2$ in \eqref{eq_monotonicity_concr} and obtain
	\begin{equation}
	\iprod{\tr(\log V_1)\,\id - \tr(\log V_2)\,\id, V_1-V_2}_{\mathbb{R}^{3\times 3}}\ =- \bigl[3+\frac 13 - 1 - 2\bigr]\log 2\, =\ -\frac 13\,\log 2\ <\ 0\,.
	\end{equation}
Furthermore, calculating the $\mu$-part of \eqref{eqhenckynonmonotone1}, which we neglected so far, yields
	\begin{equation}
	\begin{alignedat}{2}
	\langle \log V_1 - \log V_2, V_1-V_2\rangle_{\mathbb{R}^{3\times 3}}&=(\log \beta-\log \overline{\lambda}_1)(\beta- \overline{\lambda}_1)+(-\log \beta-\log \overline{\lambda}_2) \left(\frac{1}{\beta}- \overline{\lambda}_2\right) \\
	&=2\,\log 3-\left(\frac{1}{3}-2\right)\log 6=2\,\log 18-\frac{1}{3}\log 6.
	\end{alignedat}
	\end{equation}
Thus, for this choice, we have
	\begin{equation}
	\iprod{\widehat \sigma(\log V_1)-\widehat \sigma(\log V_2),V_1-V_2)}_{\mathbb{R}^{3\times 3}}=2 \, \mu \left(2\log 18-\frac{1}{3}\log 6\right)-\lambda\,\frac{1}{3}\log 2,
	\end{equation}
which is negative if $2\, \mu<\lambda\, \frac{\log \sqrt[3]{2}}{\log \frac{18^2}{\sqrt[3]{6}}}$, contradicting \eqref{eq_monotonicity_in_B}.
\subsection{Second order work condition versus corotational stability} \label{appsecondorderwork}
In linear elasticity, the stored elastic energy can be expressed as
	\begin{align}
	\mathcal{E}(t) = \int_{V_0} \WW^{\lin}(\varepsilon(t)) \dif V_0 = \int_{V_0} \frac12 \, \langle \C . \varepsilon(t), \varepsilon(t) \rangle \dif V_0 \qquad \textnormal{``work''}.
	\end{align}
Taking time derivatives yields
	\begin{align}
	\frac{\dif}{\dif t} \mathcal{E}(t) &= \int_{V_0} \langle \DD_{\varepsilon} \WW^{\lin}(\varepsilon(t)), \dot{\varepsilon}(t) \rangle \dif V_0 = \int_{V_0} \langle \sigma(t) , \dot{\varepsilon}(t) \rangle \dif V_0 = \mathcal{P}_{\textnormal{int}}^{\lin} \qquad \textnormal{``internal power''} \notag \\
	\underbrace{\frac{\dif^2}{\dif t^2} \mathcal{E}(t)}_{\substack{\textnormal{``second} \\ \textnormal{order work''}}} &= \int_{V_0} \langle \dot{\sigma}(t), \dot{\varepsilon}(t) \rangle + \langle \sigma(t), \ddot{\varepsilon}(t) \rangle \dif V_0 \overset{\sigma \in \Sym(3)}{=} \int_{V_0} \langle \dot{\sigma}(t) , \dot{\varepsilon}(t) \rangle + \langle \sigma(t) , \DD u_{tt} \rangle \dif V_0 \\
	&= \int_{V_0} \langle \dot{\sigma}(t), \dot{\varepsilon}(t) \rangle - \langle \underbrace{\textnormal{Div} \, \sigma(t)}_{\substack{= \; 0 \; \textnormal{in} \\ \textnormal{equilibrium}}} , u_{tt} \rangle \dif V_0 = \int_{V_0} \langle \dot{\sigma}(t) , \dot{\varepsilon}(t) \rangle \dif V_0 = \int_{V_0} \langle \C . \dot{\varepsilon}(t), \dot{\varepsilon}(t) \rangle \dif V_0 \ge c^{+} \, \norm{\dot{\varepsilon}(t)}^2_{V_0}. \notag
	\end{align}
Hence positive ``second order work'' in linear elasticity
	\begin{align}
	\int_{V_0} \langle \dot{\sigma}(t), \dot{\varepsilon}(t) \rangle \dif V_0 > 0
	\end{align}
is sufficient for having a stable (local) equilibrium and expresses nothing else than $\C \in \Sym^{++}_4(6)$ for the constitutive law $\sigma = \C . \varepsilon$. \\
\\
In nonlinear elasticity we have $\mathcal{E}(t) = \int_{V_0} \WW(F(t)) \dif V_0$ with (recall $\sigma = \frac{1}{J} \, S_1 \, F^T$ and $D = \sym L = \sym (\dot F \, F^{-1})$)
	\begin{equation}
	\label{eqPenergy}
	\begin{alignedat}{2}
	\frac{\dif}{\dif t} \mathcal{E}(t) = \int_{V_0} \langle \DD_F \WW(F(t)), \dot F(t) \rangle \dif V_0 &= \int_{V_0} \langle S_1(t), \dot F(t) \rangle \dif V_0 = \int_{V_0} \langle S_1 \, F^T, \dot F \, F^{-1} \rangle \dif V_0 \\
	&= \int_{V_0} \langle \sigma, L \rangle \, \underbrace{J \, \dif V_0}_{= \, \dif V_t} \overset{\sigma \in \Sym(3)}{=} \int_{V_t} \langle \sigma, D \rangle \dif V_t = \mathcal{P}_{\textnormal{int}}
	\end{alignedat}
	\end{equation}
and
	\begin{equation}
	\begin{alignedat}{2}
	\frac{\dif^2}{\dif t^2} \mathcal{E}(t) &= \int_{V_t} \langle \frac{\DD}{\DD t}[\sigma], D(t) \rangle + \langle \sigma(t), \dot{D}(t) \rangle \dif V_t = \int_{V_t} \langle \frac{\DD}{\DD t}[\sigma] , D(t) \rangle + \langle \sigma(t) , \DD_{\xi} v_{,t}(\xi,t) \rangle \dif V_t \\
	&= \int_{V_t} \underbrace{\langle \frac{\DD}{\DD t}[\sigma] , D(t) \rangle}_{\textnormal{not objective!}} - \langle \underbrace{\textnormal{Div}_{\xi} \, \sigma(t)}_{\substack{= \; 0 \; \textnormal{in spatial} \\ \textnormal{equilibrium}}}, v_{,t}(\xi,t) \rangle \dif V_t = \int_{V_t} \langle \frac{\DD}{\DD t}[\sigma], D(t) \rangle \dif V_t .
	\end{alignedat}
	\end{equation}
Thus, we observe the concordance
	\begin{align*}
	\textnormal{linear elasticity} & &&  & & \qquad \textnormal{nonlinear elasticity} \\
	\int_{V_0} \langle \sigma, \varepsilon \rangle \dif V_0 \quad & &&  & & \rightsquigarrow \quad \int_{V_0} \WW(F(t)) \dif V_0 &&\qquad \textnormal{``energy/work'' (objective)} \\
	\int_{V_0} \langle \sigma, \dot{\varepsilon} \rangle \dif V_0 \quad & &&  & &\rightsquigarrow \quad \int_{V_t} \langle \sigma, D \rangle \dif V_t = \mathcal{P}_{\textnormal{int}} &&\qquad \textnormal{``internal power'', ``rate of work'' (objective)} \\
	\int_{V_0} \langle \dot{\sigma}, \dot{\varepsilon} \rangle \dif V_0 \quad & &&  & &\rightsquigarrow \quad \int_{V_t} \langle \frac{\DD}{\DD t}[\sigma], D \rangle \dif V_t &&\qquad \textnormal{``second order work'' (not objective).} 
	\end{align*}
Therefore, the ``second order work'' condition $\frac{\dif^2}{\dif t^2} \mathcal{E}(t) > 0$ in equilibrium for finite strain can be written as
	\begin{align}
	\frac{\dif^2}{\dif t^2} \mathcal{E}(t) = \int_{V_t} \langle \frac{\DD}{\DD t} [\sigma(t)], D(t) \rangle \dif V_t > 0
	\end{align}
and ``almost'' looks like
	\begin{align}
	\int_{V_t} \underbrace{\langle \frac{\DD^{\circ}}{\DD t}[\sigma(t)], D(t) \rangle}_{\textnormal{objective!}} \dif V_t > 0.
	\end{align}
The local corotational stability requirement
	\begin{align}
	\langle \frac{\DD^{\circ}}{\DD t}[\sigma(t)], D(t) \rangle > 0 \qquad \forall \, D \in \Sym(3) \setminus \{0\}
	\end{align}
must therefore not be confused with $\frac{\dif^2}{\dif t^2} \mathcal{E}(t) > 0$, the positive second order work.
\subsection{Further calculus with the Zaremba-Jaumann rate}
Recall the formulas 
	\begin{align}
	\label{ZJrate01A}
	\boxed{\frac{\DD^{\ZJ}}{\DD t}[\sigma] \colonequals \frac{\DD}{\DD t}[\sigma] + \sigma \, W - W \, \sigma = Q \, \frac{\DD}{\DD t}[Q^T \, \sigma \, Q] \, Q^T  \quad \textnormal{for $Q(t) \in \OO(3)$ so that} \;  W =\dot{Q} \, Q^T\in \mathfrak{so}(3)}
	\end{align}
(where we set $Q\colonequals Q^W$ by abuse of notation for simplicity) and
	\begin{align}
	\label{eqnewZJappend}
	\boxed{\frac{\DD^{\ZJ}}{\DD t}[\sigma] = \frac{\DD}{\DD t}[\sigma] - W \, \sigma + \sigma \, W \overset{\eqref{eqratetype5}}{=} \DD_B\sigma(B).[D \, B + B \, D] = \H^{\ZJ}(B).D}
	\end{align}
for the derivation of the Zaremba-Jaumann derivative, which are valid for any isotropic tensor function \break $\sigma\colon \Sym^{++}(3) \to \Sym(3)$. \\
\\
The representation \eqref{eqnewZJappend} can be used to show the frame-indifference of the Zaremba-Jaumann rate in the following manner.
\begin{lem}[Frame-indifference of the Zaremba-Jaumann rate] \label{frameindZJ}
The Zaremba-Jaumann rate is frame-indifferent, i.e.~under the transformation $F(t) \mapsto Q(t) \, F(t)$ we have
	\begin{align}
	\label{eqframeindif2}
	\frac{\DD^{\ZJ}}{\DD t}[Q(t) \, \sigma(t) \, Q^T(t)] = Q(t) \, \frac{\DD^{\ZJ}}{\DD t} [\sigma] \, Q^T(t).
	\end{align}
\end{lem}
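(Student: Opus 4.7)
The plan is a direct computation using the definition $\frac{\DD^{\ZJ}}{\DD t}[\sigma] = \frac{\DD}{\DD t}[\sigma] - W\,\sigma + \sigma\,W$, tracking how each ingredient transforms under the frame change $F(t)\mapsto \widetilde F(t) = Q(t)\,F(t)$ with $Q(t)\in\OO(3)$. First I would record the transformation laws: $\widetilde\sigma = Q\,\sigma\,Q^T$, and, since $\widetilde L = \dot{\widetilde F}\,\widetilde F^{-1} = \dot Q\,Q^T + Q\,L\,Q^T$ with $\dot Q\,Q^T\in\mathfrak{so}(3)$, the skew part is $\widetilde W = \sk \widetilde L = \dot Q\,Q^T + Q\,W\,Q^T$. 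The key identity from $Q\,Q^T = \id$, differentiated, is $\dot Q\,Q^T + Q\,\dot Q^T = 0$, equivalently $Q\,\dot Q^T = -\dot Q\,Q^T$.

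Next I would substitute into the definition applied to $\widetilde\sigma$. The material derivative expands as
\begin{align*}
\frac{\DD}{\DD t}[Q\,\sigma\,Q^T] = \dot Q\,\sigma\,Q^T + Q\,\dot\sigma\,Q^T + Q\,\sigma\,\dot Q^T,
\end{align*}
and the spin contributions are
\begin{align*}
-\widetilde W\,\widetilde\sigma = -\dot Q\,\sigma\,Q^T - Q\,W\,\sigma\,Q^T, \qquad
\widetilde\sigma\,\widetilde W = Q\,\sigma\,Q^T\dot Q\,Q^T + Q\,\sigma\,W\,Q^T.
\end{align*}
Adding the three lines, the two $\pm\dot Q\,\sigma\,Q^T$ terms cancel. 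Using $Q\,\dot Q^T = -\dot Q\,Q^T$, i.e.\ $\dot Q^T = -Q^T\dot Q\,Q^T$, the remaining $Q$-skew contributions collapse: $Q\,\sigma\,\dot Q^T = -Q\,\sigma\,Q^T\dot Q\,Q^T$, which cancels the term $Q\,\sigma\,Q^T\dot Q\,Q^T$ from $\widetilde\sigma\,\widetilde W$. What survives is precisely $Q\,(\dot\sigma - W\,\sigma + \sigma\,W)\,Q^T = Q\,\tfrac{\DD^{\ZJ}}{\DD t}[\sigma]\,Q^T$, proving \eqref{eqframeindif2}.

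There is no real obstacle here: the only subtlety is bookkeeping around the skew-symmetry of $\dot Q\,Q^T$ and the cancellation involving $Q\,\dot Q^T$, which is routine once the transformation law $\widetilde W = \dot Q\,Q^T + Q\,W\,Q^T$ is established. As a sanity check (and a second, conceptually cleaner proof), I would alternatively note that the Lie-type representation $\frac{\DD^{\ZJ}}{\DD t}[\sigma] = Q^W\,\frac{\DD}{\DD t}[(Q^W)^T\sigma\,Q^W]\,(Q^W)^T$ gives the result almost for free: under the frame change the orthogonal frame transforms as $\widetilde Q^W = Q\,Q^W$ (since $\dot{\widetilde Q^W}\,(\widetilde Q^W)^T = \dot Q\,Q^T + Q\,\dot Q^W(Q^W)^T Q^T = \widetilde W$), so
\begin{align*}
\frac{\DD^{\ZJ}}{\DD t}[\widetilde\sigma] = \widetilde Q^W\,\frac{\DD}{\DD t}\!\left[(\widetilde Q^W)^T\,(Q\,\sigma\,Q^T)\,\widetilde Q^W\right](\widetilde Q^W)^T = Q\,Q^W\,\frac{\DD}{\DD t}[(Q^W)^T\sigma\,Q^W]\,(Q^W)^T Q^T = Q\,\frac{\DD^{\ZJ}}{\DD t}[\sigma]\,Q^T,
\end{align*}
making frame-indifference transparent without any explicit cancellation.
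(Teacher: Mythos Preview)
Your primary proof is correct and follows essentially the same direct computation as the paper: establish the transformation law $\widetilde W = \dot Q\,Q^T + Q\,W\,Q^T$, expand the material derivative of $Q\,\sigma\,Q^T$, and let the $\dot Q\,Q^T$-terms cancel against the extra spin contributions. Your alternative argument via the Lie-type representation with $\widetilde Q^W = Q\,Q^W$ is a clean extra route that the paper does not present.
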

\begin{proof}
We denote the image of the transformation $F(t) \mapsto F^*(t) = Q(t) \, F(t)$ by a star, for example $\sigma^* = Q \, \sigma \, Q^T$. Then we first observe that for $L = \dot F \, F^{-1}$ we obtain
	\begin{align}
	L^* = \frac{\DD}{\DD t} F^* \, (F^*)^{-1} = \frac{\DD}{\DD t}[Q \, F] \, (F^{-1} \, Q^T) = (\dot Q \, F + Q \, \dot F) \, F^{-1} \, Q^T = \dot Q \, Q^T + Q \, L \, Q^T
	\end{align}
so that for the skew-symmetric part $W = \sk L$ of $L$ we obtain the identity 
	\begin{align}
	\label{identforw}
	W^* = \dot Q \, Q^T + Q \, W \, Q^T = \dot Q \, Q^T + Q \, W \, Q^T \qquad \iff \qquad W^* - \dot Q \, Q^T = Q \, W \, Q^T \, ,
	\end{align}
since $\sk(\,)$ is an isotropic tensor function. Then a direct computation shows
	\begin{align}
	\frac{\DD^{\ZJ}}{\DD t}[\sigma^*(t)] &= \frac{\DD^{\ZJ}}{\DD t}[Q(t) \, \sigma(t) \, Q^T(t)] = \frac{\DD}{\DD t}[Q \, \sigma \, Q^T] - W^* \, \sigma^* + \sigma^* \, W^* \notag \\
	&= \dot Q \, \sigma \, Q^T + Q \, \frac{\DD}{\DD t}[\sigma] \, Q^T + Q \, \sigma \, \dot Q^T - W \, \sigma^* + \sigma^* \, W \notag \\
	&=\dot Q \, Q^T \, \underbrace{Q \, \sigma \, Q^T}_{\sigma^*} + Q \, \frac{\DD}{\DD t}[\sigma] \, Q^T + \underbrace{Q \, \sigma \, Q^T}_{\sigma^*} \, \underbrace{Q \, \dot Q^T}_{= \; -\dot Q \, Q^T} - W^* \, \sigma^* + \sigma^* \, W^* \\
	&= Q \, \frac{\DD}{\DD t}[\sigma] \, Q^T - (W^* - \dot Q \, Q^T) \, \sigma^* + \sigma^* \, (W^* - \dot Q \, Q^T) \notag \\
	\overset{\eqref{identforw}}&{=} Q \, \frac{\DD}{\DD t}[\sigma] \, Q^T - Q \, W \, \sigma \, Q^T + Q \, \sigma \, W \, Q^T \overset{\eqref{ZJrate01A}}{=} Q \, \frac{\DD^{\ZJ}}{\DD t}[\sigma] \, Q^T \, . \notag \qedhere
	\end{align}
\end{proof}
\begin{cor}
A corotational rate $\frac{\DD^{\circ}}{\DD t}$ is objective if and only if the spin tensor $\Omega^{\circ}$ transforms according to $\Omega^{\circ} \mapsto \dot Q \, Q^T + Q \, \Omega^{\circ} \, Q^T$ under a Euclidean transformation $F \mapsto Q(t) \, F(t)$.
\end{cor}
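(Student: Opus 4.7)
The plan is to mimic the computation already carried out in Lemma~\ref{frameindZJ} for the Zaremba-Jaumann rate but now keep $\Omega^{\circ}$ abstract, so that the transformation rule emerges as the precise condition forcing objectivity. Throughout, denote starred quantities by the change of frame $F \mapsto F^*(t) = Q(t)\,F(t)$, so $\sigma^* = Q\,\sigma\,Q^T$ and $\Omega^{\circ,*}$ is (a priori arbitrary) the spin of the corotational derivative in the new frame.

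First I would expand the definition of the corotational rate in the starred frame using the ordinary product rule:
\begin{align*}
\frac{\DD^{\circ}}{\DD t}[\sigma^*]
 &= \frac{\DD}{\DD t}[Q\,\sigma\,Q^T] + \sigma^*\,\Omega^{\circ,*} - \Omega^{\circ,*}\,\sigma^* \\
 &= \dot{Q}\,\sigma\,Q^T + Q\,\frac{\DD}{\DD t}[\sigma]\,Q^T + Q\,\sigma\,\dot{Q}^T + \sigma^*\,\Omega^{\circ,*} - \Omega^{\circ,*}\,\sigma^*.
\end{align*}
Using $Q\,Q^T = \id$, which implies $Q\,\dot{Q}^T = -\dot{Q}\,Q^T$, I would rewrite the two boundary terms as $\dot{Q}\,Q^T\,\sigma^* - \sigma^*\,\dot{Q}\,Q^T$, so that the whole expression collapses to
\begin{align*}
\frac{\DD^{\circ}}{\DD t}[\sigma^*] = Q\,\frac{\DD}{\DD t}[\sigma]\,Q^T + (\Omega^{\circ,*} - \dot{Q}\,Q^T)\cdot\sigma^* \cdot (-1) + \sigma^*\cdot(\Omega^{\circ,*} - \dot{Q}\,Q^T),
\end{align*}
written more cleanly as the commutator $[\sigma^*,\,\Omega^{\circ,*} - \dot{Q}\,Q^T]$ added to $Q\,\frac{\DD}{\DD t}[\sigma]\,Q^T$.

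Next I would compare this to the target expression $Q\,\frac{\DD^{\circ}}{\DD t}[\sigma]\,Q^T = Q\,\frac{\DD}{\DD t}[\sigma]\,Q^T + [\sigma^*,\,Q\,\Omega^{\circ}\,Q^T]$, obtained by inserting $\id = Q^T Q$ between each factor. Subtracting yields the equivalent objectivity condition
\begin{equation*}
\bigl[\,\sigma^*,\ \Omega^{\circ,*} - \dot{Q}\,Q^T - Q\,\Omega^{\circ}\,Q^T\,\bigr] = 0 \qquad \forall\,\sigma^* \in \Sym(3).
\end{equation*}
Since $\Omega^{\circ,*}$, $\dot{Q}\,Q^T$ and $Q\,\Omega^{\circ}\,Q^T$ are all skew-symmetric (the first by hypothesis, the second because $Q\in\OO(3)$, the third by conjugation of a skew tensor by an orthogonal tensor), the difference $\Delta\colonequals \Omega^{\circ,*} - \dot{Q}\,Q^T - Q\,\Omega^{\circ}\,Q^T$ is itself skew-symmetric. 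The key algebraic step is then: a skew-symmetric tensor $\Delta\in\mathfrak{so}(3)$ that commutes with \emph{every} symmetric tensor $\sigma^*\in\Sym(3)$ must vanish. This follows by testing against $\sigma^* = e_i\otimes e_i$ in any orthonormal basis: commutation forces all off-diagonal entries of $\Delta$ in that basis to be zero, and since the basis is arbitrary, $\Delta = 0$.

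Conversely, substituting $\Omega^{\circ,*} = \dot{Q}\,Q^T + Q\,\Omega^{\circ}\,Q^T$ into the rearranged identity above trivially yields $\frac{\DD^{\circ}}{\DD t}[\sigma^*] = Q\,\frac{\DD^{\circ}}{\DD t}[\sigma]\,Q^T$, which is the frame-indifference relation~\eqref{eq0013.122}. Hence the two statements are equivalent. The only conceptual obstacle is the commutator-vanishing argument; everything else is direct computation already carried out in the Zaremba-Jaumann case, where $\Omega^{\circ} = W$ and the required transformation rule reduces to the identity $W^* = \dot{Q}\,Q^T + Q\,W\,Q^T$ derived in~\eqref{identforw}.
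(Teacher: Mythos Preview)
Your proposal is correct and follows essentially the same route the paper intends: the corollary is stated without proof right after Lemma~\ref{frameindZJ}, so its justification is precisely the computation of that lemma with $W$ replaced by an abstract spin $\Omega^{\circ}$, which is exactly what you do. The only addition you supply is the commutator-vanishing argument needed to close the ``only if'' direction (a skew $\Delta$ commuting with all symmetric tensors must vanish), which the paper leaves implicit; your test against $\sigma^*=e_i\otimes e_i$ is correct and standard.
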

\noindent Furthermore, by applying \eqref{eqnewZJappend} to the identity $\textnormal{id}: B \mapsto B$ we get
	\begin{align}
	\label{eqappendixZJcalc001}
	\frac{\DD^{\ZJ}}{\DD t}[B] = \DD_B B . [B \, D + D \, B] = \textnormal{id} . [B \, D + D \, B] = [B \, D + D \, B] = 2 \, \sym(B \, D),
	\end{align}
leading to the estimate
	\begin{align}
	\langle \frac{\DD^{\ZJ}}{\DD t}[B], D \rangle = \langle B \, D + D \, B , D \rangle = 2 \, \langle B \, D , D \rangle \ge 2 \, \lambda_{\min}(B) \, \norm{D}^2.
	\end{align}
Additionally, we note that
	\begin{align}
	\langle \frac{\DD^{\ZJ}}{\DD t}[\log B] , D \rangle \ge c^+ \, \norm{D}^2,
	\end{align}
which has been shown in Lemma \ref{dopeasslemma001}.  Another formula of interest is
	\begin{align}
	\label{eqappendixtrace1}
	\tr(\DD_B \log B . [B \, D + D \, B]) = \tr\left(\frac{\DD^{\ZJ}}{\DD t}[\log B]\right) = \tr(2 \, D),
	\end{align}
which is surprising, since we can show (see \eqref{eqboxed2D}), that in general
	\begin{align}
	\DD_B \log B . [B \, D + D \, B] \neq 2 \, D.
	\end{align}
To prove the validity of \eqref{eqappendixtrace1}, we use representation $\eqref{eqappendixowncalc001}_4$ for $\DD_B \log B . [B \, D + D \, B]$, which is given by
	\begin{align}
	\DD_B \log B . [B \, D + D \, B] = R \, \big[ \mathcal{L} \circ (R^T \, D \, R) \big] \, R^T.
	\end{align}
We readily check, that every diagonal element of $\mathcal{L} = X \, \mathcal{F} + \mathcal{F} \, X$, where
	\begin{align}
	X = \diag(\lambda_1^2, \lambda_2^2, \lambda_3^2) \qquad \textnormal{and} \qquad \mathcal{F}_{ij} =
		\left\{
		\begin{array}{ll}
		\frac{1}{\lambda_i^2} &\qquad \textnormal{if} \quad i = j, \\
		\frac{\log \lambda_i^2 - \log \lambda_j^2}{\lambda_i^2 - \lambda_j^2} &\qquad \textnormal{if} \quad i \neq j,
		\end{array}
		\right.
	\end{align}
has the value $2$, so that $\mathcal{L} \circ \id = 2 \, \id$. Then it easily follows
	\begin{equation}
	\begin{alignedat}{2}
	\tr(\DD_B \log B . [B \, D + D \, B]) &= \langle R \, \big[ \mathcal{L} \circ (R^T \, D \, R) \big] \, R^T, \id \rangle = \langle \mathcal{L} \circ (R^T \, D \, R) , \id \rangle \\
	&= \langle R^T \, D \, R , \mathcal{L} \circ \id \rangle = \langle R^T \, D \, R , 2 \, \id \rangle = \tr(2 \, D).
	\end{alignedat}
	\end{equation}
A similar phenomenon involving the logarithmic strain is described by Lemma \ref{lemA18} and compare to Appendix~\ref{appgoldenth}.
\footnotesize
\subsection{Supplementary material}
In this supplementary part we gather results that are more or less known, but helpful for a better understanding of the paper.
\subsubsection{Remarks about zero-grade hypo-elasticity} \label{appendixzerograde}
In a stress-free state of a hypo-elastic material, the Cauchy stress tensor $\sigma$ is equal to zero ($\sigma \equiv 0$). When loaded in this state, the following linear relationship\footnote
{
From Ludwig Prandtl \cite{Prandtl}: ``[the] elastic response depends only on the increment in the loading and not on prestresses'' - nur ``elastisch bestimmt'' (elastic determination). This requirement of Prandtl may suitably be interpreted as zero-grade hypoelasticity, i.e. $\H^{\ZJ}(\sigma). D = \C^{\iso}.D$ is \textbf{independent} of the current stress level $\sigma$. In order to be consistent with hyperelasticity, one has to use, however, the logarithmic rate and apply it to the Kirchhoff stress $\tau$, then $\frac{\DD^{\log}}{\DD t}[\tau] = \C^{\iso}.D$ integrates to
	\begin{align}
	\WW_{\textnormal{Hencky}}(F) = \mu \, \norm{\log V}^2 + \frac{\lambda}{2} \, \tr^2(\log V), \qquad \frac{\DD^{\log}}{\DD t}[\tau] = \C^{\iso}.D \qquad \textnormal{(cf. \cite{NeffGhibaLankeit}, \cite{NeffGhibaPoly})}.
	\end{align}
When using $\frac{\DD^{\log}}{\DD t}[\sigma] = \C^{\iso}.D$, then this constitutive prescription integrates to $\sigma = 2 \, \mu \, \log V + \lambda \, \tr(\log V) \, \id$, which is not hyperelastic. This was already known to Hencky.
}
is assumed:
	\begin{align}
	\label{zerograde}
	\frac{\DD^{\circ}}{\DD t}[\sigma] = \H^{\circ}(0).D = \C^{\iso}.D = 2\mu \, D + \lambda \, \tr(D) \, \id.
	\end{align}
This expression is independent of the choice of the objective time derivative $\frac{\DD^{\circ}}{\DD t}[\sigma]$ used for the stresses\footnote
{
In the paper \cite{romano2014} the authors discuss the philosophical issues connected with the use of the Zaremba-Jaumann derivative and hypo-elasticity with respect to integrability from scratch. Here, to the contrary, for the choice $\H^*(\sigma).D \colonequals \H^{\circ}(\sigma).D = \frac{\DD^{\circ}}{\DD t}[\sigma]$ no problems of appropriate choice of rates or conservativeness appear since we are always consistent with hyperelasticity or Cauchy-elasticity by definition of the induced tangent stiffness $\H^{\circ}(\sigma)$.
} (cf.~Section \ref{app13}).
Since there is an isotropic tensor function on the right side of equation \eqref{zerograde}, the stresses evolve from a stress-free state under small deformations initially resembling those in a linear isotropic material. The model \eqref{zerograde} is called hypo-elasticity of zero-grade ($\H(0) =$ const.). Regarding zero-grade hypo-elasticity, we have the following excerpt from Truesdell \cite[p.123]{truesdell55hypo}: \\
\\
``The classical linear theory of elasticity is defined by the constitutive equation
	\begin{align}
	\label{eqtruesdellpureelast01}
	\textnormal{small stress} \; = \; f \textnormal{(small strain from an unstressed state)}.
	\end{align}
Here, as henceforth, we are taking twice the shear modulus $\mu$ for the particular material as the unit of stress. The usual form of the classical finite strain theory extends \eqref{eqtruesdellpureelast01} by the more general constitutive equation
	\begin{align}
	\label{eqtruesdellpureelast02}
	\textnormal{stress} \; = \; f \textnormal{(strain from an unstressed state).}
	\end{align}
While the last few years have brought physical confirmation to the finite strain theory for rubber, there remain many physical materials which are linearly elastic under small enough strain but which in large strain behave in a fashion the finite strain theory is not intended to represent. Now the linear theory of elasticity embodied in \eqref{eqtruesdellpureelast02}, while in one sense more general, in another is far more restrictive than that employed in \eqref{eqtruesdellpureelast01}. It asserts that no matter how violent the distortion, the body responds to it only with reference to the unstressed state. The body has a memory for its initial state only, being utterly oblivious of all intermediate stages. This is true of the linear theory, too, but with the restriction that the initial state must be a very near one. It is much less to expect of a material that it remember where it has just been than where it was long ago. Thus we may prefer to generalize \eqref{eqtruesdellpureelast01} by
	\begin{align}
	\label{eqtruesdellpureelast03}
	\textnormal{stress increment} \; = \; f \textnormal{(small strain from the immediately preceding state).}
	\end{align}
When the immediately preceding state is unstressed, \eqref{eqtruesdellpureelast03} reduces to \eqref{eqtruesdellpureelast01}. Thus \eqref{eqtruesdellpureelast03} and \eqref{eqtruesdellpureelast02} represent two essentially different ideas of spring, both yielding as a common first approximation the classical linear theory for small strains \cite{Marsden83} from an unstressed state. \\
\\
When we come to realize \eqref{eqtruesdellpureelast03} mathematically, it is natural to begin with
	\begin{align}
	\textnormal{rate of stress} \; = \; f \textnormal{(rate of deformation)''}.
	\end{align}
Conversely, a material that reacts anisotropically rather than isotropically to small deformations from a stress-free state cannot be hypo-elastic in the above sense.
\subsubsection{Some objective derivatives}
Some possible objective derivatives $\frac{\DD^{\sharp}}{\DD t}[\sigma]$ of the Cauchy stress tensor $\sigma$, not necessarily corotational, but all invariant under a Euclidean transformation of the observer, are listed below:
	\begin{alignat}{2}
	\frac{\DD^{\ZJ}}{\DD t}[\sigma] &\colonequals && \; \frac{\DD}{\DD t}[\sigma] + \sigma \, W - W \, \sigma = Q \, \frac{\DD}{\DD t}[Q^T \, \sigma \, Q] \, Q^T \label{ZJrate01} \\
	& &&\; \textnormal{for $Q(t) \in \OO(3)$ so that} \; \dot{Q}=W\, Q \quad (\textnormal{corotational Zaremba-Jaumann derivative\footnotemark}), \notag \\
	\frac{\DD^{\Old}}{\DD t}[\sigma] &\colonequals &&\; \frac{\DD}{\DD t}[\sigma] - (L \, \sigma + \sigma \, L^T) \quad \textnormal{(non-corotational convective contravariant Oldroyd derivative \cite{oldroyd1950})}, \notag \\
	\frac{\DD^{\textnormal{Hencky}}}{\DD t}[\sigma] &\colonequals &&\; \frac{\DD}{\DD t}[\sigma] + \sigma \, W - W \, \sigma + \sigma \, \tr(D) \quad \textnormal{(non-corotational Biezino-Hencky \cite{biezeno1928} derivative,} \\
	& &&\hspace{4.5cm} \textnormal{sometimes also called Hill-rate (cf. \cite{korobeynikov2023})),} \notag \\
	\frac{\DD^{\TR}}{\DD t}[\sigma] &\colonequals &&\; \frac{\DD}{\DD t}[\sigma] - (L \, \sigma + \sigma \, L^T) + \sigma \, \tr(D) \quad \textnormal{(non-corotational Truesdell derivative \cite[eq.~3]{truesdellremarks}),} \notag \\
	\frac{\DD^{\GN}}{\DD t}[\sigma] &\colonequals &&\; \frac{\DD}{\DD t}[\sigma] + \sigma \, \Omega - \Omega \, \sigma = R \, \frac{\DD}{\DD t}[R^T \, \sigma \, R] \, R^T, \qquad R \in \OO(3), \qquad F = R \, U \notag \\
	& &&\; \textnormal{with the ``polar spin''} \, \Omega \colonequals \dot{R}(t) \, R^T(t) \in \mathfrak{so}(3) \quad \textnormal{(corotational Green-Naghdi\footnotemark derivative [cf. \cite{bellini2015}])}, \notag \\
	\frac{\DD^{\log}}{\DD t}[\sigma] &\colonequals &&\; \frac{\DD}{\DD t}[\sigma] + \sigma \, \Omega^{\log} + \Omega^{\log} \, \sigma \quad \textnormal{(corotational logarithmic derivative\footnotemark)}. \notag
	\end{alignat}
\addtocounter{footnote}{-2}\footnotetext
{
The Zaremba-Jaumann derivative is in fact a linear combination of two Lie-derivatives, see \cite[p.100, box 6.1]{Marsden83}.
}
\addtocounter{footnote}{+1}\footnotetext
{
$\Omega = \Omega^{\GN} = \dot{R}(t) \, R^T(t) \in \mathfrak{so}(3)$ is not a purely Eulerian quantity, since it depends on the choice of the reference configuration. This is true because $R(t)$ is determined by the polar decomposition of $F = \DD \varphi$, which involves the reference configuration.
}
\addtocounter{footnote}{+1}\footnotetext
{ \label{footnoteomega}
$\Omega^{\log}$ is given by
	\begin{align*}
	\Omega^{\log} \colonequals W + \sum_{i \neq j}^3 \left(\frac{1+\frac{\lambda_i}{\lambda_j}}{1 - \frac{\lambda_i}{\lambda_j}} + \frac{2}{\log\frac{\lambda_i}{\lambda_j}}\right) \, B_i \, D \, B_j = \underbrace{\Omega}_{= \dot{R} \, R^T} + \sum_{i \neq j}^3 \frac{2}{\log \frac{\lambda_i}{\lambda_j}} \, B_i \, D \, B_j \in \mathfrak{so}(3),
	\end{align*}
with $\lambda_i = \lambda_i(B)$, $B_i$ being the subordinate eigen-projections of $B$.
}

\noindent These objective time derivatives all satisfy frame-indifference in the sense that
	\begin{align}
	\frac{\DD^{\sharp}}{\DD t}[Q(t) \, \sigma(t) \, Q(t)] = Q(t) \, \frac{\DD^{\sharp}}{\DD t} [\sigma] \, Q^T(t), \qquad F \mapsto Q(t) \, F(t),
	\end{align}
while the material rate $\frac{\DD}{\DD t}[\sigma]$ does not satisfy this transformation law.
\subsubsection{From hyperelasticity to hypo-elasticity}
For isotropic hyperelastic materials, the Cauchy stress $\sigma$ can be expressed in terms of the invariants of the left Cauchy-Green deformation tensor B (or right Cauchy-Green deformation tensor). If the strain energy density function is $\WW(F) = \widehat{\WW}(I_1,I_2,I_3)$ then we can write e.g.
	\begin{align}
	\sigma(B) = 2 \sqrt{I_3} \, \frac{\partial \widehat{\WW}}{\partial I_3} \, \id + \frac{2}{\sqrt{I_3}} \left(\frac{\partial \widehat{\WW}}{\partial I_1} + I_1 \frac{\partial \widehat{\WW}}{\partial I_2}\right) \, B - \frac{2}{\sqrt{I_3}} \frac{\partial \widehat{\WW}}{\partial I_2} \, B^2 = \varphi_0 \, \id + \varphi_1 \, B + \varphi_2 \, B^2 \, ,
	\end{align}
where $I_k, \, k = 1,2,3$ denote the principal invariants of the left Cauchy-Green tensor $B\colonequals F \, F^T$. The coefficients $\varphi_0, \varphi_1, \varphi_2$ are scalar, isotropic functions of the principal invariants or other invariants of the left Cauchy-Green tensor $B$. Due to
	\begin{align}
	\frac{\DD}{\DD t}[B] = \frac{\dif}{\dif t}(F(x,t) \, F^T(x,t)) = \dot{F} \, F^{-1} \, F \, F^T + F \, F^T \, F^{-T} \, \dot{F}^T = L \, B + B \, L^T, \qquad L\colonequals \dot{F} \, F^{-1},
	\end{align}
we have
	\begin{equation}
	\begin{alignedat}{2}
	\dot{\varphi}_i &= \frac{\DD}{\DD t} \varphi_i = \langle \DD_B \varphi_i, \dot{B} \rangle = \langle \DD_B \varphi_i, (L\, B + B \, L^T) \rangle \\
	&= \langle \DD_B \varphi_i \, B, L \rangle + \langle B \, \DD_B \varphi_i, L^T \rangle \overset{\textnormal{isotropy}}{=} \langle \DD_B \varphi_i \, B, L \rangle + \langle \DD_B \varphi_i \, B, L^T \rangle = 2 \, \langle \DD_B \varphi_i \, B, D \rangle.
	\end{alignedat}
	\end{equation}
The derivative of the Cauchy stress tensor $\sigma$ with respect to time is therefore obtained as:
	\begin{equation}
	\begin{alignedat}{2}
	\frac{\DD}{\DD t} [\sigma] &= \dot{\varphi}_0 \, \id + \dot{\varphi}_1 \, B + \varphi_1 \, \dot{B} + \dot{\varphi_2} \, B^2 + \varphi_2 \, \dot{B} \, B + \varphi_2 \, B \, \dot{B} \\
	&= 2\, \langle \DD_B\varphi_0(B), D \rangle \, \id + 2 \, \langle \DD_B \varphi_1(B) \, B, D \rangle \, B + \varphi_1 \, (L\, B + B \, L^T) \\
	&\qquad + 2 \, \langle \DD_B \varphi_2(B) \, B, D \rangle \, B^2 + \varphi_2 \, (L \, B + B \, L^T) \, B + \varphi_2 \, B \, (L \, B + B \, L^T) \\
	&= 2 \, \underbrace{\{\langle \DD_B\varphi_0(B), D \rangle \, \id + \langle \DD_B \varphi_1(B) \, B, D \rangle \, B + \langle \DD_B \varphi_2(B) \, B, D \rangle \, B^2\}}_{\equalscolon A} \\
	&\qquad + \varphi_1 \, (L\, B + B \, L^T) + \varphi_2 \, (L \, B + B \, L^T) \, B + \varphi_2 \, B \, (L \, B + B \, L^T) \\
	&= 2A + \varphi_1 \, (L\, B + B \, L^T) + \varphi_2 \, (L \, B^2 + B \, L^T \, B + B \, L \, B + B^2 \, L^T) \\
	&= 2A + \varphi_1 \, (L\, B + B \, L^T) + \varphi_2 \, (L \, B^2 + B^2 \, L^T) + \varphi_2 \, (B \, L^T \, B + B \, L \, B) \\
	&= 2A + \underbrace{\varphi_1 \, (L\, B + B \, L^T) + \varphi_2 \, (L \, B^2 + B^2 \, L^T)}_{= L\, \sigma + \sigma \, L^T - 2\varphi_0 \, D} + 2\varphi_2 \, B \, D \, B \, ,
	\end{alignedat}
	\end{equation}
where we used that from $\sigma = \varphi_0 \, \id + \varphi_1 \, B + \varphi_2 \, B^2$, we also have
	\begin{equation}
	\begin{alignedat}{2}
	L\, \sigma + \sigma \, L^T &= \varphi_0 \, L + \varphi_1 \, L \, B + \varphi_2 \, L \, B^2 + \varphi_0 \, L^T + \varphi_1 \, B \, L^T + \varphi_2 \, B^2 \, L^T \\
	&= \varphi_0 \, (L + L^T) + \varphi_1 \, (L \, B + B \, L^T) + \varphi_2 \, (L \, B^2 + B^2 \, L^T) = 2 \, \varphi_0 \, D + \varphi_1 \, (L \, B + B \, L^T) + \varphi_2 \, (L \, B^2 + B^2 \, L^T)
	\end{alignedat}
	\end{equation}
and thus
	\begin{align}
	\varphi_1 \, (L \, B + B \, L^T) + \varphi_2 \, (L \, B^2 + B^2 \, L^T) = L\, \sigma + \sigma \, L^T - 2 \varphi_0 \, D.
	\end{align}
\noindent This implies that
	\begin{equation}
	\begin{alignedat}{2}
	\frac{\DD}{\DD t}[\sigma] - (L \, \sigma + \sigma \, L^T) &= 2 \, \{\langle \DD_B\varphi_0(B), D \rangle \, \id + \langle \DD_B \varphi_1(B) \, B, D \rangle \, B + \langle \DD_B \varphi_2(B) \, B, D \rangle \, B^2\} - 2 \varphi_0 \, D + 2 \varphi_2 \, B \, D \, B,
	\end{alignedat}
	\end{equation}
which means
	\begin{equation}
	\label{eqlongoldr}
	\begin{alignedat}{2}
	\frac{\DD^{\Old}}{\DD t}[\sigma] &=2 \, \{\langle \DD_B\varphi_0(B), D \rangle \, \id + \langle \DD_B \varphi_1(B) \, B, D \rangle \, B + \langle \DD_B \varphi_2(B) \, B, D \rangle \, B^2\} - 2 \varphi_0 \, D + 2 \varphi_2 \, B \, D \, B \equalscolon \H^{\Old}(B).D \, .
	\end{alignedat}
	\end{equation}
If the stress-strain relationship is invertible, i.e.~$\sigma = \mathcal{F}(B) \iff B = \mathcal{F}^{-1}(\sigma)$ we can write \newline
	$
	\frac{\DD^{\Old}}{\DD t}[\sigma] = \H^{\Old}(\mathcal{F}^{-1}(\sigma)).D \equalscolon \H^{\Old}(\sigma).D \, ,
	$
so that an isotropic, Cauchy-elastic material is also hypo-elastic in this case, as first demonstrated by Walter Noll \cite{Noll55}. Other procedures to obtain consistent rate equations from hyperelasticity are e.g.~presented in Altmeyer et al.~\cite{altmeyer2016}, Panicaud et al.~\cite{Panicaud2016} and Romenskii \cite{romenskii1974}. \\
In a final step, for the reader's convenience, we convert the Oldroyd rate further into the Zaremba-Jaumann rate.
\subsubsection{Converting the Oldroyd rate into the corotational Zaremba-Jaumann rate}
Recall that
	\begin{equation}
	\frac{\DD^{\Old}}{\DD t}[\sigma(t)] = \underbrace{\frac{\DD}{\DD t}[\sigma] - L\sigma - \sigma L^T}_{\textnormal{Oldroyd rate}} \implies \frac{\DD}{\DD t}[\sigma] = \frac{\DD^{\Old}}{\DD t} [\sigma] + L\sigma + \sigma L^T, \qquad \frac{\DD^{\ZJ}}{\DD t}[\sigma(t)] = \underbrace{\frac{\DD}{\DD t}[\sigma] + \sigma W - W \sigma}_{\textnormal{Zaremba-Jaumann rate}}.
	\end{equation}
and therefore with $L = D + W$
	\begin{equation}
	\begin{alignedat}{2}
	\frac{\DD^{\Old}}{\DD t}[\sigma(t)] &= \frac{\DD}{\DD t}[\sigma] - L\sigma - \sigma L^T = \frac{\DD}{\DD t}[\sigma] - (D+W)\sigma - \sigma(D+W)^T =\frac{\DD}{\DD t}[\sigma] - W\sigma - \sigma W^T - D\sigma - \sigma D \\
	&=\frac{\DD}{\DD t}[\sigma] + \sigma W - W\sigma - (D\sigma + \sigma D) =\frac{\DD^{\ZJ}}{\DD t}[\sigma] - (D\sigma + \sigma D).
	\end{alignedat}
	\end{equation}
Hence, from $\frac{\DD^{\Old}}{\DD t} [\sigma] = \H^{\Old}(\sigma) . D$ we get
	\begin{equation}
	\label{eqZarembaOld}
	\begin{alignedat}{2}
	\frac{\DD^{\ZJ}}{\DD t} [\sigma] - (D\sigma + \sigma D) = \H^{\Old}(\sigma) . D \quad \iff \quad \frac{\DD^{\ZJ}}{\DD t} [\sigma] &= \H^{\Old}(\sigma) . D + (D\sigma + \sigma D) \equalscolon \H^{\ZJ}(\sigma).D .
	\end{alignedat}
	\end{equation}
or equivalently, by using \eqref{eqlongoldr}
	\begin{align}
	\label{eqyoucantseeme}
	\H^{\ZJ}(\sigma) = 2 \, \{\langle \DD_B\varphi_0(B), D \rangle \, \id + \langle \DD_B \varphi_1(B) \, B, D \rangle \, B + \langle \DD_B \varphi_2(B) \, B, D \rangle \, B^2\} - 2 \varphi_0 \, D + 2 \varphi_2 \, B \, D \, B + D \, \sigma + \sigma \, D \, .
	\end{align}
Observe, that it is virtually impossible to derive any constitutive conclusion from \eqref{eqyoucantseeme} combined with $\langle \H^{\ZJ}(\sigma).D, D \rangle > 0$ $\forall \, D \in \Sym(3) \! \setminus \! \{0\}$.

It is also worth emphasizing that invertibility of e.g.~$\H^{\Old}(\sigma)$ at given $\sigma$ is in general independent of the invertibility of $\H^{\ZJ}(\sigma)$. By this we mean $\det \H^{\Old}(\sigma) \neq 0 \notiff \det \H^{\ZJ}(\sigma) \neq 0$. Similarly, the positive definiteness of $\H^{\Old}(\sigma)$ is not connected to the positive definiteness of $\H^{\ZJ}(\sigma)$.

\subsubsection{Physical inadequacy of zero-grade hypo-elasticity in simple shear for the Zaremba-Jaumann derivative} \label{appendixsimpleshear}
In a first example we recall that zero-grade hypo-elasticity results in oscillating Cauchy shear-stress when using the Zaremba-Jaumann derivative. This was first observed by Dienes \cite{dienes1979}. It is included here for the interested reader. The point to retain is that zero-grade hypo-elasticity is in general inconsistent but this inconsistency is not due to the Zaremba-Jaumann rate.
\begin{example}[Physical inadequacy of the zero-grade hypo-elasticity formulation]
Consider the Zaremba-Jaumann derivative 
	\begin{align}
	\frac{\DD^{\circ}}{\DD t}[\sigma] = \frac{\DD^{\ZJ}}{\DD t}[\sigma] = \frac{\DD}{\DD t}[\sigma] + \sigma \, W - W \, \sigma,
	\end{align}
with the assumption of zero-grade hypo-elasticity
	\begin{align}
	\H^*(\sigma).D = \C^{\iso}.D = 2 \, \mu \, D + \lambda \, \tr(D), \quad \mu,\lambda>0,\, \id,
	\end{align}
leading to the equation
	\begin{align}
	\label{eqZJmatrixcalc001}
	\frac{\DD^{\ZJ}}{\DD t}[\sigma] = \frac{\DD}{\DD t}[\sigma] + \sigma \, W - W \, \sigma = 2\mu \, D + \lambda \, \tr(D) \, \id = \C^{\iso}.D \, .
	\end{align}
Then we calculate for simple shear
	\begin{align*}
	F(t) =
	\begin{pmatrix}
	1 & \gamma \, t \\
	0 & 1
	\end{pmatrix},
	\qquad
	\dot{F}(t) =
	\begin{pmatrix}
	0 & \gamma \\
	0 & 0
	\end{pmatrix},
	\qquad
	F^{-1}(t) =
	\begin{pmatrix}
	1 & -\gamma \, t \\
	0 & 1
	\end{pmatrix},
	\end{align*}
	\begin{align*}
	L(t) = \dot{F} \, F^{-1} =
	\begin{pmatrix}
	0 & \gamma \\
	0 & 0
	\end{pmatrix},
	\qquad
	D(t) = \sym L(t) = \frac12
	\begin{pmatrix}
	0 & \gamma \\
	\gamma & 0
	\end{pmatrix},
	\qquad
	\WW(t) = \textnormal{skew} \, L(t) = \frac12
	\begin{pmatrix}
	0 & \gamma \\
	-\gamma & 0
	\end{pmatrix},
	\end{align*}
so that $\tr(D) = 0$. Now, we can determine $\sigma(t) = \begin{pmatrix} \sigma_{11} & \sigma_{12} \\ \sigma_{12} & \sigma_{22} \end{pmatrix}$ by solving the system of ordinary differential equations given by \eqref{eqZJmatrixcalc001}:
	\begin{equation}
	\begin{alignedat}{2}
	\frac{\DD}{\DD t}[\sigma] &= 2 \mu \, D + W \, \sigma - \sigma \, W \\
	\iff \qquad
	\begin{pmatrix}
	\sigma_{11}' & \sigma_{12}' \\
	\sigma_{12}' & \sigma_{22}'
	\end{pmatrix}
	&= \mu \, \gamma \,
	\begin{pmatrix}
	0 & 1 \\
	1 & 0
	\end{pmatrix}
	+ \frac12 \, \gamma \, \left(
	\begin{pmatrix}
	0 & 1 \\
	-1 & 0
	\end{pmatrix} \,
	\begin{pmatrix}
	\sigma_{11} & \sigma_{12} \\
	\sigma_{12} & \sigma_{22}
	\end{pmatrix}
	-
	\begin{pmatrix}
	\sigma_{11} & \sigma_{12} \\
	\sigma_{12} & \sigma_{22}
	\end{pmatrix} \,
	\begin{pmatrix}
	0 & 1 \\
	-1 & 0
	\end{pmatrix} \right) \\
	&= \mu \, \gamma \,
	\begin{pmatrix}
	0 & 1 \\
	1 & 0
	\end{pmatrix}
	+ \frac12 \, \gamma \, 
	\begin{pmatrix}
	2 \, \sigma_{12} & \sigma_{22} - \sigma_{11} \\
	\sigma_{22} - \sigma_{11} & -2 \, \sigma_{12}
	\end{pmatrix}.
	\end{alignedat}
	\end{equation}
This leads to the system of ordinary differential equations
	\begin{align}
	\label{eqODEsystem001}
	\sigma_{11}' = \gamma \, \sigma_{12}, \qquad \sigma_{22}' = -\gamma \, \sigma_{12}, \qquad \sigma_{12}' = \mu \, \gamma + \frac12 \, \gamma \, (\sigma_{22} - \sigma_{11}),
	\end{align}
with the initial conditions (stress-free initial state) $\sigma_{11}(0) = \sigma_{12}(0) = \sigma_{22}(0) = 0$. Adding the first two equations leads to
	\begin{align}
	(\sigma_{11} + \sigma_{22})' = 0 \qquad \implies \qquad \sigma_{11} + \sigma_{22} = \textnormal{const.} \qquad \overset{\sigma_{11}(0) = \sigma_{22}(0) = 0}{\implies} \qquad \sigma_{11}(t) = - \sigma_{22}(t).
	\end{align}
Inserting this into the third equation of \eqref{eqODEsystem001} and differentiating the result yields 
	\begin{align}
	\sigma_{12}'' = \gamma \, \sigma_{22}' = - \gamma^2 \, \sigma_{12} \qquad \implies \qquad \sigma_{12}'' + \gamma^2 \, \sigma_{12} = 0,
	\end{align}
which is well-known to have a trigonometric solution of the form $\sigma_{12}(t) = a \, \sin(\gamma \, t) + b \, \cos(\gamma \, t)$. Since $\sigma_{12}(0) = 0$, we also must have $b=0$, so that the general solution is given by $\sigma_{12}(t) = a \, \sin(\gamma \, t)$ for arbitrary $a \in \R$. This shows that the assumption of \textbf{zero-grade hypo-elasticity} together with the Zaremba-Jaumann derivative is \textbf{in general not sound}, since it produces oscillating Cauchy shear stress for monotone increasing simple shear.\footnotemark $\hfill{\square}$
\footnotetext
{
Several authors have nevertheless insisted on using  zero-grade hypo-elasticity by modifying the formulation. More precisely, they consider the Kirchhoff stress $\tau$ and propose to use an adapted corotational rate, the so called logarithmic rate \cite{xiao97, xiao98_2,xiao1999existence},  in order to obtain an integrable hypo-elastic formulation, leading exclusively to the Hencky-energy \cite{Neff_Osterbrink_Martin_Hencky13}. Contrary to this, in our framework, there is no immediate need to depart from the Zaremba-Jaumann rate since we start from hyperelasticity or Cauchy-elasticity and consider solely the induced tangent stiffness tensor $\H^{\ZJ}(\sigma)$ which is consistent by definition.
}
\end{example}
\subsubsection{Logarithmic monotonicity of the Cauchy stress $\sigma$ for volumetric functions} \label{appvolufunc}
In this section we derive the criterion for the monotonicity of the Cauchy stress $\sigma(V)$ in $\log V$ for a ``perfectly compressible fluid'', i.e.~we show that $\sigma(V)$ satisfies
	\begin{align}
	\label{eqmonotonicityinlog01}
	\langle \sigma(V_1) - \sigma(V_2), \log V_1 - \log V_2 \rangle > 0 \qquad \forall \, V_1 \neq V_2, \qquad V_1, V_2 \in \Sym^{++}(3),
	\end{align}
if the criterion is fulfilled. For such a perfect fluid, the energy function $\WW$ has the structure
	\begin{align}
	\WW(F) = \WW_{\vol}(\det F) = h(\det F)
	\end{align}
with a (smooth) function $h: \R^+ \to \R$. 
\begin{prop}
Let $\WW(F) = \WW_{\vol}(\det F) = h(\det F) = \widehat{h}(\log \det F)$. Then the corresponding Cauchy stress $\sigma_{\vol}$ satisfies the relation
	\begin{align}
	\sigma_{\textnormal{vol}} \; \textnormal{monotone in} \; \log V \quad \iff \quad \widehat{h}'' - \widehat{h}' \ge 0 \quad \iff \quad h \; \textnormal{convex in} \; \det F.
	\end{align}
\end{prop}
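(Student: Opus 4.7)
The plan is to first reduce the tensorial monotonicity statement to a scalar monotonicity statement, and then translate between the two formulations $h$ and $\widehat h$ via elementary calculus.

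First I would compute the Cauchy stress for a purely volumetric energy. Using $\DD_F[\det F] = \Cof F = (\det F)F^{-T}$, one obtains $\DD_F \WW(F) = h'(\det F)\Cof F$, hence
\begin{align*}
\sigma_{\vol}(V) = \tfrac{1}{J}\DD_F\WW(F)\,F^T = h'(\det V)\,\id\,,
\end{align*}
which is spherical (using $\det F = \det V$). Since $\sigma_{\vol}$ is a scalar multiple of $\id$, the inner product with any symmetric matrix $A$ collapses to $\langle \sigma_{\vol},A\rangle = h'(\det V)\,\tr A$. Applying this to $A = \log V_1 - \log V_2$ and using $\tr\log V = \log\det V$, the TSTS-M-type pairing becomes
\begin{align*}
\langle \sigma_{\vol}(V_1)-\sigma_{\vol}(V_2),\,\log V_1-\log V_2\rangle
= \big(h'(\det V_1)-h'(\det V_2)\big)\big(\log\det V_1-\log\det V_2\big)\,.
\end{align*}
So the tensorial monotonicity in $\log V$ is completely determined by the behaviour of the scalar map $t\mapsto h'(t)$ together with $t\mapsto\log t$.

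Next I would carry out the equivalences. Setting $t_i = \det V_i > 0$ and using that $\log$ is strictly increasing so that $\log t_1 - \log t_2$ shares the sign of $t_1-t_2$, the right-hand side is non-negative for all admissible $V_1,V_2$ if and only if
\begin{align*}
(h'(t_1)-h'(t_2))(t_1-t_2)\ge 0 \qquad \forall\, t_1,t_2>0\,,
\end{align*}
i.e.\ if and only if $h'$ is monotone non-decreasing, which is equivalent to $h$ being convex. This already establishes the equivalence $\sigma_{\vol}$ monotone in $\log V \iff h$ convex in $\det F$.

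Finally I would translate the convexity of $h$ into the $\widehat h$ condition. From $\widehat h(\xi) = h(e^\xi)$ one gets
\begin{align*}
\widehat h'(\xi) = h'(e^\xi)\,e^\xi\,,\qquad \widehat h''(\xi) = h''(e^\xi)\,e^{2\xi} + h'(e^\xi)\,e^\xi\,,
\end{align*}
hence
\begin{align*}
\widehat h''(\xi) - \widehat h'(\xi) = h''(e^\xi)\,e^{2\xi}\,,
\end{align*}
so $\widehat h''-\widehat h'\ge 0$ holds on $\R$ if and only if $h''\ge 0$ on $\R_+$, i.e.\ $h$ is convex. Combining this with the previous paragraph closes the chain of equivalences. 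There is no real obstacle here: the whole argument is bookkeeping, and the only point worth being careful about is that $\sigma_{\vol}$ is spherical, which forces the pairing to depend only on $\det V$ and reduces the matrix inequality to a one-dimensional statement.
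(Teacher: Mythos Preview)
Your proof is correct and follows essentially the same approach as the paper: both exploit that $\sigma_{\vol}$ is spherical to reduce the tensorial monotonicity to a scalar monotonicity, then handle the equivalence between the $h$ and $\widehat h$ formulations by elementary calculus. The only cosmetic difference is that the paper computes $\sigma_{\vol}$ via the Richter formula (working in $\widehat h$ and $\widehat\xi=\tr\log V$ first, with $g(\widehat\xi)=\widehat h'(\widehat\xi)e^{-\widehat\xi}$) and then converts to $h$, whereas you compute $\sigma_{\vol}=h'(\det V)\,\id$ directly and convert to $\widehat h$ at the end; the two are identical once one notes $h'(\det V)=\widehat h'(\widehat\xi)e^{-\widehat\xi}$.
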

\begin{proof}
Since $F = V \, R$ with $R \in \OO(3)$, we have $\det F = \det V$ so that $\WW_{\vol}(\det F) = \WW_{\vol}(\det V)$. Introducing $\widehat{\WW}_{\vol}(\log \det V) \colonequals \WW_{\vol}(\det V)$ and $\widehat{h}(\log \det V) \colonequals h(\det V)$ respectively, we can derive $\sigma_{\vol}(V)$ by the Richter formula \cite{richter1948isotrope,vallee1978lois}
	\begin{align}
	\sigma(V) = \sigma_{\vol}(\det V) = \frac{1}{\det V} \, \tau(\det V) = \frac{1}{\det V} \, \DD_{\log V} \widehat{\WW}_{\vol}(\log \det V).
	\end{align}
Since $\tr(\log V) = \log \det V$ and $\DD_X \tr(X) = \id$, we can calculate the derivative $\DD_{\log V} \widehat{\WW}_{\vol}(\log \det V)$ explicitly by using $\widehat{\WW}_{\vol}(\log \det V) = \widehat{h}(\log \det V) = \widehat{h}(\tr(\log V))$, which yields
	\begin{align}
	\sigma_{\vol}(\det V) = \frac{1}{\det V} \, \widehat{h}'(\tr(\log V)) \, \id.
	\end{align}
In a final step we set $\widehat{\xi} \colonequals \tr(\log V)$, introduce $\widehat{\sigma}_{\vol}(\log \det V)\colonequals \sigma_{\vol}(\det V)$ and observe
	\begin{align}
	\label{eqmonotonicityinlog02}
	\widehat{\sigma}_{\vol}(\log \det V) = \widehat{\sigma}_{\vol}(\tr(\log V)) = \widehat{\sigma}_{\vol}(\widehat{\xi}) = \underbrace{\frac{1}{\mathrm{e}^{\tr(\log V)}} \, \widehat{h}'(\tr(\log V))}_{= \; \widehat{h}'(\widehat{\xi}) \, \mathrm{e}^{-\widehat{\xi}} \; \equalscolon \; g(\widehat \xi)} \, \id.
	\end{align}
With the help of \eqref{eqmonotonicityinlog02} we can reformulate the left hand side of \eqref{eqmonotonicityinlog01} (with $\widehat \xi_i = \tr(\log V_i)$, $i = 1, 2$)
	\begin{align}
	\langle \sigma(V_1) - \sigma(V_2), \log V_1 - \log V_2 \rangle = \langle ((g(\widehat \xi_1) - g(\widehat \xi_2)) \, \id, \log V_1 - \log V_2 \rangle = (g(\widehat \xi_1) - g(\widehat \xi_2)) \, (\widehat \xi_1 - \widehat \xi_2),
	\end{align}
showing that the monotonicity of $\sigma(V) = \sigma_{\vol}(\det V) = \widehat{\sigma}_{\vol}(\tr(\log V))$ in $\log V$ is equivalent to the monotonicity of the scalar function $g(\widehat \xi) = \widehat{h}'(\widehat{\xi}) \, \mathrm{e}^{-\widehat{\xi}}$ with respect to $\widehat{\xi} \in \R$. Calculating the derivative with respect to $\widehat{\xi}$ results in the condition
	\begin{align}
	\frac{\dif}{\dif \widehat{\xi}} \left( \widehat{h}'(\widehat{\xi}) \, \mathrm{e}^{-\widehat \xi} \right) = (\widehat h''(\widehat \xi) - \widehat h'(\widehat \xi)) \, \mathrm{e}^{-\widehat \xi} \ge 0 \qquad \iff \qquad \widehat h''(\widehat \xi) - \widehat h'(\widehat \xi) \ge 0.
	\end{align}
From this we can also derive a criterion for $h(\det V) = \widehat h(\log \det V) = \widehat h(\widehat \xi)$. Setting $x \colonequals \det V$, we obtain
	\begin{equation}
	\begin{alignedat}{2}
	h(x) &= \widehat h(\log x) \qquad \iff \qquad \frac{\dif}{\dif x} h(x) = h'(x) = \frac{\dif}{\dif x} \widehat h(\log x) = \widehat h'(\log x) \, \frac1x \\
	\iff \qquad h''(x) &= \frac{\dif}{\dif x}\left(\widehat h'(\log x) \, \frac1x\right) = \widehat h''(\log x) \, \frac{1}{x^2} - \widehat h' \, \frac{1}{x^2} = \frac{1}{x^2} \, (\widehat h''(\log x) - \widehat h'(\log x)).
	\end{alignedat}
	\end{equation}
Finally, inserting $x = \det V$ we see that
	\begin{align}
	\widehat h''(\widehat \xi) - \widehat h'(\widehat \xi) &\ge 0 \qquad \iff \qquad h''(\det V) \ge 0 \qedhere
	\end{align}
\end{proof}
\begin{rem}
Calculating the trace
	\begin{align}
	\tr(\sigma_{\vol}(\det V)) = \frac{3}{\det V} \, \widehat h'(\tr(\log V)) = 3 \, \widehat h'(\widehat \xi) \, \mathrm{e}^{-\widehat \xi},
	\end{align}
we additionally see that for the mean pressure $\frac13 \tr(\sigma_{\vol})$ the relation
	\begin{center}
	\fbox{\textnormal{$\frac13 \tr(\sigma_{\textnormal{vol}})$ invertible \quad $\overset{\substack{\DD_B \sigma_{\vol}(\id) \in \Sym^{++}_4(6)\\\left.\right.}}{\iff}$ \quad $\frac13 \tr(\sigma_{\textnormal{vol}})$ monotone \quad $\iff$ \quad $h$ convex in $\det V$}}
	\end{center}
holds true.
	\begin{figure}[h!]
		\begin{center}
		\begin{minipage}[h!]{0.8\linewidth}
			\centering
			\includegraphics[scale=0.5]{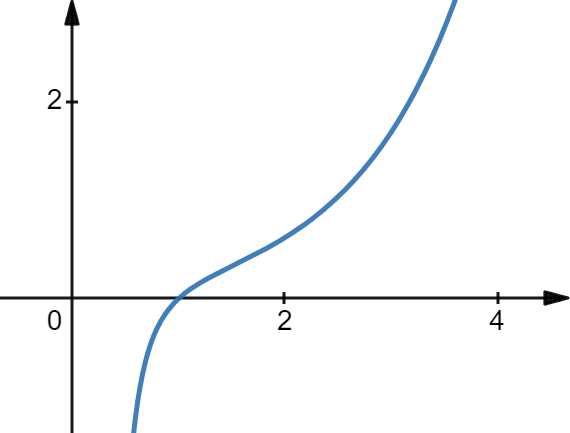}
			\put(-20,40){$\alpha$}
			\put(-145,40){$1$}
			\put(-180,160){$\frac{1}{3}\tr(\sigma(\alpha \,\id))$}
			\centering
			\caption{\footnotesize{Monotone volume-mean pressure $\alpha \mapsto \frac{1}{3}\tr(\sigma(\alpha \,\id))$. It is physically reasonable to expect that the mean pressure is an increasing function of the volumetric stretch for a stable elastic material. }}
			\label{graphic-dupa-k}
		\end{minipage}
	\end{center}
	\end{figure}
\end{rem}
\begin{rem}
The condition of Legendre-Hadamard-ellipticity and monotonicity of $\sigma_{\vol}$ in $\log V$ coincide for a perfectly compressible fluid, i.e.~in $\WW_{\vol}(F) = h(\det F)$ the function $h$ must be convex, ensuring also the polyconvexity \cite{Ball77} of $\WW_{\vol}$, see also Appendix~\ref{Appenhdet}. This was already established by Leblond \cite{leblond1992constitutive}. However, in general, polyconvexity and the corotational stability postulate (CSP) do not coincide as shown in \cite{NeffGhibaLankeit}, see also the example concerning the exponential Hencky energy herein \cite[Section 4]{NeffGhibaLankeit}.
\end{rem}
\subsubsection{Legendre-Hadamard-ellipticity for  functions of the type $F\mapsto h(\det F)$}
\label{Appenhdet}
This subsection is also taken from \cite{NeffGhibaLankeit}. \\
\\
We consider a function $h:\R\to\R$ and analyze when the function $F\mapsto h(\det F)$ is LH-elliptic as a function of $F\in \R^{3\times 3}$.  We recall that
	\begin{align}
	\DD(\det F).H=\det F\, \tr(H\, F^{-1})=\langle \Cof F,H\rangle.
	\end{align}
Using the first Frech\'{e}t-formal derivative, we compute
	\begin{align}
	\DD(h(\det F)).(H,H)=h^{\prime}(\det F)\, \langle \Cof F, H\rangle
	\end{align}
and the second derivative is given by
	\begin{align}
	\DD^2(h(\det F)).(H,H)&=h^{\prime\prime}(\det F)\, \langle \Cof F, H\rangle^2+h^{\prime}(\det F)\langle \DD(\Cof F).H, H\rangle\\
	&=h^{\prime\prime}(\det F)\, \langle \Cof F, H\rangle^2+h^{\prime}(\det F) \{\langle \langle \Cof F,H\rangle \, F^{-T},H\rangle+\det F\langle -F^{-T}H^T
	F^{-T},H\rangle\},\notag
	\\
	&=h^{\prime\prime}(\det F)\, \langle \Cof F, H\rangle^2+h^{\prime} (\det F)\, \det F \{\langle  F^{-T},H\rangle^2-\langle F^{-T}H^T
	F^{-T},H\rangle\}.\notag
	\end{align}
Hence, for $\xi,\eta\in \R^3$ we have
	\begin{align}
	\DD^2(h(&\det F)).((\xi\otimes\eta),(\xi\otimes\eta)) \\
	&=h^{\prime\prime}(\det F)\, \langle \Cof F, (\xi\otimes\eta)\rangle^2+h^{\prime} (\det F)\, \det F \{\langle  F^{-T},(\xi\otimes\eta)\rangle^2-\langle
	F^{-T}(\xi\otimes\eta)^T F^{-T},(\xi\otimes\eta)\rangle\}.\notag
	\end{align}
Surprisingly, we also have the simplification
	\begin{align}
	\langle  F^{-T},(\xi\otimes\eta)\rangle^2-&\langle F^{-T}(\xi\otimes\eta)^T F^{-T},(\xi\otimes\eta)\rangle=
	\langle \id, F^{-1}(\xi\otimes\eta)\rangle^2-\langle (\eta\otimes F^{-1}\xi) ,(F^{-1}\xi\otimes\eta)\rangle\notag\\
	&=
	\langle \id, F^{-1}(\xi\otimes\eta)\rangle^2-\langle (F^{-1}\xi\otimes\eta)^T ,(F^{-1}\xi\otimes\eta)\rangle
	\notag=
	\langle  F^{-1}\xi,\eta\rangle^2-\langle F^{-1}\xi,\eta \rangle^2=0,
	\end{align}
leading to
	\begin{align}
	\DD^2(h(\det F)).&(\xi\otimes\eta,\xi\otimes\eta)=h^{\prime\prime}(\det F)\, \langle \Cof F, (\xi\otimes\eta)\rangle^2.
	\end{align}
In conclusion, $F\mapsto h(\det F)$ is LH-elliptic  if and only if $t\mapsto h(t)$ is convex since $\langle \Cof F, (\xi\otimes\eta)\rangle^2$ is
positive. \\
\\
From \cite[page 213]{Dacorogna08} we additionally have for elastic fulids
\begin{prop}\label{propDacdet}
For $h:\mathbb{R}\rightarrow\mathbb{R}$, let $\WW:\mathbb{R}^{n\times n}\rightarrow\mathbb{R}$ be given by $\WW(F)=h(\det F).$ Then
    \begin{align}\hspace{-4mm}
      \WW \quad \textnormal{polyconvex}\quad  \iff \quad \WW \quad \textnormal{quasiconvex}\quad \iff \quad  \WW \quad \textnormal{rank one convex}\quad \iff \quad h \quad \textnormal{convex}.
    \end{align}
   \end{prop}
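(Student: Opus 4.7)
\medskip

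\noindent\textbf{Proof proposal for Proposition \ref{propDacdet}.}

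The plan is to close the cycle of implications
\[
h \text{ convex} \;\Longrightarrow\; \WW \text{ polyconvex} \;\Longrightarrow\; \WW \text{ quasiconvex} \;\Longrightarrow\; \WW \text{ rank-one convex} \;\Longrightarrow\; h \text{ convex}.
\]
The two middle implications hold for any continuous $\WW\colon\R^{n\times n}\to\R$ and are classical (see Dacorogna). So only the first and the last arrow need a dedicated argument, and these are precisely where the special structure $\WW(F)=h(\det F)$ enters.

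For the first arrow, I would simply unpack the definition of polyconvexity. A function is polyconvex if it can be written as a convex function of the minors $(F,\Cof F,\det F)\in\R^{n\times n}\times\R^{n\times n}\times\R$. Setting $g(X,Y,z)\colonequals h(z)$ yields such a representation for $\WW(F)=h(\det F)$, and $g$ is convex on the product space as soon as $h$ is convex on $\R$. This handles one direction without any computation.

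For the last (and only genuinely nontrivial) arrow, I plan to use the well-known affine behaviour of the determinant on rank-one perturbations:
\begin{equation*}
	\det(F+t\,\xi\otimes\eta) \;=\; \det F + t\,\langle \Cof F,\,\xi\otimes\eta\rangle
	\qquad\forall\, F\in\R^{n\times n},\;\xi,\eta\in\R^n,\;t\in\R.
\end{equation*}
(This follows from multilinearity of $\det$ in its columns, or from $\det(F+\xi\otimes\eta)=\det F\,(1+\eta^T F^{-1}\xi)$ for invertible $F$, extended by continuity.) Consequently, if $\WW$ is rank-one convex then for every fixed $F,\xi,\eta$ the scalar function
\[
	t \;\longmapsto\; \WW(F+t\,\xi\otimes\eta) \;=\; h\!\bigl(\det F + t\,\langle \Cof F,\,\xi\otimes\eta\rangle\bigr)
\]
is convex on $\R$. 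The key specialisation is then to choose $F=\diag(\alpha,1,\ldots,1)$ and $\xi=\eta=e_1$, which gives $\det F=\alpha$ and $\langle \Cof F,\,e_1\otimes e_1\rangle = 1$, hence $t\mapsto h(\alpha+t)$ is convex for every $\alpha\in\R$. Letting $\alpha$ range over $\R$ recovers the convexity of $h$ globally.

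The main (and rather mild) obstacle lies in this last step: one must exhibit rank-one directions along which $\det$ traces out all affine segments of $\R$, without the trick $\det F+t\,\langle\Cof F,\xi\otimes\eta\rangle$ degenerating to a constant. The diagonal construction above circumvents this completely, so the real work is just to recall/verify the rank-one determinant identity. Once that is in place, combining the four implications gives the stated fourfold equivalence.
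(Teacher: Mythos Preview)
Your argument is correct. The paper itself does not prove this proposition but simply cites Dacorogna's book. What it \emph{does} do, in the subsection immediately preceding the proposition, is compute
\[
\DD^2(h(\det F)).(\xi\otimes\eta,\xi\otimes\eta)=h''(\det F)\,\langle \Cof F,\xi\otimes\eta\rangle^2
\]
and conclude that $F\mapsto h(\det F)$ is LH-elliptic if and only if $h$ is convex --- essentially the $C^2$ version of your step ``rank-one convex $\Rightarrow$ $h$ convex''. Your route via the affine identity $\det(F+t\,\xi\otimes\eta)=\det F+t\,\langle\Cof F,\xi\otimes\eta\rangle$ together with the diagonal specialisation $F=\diag(\alpha,1,\dots,1)$, $\xi=\eta=e_1$ is a bit more direct and, unlike the second-derivative computation, does not require $h\in C^2$; it therefore yields the full equivalence for arbitrary finite-valued $h$. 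The remaining links in your cycle ($h$ convex $\Rightarrow$ polyconvex, and polyconvex $\Rightarrow$ quasiconvex $\Rightarrow$ rank-one convex) are exactly as you say: the first is immediate from the definition, the latter two are classical.
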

\subsubsection{Hypo-elasticity for the perfect elastic fluid} \label{apphypcomp}
We will demonstrate by a direct calculation that $\H^{\ZJ}(\sigma)$ is positive definite if and only if $h(x)$ is a convex function. \\
\\
Since $\det V = \sqrt{\det B}$, we have $\widetilde{\WW}(B) = h(\sqrt{\det B})$ leading to the Cauchy stress
	\begin{equation}
	\sigma(B) = \frac{2}{J} \, \DD_B \widetilde{\WW}(B) \, B = \frac{2}{\sqrt{\det B}} \, h'(\sqrt{\det B}) \, \frac{1}{2 \, \sqrt{\det B}} \, \det B \, B^{-1} \, B = h'(\sqrt{\det B}) \, \id.
	\end{equation}
Knowing $\sigma(B)$ we can calculate $\frac{\DD^{\ZJ}}{\DD t}[\sigma]$ according to formula \eqref{eqratetype5} from Section \ref{sechypoelastic} and we obtain
	\begin{equation}
	\begin{alignedat}{2}
	\H^{\ZJ}(\sigma).D = \frac{\DD^{\ZJ}}{\DD t}[\sigma] &= \DD_B \sigma(B).[B \, D + B \, D] = \langle \frac12 \, h''(\sqrt{\det B}) \, \sqrt{\det B} \, B^{-1}, B \, D + D \, B \rangle \, \id \\
	&= h''(\sqrt{\det B}) \, \sqrt{\det B} \, \tr(D) \, \id.
	\end{alignedat}
	\end{equation}
Equivalently, we can use \eqref{eqrateGN01} to calculate $\frac{\DD^{\GN}}{\DD t}[\sigma]$ from $\sigma(B)$ with $B = V^2$
	\begin{equation}
	\begin{alignedat}{2}
	\H^{\GN}(\sigma).D = \frac{\DD^{\GN}}{\DD t}[\sigma] &= 2 \, \DD_B \sigma(B).[V \, D \, V] = 2 \, \langle \frac12 \, h''(\sqrt{\det B}) \, \sqrt{\det B} \, B^{-1}, V \, D \, V \rangle \, \id \\
	&= h''(\sqrt{\det B}) \, \sqrt{\det B} \, \tr(D) \, \id,
	\end{alignedat}
	\end{equation}
showing that $\H^{\GN}(\sigma).D = \H^{\ZJ}(\sigma).D$ for the perfect elastic fluid and in fact the same representation is true for an arbitrary corotational rate $\H^{\circ}(\sigma).D$.
\begin{rem}
If, in general, we have the Cauchy stress $\sigma(B)$ given in the form $\sigma(B) = \alpha(B) \, \id$, where $\alpha: \Sym^{++}(3) \to \R$ is a scalar-valued isotropic function, we obtain (with $\frac{\DD}{\DD t}[B] = L \, B + B \, L^T$) for any corotational rate
	\begin{equation}
	\begin{alignedat}{2}
	\frac{\DD^{\circ}}{\DD t}[\sigma(B)] &= \frac{\DD}{\DD t}[\sigma(B)] = \langle \DD \alpha(B), \frac{\DD}{\DD t}[B] \rangle \, \id = \langle \DD \alpha(B) , (L \, B + B \, L^T) \rangle \, \id \\
	&= \langle \DD \alpha(B) \, B, L \rangle + \langle B \, \DD \alpha(B), L^T \rangle \, \id \overset{\alpha \, \textnormal{isotropic}}{=} \langle \DD \alpha(B) \, B, L \rangle + \langle \DD \alpha(B) \, B, L^T \rangle \, \id \\
	&= \langle \DD \alpha(B) \, B, L + L^T \rangle \, \id = 2 \, \langle \DD \alpha(B) \, B, D \rangle \, \id = \H^{\circ}(\sigma).D \, .
	\end{alignedat}
	\end{equation}
\end{rem}
\subsubsection{The Schur product}
We recall \cite{horn2013} that the Schur (or Hadamard) product of two matrices $A, B \in \C^{m \times n}$ is denoted by $(A \circ B) \in \C^{m \times n}$ and it is defined entrywise as $(A \circ B)_{ij} = A_{ij} \, B_{ij}$. This product will be used for the Daleckii-Krein formula in the upcoming section.
\begin{lem}[Properties of the Schur product]
We have the following properties:
	\begin{enumerate}
	\item symmetry: Let $A, B \in \R^{n \times n}$, then
		\begin{align}
		A \circ B = B \circ A, \qquad \textnormal{and} \qquad (A \circ B)^T = A^T \circ B^T \, .
		\end{align}
	\item distributive property: Let $A, B, C \in \R^{n \times n}$, then
		\begin{align}
		A \circ (B + C) = A \circ B + A \circ C, \qquad \textnormal{and} \qquad (A+B) \circ C = A \circ C + B \circ C \, .
		\end{align}
	\item positive definiteness: Let $A, B \in \Sym^+(n)$, then $A \circ B \in \Sym^+(n)$ and
		\begin{align}
		\lambda_{\min}(A \circ B) \ge \lambda_{\min}(B) \, \underset{1 \le i \le n}{\min} \, a_{ii} \, .
		\end{align}
	\item Let $\diag \in \R^{n \times n}$ be a diagonal matrix and $A, B \in \R^{n \times n}$, then
		\begin{equation}
		\begin{alignedat}{2}
		\diag \, (A \circ B) &= (\diag \, A) \circ B = A \circ (\diag B), \qquad (A \circ B) \, \diag &= (A \, \diag) \circ B = A \circ (B \, \diag) 
		\end{alignedat}
		\end{equation}
		and
		\begin{align}
		\diag \, (A \circ B) \, \diag &= (\diag \, A \, \diag) \circ B = A \circ (\diag \, B \, \diag), \\
		\diag \, (A \circ B) \, \diag &= (\diag \, A) \circ (B \, \diag) = (A \, \diag) \circ (\diag \, B) \, .
		\end{align}
	\item Let $A, B, \mathcal{L} \in \R^{n\times n}$, then
		\begin{align}
		\langle \mathcal{L} \circ A , A \rangle &= \sum_{i,j = 1}^n (\mathcal{L}_{ij} \, A_{ij}) \, A_{ij} \ge \sum_{i,j=1}^n \underset{i,j}{\min}(\mathcal{L}_{ij}) |A_{ij}|^2 = \underset{i,j}{\min}(\mathcal{L}_{ij}) \, \norm{A}^2, \\
		\langle \mathcal{L} \circ A , B \rangle &= \sum_{i,j=1}^n (\mathcal{L}_{ij} \, A_{ij}) \, B_{ij} = \sum_{i,j=1}^n A_{ij} \, (\mathcal{L}_{ij} \, B_{ij}) = \langle A, \mathcal{L} \circ B \rangle.
		\end{align}
	\item Let $R \in \OO(3)$ and $\mathcal{F}$ be symmetric $(\mathcal{F} \in \Sym(3))$. Then for all $\DD_1, \DD_2 \in \Sym(3)$ we have
		\begin{align}
		\langle R \, \bigg[\mathcal{F} &\circ [\diag(\lambda_1, \lambda_2, \lambda_3) \, \underbrace{R^T \, \DD_1 \, R}_{S_1} \, \diag(\lambda_1, \lambda_2, \lambda_3)]\bigg] \, R^T, \DD_2 \rangle \notag \\
		= \, &\langle \mathcal{F} \circ [\diag(\lambda_1, \lambda_2, \lambda_3) \, S_1 \, \diag(\lambda_1, \lambda_2, \lambda_3)], \underbrace{R^T \, \DD_2 \, R}_{S_2} \rangle \\
		= \, &\langle \mathcal{F} \circ [\diag(\lambda_1, \lambda_2, \lambda_3) \, S_1 \, \diag(\lambda_1, \lambda_2, \lambda_3)], S_2 \rangle = \langle \diag(\lambda_1, \lambda_2, \lambda_3) \, S_1 \, \diag(\lambda_1, \lambda_2, \lambda_3), \mathcal{F} \circ S_2 \rangle \notag \\
		= \, &\langle S_1, \diag(\lambda_1, \lambda_2, \lambda_3) \, (\mathcal{F} \circ S_2) \, \diag(\lambda_1, \lambda_2, \lambda_3) \rangle = \langle S_1, \mathcal{F} \circ [\diag(\lambda_1, \lambda_2, \lambda_3) \, S_2 \, \diag(\lambda_1, \lambda_2, \lambda_3)] \rangle \notag
		\end{align}
	\end{enumerate}
\end{lem}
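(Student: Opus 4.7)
The proof of this lemma is largely a collection of elementary verifications, so the plan is to dispatch the routine items by entrywise computation and concentrate effort on item (3), which carries the only substantive content (the Schur product theorem together with a Fiedler-type eigenvalue bound). All items start from the definition $(A\circ B)_{ij}=A_{ij}B_{ij}$.

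For items (1) and (2), the symmetry $A\circ B = B\circ A$ and the transpose identity $(A\circ B)^T = A^T\circ B^T$ follow immediately from commutativity of real multiplication applied entry by entry, and distributivity $A\circ(B+C)=A\circ B+A\circ C$ is just distributivity of real multiplication over addition. Items (4) and (5) are equally direct: a left-multiplication by $\diag(d_1,\dots,d_n)$ scales row $i$ by $d_i$, which is exactly the effect of multiplying the corresponding row of either Schur factor by $d_i$, and the inner product identity $\langle \mathcal{L}\circ A, B\rangle = \sum_{i,j}\mathcal{L}_{ij}A_{ij}B_{ij}$ is visibly symmetric in $A$ and $B$, giving both the reciprocity $\langle \mathcal{L}\circ A,B\rangle = \langle A,\mathcal{L}\circ B\rangle$ and the min-bound $\langle \mathcal{L}\circ A, A\rangle \ge (\min_{i,j}\mathcal{L}_{ij})\|A\|^2$.

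The non-trivial piece is item (3). The plan is to use the spectral decomposition $A=\sum_k \mu_k\, u_k u_k^T$ with $\mu_k\ge 0$ and orthonormal $u_k$. Setting $D_v \colonequals \diag(v_1,\dots,v_n)$, the key identity $A\circ (vv^T) = D_v\, B\, D_v$ (when applied to each rank-one summand of the spectral decomposition of $A$, but with $A$'s role exchanged with $B$'s) gives the representation
$$
A\circ B \;=\; \sum_k \mu_k\, D_{u_k}\, B\, D_{u_k}\,,
$$
which is manifestly positive semi-definite since each summand is a congruence of $B\in\Sym^+(n)$. For the lower bound, testing against $x\in\R^n$ and setting $y_k \colonequals D_{u_k} x$ yields
$$
\langle (A\circ B)x,x\rangle \;=\; \sum_k \mu_k\,\langle B y_k, y_k\rangle \;\ge\; \lambda_{\min}(B)\sum_k \mu_k \|y_k\|^2\,.
$$
Collapsing the sum via $\sum_k \mu_k\|y_k\|^2 = \sum_i x_i^2\sum_k\mu_k (u_k)_i^2 = \sum_i x_i^2 A_{ii} \ge (\min_i a_{ii})\|x\|^2$ produces the claimed bound $\lambda_{\min}(A\circ B)\ge \lambda_{\min}(B)\cdot \min_i a_{ii}$.

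Item (6) is then combinatorial: I would chain the orthogonal invariance $\langle RMR^T,N\rangle = \langle M, R^T N R\rangle$ of the Frobenius product (which moves $R$ off the outer factor and converts $\DD_2$ to $S_2 = R^T\DD_2 R$), then use the reciprocity from (5) to shift $\mathcal{F}$ between the two Schur factors, and finally the diagonal-Schur interactions from (4) to move the two copies of $\diag(\lambda_1,\lambda_2,\lambda_3)$ from the $S_1$-slot to the $S_2$-slot. The main (and only) obstacle is item (3); everything else amounts to unwinding definitions and applying symmetries that have just been established.
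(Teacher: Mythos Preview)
Your argument is correct. The paper itself does not supply a proof of this lemma; it is stated as a list of standard properties immediately after recalling the definition of the Schur product (with a reference to Horn and Johnson), and the text moves straight on to the Daleckii--Krein formula. So there is nothing in the paper to compare against, and your write-up in fact fills in what the authors left implicit.

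Your treatment of item~(3) via the spectral decomposition $A=\sum_k \mu_k\,u_k u_k^T$ and the rank-one identity $(u u^T)\circ B = D_u\,B\,D_u$ is the standard route to the Schur product theorem and the associated Fiedler-type lower bound; the remaining items are, as you say, entrywise verifications and applications of items~(4) and~(5) already established.
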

\subsubsection{The Daleckii-Krein formula}
The upcoming formula can be used to explicitly determine the Fréchet derivative of a primary matrix function $f(X)$ (see Appendix~\ref{appendixnotation} for a definition) at $X = A$, applied to a perturbation $H$.
	\begin{thm} \label{Daleckii-KreinThm} \label{thmD-K}
	(\textbf{Daleckii-Krein} \cite{daleckii1965}). Let $A = Z \, D \, Z^{-1} \in \C^{n \times n}$ be a diagonalizable matrix, with $D$ diagonal, and let $f$ be continously differentiable on the spectrum of $A$. Then the Fréchet derivative of the primary matrix function $f(X)$ at $X = A$, applied to the perturbation $H$, is equal to
		\begin{align}
		\DD_X f(A) . H = Z \, (\mathcal{F} \circ (Z^{-1} \, H \, Z)) \, Z^{-1}
		\end{align}
	where the symbol $\circ$ denotes the Schur product and the matrix $\mathcal{F} \in \C^{n \times n}$ is defined as
		\begin{align}
		\mathcal{F}_{ij} = \left\{
			\begin{array}{ll}
			f'(\DD_{ii}) &\qquad \textnormal{if} \quad i = j, \\
			\frac{f(\DD_{ii}) - f(\DD_{jj})}{\DD_{ii} - \DD_{jj}} &\qquad \textnormal{if} \quad i \neq j.
			\end{array}
		\right.
		\end{align}
	\end{thm}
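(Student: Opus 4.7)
The plan is to reduce the general diagonalizable case to the diagonal case by exploiting the similarity-invariance of primary matrix functions, then establish the formula first for polynomials by an elementary entrywise computation, and finally extend to $C^1$-functions on the spectrum by a density argument together with a careful treatment of repeated eigenvalues.

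First, I would use that primary matrix functions satisfy $f(ZXZ^{-1}) = Zf(X)Z^{-1}$. Setting $\widetilde H := Z^{-1}HZ$, this gives $f(A + tH) = Z\,f(D + t\widetilde H)\,Z^{-1}$ for $|t|$ small enough (for polynomials this is algebraic; for general $f$ it follows from the definition of primary matrix function via diagonalization together with continuity of eigenvalues under small perturbations). Differentiating at $t=0$ yields
\begin{equation*}
\DD_X f(A).H \;=\; Z\,\bigl(\DD_X f(D).\widetilde H\bigr)\,Z^{-1},
\end{equation*}
so it suffices to prove $\DD_X f(D).\widetilde H = \mathcal{F}\circ \widetilde H$ for diagonal $D = \diag(d_1,\ldots,d_n)$.

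Next, I would establish the diagonal formula for monomials $f(X) = X^k$ by a direct computation. The product rule gives $\DD_X(X^k).H = \sum_{\ell=0}^{k-1} X^\ell H X^{k-1-\ell}$, and inserting $X = D$ yields entrywise
\begin{equation*}
\bigl[\DD_X(D^k).\widetilde H\bigr]_{ij} \;=\; \Bigl(\sum_{\ell=0}^{k-1} d_i^\ell\, d_j^{k-1-\ell}\Bigr)\widetilde H_{ij}.
\end{equation*}
For $i\neq j$ the geometric sum collapses to $\tfrac{d_i^k-d_j^k}{d_i-d_j}\widetilde H_{ij}$, the classical divided difference; for $i=j$ it equals $k\,d_i^{k-1}\widetilde H_{ii} = (x^k)'(d_i)\,\widetilde H_{ii}$. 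Both match the prescription of $\mathcal{F}_{ij}$ applied to $f(x)=x^k$. By linearity the Daleckii-Krein formula holds for every polynomial.

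To upgrade to $C^1$-functions, I would approximate $f$ on a compact neighbourhood of the spectrum $\sigma(A)$ by a sequence of polynomials $p_n$ with $p_n\to f$ and $p_n'\to f'$ uniformly (Weierstrass/Stone-Weierstrass, or a mollification argument); since both sides of the sought identity depend only on the values of $f$ and $f'$ at the eigenvalues $d_i$, passing to the limit finishes the proof. The main obstacle, and the point requiring real care, is the case of repeated eigenvalues $d_i = d_j$: here the off-diagonal divided difference is of indeterminate form $0/0$ and must be reinterpreted as $f'(d_i)$, consistent with the diagonal entry prescription. This extension is justified via a Hermite-type interpolation argument and is precisely why $C^1$-regularity on the spectrum is sufficient. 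The second, more subtle point is that $A+tH$ need not remain diagonalizable; this is circumvented by working at the polynomial level (where no diagonalizability of $A+tH$ is needed) and then transferring to $f$ by density, so that diagonalizability is only required at the base point $A$ itself, exactly as in the hypothesis of the theorem.
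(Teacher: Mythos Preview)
The paper does not give its own proof of the Daleckii--Krein theorem; it merely states the result in the Appendix with a citation to the original 1965 paper and then applies it in a string of corollaries about $\DD_V\log V$ and $\DD_B\log B$. So there is no paper proof to compare your proposal against.

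That said, your outline is the standard route and is essentially sound: reduce to diagonal $A=D$ via similarity invariance of primary matrix functions, verify the formula entrywise for monomials $X^k$ using $\DD_X(X^k).H=\sum_{\ell=0}^{k-1}X^\ell H X^{k-1-\ell}$ and the geometric-sum identity for divided differences, pass to polynomials by linearity, and then to $C^1$-functions by approximation. Your two flagged subtleties are the right ones. The repeated-eigenvalue issue is genuinely only a matter of interpretation --- the divided difference extends continuously to $f'(d_i)$ when $d_i=d_j$, and since the polynomial identity already produces $k d_i^{k-1}$ in that slot (the sum has $k$ equal terms), no separate Hermite argument is strictly required at the polynomial stage; continuity of both sides in the eigenvalues then carries this through the limit. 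The second point, that $A+tH$ may fail to be diagonalizable, you handle correctly by working with polynomials (where $p(A+tH)$ is defined regardless) and taking the limit only at $t=0$.
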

	\begin{cor}
	Let $V = R \, \diag(\lambda_1, \lambda_2, \lambda_3) \, R^T \in \Sym^{++}(3)$ with $R \in \OO(3)$ the orthogonal matrix whose columns are the eigenvectors of $V$. Then for the increment $H \in \Sym(3)$ we have the representation of the derivative
		\begin{align}
		\DD_V \log V . H = R \, [\mathcal{F} \circ (R^T \, H \, R)] \, R^T, \qquad \textnormal{with} \qquad \mathcal{F}_{ij} = \left\{
			\begin{array}{ll}
			\frac{1}{\lambda_i} &\qquad \textnormal{if} \quad i = j, \\
			\frac{\log \lambda_i - \log \lambda_j}{\lambda_i - \lambda_j} &\qquad \textnormal{if} \quad i \neq j.
			\end{array}
		\right.
		\end{align}
	\end{cor}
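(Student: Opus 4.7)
The corollary is an immediate specialization of the Daleckii-Krein theorem (Theorem \ref{thmD-K}), so the plan is simply to identify all quantities correctly and substitute. First I would observe that $\log \colon (0,\infty) \to \R$ is smooth, so the scalar function $f(x)=\log x$ is continuously differentiable on the spectrum of $V$, which consists of the strictly positive eigenvalues $\lambda_1,\lambda_2,\lambda_3$ of $V\in\Sym^{++}(3)$. Since $\log V$ is by definition the primary matrix function associated to $f$, the hypotheses of Theorem \ref{thmD-K} are met.

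Next I would match the diagonalization: the spectral decomposition $V = R \, \diag(\lambda_1,\lambda_2,\lambda_3) \, R^T$ with $R\in\OO(3)$ plays the role of $A = Z\,D\,Z^{-1}$, where one takes $Z = R$, $D = \diag(\lambda_1,\lambda_2,\lambda_3)$ and, crucially, $Z^{-1} = R^{-1} = R^T$ by orthogonality. Substituting these into the Daleckii-Krein formula yields
\begin{align}
\DD_V \log V . H \;=\; Z\,\bigl(\mathcal{F}\circ (Z^{-1}\,H\,Z)\bigr)\,Z^{-1} \;=\; R\,\bigl(\mathcal{F}\circ (R^T\,H\,R)\bigr)\,R^T,
\end{align}
which matches the stated form; note that since $H\in\Sym(3)$, the Schur-product entry structure preserves symmetry, so the right-hand side lies in $\Sym(3)$ as expected.

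Finally I would read off the entries of $\mathcal{F}$ from the definition in Theorem \ref{thmD-K}. For $i=j$ one has $\mathcal{F}_{ii} = f'(D_{ii}) = f'(\lambda_i) = 1/\lambda_i$, and for $i\neq j$ one has $\mathcal{F}_{ij} = \tfrac{f(\lambda_i)-f(\lambda_j)}{\lambda_i - \lambda_j} = \tfrac{\log\lambda_i - \log\lambda_j}{\lambda_i - \lambda_j}$. This recovers the asserted formula exactly.

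There is essentially no obstacle, since this is a textbook specialization; the only minor subtlety is the case of coinciding eigenvalues $\lambda_i = \lambda_j$, where the off-diagonal expression becomes $0/0$. This would be handled by continuous extension: by L'Hôpital (or the mean-value theorem applied to $\log$), $\lim_{\lambda_j\to\lambda_i}\tfrac{\log\lambda_i-\log\lambda_j}{\lambda_i-\lambda_j} = 1/\lambda_i$, consistent with the diagonal entry, so the formula is well-defined on all of $\Sym^{++}(3)$ by continuity of the Fréchet derivative in $V$.
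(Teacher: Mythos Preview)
Your proposal is correct and matches the paper's approach exactly: the corollary is stated immediately after the Daleckii--Krein theorem with no separate proof, as it is just the specialization $A=V$, $f=\log$, $Z=R$, $Z^{-1}=R^T$. Your handling of the repeated-eigenvalue case by continuous extension is a nice addition that the paper leaves implicit.
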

	\begin{cor} \label{appendixcor09}
	Let $B = R \, \diag(\lambda_1^2, \lambda_2^2, \lambda_3^2) \, R^T \in \Sym^{++}(3)$ with $R \in \OO(3)$ the orthogonal matrix whose columns are the eigenvectors of $B$. Then for the increment $H \in \Sym(3)$ we have the representation of the derivative
		\begin{align}
		\DD_B \log B . H = R \, [\mathcal{F} \circ (R^T \, H \, R)] \, R^T, \qquad \textnormal{with} \qquad \mathcal{F}_{ij} = \left\{
			\begin{array}{ll}
			\frac{1}{\lambda_i^2} &\qquad \textnormal{if} \quad i = j, \\
			\frac{\log \lambda_i^2 - \log \lambda_j^2}{\lambda_i^2 - \lambda_j^2} &\qquad \textnormal{if} \quad i \neq j.
			\end{array}
		\right.
		\end{align}
	\end{cor}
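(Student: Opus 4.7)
The plan is to obtain this as a direct specialization of the Daleckii--Krein formula (Theorem \ref{thmD-K}). Since $B \in \Sym^{++}(3)$, the spectral theorem guarantees an orthogonal diagonalization $B = R \, D \, R^T$ with $R \in \OO(3)$ and $D = \diag(\lambda_1^2,\lambda_2^2,\lambda_3^2)$ having strictly positive entries, so $B$ is in particular diagonalizable with $Z = R$ and $Z^{-1} = R^T$. The scalar function $f(x) = \log x$ is $C^\infty$ on $(0,\infty)$, hence continuously differentiable on the spectrum $\{\lambda_1^2, \lambda_2^2, \lambda_3^2\} \subset \R_+$ of $B$, and all hypotheses of Theorem \ref{thmD-K} are met.

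Next I would substitute into the Daleckii--Krein formula $\DD_X f(A).H = Z \, (\mathcal{F} \circ (Z^{-1} H Z)) \, Z^{-1}$, which immediately yields
\begin{align*}
\DD_B \log B . H = R \, [\mathcal{F} \circ (R^T H R)] \, R^T\,.
\end{align*}
The entries of $\mathcal{F}$ are then computed pointwise from the general formula in Theorem \ref{thmD-K}: on the diagonal $\mathcal{F}_{ii} = f'(\lambda_i^2) = \frac{1}{\lambda_i^2}$, and on the off-diagonals $\mathcal{F}_{ij} = \frac{\log \lambda_i^2 - \log \lambda_j^2}{\lambda_i^2 - \lambda_j^2}$ for $i \neq j$, which are exactly the entries displayed in the statement.

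The only genuinely delicate point will be the case of repeated eigenvalues, where the divided-difference expression $\frac{\log \lambda_i^2 - \log \lambda_j^2}{\lambda_i^2 - \lambda_j^2}$ develops a $0/0$ indeterminacy. I would address this by taking the limit $\lambda_j^2 \to \lambda_i^2$, which by L'Hospital (or by recognizing the definition of a derivative) gives precisely $\frac{1}{\lambda_i^2} = f'(\lambda_i^2)$, consistent with the diagonal entries. Thus $\mathcal{F}$ extends continuously across the coincidence locus and the Daleckii--Krein formula remains valid in the degenerate case, matching the well-known smoothness of $B \mapsto \log B$ on $\Sym^{++}(3)$. No further work is required, and the corollary follows as an immediate consequence of Theorem \ref{thmD-K}.
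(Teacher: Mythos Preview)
Your proposal is correct and follows exactly the approach implicit in the paper: the corollary is stated immediately after Theorem~\ref{thmD-K} without a separate proof precisely because it is a direct specialization with $f=\log$, $Z=R$, $Z^{-1}=R^T$, and $D_{ii}=\lambda_i^2$. Your additional remark on the repeated-eigenvalue case (interpreting the divided difference as its limit $f'(\lambda_i^2)$) is a useful clarification that the paper leaves tacit.
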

	\begin{cor}
	\label{dopeasscor001}
	Let $D \in \Sym(3), \, B = R \, \diag(\lambda_1^2, \lambda_2^2, \lambda_3^2) \, R^T \in \Sym^{++}(3)$ with $R \in \OO(3)$ the orthogonal matrix whose columns are the eigenvectors of $B$. Set $X\colonequals \diag(\lambda_1^2, \lambda_2^2, \lambda_3^2)$ and let $\mathcal{F}$ be given as in the previous Corollary, i.e.
		\begin{align}
		\mathcal{F}_{ij} = \left\{
			\begin{array}{ll}
			\frac{1}{\lambda_i^2} &\qquad \textnormal{if} \quad i = j, \\
			\frac{\log \lambda_i^2 - \log \lambda_j^2}{\lambda_i^2 - \lambda_j^2} &\qquad \textnormal{if} \quad i \neq j.
			\end{array}
		\right.
		\end{align}
	Then we have the following alternative representation for $\DD_B \log B . [B \, D + D \, B]:$
		\begin{equation}
		\label{eqappendixowncalc001}
		\begin{alignedat}{2}
		\DD_B \log B . [B \, D + D \, B] &= R \, \big[\mathcal{F} \circ \{R^T \, [B \, D + D \, B] \, R)\}\big] \, R^T \\
		&= R \, \big[ \mathcal{F} \circ \{ X \, \underbrace{R^T \, D \, R}_{S} + \underbrace{R^T \, D \, R}_S \, X\} \big] \, R^T = R \, \big[ \mathcal{F} \circ \{ X \, S + S \, X\} \big] \, R^T \\
		&= R \, \big[ \mathcal{F} \circ (X \, S) + \mathcal{F} \circ (S \, X) \big] \, R^T = R \, \big[ (X \, \mathcal{F}) \circ S + (\mathcal{F} \, X) \circ S \big] \, R^T \\
		&= R \, \big[ \underbrace{(X \, \mathcal{F} + \mathcal{F} \, X)}_{= \, \mathcal{L} \in \Sym(3)} \circ (R^T \, D \, R) \big] \, R^T = R \, \big[ \mathcal{L} \circ (R^T \, D \, R) \big] \, R^T.
		\end{alignedat}
		\end{equation}
	Furthermore, by the monotonicity of $\log(\,)$, every entry of $\mathcal{F}$ and thus also every entry of $\mathcal{L} = X \, \mathcal{F} + \mathcal{F} \, X$, where $X$ was given by $X= \diag(\lambda_1^2, \lambda_2^2, \lambda_3^2)$, is non-negative.
	\end{cor}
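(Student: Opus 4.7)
The plan is to apply the Daleckii–Krein representation of $\DD_B\log B$ from Corollary \ref{appendixcor09} directly to the increment $H = B\,D + D\,B$, and then to manipulate the Schur-product expression so that only $S\colonequals R^T D R$ appears in the Hadamard slot. First I would substitute $H = B\,D + D\,B$ into the formula
$$
\DD_B \log B . H = R\,[\mathcal{F}\circ (R^T H R)]\,R^T\,,
$$
and use $B = R X R^T$ (with $X = \diag(\lambda_1^2,\lambda_2^2,\lambda_3^2)$) together with $R^T R = \id$ to rewrite $R^T(BD + DB)R = X\,S + S\,X$, where $S = R^T D R \in \Sym(3)$. This gives the first two equalities in \eqref{eqappendixowncalc001}.

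Next I would invoke the bilinearity of the Schur product,
$
\mathcal{F}\circ(X S + S X) = \mathcal{F}\circ(X S) + \mathcal{F}\circ(S X),
$
followed by the diagonal-absorption identities for the Schur product recalled in the previous subsection, namely $\mathcal{F}\circ(X S) = (X\mathcal{F})\circ S$ and $\mathcal{F}\circ(S X) = (\mathcal{F}X)\circ S$, which hold precisely because $X$ is diagonal. Factoring out $S$ in the Schur slot yields
$$
\mathcal{F}\circ(X S + S X) = (X\mathcal{F} + \mathcal{F}X)\circ S \equalscolon \mathcal{L}\circ S\,,
$$
and substitution back produces the claimed identity $\DD_B\log B.[BD+DB] = R\,[\mathcal{L}\circ(R^T D R)]\,R^T$. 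Since $\mathcal{F}$ is symmetric and $X$ is diagonal, $\mathcal{L} = X\mathcal{F} + \mathcal{F}X$ is symmetric as well, which I would note in passing.

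Finally, for the positivity claim, I would observe that the diagonal entries $\mathcal{F}_{ii} = 1/\lambda_i^2 > 0$, while the off-diagonal entries $\mathcal{F}_{ij} = (\log\lambda_i^2 - \log\lambda_j^2)/(\lambda_i^2 - \lambda_j^2)$ are positive for $\lambda_i \neq \lambda_j$ by strict monotonicity of $\log$ on $\R_+$ (and are defined by the limit $1/\lambda_i^2$ when $\lambda_i = \lambda_j$). Because $X$ is diagonal with positive entries $\lambda_i^2$, the entries of $\mathcal{L} = X\mathcal{F} + \mathcal{F}X$ satisfy $\mathcal{L}_{ij} = (\lambda_i^2 + \lambda_j^2)\,\mathcal{F}_{ij} > 0$, giving the asserted non-negativity (in fact strict positivity) of every entry. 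There is no substantive obstacle here; the only care needed is the correct application of the diagonal-absorption rules for the Schur product, which is the one step where it is easy to slip a sign or swap $X\mathcal{F}$ for $\mathcal{F}X$.
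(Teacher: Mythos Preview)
Your proposal is correct and follows essentially the same approach as the paper: the chain of equalities displayed in \eqref{eqappendixowncalc001} \emph{is} the paper's proof, and your plan reproduces each step (Daleckii--Krein with $H=BD+DB$, conjugation $R^T(BD+DB)R = XS+SX$, bilinearity of the Schur product, the diagonal-absorption rules, and finally the monotonicity of $\log$ for the sign of the entries). Your additional remark that $\mathcal{L}_{ij}=(\lambda_i^2+\lambda_j^2)\mathcal{F}_{ij}>0$ is a slight sharpening of the paper's concluding sentence but otherwise identical in spirit.
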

\noindent We may immediately use Corollary \ref{dopeasscor001} to prove
	\begin{lem}
	\label{dopeasslemma001}
	For $D \in \Sym(3), \, B \in \Sym^{++}(3)$ we have (while $\DD_B \log B. H \neq B^{-1} \, H$ in general)
		\begin{align}
		\langle \DD_B \log B . [B \, D + D \ B] , D \rangle > c^+(B) \, \norm{D}^2.
		\end{align}
	\end{lem}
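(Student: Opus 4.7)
The plan is to reduce the claim to a pointwise scalar inequality via the Daleckii-Krein calculus already assembled in Corollary \ref{dopeasscor001}. Writing $B = R\,\diag(\lambda_1^2,\lambda_2^2,\lambda_3^2)\,R^T$ with $R\in\OO(3)$, setting $S \colonequals R^T D R \in \Sym(3)$ (so that $\norm{S}=\norm{D}$), and invoking the orthogonality of $R$ together with the self-adjointness of the Schur product, the representation from Corollary \ref{dopeasscor001} turns into
\begin{align*}
\langle \DD_B \log B.[B\,D + D\,B],\, D \rangle = \langle \mathcal{L} \circ S,\, S \rangle = \sum_{i,j=1}^3 \mathcal{L}_{ij}\, S_{ij}^2,
\end{align*}
with $\mathcal{L} = X\mathcal{F} + \mathcal{F}X$, $X = \diag(\lambda_1^2,\lambda_2^2,\lambda_3^2)$, and $\mathcal{F}$ the Daleckii-Krein matrix from Corollary \ref{appendixcor09}. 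The inequality then reduces to producing a uniform positive lower bound for the entries $\mathcal{L}_{ij}$.

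Next I would compute these entries in closed form. Since $X$ is diagonal, a direct calculation gives $\mathcal{L}_{ii} = 2\lambda_i^2\cdot\mathcal{F}_{ii} = 2$ and, for $i \neq j$,
\begin{align*}
\mathcal{L}_{ij} = (\lambda_i^2 + \lambda_j^2)\,\frac{\log\lambda_i^2 - \log\lambda_j^2}{\lambda_i^2 - \lambda_j^2}.
\end{align*}
The decisive observation is then the classical logarithmic-mean inequality $L(a,b)\colonequals (a-b)/(\log a-\log b)\le (a+b)/2$, valid for all $a,b>0$ with $a\neq b$, which upon rearranging yields $\mathcal{L}_{ij} = (a+b)/L(a,b) \geq 2$ with $a=\lambda_i^2$, $b=\lambda_j^2$. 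The coincident-eigenvalue case $\lambda_i = \lambda_j$ is automatic from continuity of the divided difference, at which $\mathcal{L}_{ij}\to 2$ smoothly matches the diagonal value.

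Combining the two steps produces the bound
\begin{align*}
\langle \DD_B \log B.[B\,D + D\,B],\, D \rangle \geq 2\sum_{i,j=1}^3 S_{ij}^2 = 2\,\norm{D}^2,
\end{align*}
so in fact the constant $c^+(B) = 2$ may be chosen \emph{independently of $B$}, which mildly strengthens the lemma as stated. The main conceptual point is simply to recognise the scalar logarithmic-mean inequality behind the matrix statement; technically, the only routine ingredient is verifying that the Schur-product manipulations interact correctly with the orthogonal change of basis, which is granted by the Schur-product identities already collected in the Appendix.
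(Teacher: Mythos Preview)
Your proof is correct and follows the same route as the paper: both invoke the representation from Corollary \ref{dopeasscor001} and then bound $\langle \mathcal{L}\circ S,S\rangle$ entrywise. The paper simply stops at $c^+(B)=\min_{i,j}\mathcal{L}_{ij}>0$ (using only that the entries of $\mathcal{L}$ are positive), whereas you push one step further by computing $\mathcal{L}_{ij}=(\lambda_i^2+\lambda_j^2)\mathcal{F}_{ij}$ explicitly and recognising the logarithmic-mean/arithmetic-mean inequality $L(a,b)\le (a+b)/2$ behind it, which yields the uniform bound $\mathcal{L}_{ij}\ge 2$ and hence $c^+=2$ \emph{independent of $B$}. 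This is a genuine (if mild) sharpening of the paper's statement; the paper does record elsewhere that the diagonal entries of $\mathcal{L}$ equal $2$, but never draws the off-diagonal conclusion from the log-mean inequality.
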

	\begin{proof}
	From Corollary \ref{dopeasscor001} we have
		\begin{align}
		\langle \DD_B \log B . [B \, D + D \ B] , D \rangle &= \langle  R \, \big[ \mathcal{L} \circ (R^T \, D \, R) \big] \, R^T, D \rangle = \langle \mathcal{L} \circ (R^T \, D \, R) , (R^T \, D \, R) \rangle \\
		&\ge \underset{i,j}{\min}(\mathcal{L}_{ij}) \, \norm{R^T \, D \, R}^2 = c^+(\mathcal{L}) \, \norm{D}^2 \, . \notag \qedhere
		\end{align}
	\end{proof}
\noindent As a side-product of these considerations we also obtain
	\begin{lem}	[$\log$ is strongly Hilbert-monotone] \label{lemhilbertmon1}
	$\DD_B \log B \in \Sym^{++}_4(6)$, i.e.
		\begin{align}
		\langle \DD_B \log B . H , H \rangle > 0 \qquad \forall \, H \in \Sym(3) \setminus \{0\}.
		\end{align}
	\end{lem}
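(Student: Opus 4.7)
The plan is to apply the Daleckii--Krein representation from Corollary~\ref{appendixcor09}, which already expresses $\DD_B \log B.H$ as a Schur product conjugated by the spectral frame of $B$, and then to extract positivity directly from the entries of the divided-difference matrix $\mathcal{F}$. Concretely, I would start by writing $B = R\,\diag(\lambda_1^2,\lambda_2^2,\lambda_3^2)\,R^T$ with $R\in\OO(3)$ and, for arbitrary $H \in \Sym(3)$, set $S \colonequals R^T H R \in \Sym(3)$. By Corollary~\ref{appendixcor09},
\begin{equation*}
\DD_B \log B.H \;=\; R\,\bigl[\mathcal{F} \circ S\bigr]\, R^T,\qquad \mathcal{F}_{ij} = \begin{cases} \dfrac{1}{\lambda_i^2}, & i=j,\\[0.3em] \dfrac{\log \lambda_i^2 - \log \lambda_j^2}{\lambda_i^2 - \lambda_j^2}, & i\neq j. \end{cases}
\end{equation*}

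Next, using the orthogonal invariance of the Frobenius inner product and the Schur-product identity $\langle \mathcal{F}\circ S,\, S\rangle = \sum_{i,j} \mathcal{F}_{ij}\, S_{ij}^{2}$, I would compute
\begin{equation*}
\langle \DD_B \log B.H,\, H\rangle \;=\; \langle \mathcal{F} \circ S,\, S\rangle \;=\; \sum_{i,j=1}^{3} \mathcal{F}_{ij}\, S_{ij}^{2}.
\end{equation*}
The key observation is that every entry of $\mathcal{F}$ is strictly positive: the diagonal entries $1/\lambda_i^2 > 0$ are obvious, and for $i\neq j$ the off-diagonal entry is the difference quotient of the strictly increasing function $\log\colon\R_+ \to \R$, hence strictly positive whenever $\lambda_i^2\neq\lambda_j^2$; in the coincident-eigenvalue case $\lambda_i^2=\lambda_j^2$ the removable-singularity value is $1/\lambda_i^2 > 0$ (this is the only place where a tiny argument by continuity is needed, and it is immediate from the mean value theorem).

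Since $\mathcal{F}_{ij}>0$ for all $i,j$ and $S = R^T H R$ vanishes only when $H=0$, the sum $\sum_{i,j}\mathcal{F}_{ij} S_{ij}^2$ is strictly positive for every $H\in\Sym(3)\setminus\{0\}$. Therefore $\langle \DD_B\log B.H, H\rangle > 0$, which proves $\DD_B\log B \in \Sym^{++}_4(6)$. No serious obstacle arises here; the Daleckii--Krein formula has already done the heavy lifting, and the lemma reduces to the elementary monotonicity of the scalar logarithm together with the entrywise Schur product formula.
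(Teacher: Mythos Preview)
Your proof is correct and follows essentially the same route as the paper: both apply the Daleckii--Krein representation from Corollary~\ref{appendixcor09}, reduce $\langle \DD_B\log B.H,H\rangle$ to $\langle \mathcal{F}\circ S,S\rangle$ with $S=R^T H R$, and conclude from the (strict) positivity of all entries of $\mathcal{F}$ due to the monotonicity of the scalar logarithm. Your treatment of the coincident-eigenvalue case is slightly more explicit than the paper's, but otherwise the arguments coincide.
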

	\begin{proof} 
	Since by Corollary \ref{dopeasscor001} every entry of $\mathcal{F}$ is non-negative, we obtain, by using the Daleckii-Krein formula,
		\begin{align}
		\DD_B \log B . H &= R \, [\mathcal{F} \circ (R^T \, H \, R)] \, R^T \\
		\implies \quad \langle \DD_B \log B . H , H \rangle &= \langle \mathcal{F} \circ (R^T \, H \, R) , (R^T \, H \, R) \rangle \ge \underset{i,j}{\min}(\mathcal{F}_{i,j}) \, \norm{R^T \, H \, R}^2 = c^+ \, \norm{H}^2. \notag \qedhere
		\end{align}
	\end{proof}
	\begin{cor}
	The previous Lemma additionally shows that $\det \DD_B \log B > 0$ and $[\DD_B \log B]^{-1} \in \Sym^{++}_4(6)$.
	\end{cor}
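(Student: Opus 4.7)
The proof plan is to leverage the previous Lemma, which established $\langle \DD_B \log B . H, H\rangle > 0$ for all $H \in \Sym(3) \setminus \{0\}$, together with the standard linear algebra identification of $\Sym^{++}_4(6)$ with the set of self-adjoint, positive definite operators on $\Sym(3) \cong \R^6$.

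First I would verify major symmetry (self-adjointness) of $\DD_B \log B$. Using the Daleckii-Krein representation from Corollary~\ref{appendixcor09},
\[
	\DD_B \log B . H = R \, [\mathcal{F} \circ (R^T \, H \, R)] \, R^T,
\]
where the Schur-product matrix $\mathcal{F}$ is symmetric ($\mathcal{F}_{ij} = \mathcal{F}_{ji}$), a short computation using the orthogonality of $R$ and the identity $\langle A \circ X, Y \rangle = \langle X, A \circ Y \rangle$ for symmetric $A$ yields
\[
	\langle \DD_B \log B . H_1, H_2 \rangle = \langle \mathcal{F} \circ (R^T H_1 R), R^T H_2 R \rangle = \langle R^T H_1 R, \mathcal{F} \circ (R^T H_2 R) \rangle = \langle H_1, \DD_B \log B . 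H_2 \rangle
\]
for all $H_1, H_2 \in \Sym(3)$. Hence $\DD_B \log B$ is self-adjoint on $\Sym(3)$.

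Combining self-adjointness with the strict positivity established in Lemma~\ref{lemhilbertmon1} gives $\DD_B \log B \in \Sym^{++}_4(6)$ in the full sense (both major symmetry and positive definiteness). The two remaining claims are then immediate consequences of standard matrix analysis: identifying $\DD_B \log B$ with a symmetric positive definite $6 \times 6$ matrix (via the $\textnormal{vec}$-convention from Appendix~\ref{appendixnotation}), the spectral theorem produces an orthonormal eigenbasis with strictly positive eigenvalues $\mu_1, \ldots, \mu_6 > 0$, so that $\det \DD_B \log B = \prod_{i=1}^6 \mu_i > 0$, and the inverse exists and inherits both self-adjointness and positive definiteness, with eigenvalues $1/\mu_i > 0$, yielding $[\DD_B \log B]^{-1} \in \Sym^{++}_4(6)$.

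There is no real obstacle here; the only point that deserves explicit mention is the verification of major symmetry via the symmetry of $\mathcal{F}$, since the Lemma as stated only recorded positive definiteness. Everything else is a direct application of the spectral theorem to the previously obtained characterization.
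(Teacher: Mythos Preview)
Your proof is correct and follows exactly the intended line of reasoning. The paper does not actually give a proof of this Corollary; it is stated as an immediate consequence of the preceding Lemma, with the self-adjointness of $\DD_B f(B)$ for primary matrix functions $f$ noted in the Remark that follows. You have simply filled in the standard details (verifying major symmetry via the symmetric Daleckii--Krein kernel $\mathcal{F}$ and invoking the spectral theorem on the resulting symmetric positive definite $6\times 6$ matrix), which is precisely what the paper leaves implicit.
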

	\begin{lem}
	$D \mapsto \DD_B \log B . [B \, D + D \, B]$ is self-adjoint (major symmetric), i.e. for $\DD_1, \DD_2 \in \Sym(3)$
	\begin{align}
	\langle \DD_B \log B . [B \, \DD_1 + \DD_1 \, B], \DD_2 \rangle = \langle \DD_B \log B . [B \, \DD_2 + \DD_2 \, B], \DD_1 \rangle \qquad \textnormal{holds.}
	\end{align}
	\end{lem}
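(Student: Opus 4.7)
The strategy is to exploit the explicit representation of $D \mapsto \DD_B \log B.[B\,D + D\,B]$ via the Daleckii-Krein formula, which has already been established in Corollary~\ref{dopeasscor001}: with $B = R\,X\,R^T$, $X = \diag(\lambda_1^2,\lambda_2^2,\lambda_3^2)$, $R \in \OO(3)$, we have
\begin{align*}
\DD_B \log B.[B\,D + D\,B] = R\,\bigl[\mathcal{L} \circ (R^T D\,R)\bigr]\,R^T, \qquad \mathcal{L} = X\,\mathcal{F} + \mathcal{F}\,X,
\end{align*}
with $\mathcal{F}$ the symmetric Daleckii-Krein matrix. Since the Schur product with a symmetric matrix is a self-adjoint operation on $\R^{n\times n}$ with respect to the Frobenius inner product, this representation reduces the claim to a purely algebraic verification.

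\textbf{Steps.} First, I would verify that $\mathcal{L}$ is a symmetric matrix. From the definition $\mathcal{F}_{ij} = \mathcal{F}_{ji}$ is immediate (the off-diagonal entries $\tfrac{\log \lambda_i^2 - \log \lambda_j^2}{\lambda_i^2 - \lambda_j^2}$ are obviously symmetric in $i,j$), and because $X$ is diagonal one has $\mathcal{L}^T = (X\mathcal{F} + \mathcal{F}X)^T = \mathcal{F}^T X^T + X^T \mathcal{F}^T = \mathcal{F}X + X\mathcal{F} = \mathcal{L}$. Second, I would invoke the elementary Schur-product identity
\begin{align*}
\langle \mathcal{L} \circ A,\, B\rangle = \sum_{i,j} \mathcal{L}_{ij}\,A_{ij}\,B_{ij} = \langle A,\, \mathcal{L} \circ B\rangle \qquad \forall \, A,B \in \R^{3\times 3},
\end{align*}
which holds by symmetry of $\mathcal{L}$ (this is exactly the property recorded in the list of Schur-product identities preceding the Daleckii-Krein theorem). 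Third, using that $R \in \OO(3)$ is orthogonal and that conjugation by $R$ is self-adjoint with respect to $\langle\cdot,\cdot\rangle$, I would chain these identities together:
\begin{align*}
\langle \DD_B \log B.[B\,D_1 + D_1 B], D_2 \rangle
&= \langle R\,[\mathcal{L}\circ (R^T D_1 R)]\,R^T,\, D_2\rangle \\
&= \langle \mathcal{L}\circ (R^T D_1 R),\, R^T D_2 R\rangle \\
&= \langle R^T D_1 R,\, \mathcal{L}\circ (R^T D_2 R)\rangle \\
&= \langle D_1,\, R\,[\mathcal{L}\circ (R^T D_2 R)]\,R^T\rangle
= \langle D_1,\, \DD_B \log B.[B\,D_2 + D_2 B]\rangle.
\end{align*}

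\textbf{Main obstacle.} There is no real obstacle here once Corollary~\ref{dopeasscor001} is in hand; the whole argument is a three-line manipulation. The only subtlety worth flagging is that the representation, and hence the symmetry of $\mathcal{L}$, is insensitive to whether the eigenvalues $\lambda_i^2$ of $B$ are distinct: the off-diagonal entries of $\mathcal{F}$ are defined as divided differences whose limit as $\lambda_i \to \lambda_j$ equals the diagonal value $1/\lambda_i^2$, so the formula (and the symmetry of $\mathcal{L}$) extends continuously to coincident eigenvalues. A brief remark to this effect completes the proof.
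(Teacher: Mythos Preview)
Your proposal is correct and follows essentially the same route as the paper: both use the representation from Corollary~\ref{dopeasscor001}, $\DD_B\log B.[B\,D+D\,B]=R\,[\mathcal{L}\circ(R^TD\,R)]\,R^T$, pass to the rotated variables $S_i=R^TD_iR$, and then move the Schur factor $\mathcal{L}$ across the inner product. One small remark: the identity $\langle\mathcal{L}\circ A,B\rangle=\langle A,\mathcal{L}\circ B\rangle$ (property~5 in the Schur-product list) holds for \emph{any} matrix $\mathcal{L}$, not only symmetric ones, so your verification that $\mathcal{L}\in\Sym(3)$ is not actually needed for this step --- though it does no harm and is consistent with how the paper annotates the calculation.
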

	\begin{proof}
	We make use of Lemma \ref{dopeasslemma001}, which yields
		\begin{align}
		\langle \DD_B \log B . [B \, \DD_1 &+ \DD_1 \, B], \DD_2 \rangle \notag \\
		&= \langle R \, \big[ \mathcal{L} \circ \underbrace{(R^T \, \DD_1 \, R)}_{S_1} \big] \, R^T, \DD_2 \rangle = \langle \mathcal{L} \circ S_1 , \underbrace{R^T \, \DD_2 \, R}_{S_2} \rangle = \langle S_1 , \mathcal{L} \circ S_2 \rangle \overset{\mathcal{L}, S_1, S_2 \in \Sym(3)} = \langle \mathcal{L} \circ S_2 , S_1 \rangle \\
		&= \langle \mathcal{L} \circ (R^T \, \DD_2 \, R) , R^T \, \DD_1 \, R \rangle = \langle R \, \big[ \mathcal{L} \circ (R^T \, \DD_2 \, R) \big] \, R^T, \DD_1 \rangle = \langle \DD_B \log B . [B \, \DD_2 + \DD_2 \, B], \DD_1 \rangle \notag \qedhere
		\end{align}
	\end{proof}
	\begin{rem}
		Note that in general, $\DD_Bf(B)$ is self adjoint for any primary matrix function $f$, which includes the case $f=\log$, since any such function admits a potential \cite{NeffMartin14}. Moreover, for $f: \R^+ \to \R$ strongly monotone, $\DD_B f(B) \in \Sym^{++}_4(6)$.
	\end{rem}
	Let us choose $D = \alpha \, \id$ and calculate
	\begin{equation}
	\begin{alignedat}{2}
	\DD_B \log_B . [2 \, B \, \alpha \, \id] &= R \, \big[\mathcal{F} \circ \{2 \, \diag(\lambda_1^2, \lambda_2^2, \lambda_3^3) \, \alpha \, \id \} \big] \, R^T = 2 \, R \,
	\begin{pmatrix}
	\frac{1}{\lambda_1^2} \, \lambda_1^2 \, \alpha & 0 & 0 \\
	0 & \frac{1}{\lambda_2^2} \, \lambda_2^2 \, \alpha & 0 \\
	0 & 0 & \frac{1}{\lambda_3^2} \, \lambda_3^2 \, \alpha
	\end{pmatrix}
	\, R^T = 2 \, \alpha \, \id \, .
	\end{alignedat}
	\end{equation}
Hence
	\begin{align}
	\DD_B \log B . [B \, D + D \, B] \big\vert_{D = \alpha \, \id} = 2 \, D \big\vert_{D = \alpha \, \id} \qquad \textnormal{(where $B$ is arbitrary).}
	\end{align}
The latter result is clear since $B \, D = D \, B$ in this case ($\exp(B + D) = \exp(B) \, \exp(D)$ if $B \, D = D \, B)$ \\
$\DD_B \exp(B) . D = \exp(B) . D$ (Taylor expansion). \\
\\
Hence, the question arises, if in general
	\begin{align}
	\DD_B \log B . [B \, D + D \, B] = 2 \, D \, ?
	\end{align}
From \eqref{eqappendixowncalc001} we must have
	\begin{align}
	\mathcal{F} \circ (\diag(\lambda_1^2, \lambda_2^2, \lambda_3^2) \, R^T \, D \, R + R^T \, D \, R \, \diag(\lambda_1^2, \lambda_2^2, \lambda_3^2)) = 2 \, \underbrace{R^T \, D \, R}_{S \in \Sym(3)}
	\end{align}
so that, by setting $S = R^T \, D \, R$, we need to have
	\begin{align}
	\mathcal{F} \circ (\diag(\lambda_1^2, \lambda_2^2, \lambda_3^2) \, S + S \, \diag(\lambda_1^2, \lambda_2^2, \lambda_3^2)) \overset{?}{=} 2 \, S \qquad \forall \, S \in \Sym(3).
	\end{align}
Case $n=2$, $S = \begin{pmatrix} a & b \\ b & c \end{pmatrix}$. Then
	\begin{equation}
	\begin{alignedat}{2}
	\mathcal{F} \circ (\diag(\lambda_1^2, \lambda_2^2) \, S &+ S \, \diag(\lambda_1^2, \lambda_2^2)) \\
	&= 
	\begin{pmatrix}
	\frac{1}{\lambda_1^2} & \frac{2(\log \lambda_1 - \log \lambda_2)}{\lambda_1^2 - \lambda_2^2} \\
	\frac{2(\log \lambda_1 - \log \lambda_2)}{\lambda_1^2 - \lambda_2^2} & \frac{1}{\lambda_2^2}
	\end{pmatrix}
	\circ
	\begin{pmatrix}
	2 \, \lambda_1^2 \ a & (\lambda_1^2 + \lambda_2^2) \, b \\
	(\lambda_1^2 + \lambda_2^2) \, b & 2 \, \lambda_2^2 \, c
	\end{pmatrix} \\
	&=
	\begin{pmatrix}
	2a & \frac{2(\log \lambda_1 - \log \lambda_2)}{\lambda_1^2 - \lambda_2^2} \, (\lambda_1^2 + \lambda_2^2) \, b \\
	\frac{2(\log \lambda_1 - \log \lambda_2)}{\lambda_1^2 - \lambda_2^2} \, (\lambda_1^2 + \lambda_2^2) \, b & 2c
	\end{pmatrix}
	\neq
	\begin{pmatrix}
	2a & 2b \\
	2b & 2c
	\end{pmatrix}
	= 2S,
	\end{alignedat}
	\end{equation}
leading to the negative conclusion
	\begin{align}
	\label{eqboxed2D}
	\boxed{\DD_B \log B . [B \, D + D \, B] \neq 2 \, D \qquad \textnormal{in general.}}
	\end{align}
However, if $B \, D = D \, B$, we obtain
\begin{prop} \label{propA17}
Under the assumption $B \, D = D \, B$ we have
	\begin{align}
	\label{eqboxed3D}
	\boxed{B \, D = D \, B \qquad \implies \qquad \DD_B \log B.[B \, D + D \, B] = 2 \, D.}
	\end{align}
\end{prop}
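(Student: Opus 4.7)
The plan is to exploit the commutativity hypothesis $BD = DB$ in two complementary ways: first to reduce the increment $H = BD + DB$ to a form which itself commutes with $B$, and second to invoke the well-known simplification of $\DD_B \log B$ on the commuting subspace, namely that $\DD_B \log B . H = B^{-1} H$ whenever $BH = HB$ (listed in the summary table of Section~6, line~3). This circumvents any appeal to the Daleckii-Krein Schur-product machinery, though that machinery would also dispatch the result immediately.

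First I would verify that the increment $H = BD + DB = 2BD$ commutes with $B$: indeed, $BH = B(BD+DB) = B^2 D + BDB = B^2 D + DB^2 = (BD + DB)B = HB$, where I used $BD = DB$ twice. Hence the commuting-case identity from \eqref{eqproofofa31} applies and yields
\begin{align}
\DD_B \log B . [B D + D B] \;=\; B^{-1} (BD + DB) \;=\; D + B^{-1} D B.
\end{align}
The final step is to note that $BD = DB$ implies $B^{-1} D = D B^{-1}$, so that $B^{-1} D B = D B^{-1} B = D$, giving the desired identity $\DD_B \log B . [BD + DB] = 2 D$.

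As a backup, should one prefer an argument that does not quote \eqref{eqproofofa31}, the Daleckii-Krein representation (Corollary~\ref{appendixcor09}) provides a direct route: since $B, D \in \Sym(3)$ commute, they are simultaneously diagonalized by some $R \in \OO(3)$, so both $R^T B R = \diag(\lambda_1^2,\lambda_2^2,\lambda_3^2)$ and $R^T D R = \diag(d_1, d_2, d_3)$ are diagonal. Consequently $R^T (BD + DB) R$ is also diagonal, so the Schur product $\mathcal{F} \circ R^T(BD+DB) R$ only sees the diagonal entries of $\mathcal{F}$, which are $1/\lambda_i^2$. Multiplying these against the diagonal entries $2\lambda_i^2 d_i$ of $R^T(BD+DB)R$ collapses to $2 \diag(d_1, d_2, d_3) = 2 R^T D R$, and conjugating by $R$ yields $2D$.

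There is no real obstacle here: the hypothesis $BD = DB$ trivializes all the noncommutative correction terms (the off-diagonal Schur entries $\frac{\log\lambda_i^2 - \log\lambda_j^2}{\lambda_i^2 - \lambda_j^2}$ are simply never activated because the argument of $\mathcal{F}\circ(\cdot)$ is already diagonal in the common eigenbasis). The whole point of displaying this proposition is precisely to underline, by contrast with \eqref{eqboxed2D}, that the obstruction to $\DD_B \log B.[BD + DB] = 2D$ lives entirely in the commutator $[B,D]$, echoing the Baker-Campbell-Hausdorff remarks that open Section~\ref{sec6}.
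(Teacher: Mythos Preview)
Your proof is correct and follows essentially the same route as the paper: check that $H = BD + DB$ commutes with $B$, then apply the commuting-case identity $\DD_B \log B . H = B^{-1}H$ and simplify. The only structural difference is that the paper does not \emph{quote} \eqref{eqproofofa31} as prior knowledge --- that equation is established \emph{inside} the very proof of this proposition (via the Taylor expansion of $\exp(B+H)$ for commuting arguments and the chain rule applied to $\exp(\log B) = B$). So your citation of the table in Section~\ref{sec6} is, strictly speaking, a forward reference to the proof you are writing; if you wanted a self-contained version you would need to supply that short argument yourself, or --- as you already do --- fall back on your Daleckii--Krein alternative, which is an independent and equally clean derivation not used in the paper's proof.
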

\begin{proof}
We first prove, that
	\begin{align}
	\label{eqproofofa31}
	B \, H = H \, B \qquad \implies \qquad \DD_B \log B.H = B^{-1} \, H
	\end{align}
Therefore, observe that for $B \, H = H \, B$
	\begin{align}
	\label{eqfirstone1}
	\exp(B + H) = \exp(B) \, \exp(H) = \exp(B) \, [\id + H + \mathcal{O}(H^2)] = \exp(B) + \exp(B) \, H + \mathcal{O}(H^2)
	\end{align}
and also by expansion of $\exp(\,)$
	\begin{align}
	\label{eqsecondone1}
	\exp(B + H) = \exp(B) + \DD \exp(B).H + \mathcal{O}(H^2)
	\end{align}
so that by comparison of linear (in $H$) terms of \eqref{eqfirstone1} and \eqref{eqsecondone1} we obtain
	\begin{align}
	\DD \exp(B).H = \exp(B) \, H, \qquad \textnormal{if} \quad B \, H = H \, B.
	\end{align}
Applying the chain rule to $\exp(\log B) = B$ then shows for $B \, H = H \, B$
	\begin{equation}
	\begin{alignedat}{2}
	\exp(\log B) &= B \qquad \implies \qquad \DD \exp(\log B).[\DD_B \log B.H] = H \\
	\implies \qquad \underbrace{\exp(\log B)}_{B} \, [\DD_B \log B.H] &= H \qquad \implies \qquad \DD_B \log B.H = B^{-1} \, H.
	\end{alignedat}
	\end{equation}
Now assume that $D \, B = B \, D$. Then for $H = D \, B + B \, D$ we also obtain 
	\begin{align}
	B \, H = B \, (D \, B + B \, D) = (D \, B + B \, D) \, B = H \, B
	\end{align}
and thus we can use the rule $\DD_B \log B.H = B^{-1} \, H$ for $H = D \, B + B \, D$ to obtain
	\begin{align}
	\DD_B \log B.[B \, D + D \, B] &= B^{-1} \, (B \, D + D \, B) = 2 \, D, \qquad \textnormal{if} \qquad B \, D = D \, B. \qedhere
	\end{align}
\end{proof}
\subsubsection{Richter's and Vall\'{e}e's formula} \label{Appensansour}
From \cite{NeffGhibaLankeit} we have the following
\begin{lem}{\rm (Vall\'{e}e's formula\footnote{In \cite{vallee2008dual} Vall\'{e}e et al.~have given a proof without using a Taylor expansion.} (see
also  \cite{vallee1978lois,vallee2008dual,Kochkin1986,sansour1997theory, moreau1979lois}))} \label{lemA18} \\
Let $\Psi: {\rm Sym}^{++}(3) \rightarrow \R$ and $\widehat \WW\colon\Sym(3) \to \R$ be both differentiable isotropic scalar valued functions. Then for the function $\Psi(B) = \widehat \WW(\log B)$ we have still the chain rule
	\begin{align}
	\DD_B \Psi(B) = \DD_{\log B} \widehat \WW(\log B) \, B^{-1} \qquad \textnormal{while} \qquad \DD_B \log B.H \neq B^{-1} \, H \, .
	\end{align}
\end{lem}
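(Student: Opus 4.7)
The plan is to combine the standard chain rule with the isotropy of $\widehat{\WW}$ and the Daleckii-Krein representation of $\DD_B\log B$. The identity is not a naive chain rule (since $\DD_B\log B.H\neq B^{-1}H$ in general), but rather a chain rule that survives only because one of the two factors is diagonal in the eigenbasis of $B$.

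First I would apply the classical chain rule for differentiable compositions to obtain, for every $H\in\Sym(3)$,
\begin{align}
\DD_B\Psi(B).H = \langle \DD_{\log B}\widehat{\WW}(\log B),\,\DD_B\log B.H\rangle.
\end{align}
Setting $S\colonequals \DD_{\log B}\widehat{\WW}(\log B)\in\Sym(3)$, the claim reduces to showing that $\langle S,\DD_B\log B.H\rangle=\langle S\,B^{-1},H\rangle$ for all $H\in\Sym(3)$. The key observation is that since $\widehat{\WW}$ is an \emph{isotropic scalar} function, $S$ is itself an isotropic tensor function of $\log B$: differentiating $\widehat{\WW}(Q^TXQ)=\widehat{\WW}(X)$ at $Q=\exp(t\Omega)$, $\Omega\in\mathfrak{so}(3)$, yields $[S,\log B]=0$, whence $[S,B]=[S,B^{-1}]=0$. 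Equivalently, by the representation theorem for isotropic scalar functions one may write $\widehat{\WW}(\log B)=\widehat{g}(\log\lambda_1,\log\lambda_2,\log\lambda_3)$ with $\widehat{g}$ a symmetric scalar function of the spectrum, so that $S$ is diagonal in any orthonormal eigenbasis of $B$.

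Next I would invoke the Daleckii-Krein representation of Corollary~\ref{appendixcor09}, writing $B=R\,\diag(\lambda_1^2,\lambda_2^2,\lambda_3^2)\,R^T$ with $R\in\OO(3)$ and
\begin{align}
\DD_B\log B.H = R\,\bigl[\mathcal{F}\circ(R^THR)\bigr]\,R^T,\qquad \mathcal{F}_{ii}=\frac{1}{\lambda_i^2},\quad \mathcal{F}_{ij}=\frac{\log\lambda_i^2-\log\lambda_j^2}{\lambda_i^2-\lambda_j^2}\ (i\neq j).
\end{align}
By the commutation just established, $\widetilde{S}\colonequals R^TSR=\diag(s_1,s_2,s_3)$ is diagonal. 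Setting $\widetilde{H}\colonequals R^THR$ and using orthogonal invariance of the Frobenius inner product together with the symmetry $\langle A\circ B,C\rangle=\langle A,B\circ C\rangle$ of the Schur product, only the diagonal entries of $\mathcal{F}\circ\widetilde{H}$ contribute, so that
\begin{align}
\langle S,\DD_B\log B.H\rangle=\langle \widetilde{S},\mathcal{F}\circ\widetilde{H}\rangle=\sum_{i=1}^3\frac{s_i}{\lambda_i^2}\,\widetilde{H}_{ii}=\langle\diag(s_1/\lambda_1^2,s_2/\lambda_2^2,s_3/\lambda_3^2),\widetilde{H}\rangle=\langle S\,B^{-1},H\rangle,
\end{align}
where the last equality comes from $S\,B^{-1}=R\,\diag(s_1/\lambda_1^2,s_2/\lambda_2^2,s_3/\lambda_3^2)\,R^T$. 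Varying $H\in\Sym(3)$ identifies the gradients and yields the claim.

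The main obstacle is the commutation step: one must make precise why isotropy of a \emph{scalar} function forces its gradient to share the eigenbasis of its argument. This is where the proof genuinely uses scalar-valuedness, and it is also the reason the analogous identity fails for tensor-valued $\widehat{\sigma}$ in Section~\ref{ips2}. An alternative route that sidesteps the Daleckii-Krein bookkeeping is purely spectral: using $\DD_B\lambda_i^2(B)=v_i\otimes v_i$ for the eigenvectors $v_i$ of $B$, one differentiates $\Psi(B)=\widehat{g}(\log\lambda_1,\log\lambda_2,\log\lambda_3)$ directly to get
\begin{align}
\DD_B\Psi(B)=\sum_{i=1}^3\frac{\partial\widehat{g}}{\partial\log\lambda_i}\,\frac{1}{\lambda_i^2}\,v_i\otimes v_i=\Bigl(\sum_{i=1}^3\frac{\partial\widehat{g}}{\partial\log\lambda_i}\,v_i\otimes v_i\Bigr)\,B^{-1}=\DD_{\log B}\widehat{\WW}(\log B)\,B^{-1},
\end{align}
which gives the identity at once and makes transparent why scalar isotropy is indispensable.
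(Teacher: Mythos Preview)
Your argument is correct. Both routes you give --- the Daleckii--Krein computation and the direct spectral differentiation --- are valid, and you have identified the essential point: isotropy of the \emph{scalar} function $\widehat{\WW}$ forces $S=\DD_{\log B}\widehat{\WW}(\log B)$ to be coaxial with $B$, so that only the diagonal entries $\mathcal{F}_{ii}=1/\lambda_i^2$ survive in the pairing and the off-diagonal divided differences (which prevent $\DD_B\log B.H=B^{-1}H$) drop out.

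The paper itself does not give a self-contained proof here; it simply refers to \cite{NeffGhibaLankeit} for a Taylor-expansion argument (and the footnote mentions that Vall\'ee et al.~\cite{vallee2008dual} also give a proof avoiding Taylor expansion). Your Daleckii--Krein route is different in flavour: it trades the series expansion for the explicit spectral kernel and the commutation property of the gradient of an isotropic scalar. This has the advantage of making transparent exactly \emph{where} scalar-valuedness enters --- namely in forcing $\widetilde{S}$ to be diagonal --- and thereby explains directly why the analogous identity fails for tensor-valued $\sigma(B)$ (the gradient $\DD_{\log B}\widehat{\sigma}$ is a fourth-order object, not a symmetric tensor coaxial with $B$). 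Your second, purely spectral computation is essentially the shortest possible proof and is arguably cleaner than either the Taylor-expansion or the Daleckii--Krein approach.
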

\begin{proof}
For a simple proof using Taylor-expansion, see \cite{NeffGhibaLankeit}.
\end{proof}
\noindent Yet another similar result is given by the
\subsubsection{Golden-Thompson inequality} \label{appgoldenth}
The Golden-Thompson inequality (cf.~Golden \cite{golden65} and Thompson \cite{thompson65}, see also Tao \cite{Tao}) implies that for symmetric matrices $B, D \in \Sym(3)$ the inequality
	\begin{align}
	\label{eqgoldenthompson}
	\tr(\exp(B+D)) \le \tr(\exp(B) \, \exp(D))
	\end{align}
holds. It is motivated by the fact, that for scalar values $b, d \in \R$, we obviously have $\mathrm{e}^{b + d} = \mathrm{e}^{b} \, \mathrm{e}^d$. It is well known, that for \emph{commuting matrices} $B, D$ the same equality holds, i.e.
	\begin{align}
	\label{eqappendixcommute}
	\exp(B + D) = \exp(B) \, \exp(D), \qquad \textnormal{if} \qquad B \, D = D \, B.
	\end{align}
\emph{However}, if $B$ and $D$ do not commute, the equation \eqref{eqappendixcommute} does no longer hold. In fact, Petz \cite{Petz1994} proved that for two symmetric matrices $B, D \in \Sym(3)$, it holds
	\begin{align}
	\tr(\exp(B+D)) = \tr(\exp(B) \, \exp(D)) \qquad \iff \qquad B \, D = D \, B.
	\end{align}

\end{appendix}

\end{document}